\numberwithin{equation}{section}
\newcommand{\reff}[1]{(\ref{#1})}
\newcommand{\T}{\theta}
\newcommand{\VT}{\vartheta}
\renewcommand{\(}{\left( }
\renewcommand{\)}{\right) }
\renewcommand{\theequation}{\theequation. \arabic{equation}}
\numberwithin{equation}{section}
\newtheorem{thm}{Theorem}[section]
\newtheorem{cor}[thm]{Corollary}
\newtheorem{prop}[thm]{Proposition}
\newtheorem{defn}[thm]{Definition}
\newtheorem{conj}[thm]{Conjecture}
\def\squarebox#1{\hbox to #1{\hfill\vbox to #1{\vfill}}}
\begin{document}
\title[A theta function identity of degree eight]
{ A universal identity for theta functions of degree eight  and applications }
\author{Zhi-Guo Liu }
\address{School of Mathematical Sciences and  Shanghai  Key Laboratory of PMMP, East China Normal University,
	500 Dongchuan Road, Shanghai 200241, P.R.
	China}\email{zgliu@math.ecnu.edu.cn, liuzg@hotmail.com}
\thanks{This work was supported by the National Science Foundation of China (Grant No. 11971173) and Science and Technology Commission of Shanghai Municipality (Grant No. 13dz2260400).}
\thanks{ 2020 Mathematics Subject Classifications : 33E05,
	11F11, 11F20, 11F27.}
\thanks{ Keywords: Elliptic function, theta function, addition formula,   Ramanujan's modular equations}
\begin{abstract}
	Previously, we proved an identity for theta functions of degree eight,  and several applications of it were also discussed. This identity is a natural extension of the addition formula for the Weierstrass sigma-function.
	In this paper we will use this identity to reexamine our work in theta function identities in the past two decades. Hundreds of results about elliptic modular functions, both classical and new, are derived from this identity with ease. Essentially, this general theta function identity is a theta identities generating machine. Our investigation shows that many well-known results about elliptic modular functions  with different appearances due to Jacobi, Kiepert, Ramanujan and Weierstrass among others,  actually  share a  common source.  This paper can also be seen as a summary of  my past work on theta function identities. A conjecture is also proposed.
	
\end{abstract}
\dedicatory{Dedicated to Srinivasa Ramanujan on the occasion of his 133rd birth anniversary }
\maketitle	
\section{Introduction and preliminary}
 For convenience, sometimes we use $\exp(z)$ to denote the natural exponential function $e^{z}$.  Throughout this paper we take $q=\exp(2\pi i \tau)$, where $i$ is the imaginary unit and $\tau$ has positive imaginary part. So that we have $|q|<1$.  

The Dedekind eta function is a modular form of weight $1/2$ which is  defined by 
\begin{equation}\label{jabel:eqn1}
	\eta(\tau)=q^{1/24} \prod_{n=1}^\infty (1-q^n)=e^{\frac{\pi i\tau}{12}}\prod_{n=1}^\infty (1-e^{2\pi n i\tau}).
\end{equation} 

To carry out our study, we need the Jacobi theta function $\T_1(z|\tau)$ which is defined as (see, for example  \cite[p.~463]{WhiWat})
\begin{align}\label{jabel:eqn2}
	\T_1(z|\tau)&=-iq^{1/8}\sum_{n=-\infty}^\infty (-1)^n q^{n(n+1)/2} e^{(2n+1)iz}\\
	&=2q^{1/8}\sum_{n=0}^\infty (-1)^n 	q^{n(n+1)/2} \sin (2n+1) z.\nonumber
\end{align}

Jacobi's  triple product identity in the next theorem is one of the most fundamental  results in the theory of elliptic theta functions and $q$-series, which can be found in any  standard textbook in elliptic theta functions or $q$-series  (see, for example \cite[Theorem~1.3.3]{Berndt06} and \cite[Theorem~10.4.1]{AAR99}).
\begin{thm}[Jacobi triple product identity]\label{jactripthm}  For $z\not=0$ and $|q|<1$, we have 
	\begin{equation}\label{jabel:eqn3}
		(1-z)\prod_{n=1}^\infty (1-q^n)(1-q^n z) (1-q^n/z)=\sum_{n=-\infty}^\infty (-1)^n q^{n(n-1)/2} z^n.
	\end{equation}	
\end{thm}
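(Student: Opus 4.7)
My plan is a standard functional-equation plus Laurent-expansion argument, followed by a self-duplication step to pin down the overall normalization. Write the left-hand side as
\[ f(z)=(1-z)\prod_{n=1}^{\infty}(1-q^{n})(1-q^{n}z)(1-q^{n}z^{-1}). \]
Since $|q|<1$, both infinite products converge absolutely and uniformly on compact subsets of $\Co$, so $f$ is holomorphic on $\Co$ and has a Laurent expansion $f(z)=\sum_{n\in\Z}a_{n}(q)\,z^{n}$. Let $S(z,q)$ denote the right-hand side of \eqref{jabel:eqn3}, so the goal is $f(z)=S(z,q)$.

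First, I would establish the quasi-periodicity $f(qz)=-z^{-1}f(z)$ by shifting indices in the two products:
\[ \prod_{n\ge 1}(1-q^{n+1}z)=\frac{\prod_{n\ge 1}(1-q^{n}z)}{1-qz},\qquad \prod_{n\ge 1}(1-q^{n-1}z^{-1})=(1-z^{-1})\prod_{n\ge 1}(1-q^{n}z^{-1}), \]
so that the factor $(1-qz)$ cancels and the ratio $(1-z^{-1})/(1-z)$ equals $-z^{-1}$. A direct check shows that $S(z,q)$ satisfies the same quasi-periodicity, so equating Laurent coefficients in $f(qz)=-z^{-1}f(z)$ gives the recurrence $a_{n+1}(q)=-q^{n}a_{n}(q)$, whence $a_{n}(q)=(-1)^{n}q^{n(n-1)/2}a_{0}(q)$ for all $n\in\Z$ and
\[ f(z)=a_{0}(q)\,S(z,q). \]

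The main obstacle is showing $a_{0}(q)=1$, since the functional equation is insensitive to an overall scalar depending only on $q$. I would use a self-duplication argument. The factorization $(1-q^{n}z)(1+q^{n}z)=1-q^{2n}z^{2}$ yields
\[ f(z)f(-z)=\prod_{n\ge 1}(1-q^{n})(1-q^{2n-1})\cdot f_{q^{2}}(z^{2}), \]
where $f_{q^{2}}$ denotes $f$ with $q$ replaced by $q^{2}$. On the sum side, matching coefficients of $z^{2k}$ via the substitution $m=k+j$ in the double sum gives $S(z,q)S(-z,q)=S(q,q^{2})\,S(z^{2},q^{2})$. Substituting $f=a_{0}S$ in both factorizations and comparing produces
\[ a_{0}(q)^{2}\,S(q,q^{2})=\prod_{n\ge 1}(1-q^{n})(1-q^{2n-1})\cdot a_{0}(q^{2}). \]
Applying the proportionality itself at $z=q$, $q\mapsto q^{2}$, gives $f_{q^{2}}(q)=a_{0}(q^{2})\,S(q,q^{2})$, while a direct product computation shows $f_{q^{2}}(q)=\prod_{n\ge 1}(1-q^{n})(1-q^{2n-1})$. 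Substituting this forces $a_{0}(q)^{2}=a_{0}(q^{2})^{2}$. Since both sides of \eqref{jabel:eqn3} reduce to $1-z$ at $q=0$, we have $a_{0}(0)=1$; continuity in $q$ then pins the sign to give $a_{0}(q)=a_{0}(q^{2})$, and iterating with $|q^{2^{k}}|\to 0$ forces $a_{0}(q)\equiv 1$ on $|q|<1$.
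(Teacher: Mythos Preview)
Your argument is correct and is one of the classical proofs (functional equation plus Laurent expansion, with Gauss's self-duplication trick to determine the constant); the only step that deserves a touch more care is the last one, but since iterating $a_{0}(q)^{2}=a_{0}(q^{2})^{2}$ gives $a_{0}(q)^{2}=\lim_{k}a_{0}(q^{2^{k}})^{2}=1$ and $a_{0}$ is continuous on the connected disk with $a_{0}(0)=1$, you get $a_{0}\equiv 1$ without needing to know in advance that $a_{0}$ is nonvanishing. The paper does not actually prove this theorem---it is simply quoted as a standard result with references to \cite{Berndt06} and \cite{AAR99}---so there is no proof in the paper to compare against.
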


Using the Jacobi triple product identity, one can get the infinite product representation for $\T_1$,
\begin{equation}\label{jabel:eqn4}
	\T_1(z|\tau)=2q^{1/8}(\sin z )\prod_{n=1}^\infty (1-q^n)(1-q^ne^{2iz}) (1-q^ne^{-2iz}).
\end{equation}

It is easily seen that in the fundamental periodic parallelogram
given by
\begin{equation}\label{jabel:eqn5}
	\prod=\{x\pi+y\pi\tau|~ 0\le x< 1, 0\le y< 1\},
\end{equation}
the zeros of $\theta_1(z|\tau)$ are at
$z=0$.  The set of zeros of $\theta_1(z|\tau)$  form a lattice $\Lambda$, which is given by 
\begin{equation}\label{jabel:eqn6}
	\Lambda=\{ m \pi+ n\pi \tau: (m, n) \in \mathbb{Z}^2 \}.
\end{equation}

The Jacobi theta function $\theta_1(z|\tau)$ is an entire, quasi-doubly periodic function of $z$,  and  regarded as the two dimensional version of the Sine function. 

\begin{defn} \label{qequiv}
	If the difference of two complex numbers is in the set $\Lambda$,  then these two complex numbers are 
	said to be equivalent modulo  $ \Lambda$, and if the difference of two complex numbers is not in the set $\Lambda$,  then these two complex numbers are said to be inequivalent modulo  $ \Lambda$.
\end{defn}

The Bernoulli numbers $B_k$ are defined as the coefficients in the power series 
\begin{equation}\label{jabel:eqn7}
	\frac{z}{e^{z}-1}=\sum_{k=0}^\infty B_k \frac{z^k}{k!}~\quad \text{for}\quad 
	|z|<2\pi,
\end{equation}
and the normalized  Eisenstein series $E_{2k}(\tau)$ on the full modular group are defined by  \cite[Eq.~(6.1.4)]{Rankin1977}
\begin{equation}\label{jabel:eqn8}
	E_{2k}(\tau)=1-\frac{4k}{B_{2k}}\sum_{n=1}^\infty \frac{n^{2k-1}q^n}{1-q^n}.	
\end{equation}
	
For simplicity,  we will use $L(\tau), ~M(\tau)$ and $N(\tau)$  to denote $E_2(\tau), ~E_4(\tau)$ and $E_6(\tau)$ respectively. Thus we have
 \cite[p.~195]{Rankin1977}
\begin{equation}\label{jabel:eqn9}
	\begin{split}
	L(\tau):=	E_2(\tau)&=1-24\sum_{n=1}^\infty \frac{nq^n}{1-q^n},\\
	M(\tau):=	E_4(\tau)&=1+240\sum_{n=1}^\infty \frac{n^3 q^n}{1-q^n},\\
	N(\tau):=	E_6(\tau)&=1-504\sum_{n=1}^\infty \frac{n^5 q^n}{1-q^n}.
	\end{split}	
\end{equation}

It is known that the Weierstrass elliptic function $\wp(z|\tau)$ attached to the periodic lattice $\Lambda$ is defined by  \cite[p.~10]{Apostol1990}
\begin{equation}\label{jabel:eqn10}
	\wp(z|\tau)=\frac{1}{z^2}+\sum_{{\omega \in \Lambda}\atop{\omega \not=0}}
	\( \frac{1}{(z-\omega)^2}-\frac{1}{\omega^2}\),
\end{equation}
which has primitive periods $\pi$ and $\pi\tau$. Also it has only one inequivalent pole at $z=0$, of order two.

For $k>2$, the Eisenstein series $G_k(\tau)$ attached to the Lattice $\Lambda$ is defined by
\begin{equation}\label{jabel:eqn11}
G_k(\tau)=\sum_{(m, n)\not=(0, 0)} \frac{1}{(m+n \tau)^k}.
\end{equation}
It is well-known that the Laurent expansion of $\wp(z|\tau)$ near the origin is given by \cite[Theorem~1.11]{Apostol1990}
\begin{equation}\label{jabel:eqn12}
	\wp(z|\tau)=\frac{1}{z^2}+\sum_{k=1}^\infty (2k+1)G_{2k+2}(\tau) z^{2k+2}.
\end{equation}

For any function $f(z|\tau),$  we will use the prime and the double prime  to denote the  first order  and the second order partial derivatives of $f(z|\tau)$ with respect $z$, etc.   We sometimes use $\(\log f\)'(z|\tau)$  and $\(\log f\)''(z|\tau)$to denote the first order and the second order partial logarithmic derivatives of $f(z|\tau)$ with respect to $z$, etc. 

Logarithmically differentiating  the infinite product representations of $\T_1(z|\tau)$ with respect to $z$, respectively, one can  find that (see, for example  \cite[p.~45]{Bellman})
\begin{equation} \label{jabel:eqn13}
	(\log \T_1)'(z|\tau)=\cot z+4\sum_{n=1}^\infty \frac{q^n}{1-q^n} \sin 2nz.	
\end{equation}

Substituting the Laurent series expansion of $\cot z$ near $z=0$ and the Maclaurin series of $\sin z$ to the right-hand side of the above equation, we easily find that near $z=0,$
\begin{equation}\label{jabel:eqn14}
	(\log \T_1)'(z|\tau)=\frac{1}{z}-\frac{1}{3}L(\tau)z-\frac{1}{45}M(\tau)z^3-
	\frac{2}{945}N(\tau)z^5+O(z^7),
\end{equation}
 where $L(\tau), M(\tau)$ and $N(\tau)$ are defined by \reff{jabel:eqn9}.

It is not difficult to verify that the Weierstrass elliptic function 
is related to the second order partial logarithmic derivative of $\T_1$  by the relation
\begin{align}\label{jabel:eqn15}
	\wp(z|\tau)=-\(\log \T_1\)''(z|\tau)-\frac{1}{3}L(\tau).
\end{align}		
In this paper we also need the Jacobi theta function $\T_2,~\T_3$ and $\T_4$ which are defined as follows:		
\begin{defn} \label{jtheta}The Jacobi theta
	functions $\T_k$ for $k=2, 3, 4,$ are defined as
	\begin{align*}
		\T_2(z|\tau)&=2\sum_{n=0}^\infty q^{\frac{(2n+1)^2}{8}} ~  \cos (2n+1)z,\\
		\T_3(z | \tau)&=1+2\sum_{n=1}^\infty q^{\frac{1}{2}n^2} \cos 2nz ,\\
		\T_4(z | \tau)&=1+2\sum_{n=1}^\infty (-1)^n q^{\frac{1}{2}n^2}  \cos 2nz.
	\end{align*}
\end{defn}

Using the Jacobi triple product identity one can easily derive the infinite  product representations of the Jacobi theta functions in the following proposition.
\begin{prop}\label{infiniteprod}  Let $ \T_2, \T_3$ and $\T_4$ be defined by Definition~\ref{jtheta}. Then  
	\begin{align*}
		\T_2(z|\tau)&=2q^{1/8}(\cos z )\prod_{n=1}^\infty (1-q^n)(1+q^ne^{2iz}) (1+q^ne^{-2iz}),\\
		\T_3(z|\tau)&=\prod_{n=1}^\infty (1-q^n)(1+q^{(n-1/2)} e^{2iz}) (1+q^{(n-1/2)}e^{-2iz}),\\
		\T_4(z|\tau)&=\prod_{n=1}^\infty (1-q^n)(1-q^{(n-1/2)} e^{2iz}) (1-q^{(n-1/2)}e^{-2iz}).	
	\end{align*}
\end{prop}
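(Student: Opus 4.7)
The plan is to derive each product formula by rewriting the defining series of $\theta_k$ as a bilateral sum of the shape appearing on the right-hand side of Jacobi's triple product identity~\reff{jabel:eqn3}, then reading off the corresponding product through an appropriate substitution for the variable $z$ (and in one case for $q$). For later convenience I would first restate Theorem~\ref{jactripthm} in the equivalent ``plus-sign'' form obtained by sending $z \mapsto -z$:
\begin{equation*}
	(1+z)\prod_{n=1}^\infty (1-q^n)(1+q^n z)(1+q^n/z) \;=\; \sum_{n=-\infty}^\infty q^{n(n-1)/2} z^n.
\end{equation*}

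For $\theta_3$, I would combine the cosine terms into exponentials to write
$\theta_3(z|\tau)=\sum_{n\in\Z} q^{n^2/2}e^{2inz}$, then substitute $z\mapsto q^{1/2}e^{2iz}$ into the bilateral identity above; the exponent $\tfrac{n(n-1)}{2}+\tfrac{n}{2}=\tfrac{n^2}{2}$ then produces the series for $\theta_3$, and re-indexing the shifted product $\prod_{n\geq 1}(1+q^{n+1/2}e^{2iz})$ together with the prefactor $(1+q^{1/2}e^{2iz})$ yields exactly $\prod_{n\geq 1}(1+q^{n-1/2}e^{2iz})$, giving the desired representation. The formula for $\theta_4$ then follows from the same substitution applied to the original ``minus-sign'' form \reff{jabel:eqn3}, since $\theta_4(z|\tau)=\sum_{n\in\Z}(-1)^nq^{n^2/2}e^{2inz}$. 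Alternatively, one may observe $\theta_4(z|\tau)=\theta_3(z+\pi/2|\tau)$, which is a convenient cross-check.

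For $\theta_2$, the idea is the same but the $\tfrac{1}{8}$ in the exponent forces us to factor out $q^{1/8}e^{iz}$ first:
\begin{equation*}
	\theta_2(z|\tau)=\sum_{n\in\Z} q^{(2n+1)^2/8}e^{i(2n+1)z}=q^{1/8}e^{iz}\sum_{n\in\Z} q^{n(n+1)/2}e^{2inz}.
\end{equation*}
Now I would substitute $z\mapsto q e^{2iz}$ into the plus-sign bilateral identity so that the right-hand side becomes $\sum_{n\in\Z}q^{n(n+1)/2}e^{2inz}$; on the left, the $n=1$ term of $\prod_{n\geq 1}(1+q^{n-1}e^{-2iz})$ produces an explicit $(1+e^{-2iz})$ which combines with $q^{1/8}e^{iz}$ to give $2q^{1/8}\cos z$, while the remaining product telescopes to the symmetric product stated.

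The main technical point (and really the only place where care is needed) is bookkeeping in these substitutions: each time one substitutes $z\mapsto q^{\alpha}e^{2iz}$ the factor $(1-z)$ or $(1+z)$ in front of the triple product must be absorbed into the shifted infinite product so that the indices line up into the symmetric pair $(1\pm q^{n-1/2}e^{\pm 2iz})$ or $(1+q^n e^{\pm 2iz})$. No convergence issues arise because $|q|<1$ guarantees absolute and uniform convergence on compact subsets of $z\in\C$, and therefore the identities hold as stated.
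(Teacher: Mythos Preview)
Your proof is correct and follows exactly the route the paper indicates: the paper merely states that ``using the Jacobi triple product identity one can easily derive the infinite product representations'' without spelling out the substitutions, and you have carried out precisely those substitutions (namely $z\mapsto q^{1/2}e^{2iz}$ for $\T_3,\T_4$ and $z\mapsto qe^{2iz}$ for $\T_2$) in full detail. Nothing is missing.
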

Using the infinite product expansions of the Jacobi theta functions, we can easily find the following multiplication formulas for theta functions. These formulas were proved by Jacobi \cite{Jacobi1828a} in 1828 (see, also \cite[pp.~300--320]{Enne 1890}).
\begin{prop}\label{multhetapp}
	If $n$ is an odd integer, then for $j=1, 2, 3, 4,$ we have
	\begin{equation}\label{jabel:eqn16}
		\T_j(z|\tau) \prod_{k=1}^{\frac{n-1}{2}} \T_j \(\frac{k\pi }{n}+z|\tau \) \T_j \(\frac{k\pi }{n}-z|\tau \)=\frac{\eta^n(\tau)}{\eta(n\tau)} 
		{\T_j(nz|n\tau)},
	\end{equation}
	and
	\begin{equation}\label{jabel:eqn17}
		\T_j(z|\tau)\prod_{k=1}^{\frac{n-1}{2}} \T_j \(z+\frac{k\pi \tau }{n}|\tau \) \T_j \(z-\frac{k\pi \tau}{n}|\tau \)=q^{\frac{(1-n^2)}{24n}}\frac{\eta^n (\tau)}{\eta(\frac{\tau}{n})}
		\T_j \(z|\frac{\tau}{n}\).
	\end{equation}
\end{prop}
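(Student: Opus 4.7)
My plan is to establish both \reff{jabel:eqn16} and \reff{jabel:eqn17} by the standard elliptic-function argument combined with direct infinite-product manipulation. In each case I first view the ratio LHS$/$RHS as a meromorphic function of $z$; verify it is doubly periodic and entire; conclude by Liouville's theorem it is a constant $c(\tau)$; and then pin down $c(\tau)$ by expanding the infinite products of Proposition~\ref{infiniteprod}.

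For \reff{jabel:eqn16}, let $F_j(z)$ denote the LHS. Using the parity of $\T_j$ in $z$ (odd for $j=1$, even for $j=2,3,4$), I rewrite $F_j$ up to a sign $\varepsilon_j$ as the symmetric product $\prod_{k=-(n-1)/2}^{(n-1)/2}\T_j(z+k\pi/n|\tau)$, with $\varepsilon_1=(-1)^{(n-1)/2}$ and $\varepsilon_j=1$ for $j\geq 2$. Under $z\to z+\pi$ and $z\to z+\pi\tau$ the $n$ quasi-periodic exponential contributions telescope (since $\sum_{|k|\leq(n-1)/2}k=0$) into exactly the quasi-period law of $\T_j(nz|n\tau)$, and a zero count in the fundamental parallelogram gives $n$ matching simple zeros at $z\equiv k\pi/n\pmod\Lambda$. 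The constant $c(\tau)$ is then computed by the trigonometric dissection $\prod_{k=-(n-1)/2}^{(n-1)/2}\sin(z+k\pi/n)=(-1)^{(n-1)/2}2^{1-n}\sin(nz)$ (and its $\cos$ analogue $\prod\cos(z+k\pi/n)=2^{1-n}\cos(nz)$), combined with the polynomial identity $\prod_{k=0}^{n-1}(1\mp x\zeta^k)=1\mp x^n$ with $\zeta=e^{2\pi i/n}$ and $n$ odd. After the sine sign cancels $\varepsilon_1$ and the $q^m$-factors collapse to $q^{mn}$-factors, the residual $\prod(1-q^m)^n/\prod(1-q^{mn})$ evaluates to $\eta^n(\tau)/\eta(n\tau)$, as required.

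For \reff{jabel:eqn17} the scheme is parallel but lives on the finer lattice $\pi\mathbb{Z}+(\pi\tau/n)\mathbb{Z}$, with $\T_j(z|\tau/n)$ in place of $\T_j(nz|n\tau)$. Quasi-periodicity under $z\to z+\pi$ is immediate; under $z\to z+\pi\tau/n$ the shift swaps the extremal factor $\T_j(z+(n+1)\pi\tau/(2n)|\tau)$ for $\T_j(z-(n-1)\pi\tau/(2n)|\tau)$, a change by $\pi\tau$ that produces exactly the quasi-period factor of $\T_j(z|\tau/n)$, while the other $n-1$ factors cancel pairwise. A single simple zero in the small fundamental domain on each side renders the ratio constant. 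For the constant, expanding $\sin(z+k\pi\tau/n)=-(2i)^{-1}q^{-k/(2n)}e^{-iz}(1-q^{k/n}e^{2iz})$ and interchanging products, one finds that the bi-indexed exponents $\{m+k/n:m\geq 1,\,|k|\leq(n-1)/2\}$ equal $\{j/n:j\geq(n+1)/2\}$, and that together with the ``short'' factors $\prod_{k=1}^{(n-1)/2}(1-q^{k/n}e^{\pm 2iz})$ produced by the sine expansion they reconstitute the infinite product of $\T_j(z|\tau/n)$. The net $q$-exponent $n/8-(n^2-1)/(8n)-1/(8n)=0$ vanishes, and the Dedekind-$\eta$ residue $\prod(1-q^m)^n/\prod(1-q^{m/n})$ is $q^{(1-n^2)/(24n)}\eta^n(\tau)/\eta(\tau/n)$.

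The principal obstacle is the $q$-power and index bookkeeping in \reff{jabel:eqn17}, in particular the reindexation $\{m+k/n:m\geq 1,\,|k|\leq(n-1)/2\}=\{j/n:j\geq(n+1)/2\}$ that splices the short and long $q^{j/n}$-products into the single infinite product of $\T_j(z|\tau/n)$, together with the tracking of the $q^{-(n^2-1)/(8n)}$ that comes out of $\sum_{k=1}^{(n-1)/2}k=(n^2-1)/8$. A less elementary alternative would deduce \reff{jabel:eqn17} from \reff{jabel:eqn16} via the Jacobi imaginary transformation $\tau\to -1/\tau$, which exchanges real-period and $\tau$-period shifts; the $q^{(1-n^2)/(24n)}$ factor then arises from the Gaussian $e^{iw^2\tau/\pi}$ accumulated over the $n$ arguments $w\in\{z,z\pm k\pi/n\}$, whose squares sum to $nz^2+\pi^2(n^2-1)/(12n)$.
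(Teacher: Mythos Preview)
The paper does not actually prove Proposition~\ref{multhetapp}; it merely asserts that the formulas follow ``using the infinite product expansions of the Jacobi theta functions'' and cites Jacobi (1828) and Enneper for the original source. Your proposal is correct and carries out in detail exactly what the paper leaves implicit: the direct product computation via Proposition~\ref{infiniteprod}, together with the trigonometric dissection $\prod_{k}\sin(z+k\pi/n)=(-1)^{(n-1)/2}2^{1-n}\sin(nz)$ and the root-of-unity identity $\prod_{k}(1-x\zeta^{k})=1-x^{n}$. The bookkeeping you flag for \reff{jabel:eqn17} --- the reindexation $\{m+k/n:m\ge 1,\ |k|\le (n-1)/2\}=\{j/n:j\ge (n+1)/2\}$, the splicing with the short factors $(1-q^{l/n}e^{\pm 2iz})$ for $1\le l\le (n-1)/2$, and the exponent $\sum_{l=1}^{(n-1)/2}l=(n^{2}-1)/8$ --- all checks out.

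One minor remark: the Liouville step (showing the ratio is doubly periodic and entire, hence constant) is logically redundant in your write-up, since the infinite-product manipulation you go on to perform already establishes the identity outright, constant included. Either argument suffices by itself; if you keep the Liouville reduction, you could instead pin down $c(\tau)$ by evaluating at a single convenient point (say $z=0$ for $j=2,3,4$, or by comparing leading Taylor coefficients at $z=0$ for $j=1$) and skip the full product expansion. Your alternative route to \reff{jabel:eqn17} from \reff{jabel:eqn16} via the imaginary transformation of Proposition~\ref{imaginarypp} is also valid and is the classical way to pass between the two.
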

The four Jacobi theta functions are mutually related, and starting from one of them we may obtain the other three by simple calculations. For example, we have the following proposition.
\begin{prop}\label{halfperiods} Theta functions $\T_1, \T_2, \T_3$ and $\T_4$ satisfy the relations
	\begin{align*}
		\T_1\(z+{\pi}/{2}\Big|\tau\)&=\T_2(z|\tau),\\
		\T_1\(z+(\pi\tau)/2\Big|\tau\)&=i q^{-\frac{1}{8}}e^{-iz} \T_4(z|\tau), \\
		\T_1\(z+(\pi+\pi\tau)/2 \Big|\tau\)&=q^{-\frac{1}{8}} e^{-iz} \T_3(z|\tau).
	\end{align*}
\end{prop}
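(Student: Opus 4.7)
The plan is to verify each of the three displayed identities by direct term-by-term comparison with the series definitions of $\T_2, \T_3, \T_4$ given in Definition~\ref{jtheta}, using the series form of $\T_1$ from \reff{jabel:eqn2}.

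For the first identity, I would substitute $z\mapsto z+\pi/2$ into the sine series for $\T_1$. Since $2n+1$ is odd, $\sin((2n+1)(z+\pi/2)) = (-1)^n\cos((2n+1)z)$, so the $(-1)^n$ cancels the alternating sign in $\T_1$. The elementary identity $q^{1/8}\cdot q^{n(n+1)/2} = q^{(2n+1)^2/8}$ then rewrites the coefficient exactly into the form appearing in the cosine series for $\T_2$.

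For the second identity, I would use the bilateral exponential form of $\T_1$. Substituting $z\mapsto z+\pi\tau/2$ introduces a factor $e^{(2n+1)i\pi\tau/2}=q^{(2n+1)/4}$ in each summand, and the resulting $q$-exponent simplifies via $\tfrac{1}{8}+\tfrac{n(n+1)}{2}+\tfrac{2n+1}{4} = -\tfrac{1}{8}+\tfrac{(n+1)^2}{2}$. A reindexing $n\mapsto n-1$ (which also flips a sign since $(-1)^{n-1}=-(-1)^n$) then converts the sum into $iq^{-1/8}e^{-iz}\sum_{n}(-1)^n q^{n^2/2}e^{2inz}$, and the bilateral series is recognized as $\T_4(z|\tau)$ by pairing the $n$ and $-n$ terms in Definition~\ref{jtheta}.

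Rather than repeat this bookkeeping for the third identity, I would chain the first two: write $(\pi+\pi\tau)/2=\pi/2+\pi\tau/2$ and apply the second relation at the point $z+\pi/2$, reducing matters to the auxiliary identity $\T_4(z+\pi/2|\tau)=\T_3(z|\tau)$. This auxiliary identity is immediate from the cosine series since $\cos(2n(z+\pi/2))=(-1)^n\cos 2nz$ cancels the alternating sign in $\T_4$. Combined with $e^{-i\pi/2}=-i$, the prefactor $i$ coming from the second relation is absorbed, leaving $q^{-1/8}e^{-iz}\T_3(z|\tau)$ as required. There is no conceptual obstacle here; the only thing to watch is the careful bookkeeping of the $q$-exponent, the overall sign, and the index shift in the second identity, where an arithmetic slip would derail the whole computation.
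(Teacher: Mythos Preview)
Your proposal is correct: each of the three identities follows exactly as you describe, and the bookkeeping in the second identity (the $q$-exponent simplification $\tfrac18+\tfrac{n(n+1)}2+\tfrac{2n+1}4=-\tfrac18+\tfrac{(n+1)^2}2$, the shift $n\mapsto n-1$, and the recognition of the bilateral sum as $\T_4$) all checks out, as does the chaining argument for the third identity via $\T_4(z+\pi/2\mid\tau)=\T_3(z\mid\tau)$.

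The paper itself does not supply a proof of this proposition; it is stated as a standard fact obtainable ``by simple calculations'' from the series definitions. Your direct term-by-term verification is precisely the kind of calculation the paper has in mind, so there is nothing to compare against and no discrepancy to report.
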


Theta functions $\T_1, \T_2, \T_3$ and $\T_4$  are not  elliptic functions, and they satisfy the following functional equations.
\begin{prop}\label{doubleperiods}
	With respect to the
	(quasi) periods $\pi$ and $\pi\tau$, we have 
	\begin{align*}
		-\theta_1(z | \tau)&=\theta_1(z+\pi | \tau)=\exp ((2z+\pi \tau)i)
		~\theta_1(z+\pi\tau | \tau),\\
		\theta_2(z | \tau)&=-\theta_2(z+\pi | \tau)=\exp ((2z+\pi \tau)i)~
		\theta_2(z+\pi\tau | \tau), \\
		\theta_3(z | \tau)&=\theta_3(z+\pi | \tau)=\exp ((2z+\pi \tau)i)~
		\theta_3(z+\pi\tau | \tau),\\
		\theta_4(z | \tau)&=\theta_4(z+\pi | \tau)=-\exp ((2z+\pi \tau)i)~
		\theta_4(z+\pi\tau | \tau).
	\end{align*}
\end{prop}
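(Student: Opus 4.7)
The plan is to establish the quasi-periodicity for $\theta_1$ directly from either the series representation \reff{jabel:eqn2} or the infinite product \reff{jabel:eqn4}, and then leverage Proposition \ref{halfperiods} to transfer these identities to $\theta_2, \theta_3, \theta_4$. The periodicity under $z\mapsto z+\pi$ is essentially immediate in every case because of the $e^{(2n+1)iz}$ or $\cos(2nz), \sin(2nz)$ building blocks; the real content lies in the $z\mapsto z+\pi\tau$ shifts, which require absorbing the $q$-powers that appear when $e^{2iz}$ is replaced by $qe^{2iz}$.

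For $\theta_1$ I would use the product \reff{jabel:eqn4}. Substituting $z\mapsto z+\pi$ sends $\sin z$ to $-\sin z$ and leaves the two-sided product untouched, giving $\theta_1(z+\pi|\tau)=-\theta_1(z|\tau)$. For $z\mapsto z+\pi\tau$, I would write $e^{2i(z+\pi\tau)}=qe^{2iz}$, so the product over $(1-q^n e^{2iz})$ telescopes off its $n=1$ factor while the product over $(1-q^n e^{-2iz})$ acquires an extra $n=0$ factor $(1-e^{-2iz})$. Combining this telescoping with $\sin(z+\pi\tau)=(q^{1/2}e^{iz}-q^{-1/2}e^{-iz})/(2i)$ and simplifying should collapse the whole ratio $\theta_1(z+\pi\tau|\tau)/\theta_1(z|\tau)$ to $-q^{-1/2}e^{-2iz}=-\exp(-(2z+\pi\tau)i)$, which is the desired relation in the form stated. (Alternatively, the same identity follows by reindexing $n\mapsto n+1$ in the bilateral series \reff{jabel:eqn2}, which is slightly cleaner.)

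Once $\theta_1$ is handled, the remaining three identities follow mechanically from Proposition \ref{halfperiods}. For instance, writing $\theta_2(z|\tau)=\theta_1(z+\pi/2|\tau)$ gives
\[
\theta_2(z+\pi\tau|\tau)=\theta_1(z+\pi/2+\pi\tau|\tau)=-\exp(-(2(z+\pi/2)+\pi\tau)i)\,\theta_1(z+\pi/2|\tau),
\]
and the factor $-e^{-i\pi}=1$ removes the minus sign, producing the claim. For $\theta_4$ one uses $\theta_1(z+\pi\tau/2|\tau)=iq^{-1/8}e^{-iz}\theta_4(z|\tau)$; solving for $\theta_4$, then shifting $z$ by $\pi$ or $\pi\tau$ inside $\theta_1$, and cleaning up the exponential prefactors yields the stated relations (including the extra minus sign in the $\pi\tau$ shift, which arises from $e^{-i\pi}=-1$ applied to an extra $e^{-i\pi}$ compared to the $\theta_2$ case). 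The computation for $\theta_3$ is identical in spirit.

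The substantive work is purely bookkeeping: the chief risk is losing track of signs and fractional powers of $q$ during the telescoping step for $\theta_1$ and during the half-period shifts for $\theta_2,\theta_3,\theta_4$. No new ideas beyond \reff{jabel:eqn2}, \reff{jabel:eqn4}, Proposition \ref{halfperiods}, and the identity $q=e^{2\pi i\tau}$ are needed.
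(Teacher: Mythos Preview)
Your approach is correct and standard. Note, however, that the paper does not actually supply a proof of Proposition~\ref{doubleperiods}; it is stated as a well-known fact (alongside Propositions~\ref{infiniteprod}, \ref{multhetapp}, and \ref{halfperiods}) and then used freely throughout. Your outline---verifying the $\theta_1$ relations directly from the series \reff{jabel:eqn2} (or the product \reff{jabel:eqn4}) and then pushing them to $\theta_2,\theta_3,\theta_4$ via the half-period shifts in Proposition~\ref{halfperiods}---is exactly the textbook argument, and the sign/exponent bookkeeping you sketch checks out. One small remark: deriving the $\theta_2,\theta_3,\theta_4$ relations from Proposition~\ref{halfperiods} is perfectly fine logically, but it is just as easy (and avoids any appearance of circularity, since the paper states Proposition~\ref{halfperiods} before Proposition~\ref{doubleperiods} without proof either) to read off all four relations directly from the respective series in Definition~\ref{jtheta} by the same reindexing trick you describe for $\theta_1$.
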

We now introduce the concept of the degree of a theta function.
\begin{defn}\label{ddefn} Suppose that $r$ is a non-negative  integer and $a, b$
	are two  nonzero complex numbers,  and let
	$f(z)$ be an entire function of $z$   satisfying the functional equations
	$f(z+\pi)=af(z)$ and $f(z+\pi\tau)
	=be^{-2irz}f(z).$
	Then  we say $f(z)$ is a theta function of degree $r.$
\end{defn}
It is obvious that the four Jacobi theta functions are all theta functions of degree $1$. For any non-negative integer $r$, the $r$ th powers of Jacobi theta functions have degree $r$. 

For convenience, we will use  $\VT_1'(\tau)$ and $\VT_j(\tau)$ to denote  $\T_1'(0|\tau) $ and $\VT_j(0|\tau)$ for $j=2, 3, 4$, respectively.

Differentiating both sides of the infinite product representation of $\theta_1$ in Proposition~\ref{infiniteprod},  one can find that
\begin{equation}\label{jabel:eqn18}
	\VT_1'(\tau)=2q^{1/8} \prod_{n=1}^{\infty} (1-q^n)^3=2\eta^3(\tau).
\end{equation}

The four Jacobi imaginary transformation formulas of theta functions were first obtained by Jacobi in 1828, who obtained them from the theory of elliptic functions \cite[pp.~ 403-404]{Jacobi1828} (see also \cite[p.~177]{Rademacher1973} and \cite[p.~ 475]{WhiWat}),  but Poisson \cite{Poisson1827} had previously obtained a formula equivalent to the imaginary transformation formula of $\T_3$ in 1827 by using the Poisson summation formula (see also \cite[pp.~7--11]{Bellman}). The imaginary transformation formulas are among the deepest results of the elliptic theta function theory, and the imaginary transformation formulas of theta functions  are a bridge between elliptic functions and modular forms.
\begin{prop}\label{imaginarypp}  If $\operatorname{Im}(\tau)>0$ and $\sqrt{-\tau i}=+1$ for $\tau=i$, then we have
	\begin{equation} \label{jabel:eqn19}
		\begin{split}
			\T_1 \(\frac{z}{\tau} | -\frac{1}{\tau} \)&=-i \sqrt{-i\tau} \exp
			(iz^2/(\pi\tau)) \T_1(z\mid\tau), \\
			\T_2
			\(\frac{z}{\tau} | -\frac{1}{\tau} \)&=\sqrt{-i\tau} \exp
			(iz^2/(\pi\tau)) \T_4(z\mid\tau), \\
			\T_4 \(\frac{z}{\tau} | -\frac{1}{\tau} \)&=\sqrt{-i\tau} \exp
			(iz^2/(\pi\tau)) \T_2(z|\tau),\\
			\T_3\(\frac{z}{\tau} | -\frac{1}{\tau} \)&=\sqrt{-i\tau} \exp
			(iz^2/(\pi\tau)) \T_3(z|\tau). 
		\end{split}
	\end{equation}
	In particular, by setting $z=0$ in these formulas, one can conclude  that
	\begin{equation}\label{jabel:eqn20}
		\begin{split}
			\VT'_1 \( -\frac{1}{\tau} \)&=-i \tau \sqrt{-i\tau}~  \VT'_1(
			\tau), \\
			\VT_2 \(-\frac{1}{\tau} \)&=\sqrt{-i\tau}~ \VT_4( \tau),\\
			\VT_4 \( -\frac{1}{\tau} \)&=\sqrt{-i\tau}~
			\VT_2( \tau), \\
			\VT_3 \( -\frac{1}{\tau}
			\)&=\sqrt{-i\tau}~ \VT_3( \tau). 
		\end{split}
	\end{equation}
\end{prop}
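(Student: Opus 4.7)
The plan is to establish the four $z$-dependent identities in \reff{jabel:eqn19} by the standard Liouville argument, and then read off the $z=0$ specializations in \reff{jabel:eqn20}. Concretely, for each $j\in\{1,2,3,4\}$ I would show that the quotient of the two sides of the $\theta_j$ identity is an entire, doubly-periodic function of $z$, hence constant in $z$, and then pin down the remaining $\tau$-dependent factor.

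Take $j=3$ as the prototype. Set $F(z)=\theta_3(z/\tau\,|-1/\tau)$ and $G(z)=\sqrt{-i\tau}\,\exp(iz^2/(\pi\tau))\,\theta_3(z|\tau)$. Applying Proposition~\ref{doubleperiods} with modulus $-1/\tau$ to $F$ and modulus $\tau$ to $G$, a short calculation shows that both $F$ and $G$ transform in exactly the same way under $z\mapsto z+\pi$ and $z\mapsto z+\pi\tau$; and by the infinite product representation in Proposition~\ref{infiniteprod} they have the same simple zero set in $\mathbb{C}$. Hence $F/G$ is entire and elliptic, so constant in $z$ by Liouville's theorem. The same recipe handles $j=1,2,4$. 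Alternatively, once the $\theta_3$ formula is known for all $z$, the $\theta_1,\theta_2,\theta_4$ identities follow by translating $z$ by the half-periods $\pi/2$, $\pi\tau/2$, $(\pi+\pi\tau)/2$ and applying Proposition~\ref{halfperiods}.

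It remains to determine the $\tau$-dependent constants, i.e.\ the specializations in \reff{jabel:eqn20}. The pivotal identity is
\[
\vartheta_3\!\left(-\tfrac{1}{\tau}\right)=\sqrt{-i\tau}\,\vartheta_3(\tau).
\]
From it, the $\vartheta_2$ and $\vartheta_4$ identities in \reff{jabel:eqn20} follow via Proposition~\ref{halfperiods} (the swap $\vartheta_2\leftrightarrow\vartheta_4$ under $\tau\mapsto -1/\tau$ reflects the fact that this transformation interchanges the roles of the two half-periods), while the $\vartheta_1'$ identity follows either by dividing the first line of \reff{jabel:eqn19} by $z$ and letting $z\to 0$, or equivalently from \reff{jabel:eqn18} together with the $\eta$-transformation $\eta(-1/\tau)=\sqrt{-i\tau}\,\eta(\tau)$ (itself an immediate consequence of the $\vartheta_3$ identity via Jacobi's triple product).

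The only genuine analytic obstacle is the $\vartheta_3$ identity above. I would prove it by applying the Poisson summation formula to the Gaussian $f(x)=e^{-\pi s x^2}$, whose Fourier transform is $\hat{f}(\xi)=s^{-1/2}e^{-\pi\xi^2/s}$: for $s>0$, $\sum_{n\in\Z}e^{-\pi s n^2}=s^{-1/2}\sum_{n\in\Z}e^{-\pi n^2/s}$, which rewritten at $\tau=is$ is exactly $\vartheta_3(-1/\tau)=\sqrt{-i\tau}\,\vartheta_3(\tau)$ on the positive imaginary axis; this extends to all $\tau$ in the upper half-plane by analytic continuation, both sides being holomorphic there. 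The branch convention $\sqrt{-i\tau}=+1$ at $\tau=i$ is compatible with the positive real square root used for $s>0$. Everything else is bookkeeping with the functional equations in Propositions~\ref{halfperiods} and~\ref{doubleperiods}.
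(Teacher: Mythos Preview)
Your proposal is correct. Note, however, that the paper does not actually prove Proposition~\ref{imaginarypp}: it is stated as a classical result with references to Jacobi, Poisson, Rademacher, Bellman, and Whittaker--Watson. The paper explicitly remarks that Poisson obtained the $\theta_3$ transformation via the Poisson summation formula, which is precisely the analytic core of your argument, and the remaining identities are then routine consequences of the half-period relations in Proposition~\ref{halfperiods} --- exactly as you outline. So your approach is the standard one and is fully consistent with the historical route the paper alludes to; there is nothing to compare against beyond that.

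One small remark: your aside that the $\eta$-transformation is ``an immediate consequence of the $\theta_3$ identity via Jacobi's triple product'' is true but runs opposite to the paper's logical order (the paper derives Proposition~\ref{Dedekind-eta} from the $\vartheta_1'$ line of \reff{jabel:eqn20}). Since you also offer the direct route --- divide the $\theta_1$ line of \reff{jabel:eqn19} by $z$ and let $z\to 0$ --- there is no circularity, but in a write-up you should pick one direction and stick to it.
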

Substituting (\ref{jabel:eqn18}) into the first equation in (\ref{jabel:eqn20}) and simplifying, one can
obtain the following well-known modular transformation formula for
$\eta$-function (see also \cite[p.~ 48]{Apostol1990}).
\begin{prop}\label{Dedekind-eta}
	If $\operatorname{Im}(\tau)>0$  and $\sqrt{-i\tau}=+1$ for $\tau=i$, then we have
	\begin{equation}
		\eta\(-\frac{1}{\tau}\)=\sqrt{-\tau i}~ \eta(\tau). \label{jabel:eqn21}
	\end{equation}
\end{prop}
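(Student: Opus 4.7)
The plan is to obtain the eta transformation formula directly from the first identity in (\ref{jabel:eqn20}) by substituting the product representation (\ref{jabel:eqn18}) and then taking a cube root, using the normalization at $\tau = i$ to fix the branch.

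First, I would write $\VT_1'(\tau) = 2\eta^3(\tau)$ by (\ref{jabel:eqn18}) and apply this identity on both sides of
\[
\VT_1'\!\left(-\frac{1}{\tau}\right) = -i\tau\sqrt{-i\tau}\ \VT_1'(\tau),
\]
which is the first line of (\ref{jabel:eqn20}). Cancelling the factor $2$ and observing that
\[
-i\tau\sqrt{-i\tau} = (\sqrt{-i\tau}\,)^{3},
\]
I would obtain the cubed identity
\[
\eta^{3}\!\left(-\frac{1}{\tau}\right) = (\sqrt{-i\tau}\,)^{3}\,\eta^{3}(\tau).
\]
Thus the two sides agree up to a cube root of unity, and it remains to show the correct cube root is $1$, so that
\[
\eta\!\left(-\frac{1}{\tau}\right) = \sqrt{-i\tau}\ \eta(\tau).
\]

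To pin down the branch, I would argue by continuity: both sides of the desired identity are holomorphic functions of $\tau$ on the upper half-plane $\mathrm{Im}(\tau)>0$, and the ratio $\eta(-1/\tau)\big/\bigl(\sqrt{-i\tau}\,\eta(\tau)\bigr)$ is a continuous function on a connected domain taking values in the finite set $\{1,\omega,\omega^{2}\}$ where $\omega = e^{2\pi i/3}$. Hence it is constant. Evaluating at the fixed point $\tau = i$, where $-1/\tau = i$ and where we are told $\sqrt{-i\tau} = 1$, forces the constant to be $1$. This yields the claimed identity.

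The main obstacle, apart from bookkeeping of the factor $-i\tau\sqrt{-i\tau} = (-i\tau)^{3/2}$, is precisely the choice of cube root. A purely algebraic manipulation only determines $\eta(-1/\tau)$ up to a cube root of unity, so the argument genuinely requires the continuity/monodromy step and the normalization at $\tau = i$ (equivalently, positivity of $\eta(it)$ for $t>0$, which is immediate from the product (\ref{jabel:eqn1})) to eliminate the factors $\omega$ and $\omega^{2}$.
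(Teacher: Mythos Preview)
Your proposal is correct and follows essentially the same approach as the paper, which simply says to substitute \reff{jabel:eqn18} into the first equation of \reff{jabel:eqn20} and simplify. You have supplied the details the paper omits, in particular the observation $-i\tau\sqrt{-i\tau}=(\sqrt{-i\tau})^{3}$ and the continuity argument at $\tau=i$ needed to select the correct cube root.
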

Expand both sides of the first  equation in \reff{jabel:eqn19} into power series about $z$, and then compare the coefficients of $z$ to obtain the following modular transformation formula \cite[p.69]{Apostol1990}
\begin{equation}\label{jabel:eqn22}
	L(-1/{\tau})=-\frac{6\tau i}{\pi}+\tau^2 L(\tau).
\end{equation}

We \cite[Theorem~1.2]{Liu2012JNT} proved the following two remarkable identities for theta functions of degree eight. 
\begin{thm} \label{liuaddthm}
	Suppose that $f(z|\tau)$ is an even entire function of $z$ which satisfies the
	functional equations $f(z|\tau)=f(z+\pi|\tau)=q^4 e^{16iz}f(z+\pi\tau|\tau)$.
	Then we  have
	\begin{align}\label{jabel:eqn23}
		&\frac{4f(x|\tau)}{\T_1^2(2x|\tau)}-
		\frac{4f(y|\tau)}{\T_1^2(2y|\tau)}
		={\T_1(x+y|\tau)\T_1(x-y|\tau)}
		\left\{\frac{-f(0|\tau)}{\T_1^2(x|\tau)\T_1^2(y|\tau)}\right.\\
		&\qquad \qquad \qquad \qquad  +\left. \frac{f(\frac{\pi}{2}|\tau)}{\T_2^2(x|\tau)\T_2^2(y|\tau)}
		-\frac{qf(\frac{\pi+\pi\tau}{2}|\tau)}{\T_3^2(x|\tau)\T_3^2(y|\tau)}+\frac{qf(\frac{\pi\tau}{2}|\tau)}{\T_4^2(x|\tau)\T_4^2(y|\tau)}\right\}.\nonumber
	\end{align}
\end{thm}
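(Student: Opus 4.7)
My plan is to treat \reff{jabel:eqn23} as an identity of meromorphic functions of $x$ with $y$ held fixed and inequivalent to $0$ modulo $\Lambda$. I will show that both sides are elliptic functions of $x$ with respect to $\Lambda$, that their only poles in a fundamental parallelogram are double poles at the four half-periods
\[
x_1=0,\quad x_2=\tfrac{\pi}{2},\quad x_3=\tfrac{\pi\tau}{2},\quad x_4=\tfrac{\pi+\pi\tau}{2},
\]
and that their principal parts at each $x_j$ coincide. The theorem then follows by Liouville's theorem and a single substitution.

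For ellipticity, one checks directly from Proposition~\ref{doubleperiods} that the hypothesized quasi-periods of the degree-eight theta function $f$, namely $f(z+\pi)=f(z)$ and $f(z+\pi\tau)=q^{-4}e^{-16iz}f(z)$, are exactly those of $\T_1^2(2x)$. Hence $4f(x)/\T_1^2(2x)$ is doubly periodic, and the constant $-4f(y)/\T_1^2(2y)$ is trivially so. On the right-hand side each quotient $\T_1(x+y)\T_1(x-y)/\T_j^2(x)$ for $j=1,2,3,4$ is likewise elliptic in $x$ by Proposition~\ref{doubleperiods}. Both sides are even in $x$ (at $x_3,x_4$ ``even up to'' the compensating exponential dictated by the quasi-periodicity of $f$), so the Laurent expansions at each $x_j$ contain no $(x-x_j)^{-1}$ term.

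I next match principal parts. Near $x_1=0$, the expansion $\T_1(u)=\VT_1'(\tau)u+O(u^3)$ gives LHS$=f(0)/(\VT_1'(\tau)^2 x^2)+O(1)$; the only term on the right singular there is the $-f(0)$ term, whose leading behavior is $-\T_1(y)\T_1(-y)f(0)/(\T_1^2(y)\VT_1'(\tau)^2 x^2)=f(0)/(\VT_1'(\tau)^2 x^2)$, since $\T_1$ is odd. At $x_2,x_3,x_4$ one argues analogously, using Proposition~\ref{halfperiods} to translate $\T_1(2x)$ and $\T_1(x\pm y)$ into expressions involving $\T_2,\T_3,\T_4$ evaluated near $0$. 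The specific signs and factors $-f(0),\,+f(\pi/2),\,-q f((\pi+\pi\tau)/2),\,+q f(\pi\tau/2)$ on the right-hand side are precisely those needed for the $q^{\pm 1/8}$ and $e^{\pm iz}$ twists in the shift formulas to cancel against the corresponding twists in the expansion of $f$ near $x_j$, yielding the same leading $(x-x_j)^{-2}$ coefficient as on the left.

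Once all four principal parts match, $\mathrm{LHS}-\mathrm{RHS}$ is a holomorphic elliptic function of $x$, hence a constant $c=c(y,\tau)$. Substituting $x=y$ makes the LHS vanish by inspection and the RHS vanish via the factor $\T_1(x-y)$, forcing $c\equiv 0$ and proving \reff{jabel:eqn23}. The main technical obstacle is the principal-part computation at $x_3$ and $x_4$: there $f$ is \emph{not} literally even around $x_j$; rather the relation $f(z+\pi\tau)=q^{-4}e^{-16iz}f(z)$ combined with evenness of $f$ forces $f'(x_j)=-8i\,f(x_j)$, and one has to verify that this linear term is exactly compensated by linear terms produced by $\T_1(2x)^{-2}$ and $\T_j(x)^{-2}$ after the Proposition~\ref{halfperiods} substitutions, so that the residues emerge in precisely the clean form $\pm q\,f(x_j)/(\VT_1'(\tau)^2(x-x_j)^2)$ displayed in the statement.
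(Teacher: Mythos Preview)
Your Liouville-type argument is correct and complete. Note, however, that the present paper does \emph{not} supply its own proof of Theorem~\ref{liuaddthm}: the identity is quoted from \cite[Theorem~1.2]{Liu2012JNT} and used here as a black box. So there is no ``paper's proof'' to compare against; your proposal stands on its own.

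One simplification is worth pointing out. You flag as the ``main technical obstacle'' the failure of $f$ to be literally even around $x_3=\pi\tau/2$ and $x_4=(\pi+\pi\tau)/2$, and you propose tracking the compensation $f'(x_j)=-8i\,f(x_j)$ through the expansions. This is unnecessary. The function $g(x)=4f(x)/\T_1^2(2x)$ is globally even in $x$ (since $f$ is even and $\T_1$ is odd) and elliptic with periods $\pi,\pi\tau$. These two facts alone force $g$ to be even about every half-period: for instance
\[
g\Big(\tfrac{\pi\tau}{2}+u\Big)=g\Big(-\tfrac{\pi\tau}{2}-u\Big)=g\Big(\tfrac{\pi\tau}{2}-u\Big),
\]
using evenness and then $\pi\tau$-periodicity. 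The same reasoning applies to each term on the right-hand side, so the Laurent expansions at $x_3,x_4$ automatically have no $(x-x_j)^{-1}$ part, and only the $(x-x_j)^{-2}$ coefficients need to be matched. With that observation your proof becomes entirely routine: check the four leading coefficients (your computations at $x_1,x_2$ are correct, and the ones at $x_3,x_4$ go through with the shift formulas of Proposition~\ref{halfperiods} exactly as you indicate), invoke Liouville, and evaluate at $x=y$.
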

Based on the above theta function identity and with the help of asymptotic analysis, we also prove the following theorem.
\begin{thm} \label{liuaddthm:lim}
	Suppose that $f(z|\tau)$ is an even entire function of $z$ which satisfies the
	functional equations $f(z|\tau)=f(z+\pi|\tau)=q^4 e^{16iz}f(z+\pi\tau|\tau)$.
	Then we  have
	\begin{align}\label{jabel:eqn24}
		&\(8L(\tau)+3(\log f)''(0|\tau)\)^2+8M(\tau) +3 (\log f)^{(4)}(0|\tau)\\
		&=\frac{72\VT_1'(\tau)^4}{f(0|\tau)}
		\(\frac{f(\frac{\pi}{2}|\tau)}{\VT_2^4(\tau)}-\frac{qf(\frac{\pi+\pi\tau}{2}|\tau)}{\VT_3^4(\tau)}+\frac{qf(\frac{\pi \tau}{2}|\tau)}{\VT_4^4(\tau)}\). \nonumber
	\end{align}
\end{thm}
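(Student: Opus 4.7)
The plan is to derive \reff{jabel:eqn24} from the degree-eight identity \reff{jabel:eqn23} in two stages: first let $y\to 0$ to obtain an intermediate identity in $x$ alone, then Taylor-expand that intermediate identity around $x=0$ and match coefficients of $x^2$. Throughout, write $F(x):=4f(x|\tau)/\T_1^2(2x|\tau)$, which is even in $x$ with a double pole at the origin, and set $\phi:=(\log f)''(0|\tau)$ and $\psi:=(\log f)^{(4)}(0|\tau)$. For the first stage, fix $x$ and expand both sides of \reff{jabel:eqn23} as Laurent series in $y$ near $y=0$. The $y^{-2}$ parts cancel automatically via $\T_1(y|\tau)\sim\VT_1'(\tau)\,y$, both sides equalling $-f(0|\tau)/(\VT_1'(\tau)^2 y^2)$. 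For the $y^0$-parts, using $\T_1(x+y|\tau)\T_1(x-y|\tau)=\T_1^2(x|\tau)+y^2(\T_1\T_1''-(\T_1')^2)(x|\tau)+O(y^4)$ together with the logarithmic-derivative identity $(\T_1\T_1''-(\T_1')^2)(x|\tau)/\T_1^2(x|\tau) = (\log\T_1)''(x|\tau) = -\wp(x|\tau)-L(\tau)/3$ from \reff{jabel:eqn15} produces the intermediate identity
\begin{equation}\label{intermed}
F(x) = a_0 + \frac{f(0|\tau)\wp(x|\tau)}{\VT_1'(\tau)^2} + \frac{f(\pi/2|\tau)\T_1^2(x|\tau)}{\VT_2^2(\tau)\T_2^2(x|\tau)} - \frac{qf(\tfrac{\pi+\pi\tau}{2}|\tau)\T_1^2(x|\tau)}{\VT_3^2(\tau)\T_3^2(x|\tau)} + \frac{qf(\tfrac{\pi\tau}{2}|\tau)\T_1^2(x|\tau)}{\VT_4^2(\tau)\T_4^2(x|\tau)},
\end{equation}
where $a_0$ is the constant term in the Laurent expansion of $F$ at $0$.

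For the second stage, expand both sides of \reff{intermed} around $x=0$. Integrating \reff{jabel:eqn14} gives $\T_1(u|\tau)=\VT_1'(\tau)\,u\exp(-Lu^2/6-Mu^4/180+O(u^6))$; combining this with $f(x|\tau)=f(0|\tau)(1+\phi x^2/2+(\psi/24+\phi^2/8)x^4+O(x^6))$ and inverting yields the $x^2$-coefficient of $F(x)$ as $(f(0|\tau)/\VT_1'(\tau)^2)(\psi/24+\phi^2/8+2L\phi/3+8L^2/9+8M/45)$. On the right of \reff{intermed} the $x^2$-coefficient is $f(0|\tau)M/(15\VT_1'(\tau)^2)$, coming from $\wp(x|\tau)=1/x^2+Mx^2/15+O(x^4)$, plus $\VT_1'(\tau)^2$ times the theta combination on the right of \reff{jabel:eqn24}, since $\T_1^2(x|\tau)/\T_j^2(x|\tau)=\VT_1'(\tau)^2 x^2/\VT_j^2(\tau)+O(x^4)$ for $j=2,3,4$. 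Equating the two $x^2$-coefficients and multiplying by $72\VT_1'(\tau)^2/f(0|\tau)$, the left side becomes $3\psi+9\phi^2+48L\phi+64L^2+8M$, and recognizing $9\phi^2+48L\phi+64L^2=(3\phi+8L)^2$ delivers \reff{jabel:eqn24}.

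The main obstacle is the Taylor-series bookkeeping needed for the $x^2$-coefficient of $F$: one must expand $\T_1(2x|\tau)$ through order $x^5$, invert its square, multiply against $f(x|\tau)$ through order $x^4$, and correctly track the interactions between $L$, $M$, $\phi$, and $\psi$. The algebraic coincidence that the $L$- and $\phi$-dependent terms assemble into the perfect square $(8L+3\phi)^2$, while the $M$-contributions $72(8M/45-M/15)$ collapse to exactly $8M$, is precisely what gives \reff{jabel:eqn24} its compact final form.
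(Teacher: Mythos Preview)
Your argument is correct and follows exactly the route the paper indicates (``based on the above theta function identity and with the help of asymptotic analysis''): you let $y\to 0$ in \reff{jabel:eqn23} to get the one-variable intermediate identity, then match the $x^2$-coefficients near $x=0$. The bookkeeping you outline---the Laurent expansion of $F(x)=4f(x|\tau)/\T_1^2(2x|\tau)$ via \reff{jabel:eqn14}, the use of \reff{jabel:eqn15} to convert $(\log\T_1)''$ into $\wp$, and the collapse $72(8M/45-M/15)=8M$ together with $9\phi^2+48L\phi+64L^2=(3\phi+8L)^2$---checks out line by line.
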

These two theorems not only allow us to recover several well-known results in elliptic modular functions, but also lead to several new results in \cite{Liu2012JNT}.  However, many other applications of these two theorems  are not discussed.  In this paper, we will further study the  application of them. In order to show readers the power of these two theorems, we now will give a few applications of these two theorems.

Using Proposition~\ref{doubleperiods} we can verify that the entire function $\T_1^2(2z|\tau)\wp(z|\tau)$ satisfies the conditions of  Theorem~\ref{liuaddthm}. So we can take 
$f(z|\tau)=\T_1^2(2z|\tau)\wp(z|\tau)$ 
in Theorem~\ref{liuaddthm}. By a simple calculation we easily find that  
$\{ 0, \pi/2, (\pi+\pi\tau)/2, (\pi\tau)/2\}$ is a complete set of inequivalent zeros of $\T_1(2z|\tau).$  Using this fact and  the Laurent series expansion of $\wp(z|\tau)$,  we find that
\[
f(\pi/2|\tau)=f((\pi\tau)/2|\tau)=f((\pi+\pi\tau)/2|\tau)=0,
\]
and 
\[
f(0|\tau)=\lim_{z\to 0} \T_1^2(2z|\tau)\wp(z|\tau)=4\VT_1'(\tau)^2.
\]
Substituting the above equations into \reff{jabel:eqn23} we immediately find that  
\begin{equation}\label{jabel:eqn25}
	\wp(x|\tau)-\wp(y|\tau)=-\VT_1'(\tau)^2 \frac{\T_1(x+y|\tau)\T_1(x-y|\tau)}
	{\T_1^2(x|\tau)\T_1^2(y|\tau)}.
\end{equation}
The above formula is equivalent to the addition formula for the Weierstrass sigma-function \cite[p.~179]{Daniels}, \cite[p.~13, Eq.(1)]{Schwarz1893} and \cite[p.~451, Example~ 1]{WhiWat}.
Therefore, Theorem~\ref{liuaddthm}  is indeed a generalization of the addition formula for the Weierstrass sigma-function.

Dividing both sides of \reff{jabel:eqn25} by $y-x$ and then letting $y\to x$, we arrive at the following identity which is equivalent to the  Weierstrass  identity \cite[p.14, Eq.(16)]{Schwarz1893}:
\begin{equation}\label{jabel:eqn26}
	\wp'(x|\tau)=-\VT_1'(\tau)^3 \frac{\T_1(2x|\tau)}{\T_1^4(x|\tau)}.
\end{equation}
We use $e_1(\tau), e_2(\tau)$ and $e_3(\tau)$ to denote the values of $\wp(z|\tau)$ at the half-periods, namely, 
\begin{equation}\label{jabel:eqn27}
	e_1(\tau)=\wp\(\frac{\pi}{2}|\tau\),~e_2(\tau)=\wp\(\frac{\pi \tau}{2}|\tau\), ~e_3(\tau)=\wp\(\frac{\pi+\pi\tau}{2}|\tau\).
\end{equation}
Theorem~\ref{liuaddthm} provides the impetus for perhaps the most straight-forward proof of the following well-known result due to Weierstrass  \cite[Theorem~1.14]{Apostol1990} and \cite[p.~12, Eq.(17)]{Schwarz1893}, which is one of the most fundamental properties of  Weierstrass elliptic functions.
\begin{prop}[Weierstrass]\label{W-pdiff} Let $e_1(\tau), e_2(\tau)$ and $e_3(\tau)$ be defined by \reff{jabel:eqn27}. Then we have 
	\begin{equation}\label{jabel:eqn28}
		\wp'(z|\tau)^2 =4\(\wp(z|\tau)-e_1(\tau)\)\(\wp(z|\tau)-e_2(\tau)\)
		\(\wp(z|\tau)-e_3(\tau)\).
	\end{equation}	
\end{prop}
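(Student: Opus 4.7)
The plan is to apply the factorization \reff{jabel:eqn25} three times — once at each of the half-periods $y=\pi/2$, $y=\pi\tau/2$, $y=(\pi+\pi\tau)/2$ — multiply the three results together, and compare with the square of \reff{jabel:eqn26}. Since \reff{jabel:eqn25} and \reff{jabel:eqn26} are themselves immediate consequences of Theorem~\ref{liuaddthm} (with $f(z|\tau)=\T_1^2(2z|\tau)\wp(z|\tau)$), the entire argument sits inside the degree-eight framework.

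In detail, I would substitute $y=\pi/2$, $y=\pi\tau/2$, and $y=(\pi+\pi\tau)/2$ successively into \reff{jabel:eqn25} and apply Proposition~\ref{halfperiods}, together with the oddness of $\T_1$ and the evenness of $\T_2,\T_3,\T_4$, to rewrite the products $\T_1(x+y|\tau)\T_1(x-y|\tau)$ and $\T_1^2(y|\tau)$. This gives
\begin{align*}
\wp(x|\tau)-e_1(\tau)&=\frac{\VT_1'(\tau)^2\,\T_2^2(x|\tau)}{\T_1^2(x|\tau)\,\VT_2^2(\tau)},\\
\wp(x|\tau)-e_2(\tau)&=\frac{\VT_1'(\tau)^2\,\T_4^2(x|\tau)}{\T_1^2(x|\tau)\,\VT_4^2(\tau)},\\
\wp(x|\tau)-e_3(\tau)&=\frac{\VT_1'(\tau)^2\,\T_3^2(x|\tau)}{\T_1^2(x|\tau)\,\VT_3^2(\tau)}.
\end{align*}
Multiplying these three identities and squaring \reff{jabel:eqn26} (which yields $\wp'(x|\tau)^2=\VT_1'(\tau)^6\,\T_1^2(2x|\tau)/\T_1^8(x|\tau)$) reduces \reff{jabel:eqn28} to the classical duplication formula
\[
\T_1(2x|\tau)\,\VT_2(\tau)\VT_3(\tau)\VT_4(\tau)=2\,\T_1(x|\tau)\T_2(x|\tau)\T_3(x|\tau)\T_4(x|\tau),
\]
which follows at once from the four infinite product representations in \reff{jabel:eqn4} and Proposition~\ref{infiniteprod} by writing $1-q^{n}e^{\pm 4ix}=(1-q^{n/2}e^{\pm 2ix})(1+q^{n/2}e^{\pm 2ix})$ and splitting the index $n$ into even and odd parts.

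The only real obstacle is the sign-and-$q$-power bookkeeping in the two shifts involving $\pi\tau$: the factors $iq^{-1/8}e^{-iz}$ and $q^{-1/8}e^{-iz}$ from Proposition~\ref{halfperiods} enter both through $\T_1(x\pm y|\tau)$ in the numerator and through $\T_1^2(y|\tau)$ in the denominator, and one must verify that the resulting powers of $q^{-1/4}$ cancel and that the two explicit minus signs conspire with the overall sign in \reff{jabel:eqn25} to produce the positive prefactor $\VT_1'(\tau)^2/\T_1^2(x|\tau)$ in each of the three factorizations. Once that is done, the factor $4$ on the right-hand side of \reff{jabel:eqn28} is accounted for by the $2^2$ coming from squaring the duplication identity, and everything else is a single algebraic check.
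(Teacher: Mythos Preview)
Your argument is correct: the three half-period specializations of \reff{jabel:eqn25} give exactly the factorizations you wrote (the sign and $q^{-1/4}$ bookkeeping you flag does cancel in every case), and the reduction to the duplication identity $\T_1(2x|\tau)\VT_2(\tau)\VT_3(\tau)\VT_4(\tau)=2\T_1(x|\tau)\T_2(x|\tau)\T_3(x|\tau)\T_4(x|\tau)$ is clean.

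However, the paper proceeds differently. Rather than invoking \reff{jabel:eqn25} three times and \reff{jabel:eqn26} once and then appealing to an auxiliary duplication identity, it applies Theorem~\ref{liuaddthm} \emph{once more, directly}, to the even degree-eight function
\[
f(z|\tau)=\bigl(\wp(z|\tau)-e_1(\tau)\bigr)\bigl(\wp(z|\tau)-e_2(\tau)\bigr)\bigl(\wp(z|\tau)-e_3(\tau)\bigr)\,\T_1^8(z|\tau).
\]
All four special values $f(0),f(\pi/2),f((\pi+\pi\tau)/2),f(\pi\tau/2)$ vanish (one factor of $\wp-e_j$ kills each nonzero half-period, and $\T_1^8$ kills $z=0$), so the right-hand side of \reff{jabel:eqn23} is identically zero and $f(x|\tau)/\T_1^2(2x|\tau)$ is constant in $x$; the constant is read off as $\VT_1'(\tau)^6/4$ by letting $x\to 0$. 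Substituting \reff{jabel:eqn26} then finishes. The paper's route avoids any separate product manipulation or sign tracking and showcases Theorem~\ref{liuaddthm} as a self-contained engine; your route, by contrast, earns the useful intermediate formulas $\wp-e_j=\VT_1'^{\,2}\T_{j+1}^2(x)/\bigl(\T_1^2(x)\VT_{j+1}^2\bigr)$ but trades them for an external theta identity.
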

\begin{proof} Noting that $\wp(z|\tau)$ is an elliptic function which has only one inequivalent pole at $z=0$, of order two,  and $z=0$ is a zero of $\T_1(z|\tau)$,  we find  that the function
	\[
	\(\wp(z|\tau)-e_1(\tau)\)\(\wp(z|\tau)-e_2(\tau)\)\(\wp(z|\tau)-e_3(\tau)\) \T_1^8(z|\tau)
	\]	
	is an entire function of $z$.  Using Proposition~\ref{doubleperiods} we can verify that the above function  satisfies the conditions of  Theorem~\ref{liuaddthm}. So we can take $f(z|\tau)$ as the above function in the theorem. It is easily seen that 
	\[
	f(0|\tau)=f(\pi/2|\tau)=f((\pi\tau)/2|\tau)=f((\pi+\pi\tau)/2|\tau)=0.
	\]
	Substituting these values of $f$ into \reff{jabel:eqn23} we immediately find that 
	\begin{align*}
		&\(\wp(x|\tau)-e_1(\tau)\)\(\wp(x|\tau)-e_2(\tau)\)\(\wp(x|\tau)-e_3(\tau)\) \frac{\T_1^8(x|\tau)}{\T_1^2(2x|\tau)}\\
		&=\(\wp(y|\tau)-e_1(\tau)\)\(\wp(y|\tau)-e_2(\tau)\)\(\wp(y|\tau)-e_3(\tau)\) \frac{\T_1^8(y|\tau)}{\T_1^2(2y|\tau)}.
	\end{align*}
	When $y$ approaches zero, the limit value of the right-hand of the above equatio is $\VT'_1(\tau)^6/4$. Thus we have 
	\begin{align*}
		4\(\wp(x|\tau)-e_1(\tau)\)\(\wp(x|\tau)-e_2(\tau)\)\(\wp(x|\tau)-e_3(\tau)\)=\VT_1'(\tau)^6 \frac{\T_1^2(2x|\tau)}{\T_1^8(x|\tau)}.
	\end{align*}
	Substituting \reff{jabel:eqn26} into the right-hand side of the above equation and replacing $x$ by $z$ we complete the proof of Proposition~\ref{W-pdiff}.		
\end{proof}
Combining\reff{jabel:eqn14} and \reff{jabel:eqn15} yields the Laurent series expansion for $\wp(z|\tau)$ at $z=0,$
\begin{equation}\label{jabel:eqn29}
	\wp(z|\tau)=\frac{1}{z^2}+\frac{1}{15}L(\tau)z^2+\frac{2}{189}M(\tau)z^4+O(z^6),
\end{equation}
 where $L(\tau), M(\tau)$ and $N(\tau)$ are defined by \reff{jabel:eqn9}.

Appealing to this Laurent expansion and using the method of eliminating poles (see, for example \cite[pp.10-11]{Apostol1990}), we can obtain the following differential equation satisfied by the Weierstrass elliptic function $\wp(z|\tau)$ which is equivalent to \cite[Theorem~1.12]{Apostol1990} and 
\cite[p.12, Eq.(14)]{Schwarz1893}.

\begin{prop}[Weierstrass]\label{pdiff}The Weierstrass elliptic function $\wp(z|\tau)$ satisfies the differential equation
	\begin{equation}\label{jabel:eqn30}
		\wp'(z|\tau)^2=4\wp^3(z|\tau)-\frac{4}{3}M(\tau)\wp(z|\tau)-\frac{8}{27}N(\tau).
	\end{equation}	
	\end{prop}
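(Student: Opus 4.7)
The plan is to use the method of eliminating poles, as suggested just before the proposition. Set
\[
G(z|\tau):=\wp'(z|\tau)^2-4\wp^3(z|\tau)+\tfrac{4}{3}M(\tau)\wp(z|\tau).
\]
Since $\wp$ is even and $\wp'$ is odd, $G$ is an even function of $z$, and since $\wp$ and $\wp'$ are $\Lambda$-periodic, so is $G$. Moreover, the only possible poles of $G$ in a fundamental parallelogram lie at $z\equiv 0$. The strategy is to show that the principal part of $G$ at $z=0$ vanishes; then $G$ is an entire elliptic function, hence constant in $z$ by Liouville's theorem, and its value can be read off as the constant term in its Laurent expansion at $z=0$.

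To carry out the pole cancellation I would feed the Laurent expansion \reff{jabel:eqn29} into $(\wp')^2$ and $\wp^3$. Differentiating \reff{jabel:eqn29} term by term gives $\wp'(z|\tau)=-2z^{-3}+O(z)$, so $(\wp')^2=4z^{-6}+\alpha z^{-2}+\beta+O(z^2)$ for computable constants $\alpha,\beta$; there are no $z^{-4}$ or odd-power terms, because \reff{jabel:eqn29} has only even powers of $z$ with no $z^0$ term. Similarly $\wp^3=z^{-6}+\gamma z^{-2}+\delta+O(z^2)$, again with no $z^{-4}$ term since obtaining one would require a $z^0$ factor in $\wp$. The $z^{-6}$ singularity therefore cancels automatically in $(\wp')^2-4\wp^3$, and a short calculation shows that $\alpha-4\gamma=-\tfrac{4}{3}M(\tau)$; adding $\tfrac{4}{3}M(\tau)\wp$, which contributes $\tfrac{4}{3}M(\tau)z^{-2}+O(z^2)$, precisely kills the remaining $z^{-2}$ pole and introduces no new constant term.

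Having eliminated all polar parts, $G(z|\tau)$ extends holomorphically across the lattice and is doubly periodic, so it is constant in $z$. The value of this constant equals the residual $O(1)$ term $\beta-4\delta$ in the Laurent expansion; a direct collection of contributions using \reff{jabel:eqn29} yields $\beta-4\delta=-\tfrac{8}{27}N(\tau)$. Rearranging $G(z|\tau)\equiv-\tfrac{8}{27}N(\tau)$ gives \reff{jabel:eqn30}.

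The only real obstacle is the careful bookkeeping of Laurent coefficients through the squaring and cubing operations, which is pure arithmetic rather than a conceptual difficulty. An alternative route via \reff{jabel:eqn26} and \reff{jabel:eqn28} would require identifying the elementary symmetric functions of $e_1(\tau),e_2(\tau),e_3(\tau)$ with multiples of $M(\tau)$ and $N(\tau)$ through theta-constant identities; that path is considerably longer, and the direct pole-cancellation argument is the one that best matches the hint in the paper.
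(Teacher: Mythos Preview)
Your proposal is correct and follows precisely the approach the paper indicates: the paper does not spell out a proof but simply points to the Laurent expansion \reff{jabel:eqn29} and the method of eliminating poles (citing Apostol), which is exactly the pole-cancellation/Liouville argument you carry out. The bookkeeping you describe checks out (with $a=\tfrac{1}{15}M(\tau)$, $b=\tfrac{2}{189}N(\tau)$ one finds $\alpha-4\gamma=-20a=-\tfrac{4}{3}M(\tau)$ and $\beta-4\delta=-28b=-\tfrac{8}{27}N(\tau)$), so there is no gap.
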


\begin{prop}\label{Jacobiabstruse} Suppose that $u_1, u_2, u_3$ and $u_4$ are complex numbers such that $u_1+u_2+u_3+u_4$ is an integral multiple of $\pi$. Then we have
	\begin{equation}\label{jabel:eqn31}
		\prod_{k=1}^4 \T_2(u_k|\tau)+\prod_{k=1}^4 \T_4(u_k|\tau)=\prod_{k=1}^4 \T_1(u_k|\tau)+\prod_{k=1}^4 \T_3(u_k|\tau).
	\end{equation}	
\end{prop}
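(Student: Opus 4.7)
The plan is to reduce the identity to a statement about a theta function of a single variable and then use a zero-counting argument. By the $2\pi$-periodicity of $\T_1,\ldots,\T_4$ I may assume $u_1+u_2+u_3+u_4=0$ (the case $\sum u_k\in 2\pi\Z$ reduces immediately; the odd-multiple case yields a sign-rearranged variant of the same argument). Setting $u_4 = -u_1-u_2-u_3$, I would consider
\begin{equation*}
H(u_1) := \prod_{k=1}^4 \T_2(u_k|\tau) + \prod_{k=1}^4 \T_4(u_k|\tau) - \prod_{k=1}^4 \T_1(u_k|\tau) - \prod_{k=1}^4 \T_3(u_k|\tau)
\end{equation*}
as a function of $u_1$ with $u_2,u_3$ fixed. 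The shift $u_1\to u_1+\pi$ (resp.\ $u_1\to u_1+\pi\tau$) forces the compensating shift $u_4\to u_4-\pi$ (resp.\ $u_4\to u_4-\pi\tau$); the sign factors $\alpha_j^2=1$ and $\epsilon_j^2=1$ from Proposition~\ref{doubleperiods} then cancel in pairs, so every one of the four products transforms by the \emph{same} factor. Therefore $H$ is a theta function of degree 2 in $u_1$, satisfying $H(u_1+\pi)=H(u_1)$ and $H(u_1+\pi\tau) = q^{-1}e^{-4iu_1-2i(u_2+u_3)}H(u_1)$, and so has at most two zeros modulo the period lattice.

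The crucial ingredient from Theorem~\ref{liuaddthm} is the two-variable ``squared Jacobi'' identity
\begin{equation*}
\T_2^2(a|\tau)\T_2^2(b|\tau) + \T_4^2(a|\tau)\T_4^2(b|\tau) = \T_1^2(a|\tau)\T_1^2(b|\tau) + \T_3^2(a|\tau)\T_3^2(b|\tau)
\end{equation*}
for arbitrary $a,b$. To derive it, I would apply Theorem~\ref{liuaddthm} to the even degree-8 function $f(z|\tau) = \prod_{k=1}^4 \T_1(z+v_k|\tau)\T_1(z-v_k|\tau)$ with arbitrary $v_1,\ldots,v_4$. The half-period-shift formulas in Proposition~\ref{halfperiods} give $f(0)=\prod\T_1^2(v_k)$, $f(\pi/2)=\prod\T_2^2(v_k)$, $qf((\pi+\pi\tau)/2)=\prod\T_3^2(v_k)$, $qf(\pi\tau/2)=\prod\T_4^2(v_k)$. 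Setting $x=v_1$, $y=v_2$ in~\reff{jabel:eqn23} makes the left-hand side vanish since $f(v_1)=f(v_2)=0$; cancelling the generically nonzero factor $\T_1(v_1+v_2)\T_1(v_1-v_2)$ produces the squared identity. (Taking $v_4=0$ specializes it to the classical $\VT_2^2\T_2^2(v)+\VT_4^2\T_4^2(v)=\VT_3^2\T_3^2(v)$.)

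I would then exhibit three distinct zeros of $H(u_1)$. At $u_1=-u_2$ one has $u_4=-u_3$; the four products reduce via the parities of the $\T_j$ to $\T_j^2(u_2)\T_j^2(u_3)$, and $H(-u_2)=0$ is exactly the squared identity at $(v_3,v_4)=(u_2,u_3)$. By symmetry $H(-u_3)=0$. At $u_1=0$ the $\T_1$-product vanishes because $\T_1(0)=0$, and $H(0)=0$ becomes the three-variable Jacobi identity
\begin{equation*}
\VT_2\T_2(u_2)\T_2(u_3)\T_2(u_2{+}u_3) + \VT_4\T_4(u_2)\T_4(u_3)\T_4(u_2{+}u_3) = \VT_3\T_3(u_2)\T_3(u_3)\T_3(u_2{+}u_3),
\end{equation*}
which I would prove as an intermediate step by the same method: as a function of $u_1$ (with $u_3=-u_1-u_2$) it is again a degree-2 theta function, now with zeros at $u_1=0$ and $u_1=-u_2$ (both supplied by the classical two-variable identity) and a third zero at $u_1=\pi/2$, where $\T_2(\pi/2)=0$ and $\VT_1=0$ kill the $\T_2$- and $\T_1$-products while the half-period shifts in Proposition~\ref{halfperiods} make the remaining $\T_3$- and $\T_4$-contributions cancel.

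Three distinct zeros modulo the lattice of a degree-2 theta function force $H\equiv 0$, and the identity follows. The main difficulty lies in orchestrating this nested cascade — from the squared two-variable identity obtained from Theorem~\ref{liuaddthm}, to the classical two-variable case by specialization, to the three-variable Jacobi by zero-counting, and finally to the four-variable result — while carefully tracking the quasi-periodicity factors at each level and verifying that the identified zeros are inequivalent in the fundamental parallelogram for generic parameters.
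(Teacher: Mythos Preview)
Your argument is correct, but it is far more circuitous than the paper's. The paper obtains the identity in a single application of Theorem~\ref{liuaddthm}: one takes
\[
f(z|\tau)=\T_1(z-x|\tau)\T_1(z+x|\tau)\T_1(z-y|\tau)\T_1(z+y|\tau)\prod_{k=1}^{4}\T_1(z+u_k|\tau),
\]
with $x,y$ auxiliary. Then $f(x|\tau)=f(y|\tau)=0$ annihilates the left side of \reff{jabel:eqn23}, while Proposition~\ref{halfperiods} gives $f(0)=\T_1^2(x)\T_1^2(y)\prod\T_1(u_k)$, $f(\tfrac{\pi}{2})=\T_2^2(x)\T_2^2(y)\prod\T_2(u_k)$, and so on; the denominators $\T_j^2(x)\T_j^2(y)$ on the right of \reff{jabel:eqn23} cancel exactly, and dividing out $\T_1(x+y)\T_1(x-y)$ yields the identity immediately.

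Your route is essentially the special case of this with the $u_k$ forced into $\pm$ pairs: your $f=\prod_k\T_1(z+v_k)\T_1(z-v_k)$ is precisely the paper's $f$ with $(u_1,u_2,u_3,u_4)=(v_3,-v_3,v_4,-v_4)$, which is why you recover only the ``squared'' two-variable identity and must then bootstrap twice by degree-$2$ zero-counting to reach the full four-variable statement. The cascade you describe is sound (your three exhibited zeros are genuinely inequivalent for generic parameters, and the intermediate three-variable step checks out), but it trades one well-chosen $f$ for three layers of argument. The insight you are missing is that the last four factors of $f$ need not be paired as $\T_1(z+v)\T_1(z-v)$: allowing arbitrary shifts $u_1,\dots,u_4$ with $\sum u_k\in\pi\Z$ still gives the required quasi-periodicity, and the half-period values then produce the four products $\prod\T_j(u_k)$ directly rather than their squares.
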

\begin{proof} With the help of Proposition~\ref{doubleperiods}, in  Theorem~\ref{liuaddthm}  we can take the entire function $f(z|\tau)$ as
	\[
	f(z|\tau)=\T_1(z-x|\tau)\T_1(z+x|\tau)\T_1(z-y|\tau)\T_1(z+y|\tau)
	\prod_{k=1}^4\T_1(z+u_k|\tau).
	\] 
	It is obvious that  $f(x|\tau)=f(y|\tau)=0$ and appealing to Proposition~\ref{halfperiods} and a direct computation we find that
	\begin{align*}
		&f(0|\tau)=\T_1^2(x|\tau)\T_1^2(y|\tau)\prod_{k=1}^4 \T_1(u_k|\tau),~
		f\(\frac{\pi+\pi\tau}{2}|\tau \)=q^{-1}\T_3^2(x|\tau)\T_3^2(y|\tau)\prod_{k=1}^4 \T_3(u_k|\tau),\\ 
		&f\(\frac{\pi}{2}|\tau\)=\T_2^2(x|\tau)\T_2^2(y|\tau)\prod_{k=1}^4 \T_2(u_k|\tau),~
		f\(\frac{\pi \tau}{2}|\tau\)=q^{-1}\T_4^2(x|\tau)\T_4^2(y|\tau)\prod_{k=1}^4 \T_4(u_k|\tau).
	\end{align*}
	Substituting the above values of $f$ into Theorem~\ref{liuaddthm}, we immediately arrive at \reff{jabel:eqn31}. This completes the proof of Proposition~\ref{Jacobiabstruse}.
\end{proof}

When  $u_1+u_2+u_3+u_4=0$,  Proposition~\ref{Jacobiabstruse} reduces  to  \cite[Theorem~3]{Liuresidue2001}. If we specialize \reff{jabel:eqn31} to the case when $u_1=u_2=u_3=u_4=0$, we obtain Jacobi's quartic theta function identity \cite[p.467]{WhiWat}
\begin{equation}\label{jabel:eqn32}
	\VT_2^4(\tau)+\VT_4^4(\tau)=\VT_3^4(\tau).
\end{equation}
Therefore, we can also think that Theorem~\ref{liuaddthm} is a generalization of  the above identity due to Jacobi. 

 For any integer $a$ and any positive odd integer $n$, we use  $\(\frac{a}{n}\)$ to denote the Jacobi symbol modulo $n$.  The Glaisher--Ramanujan Eisenstein series $a(\tau)$ which is  defined by
\begin{equation}\label{jabel:eqn33}
	a(\tau)=1+6\sum_{n=1}^\infty \(\frac{n}{3}\)\frac{q^n}{1-q^n}.
\end{equation}
J. W. Glaisher \cite{Glaisher1889} studied some arithmetic properties of $a(\tau)$ in 1889, and it was also discussed by Ramanujan in one of his letter to Hardy, written from the nursing home,  Fitzroy House \cite[p.93]{Ramanujan1988}.
It is easily seen that
\begin{equation}\label{jabel:eqn34} a(\tau)=\sqrt{3} \(\log \T_1\)'\(\frac{\pi}{3}\Big|\tau\)  ~\text{and}\quad a(\tau)=-2+3i \(\log \T_1\)'(\pi\tau|3\tau).
\end{equation}

If $k\ge 1$ is a positive integer, we use $r_k(n)$ to denote the number of representations of $n$ as a sum of $k$ squares. We also use $ t_k(n)$
to denote the number of representations of  $n$ as a sum of $k$ triangular numbers.
Following Ramanujan we define the theta functions  $\phi(q)$ and $\psi(q)$ by 
\begin{equation}\label{jabel:eqn35}
	\phi(q)=\sum_{n=-\infty}^\infty q^{n^2} \quad \text{and}\quad \psi(q)=\sum_{n=0}^\infty q^{n(n+1)/2}.
\end{equation}
Consequently, the generating functions for $r_k(n)$ and $t_k(n)$ are given by 
\begin{equation}\label{jabel:eqn36}
	\phi^k(q)=\sum_{n=0}^\infty r_k(n) q^n \quad \text{and}\quad 
	\psi^k(q)=\sum_{n=0}^\infty t_k(n) q^n.
\end{equation}
Using the infinite product representations of $\T_3$ and $\T_2$, one can easily deduce that (see, for example \cite[Corollary~1.3.4]{Berndt06})
\begin{equation}\label{jabel:eqn37}
	\phi(q)=\prod_{n=1}^\infty  (1-q^{2n}) (1+q^{2n-1})^2 \quad \text{and}\quad 
	\psi(q)=\prod_{n=1}^\infty (1-q^n) (1+q^n)^2.
\end{equation}

In this paper we also need the trigonometric series expansions for the partial logarithmic derivatives of $\T_2, \T_3$ and $\T_4$, which are given by (see \cite[p.489]{WhiWat})
\begin{equation}\label{jabel:eqn38}
	\begin{split}
		(\log \T_2)'(z|\tau)&=-\tan z+4\sum_{n=1}^\infty \frac{(-q)^n}{1-q^n} \sin 2nz,\\
		(\log \T_4)'(z|\tau)&=4\sum_{n=1}^\infty \frac{q^{n/2}}{1-q^n} \sin 2nz,\\
		(\log \T_3)'(z|\tau)&=4\sum_{n=1}^\infty (-1)^n \frac{q^{n/2}}{1-q^n} \sin 2nz.
	\end{split}
\end{equation}
The rest of the paper is organized as follows.  In Section~2 we will use Theorem~\ref{liuaddthm} to prove a general theta function identity of degree $3$ and give a few  applications of this identity.

 In Section~3 we will use Theorem~\ref{liuaddthm} to  investigate a generalization of the Kiepert quintuple product identity and its application.
In particular we prove the following proposition.
\begin{prop}\label{dirichletgaussKiepert} If $m$ is square-free such that
	$m\equiv 1 \pmod 4$, then we have
	\begin{equation}\label{jabel:eqn43}
		\(\prod_{n=1}^\infty (1-q^n)\)  \sum_{k=1}^{\frac{m-1}{2}}\(\frac{k}{m}\)
		\frac{\T_1(\frac{4k\pi}{m}|\tau)}{\T_1(\frac{2k\pi}{m}|\tau)}
		=\sqrt{m} \sum_{n=-\infty}^\infty (-1)^n \(\frac{6n+1}{m}\)q^{n(3n+1)/2},
	\end{equation}
where $\(\frac{k}{m}\)$ is the Jacobi symbol.
\end{prop}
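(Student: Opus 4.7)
The plan is to combine a Kiepert-type theta identity---the generalised Kiepert quintuple product identity whose proof, via Theorem~\ref{liuaddthm}, is developed in Section~3---with a classical quadratic Gauss sum evaluation. Specifically I use
\begin{equation}\label{plan:kiepert}
\prod_{n=1}^\infty(1-q^n)\,\frac{\T_1(2z|\tau)}{\T_1(z|\tau)}=2\sum_{n=-\infty}^\infty(-1)^n q^{n(3n+1)/2}\cos\((6n+1)z\).
\end{equation}
For the reader's convenience, \eqref{plan:kiepert} can be proved directly as follows. With $g(z)=\T_1(2z|\tau)/\T_1(z|\tau)$ and $S(z)=\sum_n(-1)^n q^{n(3n+1)/2}e^{i(6n+1)z}$, Proposition~\ref{doubleperiods} gives $g(z+\pi)=-g(z)$ and $g(z+\pi\tau)=-q^{-3/2}e^{-6iz}g(z)$; a term-wise shift $n\mapsto n-1$ shows $S$ has the same quasi-periodicity. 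Since $g$ is even while $S$ is not, I pass to $U(z):=S(z)+S(-z)$, which inherits both functional equations and is even. Any even theta function of degree $3$ with these quasi-periods must vanish at $\pi/2$, $\pi\tau/2$, and $(\pi+\pi\tau)/2$ by a parity argument at each half-period, so $U$ and $g$ share zeros there; hence $U/g$ is a holomorphic elliptic function, therefore constant. Evaluating at $z=0$ using Euler's pentagonal number theorem $U(0)=2\prod(1-q^n)$ and $g(0)=2$ fixes the constant as $\prod(1-q^n)$.

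Given \eqref{plan:kiepert}, I substitute $z=2\pi k/m$, multiply by the Jacobi symbol $\(\frac{k}{m}\)$, and sum over $k=1,\ldots,(m-1)/2$. Swapping the two sums on the right-hand side produces
\begin{equation*}
\prod_{n=1}^\infty(1-q^n)\sum_{k=1}^{(m-1)/2}\(\frac{k}{m}\)\frac{\T_1(4k\pi/m|\tau)}{\T_1(2k\pi/m|\tau)}=2\sum_n(-1)^n q^{n(3n+1)/2}\sum_{k=1}^{(m-1)/2}\(\frac{k}{m}\)\cos\frac{2\pi k(6n+1)}{m}.
\end{equation*}
Since $m$ is squarefree with $m\equiv 1\pmod 4$, the Jacobi symbol $\(\frac{\cdot}{m}\)$ is a primitive real Dirichlet character modulo $m$ with $\(\frac{-1}{m}\)=1$; the $k\mapsto m-k$ symmetry together with the classical quadratic Gauss sum then yield
\begin{equation*}
2\sum_{k=1}^{(m-1)/2}\(\frac{k}{m}\)\cos\frac{2\pi ka}{m}=\sum_{k=1}^{m-1}\(\frac{k}{m}\)e^{2\pi ika/m}=\sqrt m\,\(\frac{a}{m}\)
\end{equation*}
for every integer $a$ (with the usual convention $\(\frac{a}{m}\)=0$ when $\gcd(a,m)>1$). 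Applying this with $a=6n+1$ to the inner character sum collapses the right-hand side to $\sqrt m\sum_n(-1)^n\(\frac{6n+1}{m}\)q^{n(3n+1)/2}$, yielding exactly \reff{jabel:eqn43}.

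The principal hurdle is establishing the Kiepert identity \eqref{plan:kiepert}, which is where Theorem~\ref{liuaddthm} does the real work; once it is in hand, the proof of the proposition reduces to the Gauss sum manipulation above.
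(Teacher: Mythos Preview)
Your proposal is correct and follows essentially the same route as the paper: specialize the Kiepert quintuple product identity \eqref{plan:kiepert} at $z=2k\pi/m$, weight by the Jacobi symbol, sum over $k$, and collapse the inner character sum using the quadratic Gauss sum evaluation (the paper records this as Proposition~\ref{dirichletgauss:n2}). The only minor difference is that you supply a self-contained Liouville-type argument for \eqref{plan:kiepert}, whereas the paper derives it from the degree-$4$ identity Theorem~\ref{KLthm} (itself a specialization of Theorem~\ref{liuaddthm}); both routes are standard and the core of the proof of the proposition is identical.
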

In Section~4 we will use Theorem~\ref{liuaddthm} to prove the following addition formula and give a few applications. 
\begin{thm}  \label{6RRCthm:n1}  Suppose that $F(z|\tau)$ and $G(z|\tau)$ are two entire functions of degree $6$, which satisfy the functional equations 
	\begin{align} \label{rrc:eqn1}
		F(z|\tau)=F(z+\pi|\tau)=q^{3} e^{12iz}F(z+\pi\tau|\tau)
	\end{align}	
	and 
	\begin{align}\label{rrc:eqn2}
		G(z|\tau)=G(z+\pi|\tau)=q^{3} e^{12iz}G(z+\pi\tau|\tau).
	\end{align}	
	 Then there exists a constant $C$ independent of $x$ and $y$ such that
	\begin{align}\label{rrc:eqn3}
		&\(F(x|\tau)-F(-x|\tau)\)\(G(y|\tau)-G(-y|\tau)\)\\
		&-\(F(y|\tau)-F(-y|\tau)\)\(G(x|\tau)-G(-x|\tau)\)\nonumber\\
		&=C\T_1(x-y|\tau)\T_1(x+y|\tau)\T_1(2x|\tau)\T_1(2y|2\tau).\nonumber
	\end{align}		
	\end{thm}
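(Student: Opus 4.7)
My plan is to fix $y$, view both sides of \reff{rrc:eqn3} as entire functions of $x$, and show they are theta functions of degree $6$ with the same quasi-periodic multipliers and the same six simple zeros in a fundamental parallelogram; the theta-function uniqueness principle that underlies Theorem~\ref{liuaddthm} then forces them to agree up to a multiplicative scalar $C(y)$, and a parallel analysis in $y$ promotes $C(y)$ to a genuine constant.

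To verify the quasi-periodic behavior, I replace $z$ by $-z-\pi\tau$ in \reff{rrc:eqn1} to obtain $F(-z-\pi\tau|\tau)=q^{-3}e^{-12iz}F(-z|\tau)$; consequently the odd part $F(z|\tau)-F(-z|\tau)$ inherits from $F$ both $\pi$-periodicity and the multiplier $q^{-3}e^{-12iz}$ under $z\mapsto z+\pi\tau$, and the same holds for $G$. Hence the LHS of \reff{rrc:eqn3}, as a function of $x$, satisfies $h(x+\pi|\tau)=h(x|\tau)$ and $h(x+\pi\tau|\tau)=q^{-3}e^{-12ix}h(x|\tau)$. A parallel computation using Proposition~\ref{doubleperiods} shows that the product $\T_1(x-y|\tau)\T_1(x+y|\tau)\T_1(2x|\tau)$ transforms in exactly the same way in $x$, so both sides are theta functions of degree $6$ with identical multipliers.

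The crux of the proof is the zero count. The RHS has simple zeros at $x=\pm y$ (from $\T_1(x\mp y|\tau)$) and at the four half-periods $0,\pi/2,\pi\tau/2,(\pi+\pi\tau)/2$ (from $\T_1(2x|\tau)$), exhausting the six zeros of a theta function of degree $6$. The LHS vanishes at $x=\pm y$ and at $x=0$ by direct substitution. The nontrivial verification is $F(c|\tau)=F(-c|\tau)$, and likewise for $G$, at $c\in\{\pi/2,\pi\tau/2,(\pi+\pi\tau)/2\}$. For $c=\pi/2$ this is just $\pi$-periodicity. For $c=\pi\tau/2$, substituting $z=-\pi\tau/2$ into \reff{rrc:eqn1} and using $e^{6i\pi\tau}=q^{3}$ gives $F(\pi\tau/2|\tau)=q^{-3}\cdot q^{3}\cdot F(-\pi\tau/2|\tau)=F(-\pi\tau/2|\tau)$; the case $c=(\pi+\pi\tau)/2$ is identical after one application of $\pi$-periodicity. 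The cancellation $q^{-3}\cdot q^{3}=1$ is precisely why the theorem requires the degree to be exactly $6$, and this is the main obstacle in the whole argument: any other degree would destroy the cancellation and the theorem would fail as stated.

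With multipliers and zero divisors matched in $x$, the ratio of the two sides of \reff{rrc:eqn3} is some $C(y)$ independent of $x$. A completely parallel analysis, fixing $x$ and viewing both sides as functions of $y$, shows that the ratio is also independent of $y$, so $C$ is a true constant; its explicit value can be read off by expanding near $y=0$ using $F(y|\tau)-F(-y|\tau)=2yF'(0|\tau)+O(y^{3})$, the analogous expansion for $G$, and $\T_1(2y|\tau)=2y\VT_1'(\tau)+O(y^{3})$, which reduces \reff{rrc:eqn3} in the limit $y\to0$ to a closed-form expression for $C$ in terms of $F'(0|\tau)$, $G'(0|\tau)$, the odd parts of $F$ and $G$ at $x$, and $\VT_1'(\tau)\T_1^{2}(x|\tau)\T_1(2x|\tau)$.
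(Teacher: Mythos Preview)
Your argument is correct and self-contained, but it is \emph{not} the route the paper takes. The paper's whole point is that Theorem~\ref{6RRCthm:n1} is a \emph{consequence} of the master degree-eight identity, Theorem~\ref{liuaddthm}: writing $A(z)=F(z|\tau)-F(-z|\tau)$ and $B(z)=G(z|\tau)-G(-z|\tau)$, one forms the auxiliary even function
\[
f(z|\tau)=\frac{\bigl(A(z|\tau)B(y|\tau)-B(z|\tau)A(y|\tau)\bigr)\,\T_1(2z|\tau)}{\T_1(z-y|\tau)\T_1(z+y|\tau)},
\]
checks that it satisfies the hypotheses of Theorem~\ref{liuaddthm}, observes that $f$ vanishes at all four half-periods (so the right side of \reff{jabel:eqn23} collapses to zero), and then reads off \reff{rrc:eqn3} together with the Wronskian identity $A'(y|\tau)B(y|\tau)-B'(y|\tau)A(y|\tau)=C\,\VT_1'(\tau)\T_1^3(2y|\tau)$. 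Your approach instead bypasses Theorem~\ref{liuaddthm} entirely and argues directly by matching multipliers and zeros and invoking Liouville; this is essentially the same style of argument by which Theorem~\ref{liuaddthm} itself is established in \cite{Liu2012JNT}, so your phrase ``the uniqueness principle that underlies Theorem~\ref{liuaddthm}'' is apt even if Theorem~\ref{liuaddthm} is not literally invoked. What the paper's route buys is the unifying narrative---every result in the paper is exhibited as a specialization of the single degree-eight formula---while your route buys independence from that machinery and a transparent explanation of why degree~$6$ is exactly right. One small remark: your passage from ``same six zeros'' to ``ratio is constant'' needs the zeros of the right-hand side to be \emph{simple}, which holds only for generic $y$; the identity then extends to all $y$ by analyticity, and this is worth saying explicitly. (Also, the displayed statement's $\T_1(2y|2\tau)$ is an evident misprint for $\T_1(2y|\tau)$, as both your degree count in $y$ and the paper's own proof confirm.)
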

In Section~5 we will discuss some applications of a general theta function identity of degree $6$. In particular, we prove that
\begin{equation*}
	\frac{\VT_2(21\tau)}{\VT_2(\tau)}-\frac{\VT_3(21\tau)}{\VT_3(\tau)}	
	+\frac{\VT_4(21\tau)}{\VT_4(\tau)}
	=\frac{4\eta^3(3\tau)\eta(7\tau)}{\eta^4(\tau)}-7\frac{\eta^3(21\tau)}{\eta^3(\tau)}.
\end{equation*}
In Section~6 we will investigate the application of Theorem~\ref{liuaddthm:lim} to Eisenstein series identities. For example,  we find that
\begin{align*}
	&(9L(9\tau)-L(\tau))^2 +\frac{1}{5}\(42M(9\tau)-2M(\tau)\)\\
	&=\frac{72\VT_1'(9\tau)^5}{\VT_1'(\tau)}
	\(\frac{\VT_2(\tau)}{\VT_2^5(9\tau)}-\frac{\VT_3(\tau)}{\VT_3^5(9\tau)}+\frac{\VT_4(\tau)}{\VT_4^5(9\tau)}\),\nonumber
\end{align*}
 where $L(\tau)$ and $M(\tau)$ are the first two Eisenstein series defined in \reff{jabel:eqn9}.	

More applications of Theorem~\ref{liuaddthm} to modular function identities are given in Section~7.
\section{A general theta function identity of degree $3$}
\begin{thm}  \label{degree3addthm}  Suppose that $F(z|\tau)$ and $G(z|\tau)$ are two odd entire functions of degree $3$, which satisfy the functional equations 
	\begin{align} \label{liu:eqn1}
		F(z|\tau)=-F(z+\pi|\tau)=-q^{3/2} e^{6iz}F(z+\pi\tau|\tau)
	\end{align}	
	and 
	\begin{align}\label{liu:eqn2}
		G(z|\tau)=-G(z+\pi|\tau)=-q^{3/2} e^{6iz}G(z+\pi\tau|\tau).
	\end{align}	
	Then there exists a constant $C$ independent of $x$ and $y$ such that
	\begin{align}\label{liu:eqn3}
		F(x|\tau) G(y|\tau)-F(y|\tau)G(x|\tau)
		=C\T_1(x-y|\tau)\T_1(x+y|\tau)\T_1(x|\tau)\T_1(y|\tau).
	\end{align}		
\end{thm}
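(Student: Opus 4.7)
The plan is to apply Liouville's theorem for elliptic functions directly to the ratio of the two sides, rather than attempting to invoke Theorem~\ref{liuaddthm}. Fix a generic $y$ and set
\begin{equation*}
H(x) := F(x|\tau)G(y|\tau)-F(y|\tau)G(x|\tau),
\end{equation*}
viewed as an entire function of $x$. The first step is to verify that $H(x)$ and the theta product $\T_1(x-y|\tau)\T_1(x+y|\tau)\T_1(x|\tau)$ have the same quasi-periodicity in $x$: both satisfy $h(x+\pi)=-h(x)$ and $h(x+\pi\tau)=-q^{-3/2}e^{-6ix}h(x)$. For $H$ this is immediate from \reff{liu:eqn1} and \reff{liu:eqn2} (the constants in $x$, namely $G(y)$ and $F(y)$, pass through unchanged); for the theta product it follows from Proposition~\ref{doubleperiods} applied to the three degree-one factors, whose arguments sum to $3x$.

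Next, I would form the quotient
\begin{equation*}
\phi(x) := \frac{H(x)}{\T_1(x-y|\tau)\,\T_1(x+y|\tau)\,\T_1(x|\tau)}.
\end{equation*}
By the matching quasi-periodicities, $\phi$ is doubly periodic in $x$ with periods $\pi$ and $\pi\tau$. The only potential poles lie at $x\equiv 0,\,y,\,-y\pmod{\Lambda}$, where the denominator has simple zeros; but $H$ vanishes at each of these points. Indeed, $H(0)=0$ since the oddness of $F$ and $G$ forces $F(0)=G(0)=0$; $H(y)=0$ is trivial; and $H(-y)=-F(y)G(y)+F(y)G(y)=0$ again by oddness. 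Hence $\phi$ is an entire elliptic function, and by Liouville's theorem it is constant in $x$. Writing this constant as $c(y)$, we obtain $H(x)=c(y)\,\T_1(x-y|\tau)\T_1(x+y|\tau)\T_1(x|\tau)$.

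To finish, I would exploit the antisymmetry $H(x,y)=-H(y,x)$. Applying the just-derived identity with $x$ and $y$ swapped, and using $\T_1(y-x|\tau)=-\T_1(x-y|\tau)$, gives $c(y)\T_1(x|\tau)=c(x)\T_1(y|\tau)$, so that $c(y)/\T_1(y|\tau)=:C$ is independent of $y$. This produces the desired identity \reff{liu:eqn3} with a genuine constant $C$. The argument is essentially routine; the only delicate point is the careful bookkeeping of the three quasi-period multipliers in the first step, and the mildly non-mechanical move is the antisymmetry argument at the end, which is what prevents $C$ from secretly depending on $y$.
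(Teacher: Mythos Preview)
Your proof is correct. It differs from the paper's in a meaningful way: the paper derives the result from its master degree-$8$ identity (Theorem~\ref{liuaddthm}) by taking
\[
f(z|\tau)=\frac{\bigl(F(z|\tau)G(y|\tau)-G(z|\tau)F(y|\tau)\bigr)\T_1^2(2z|\tau)}{\T_1(z-y|\tau)\T_1(z+y|\tau)\T_1(z|\tau)},
\]
observing that the half-period values of $f$ all vanish because of the factor $\T_1^2(2z|\tau)$, and reading off that the quotient $H(x)/\bigl(\T_1(x-y)\T_1(x+y)\T_1(x)\T_1(y)\bigr)$ is independent of $x$. You bypass Theorem~\ref{liuaddthm} entirely and reach the same conclusion with a bare-hands Liouville argument. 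Your route is more elementary and self-contained; the paper's route is what supports its central thesis that Theorem~\ref{liuaddthm} is a ``theta identities generating machine'' from which results like this one flow. The closing symmetry step --- using the antisymmetry of $H$ in $(x,y)$ to upgrade $c(y)$ to $C\,\T_1(y|\tau)$ --- is essentially the same in both arguments.
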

\begin{proof}
Let $F(z|\tau)$ and $G(z|\tau)$ be the two entire functions given in Theorem~\ref{degree3addthm}. Then
\[
f(z|\tau)=\frac{\(F(z|\tau)G(y|\tau)-G(z|\tau)F(y|\tau)\)\T_1^2(2z|\tau)}{\T_1(z-y|\tau)\T_1(z+y|\tau)\T_1(z|\tau)}
\]	
satisfies the conditions of Theorem~\ref{liuaddthm}. Appealing to L'Hospital's rule and a simple calculation we find that
\[
f(y|\tau)=\frac{\(F'(y|\tau)G(y|\tau)-G'(y|\tau)F(y|\tau)\)\T_1(2y|\tau)}{\VT'_1(\tau)\T_1(y|\tau)}.
\]
Noting that $0, \pi/2, (\pi+\pi\tau)/2$ and $(\pi \tau)/2$ are zeros of $\T_1(2z|\tau)$, we immediately find that
\[
f(0|\tau)=f(\pi/2|\tau)=f((\pi+\pi\tau)/2|\tau)=f((\pi\tau)/2|\tau)=0.
\]
Substituting these values of $f$ into \reff{jabel:eqn23} and simplifying  we easily conclude that
\begin{align*}
&\frac{F(x|\tau)G(y|\tau)-F(y|\tau)G(x|\tau)}{\T_1(x-y|\tau)\T_1(x+y|\tau)\T_1(x|\tau)\T_1(y|\tau)}\\
&=\frac{F'(y|\tau)G(y|\tau)-G'(y|\tau)F(y|\tau)} {\VT_1'(\tau)\T_1^2(y|\tau)\T_1(2y|\tau)}.	
\end{align*}
The right-hand side of the above equation is independent of $x$, so also does the left-hand side.  It is obvious that  the left-hand  side of the above equation is symmetric about $x$ and $y$, so the left-hand side  of the above equation is also independent of $y$. Thus there exists a constant $C$ independent of $x$ and $y$ such that
\[
\frac{F(x|\tau)G(y|\tau)-F(y|\tau)G(x|\tau)}{\T_1(x-y|\tau)\T_1(x+y|\tau)\T_1(x|\tau)\T_1(y|\tau)}=C,
\]
 which is equivalent to \reff{liu:eqn3}. We thus complete the proof of Theorem~\ref{degree3addthm}.
\end{proof}
Theorem~\ref{degree3addthm} is equivalent to \cite[Theorem~1]{Liu2007ADV}, which  has been used in \cite{Liu2007ADV, Liu2009pac} to derive many elliptic function identities, including Ramanujan's cubic theta function identity and Winquist's identity \cite{Winquist1969}.  Next we will give a few applications of Theorem~\ref{degree3addthm}.

By taking $F(z|\tau)=\T_1(3z|3\tau)$ and 
\[
G(z|\tau)=\( \frac{\sin z}{\sin 3z}+2\sum_{n=1}^\infty \(\frac{n}{3}\)
\frac{q^n}{1-q^n} \cos 2nz \) \T_1(3z|3\tau)
\]
in Theorem~\ref{degree3addthm} and simplifying we arrive at the following Lambert series identity \cite[Eq.(3.20)]{Liu2010IMRN}:
\begin{align}\label{liu:eqn4}
	\frac{\sin x}{ \sin 3x}-\frac{\sin y}{\sin 3y}+	2\sum_{n=1}^\infty \(\frac{n}{3}\)
	\frac{q^n}{1-q^n} \(\cos 2nx-\cos 2ny\)\\
	=\frac{\eta^3(3\tau)\T_1(x|\tau)\T_1(y|\tau)\T_1(x-y|\tau)\T_1(x+y|\tau)}{\eta^3(\tau)\T_1(3x|3\tau)\T_1(3y|3\tau)}.\nonumber
\end{align}
Theorem~\ref{degree3addthm} can be used to derive the following general Lambert series identity.
\begin{prop}\label{degree3addthm:a} The following Lambert series identity related to theta functions holds: 
	\begin{align} \label{liu:eqn5}
		&\sum_{n=1}^\infty \frac{q^{n/2}}{1-q^n} (\cos 2nx- \cos 2ny) \sin (2nu)\\
		&=-\frac{\eta^3(\tau)\T_1 (2u|\tau)\T_1(x+y|\tau)\T_1(x-y|\tau)}{4\T_4(x+u|\tau)\T_4(x-u|\tau)\T_4(y+u|\tau)\T_4(y-u|\tau)}.\nonumber
	\end{align}
\end{prop}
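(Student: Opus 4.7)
The plan is to apply Theorem~\ref{degree3addthm} to a carefully chosen pair $(F,G)$. Using the series $(\log \T_4)'(z|\tau) = 4\sum_{n\geq 1}\frac{q^{n/2}}{1-q^n}\sin 2nz$ from \reff{jabel:eqn38} together with $\sin 2n(u+x) + \sin 2n(u-x) = 2\sin 2nu \cos 2nx$, one sees at once that the left-hand side of \reff{liu:eqn5} equals $h(x)-h(y)$, where
\begin{equation*}
h(z|\tau) := \tfrac{1}{8}\bigl((\log \T_4)'(u+z|\tau) + (\log \T_4)'(u-z|\tau)\bigr).
\end{equation*}
The task is therefore to recognise $h(x)-h(y)$ as a theta quotient of the shape on the right-hand side of \reff{liu:eqn5}.

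I would set $F(z|\tau) = \T_1(z|\tau)\T_4(z-u|\tau)\T_4(z+u|\tau)$ and $G(z|\tau) = F(z|\tau)h(z|\tau)$. A direct check using Propositions~\ref{halfperiods} and \ref{doubleperiods} shows that $F$ is odd, entire, of degree $3$, and satisfies \reff{liu:eqn1}. The function $h$ is even in $z$ and has only simple poles modulo $\Lambda$, located at $z \equiv \pm u + \pi\tau/2$; these are killed by the simple zeros of $F$ at the same points, so $G=Fh$ is entire. Moreover, $h$ is doubly periodic: $\pi$-periodicity is immediate from $\T_4(w+\pi)=\T_4(w)$, and for the $\pi\tau$-shift the two summands of $h$ contribute $-2i$ and $+2i$ which cancel. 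Thus $G$ inherits the oddness and functional equations \reff{liu:eqn2} from $F$, and is a theta function of degree $3$.

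Applying Theorem~\ref{degree3addthm} and dividing by $F(x)F(y) = \T_1(x|\tau)\T_1(y|\tau)\T_4(x-u|\tau)\T_4(x+u|\tau)\T_4(y-u|\tau)\T_4(y+u|\tau)$ then yields
\begin{equation*}
h(x) - h(y) = -\frac{C\,\T_1(x-y|\tau)\T_1(x+y|\tau)}{\T_4(x-u|\tau)\T_4(x+u|\tau)\T_4(y-u|\tau)\T_4(y+u|\tau)}
\end{equation*}
for some constant $C=C(u,\tau)$. All that remains is to identify $C$ with $\eta^3(\tau)\T_1(2u|\tau)/4$.

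To pin down $C$ I would compare the residues of both sides at the simple pole $x = -u + \pi\tau/2$. On the left, only $(\log\T_4)'(u+x)$ contributes, giving residue $\tfrac{1}{8}$. On the right, $\T_4(x+u)$ supplies the simple zero; converting $\T_1(x_0-y)$, $\T_1(x_0+y)$, $\T_4(x_0-u)$ and $\T_4'(\pi\tau/2)$ into expressions in $\T_4(y\pm u)$, $\T_1(2u|\tau)$ and $\VT_1'(\tau)$ using the half-period shift $\T_1(w+\pi\tau/2) = iq^{-1/8}e^{-iw}\T_4(w|\tau)$ from Proposition~\ref{halfperiods} (together with $\T_1(w+\pi\tau) = -q^{-1/2}e^{-2iw}\T_1(w|\tau)$), all $y$-dependence and the $q^{\pm 1/8}$ and exponential phases cancel, leaving a residue of $C/\bigl(\T_1(2u|\tau)\VT_1'(\tau)\bigr)$. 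Equating the two residues and invoking $\VT_1'(\tau) = 2\eta^3(\tau)$ from \reff{jabel:eqn18} gives $C = \eta^3(\tau)\T_1(2u|\tau)/4$, which is exactly \reff{liu:eqn5}. The main technical obstacle I expect is this residue bookkeeping with its $q^{1/8}$ powers and exponential phases; the verification that $(F,G)$ satisfies the hypotheses of Theorem~\ref{degree3addthm} is routine quasi-periodicity checking.
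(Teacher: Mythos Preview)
Your proposal is correct and follows essentially the same route as the paper: the paper takes $F(z|\tau)=\T_1(z|\tau)\T_4(z+u|\tau)\T_4(z-u|\tau)$ and $G=F\cdot\bigl((\log\T_4)'(z+u|\tau)-(\log\T_4)'(z-u|\tau)\bigr)$, which differs from your $(F,G)$ only by a harmless constant factor in $G$ (your $h$ is $\tfrac18$ of the paper's elliptic factor, via the oddness of $(\log\T_4)'$). The paper determines the constant by multiplying through by $\T_4(y-u|\tau)$ and letting $y\to u+\pi\tau/2$, which is the same residue evaluation you perform at $x=-u+\pi\tau/2$; the bookkeeping with $q^{1/8}$ factors and phases that you flag as the main technical step does indeed cancel cleanly.
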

\begin{proof} With the help of Proposition~\ref{doubleperiods} we can verify that $F(z|\tau)$ and $G(z|\tau)$ satisfies the conditions of Theorem~\ref{degree3addthm}, where $F(z|\tau)$ and $G(z|\tau)$ are given by 
	\[
	F(z|\tau)=\T_4(z+u|\tau)\T_4(z-u|\tau)\T_1(z|\tau),
	\]
	and
	\[
	G(z|\tau)=\T_1(z|\tau)\T_4(z+u|\tau)\T_4(z-u|\tau)\((\log \T_4)'(z+u|\tau)-(\log \T_4)'(z-u|\tau)\).
	\]
	Thus there exists a constant $C$ independent of $x$ and $y$ such that	
    \begin{equation*}
    	(\log \T_4)'(x+u|\tau)-(\log \T_4)'(x-u|\tau)-(\log \T_4)'(y+u|\tau)+(\log \T_4)'(y-u|\tau)
    \end{equation*}
    \begin{equation*}
    	=\frac{C\T_1(x-y|\tau)\T_1(x+y|\tau)}{\T_4(x+u|\tau)\T_4(x-u|\tau)\T_4(y+u|\tau)\T_4(y-u|\tau)}.
    \end{equation*}	
Multiplying both sides of the above equation by $\T_4(y-u|\tau)$, letting $y\to u+(\pi\tau/2)$ and simplifying we find that $C=-\VT_1'(\tau)\T_1(2u|\tau)$. It follows that
 \begin{equation*}
	(\log \T_4)'(x+u|\tau)-(\log \T_4)'(x-u|\tau)-(\log \T_4)'(y+u|\tau)+(\log \T_4)'(y-u|\tau)
\end{equation*}
\begin{equation*}
	=\frac{-\VT_1'(\tau)\T_1(x-y|\tau)\T_1(x+y|\tau)}{\T_4(x+u|\tau)\T_4(x-u|\tau)\T_4(y+u|\tau)\T_4(y-u|\tau)\T_1(2u|\tau)}.
\end{equation*}
Substituting the trigonometric series expansion for $(\log \T_4)'(z|\tau)$ into the left-hand side of the above equation and simplifying we arrive at 
\reff{liu:eqn5}. This completes the proof of Proposition~\ref{degree3addthm:a}.	
\end{proof}
For any rational integers $m$ and $n$, we will use $\(\frac{m}{n}\)$ to denote the Kronecker symbol.

By setting $u=\pi/4$ in \reff{liu:eqn5} and noting that $\sin \frac{\pi n}{2}=(\frac{-4}{n})$,  we conclude that
\begin{equation}\label{liu:eqn6}
	4\sum_{n=0}^\infty \(\frac{-4}{n}\) \frac{q^{n/2}}{1-q^n} \(\cos 2nx -\cos 2ny\) 
	=-\VT_2^2(\tau)\frac{\T_1(x+y|\tau)\T_1(x-y|\tau)}{\T_3(2x|2\tau)\T_3(2y|2\tau)}.
\end{equation}

Replaing $\tau$ by $3\tau$ in \reff{liu:eqn5} and then setting $u=(\pi\tau/2)$ and simplifying we arrive at \cite[Eq.(3.16)]{Liu2010IMRN}
\begin{align}\label{liu:eqn7}
	&\sum_{n=1}^\infty \frac{q^n}{1+q^n+q^{2n}} \(\cos 2nx-\cos 2ny\)\\
	&=-\frac{\eta^3(\tau)\T_1(x|3\tau)\T_1(y|3\tau)\T_1(x+y|3\tau)\T_1(x-y|3\tau)}{2\eta^3(3\tau)\T_1(x|\tau)\T_1(y|\tau)}.\nonumber
\end{align}

By choosing $F(z|\tau)=\T_1(z|\tau)\T_2(2z|2\tau)$ and 
\[
G(z|\tau)=\(\frac{1}{\cos 2z}+4 \sum_{n=1}^\infty \(\frac{-4}{n}\)\frac{q^n}{1-q^n} \cos 2nz\)\T_1(z|\tau)\T_2(2z|2\tau),
\]
in Theorem~\ref{degree3addthm} and making some simple calculations we find that 
\begin{align}\label{liu:eqn8}
	&\frac{1}{\cos 2x}-\frac{1}{\cos 2y} +4\sum_{n=1}^\infty \(\frac{-4}{n}\) \frac{q^n}{1-q^n} \(\cos 2nx -\cos 2ny\)\\
	&=\VT_2^2(\tau) \frac{\T_1(x+y|\tau)\T_1(x-y|\tau)}{2\T_2(2x|2\tau)\T_2(2y|2\tau)}.\nonumber
\end{align}

In exactly the same way, if we take $F(z|\tau)=\T_1^3(z+\frac{\pi}{3}|\tau)+\T_1^3(z-\frac{\pi}{3}|\tau)$ and $G(z|\tau)=\T_1(3z|3\tau)$ in Theorem~\ref{degree3addthm}, after a little reduction, we deduce that \cite[Eq.(3.43)]{Liu2007ADV}
\begin{align}\label{liu:eqn9}
	&\(\T_1^3(x+\frac{\pi}{3}|\tau)+\T_1^3(x-\frac{\pi}{3}|\tau)\)\T_1(3y|3\tau)
	\\
	&-\(\T_1^3(y+\frac{\pi}{3}|\tau)+\T_1^3(y-\frac{\pi}{3}|\tau)\)\T_1(3x|3\tau)\nonumber\\
	&=\frac{3\eta^3(3\tau)}{\eta^3(\tau)}\T_1(x|\tau)\T_1(y|\tau)\T_1(x+y|\tau)\T_1(x-y|\tau).\nonumber
\end{align}
Dividing both sides of the above equation by $y$ and then letting $y \to 0$, we conclude that
\begin{equation}\label{liu:eqn10}
	\T_1^3(x+\frac{\pi}{3}|\tau)+\T_1^3(x-\frac{\pi}{3}|\tau)-\T_1^3(x|\tau)=3a(\tau)\T_1(3x|3\tau),
\end{equation}
where $a(\tau)$  is the Glaisher--Ramanujan Eisenstein series given by \reff{jabel:eqn33}.

 By taking $F(z|\tau)={\T_1(3z|3\tau)}$ and $G(z|\tau)=\T_1(z|\tau/3)$ in Theorem~\ref{degree3addthm} and making a simple calculation, we deduce that
\cite[Theorem~5]{LiuTrans}
\begin{align}\label{liu:eqn11}
	&\eta^2(\tau)\T_1(3y|3\tau)\T_1\(x|\frac{\tau}{3}\)-\eta^2(\tau)\T_1(3x|3\tau)\T_1\(y|\frac{\tau}{3}\)\\
	&=\T_1(x|\tau)\T_1(y|\tau)\T_1(x-y|\tau)\T_1(x+y|\tau).\nonumber
\end{align}	
This identity allows us to derive the following identity (see \cite[pp~.829--830]{LiuTrans} for details):
\begin{align}\label{liu:eqn12}
	&32\prod_{n=1}^\infty (1-q^n)^{10}\\
	&=9\(\sum_{n=-\infty}^\infty(-1)^n (2n+1)^3 q^{3n(n+1)/2}\)\(\sum_{n=-\infty}^\infty(-1)^n (2n+1) q^{n(n+1)/6}\)\nonumber\\
	&-\(\sum_{n=-\infty}^\infty(-1)^n (2n+1) q^{3n(n+1)/2}\)\(\sum_{n=-\infty}^\infty(-1)^n (2n+1)^3 q^{n(n+1)/6}\).\nonumber
\end{align}
A short proof of Ramanujan’s partition congruence for the modulus $11$, $p(11n+6)\equiv 0 \pmod {11}$ is given in \cite{BCLY}  with the help of 
\reff{liu:eqn11}.

By taking $F(z|\tau)=\T_1(z|\tau)\T_1(z-u|\tau)\T_1(z+u|\tau)$ 
and $G(z|\tau)=\T_1(z|\tau)\T_1(z-v|\tau)\T_1(z+v|\tau)$ 
in Theorem~\ref{degree3addthm} we deduce that
\begin{align}\label{liu:eqn12}
	&\T_1(x-u|\tau)\T_1(x+u|\tau)\T_1(y-v|\tau)\T_1(y+v|\tau)\\
	&~ -\T_1(y-u|\tau)\T_1(y+u|\tau)\T_1(x-v|\tau)\T_1(x+v|\tau)\nonumber\\
	&=\T_1(u+v|\tau)\T_1(u-v|\tau)\T_1(x-y|\tau)\T_1(x+y|\tau).\nonumber
\end{align}	
This identity is equivalent to the Weierstrass three-term theta function identity \cite{Weierstrass1882} (see,  also \cite[p.~ 451,  Example~5]{WhiWat}, \cite[p.~142, Eq.(25)]{Enne 1890}, \cite[Eq.(1.1)]{Koornwinder2014}, \cite[Theorem~7]{Liu2007ADV}).
\section{A generalization of the Kiepert quintuple product identity and its application}
In this section we will first use Theorem~\ref{liuaddthm} to derive  the following theta function identities of degree $4$, which is equivalent to \cite[Theorem~1]{Liu2005ADV}. 
\begin{thm} \label{KLthm} Suppose that $F(z|\tau)$ is an odd entire function of $z$ which satisfies the functional equations $F(z|\tau)=F(z+\pi|\tau)=q^2 e^{8iz}F(z+\pi\tau|\tau)$. Then we  have
	\begin{equation}\label{KL:eqn1}
		\frac{F(x|\tau)}{\T_1(2x|\tau)}=	\frac{F(y|\tau)}{\T_1(2y|\tau)},
	\end{equation}	
	or
	\begin{equation}\label{KL:eqn1a}
	F(x|\tau)=C\T_1(2x|\tau),	
	\end{equation}
	where $C$ is a constant independent  of $x$.
\end{thm}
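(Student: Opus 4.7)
The plan is to reduce Theorem~\ref{KLthm} to a direct application of Theorem~\ref{liuaddthm} by exhibiting a companion function whose zeros at the four half-period representatives wipe out the entire right-hand side of \reff{jabel:eqn23}.

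Specifically, I would set
\[
f(z|\tau)=F(z|\tau)\,\T_1(2z|\tau),
\]
and first verify the hypotheses of Theorem~\ref{liuaddthm}. The function $f$ is entire since both factors are. It is even, because $F$ is odd by assumption and $\T_1(2z|\tau)$ is odd in $z$. For the quasi-periodicity, $F(z+\pi|\tau)=F(z|\tau)$ by hypothesis, and $\T_1(2(z+\pi)|\tau)=\T_1(2z+2\pi|\tau)=\T_1(2z|\tau)$ by applying Proposition~\ref{doubleperiods} twice, so $f(z+\pi|\tau)=f(z|\tau)$. For the shift by $\pi\tau$, the hypothesis gives the multiplier $q^{-2}e^{-8iz}$ for $F$, and applying the quasi-period formula for $\T_1$ in Proposition~\ref{doubleperiods} twice (once to pass from $\T_1(2z|\tau)$ to $\T_1(2z+\pi\tau|\tau)$, once more to reach $\T_1(2z+2\pi\tau|\tau)$) gives the multiplier $q^{-2}e^{-8iz}$ for $\T_1(2z|\tau)$. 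Multiplying, $f(z+\pi\tau|\tau)=q^{-4}e^{-16iz}f(z|\tau)$, equivalently $f(z|\tau)=q^{4}e^{16iz}f(z+\pi\tau|\tau)$, which is exactly the degree-eight condition demanded by Theorem~\ref{liuaddthm}.

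Next I would compute the four special values. Since $0,\pi/2,(\pi+\pi\tau)/2,\pi\tau/2$ all satisfy $\T_1(2z|\tau)=0$ (their doubles are lattice points of $\Lambda$), we get
\[
f(0|\tau)=f\!\left(\tfrac{\pi}{2}\Big|\tau\right)=f\!\left(\tfrac{\pi+\pi\tau}{2}\Big|\tau\right)=f\!\left(\tfrac{\pi\tau}{2}\Big|\tau\right)=0.
\]
Substituting these four vanishings into the identity \reff{jabel:eqn23} kills the entire right-hand curly bracket, and the equation collapses to
\[
\frac{F(x|\tau)\,\T_1(2x|\tau)}{\T_1^2(2x|\tau)}=\frac{F(y|\tau)\,\T_1(2y|\tau)}{\T_1^2(2y|\tau)},
\]
which is precisely \reff{KL:eqn1}. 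Since the left side depends only on $x$ and the right side only on $y$, both must equal a common constant $C=C(\tau)$ independent of $x$ and $y$, yielding \reff{KL:eqn1a}.

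The only nontrivial checkpoint is the quasi-periodicity bookkeeping for $\T_1(2z|\tau)$ under the shift $z\mapsto z+\pi\tau$; this is mildly tedious because the shift advances the argument of $\T_1$ by $2\pi\tau$, so one must apply the quasi-period formula twice and carefully track the exponential factors. Everything else follows by direct substitution, so I do not anticipate any serious obstacle.
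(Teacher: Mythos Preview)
Your proposal is correct and follows essentially the same approach as the paper: both take $f(z|\tau)=F(z|\tau)\,\T_1(2z|\tau)$, check the degree-eight hypotheses of Theorem~\ref{liuaddthm}, observe that the four half-period values vanish because they are zeros of $\T_1(2z|\tau)$, and then read off \reff{KL:eqn1} from the collapsed identity. Your write-up merely spells out the quasi-periodicity bookkeeping and the even/odd parity check in slightly more detail than the paper does.
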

\begin{proof} 
	Let $F(z|\tau)$ be the given function in Theorem~\ref{liuaddthm}. Now in Theorem~\ref{liuaddthm} we can take
	\[
	f(z|\tau)=F(z|\tau)\T_1(2z|\tau)
	\]
	since it satisfies all conditions of Theorem~\ref{liuaddthm}. It is obvious that  $\T_1(0|\tau)=0.$ Using this fact and the functional equations $\T_1(z|\tau)=-\T_1(z+\pi|\tau)=-q^{1/2} e^{2iz} \T_1(z+\pi\tau|\tau)$ in Proposition~\ref{doubleperiods}, we deduce that $0, \pi/2, (\pi+\pi\tau)/2$ and $\pi\tau/2$ are zeros of $\T_1(2z|\tau)$. It follows that  $f(0|\tau)=f(\pi/2|\tau)=f((\pi+\pi\tau)/2|\tau)=f(\pi\tau/2|\tau)=0$. Substituting these values of $f(z|\tau)$ into Theorem~\ref{liuaddthm} we arrive at \reff{KL:eqn1}.  Equation \reff{KL:eqn1} indicates that 
	$F(x|\tau)/{\T_1(2x|\tau)}$ is independent of $x$, and so it must be a constant, say $C$. Hence we obtain \reff{KL:eqn1a}.  This  completes the proof of Theorem~\ref{KLthm}.
\end{proof}
This beautiful formula has a lot of applications in number theory. Let's give some examples.
\subsection{The Kiepert quintuple product identity}
The quintuple product identity was first discovered by Kiepert \cite[p.213]{Kiepert1879}, and later rediscovered by others many times.  A survey of the quintuple product, which includes 29 proofs of this beautiful identity, has been given by Cooper \cite{Cooper2006}. Next we will use Theorem~\ref{KLthm} to  provide a proof of  the  Kiepert quintuple product identity, and  the proof is slightly different from that of \cite{Liu2005ADV}. For the Kiepert quintuple product identity, one can see also Chan \cite[Theorem~4.3]{Chan2020}. 
\begin{thm}\label{KLthm:a} Kiepert's  quintuple product identity states that
	\begin{equation}\label{KL:eqn2}
		2\sum_{n=-\infty}^\infty (-1)^n q^{n(3n+1)/2} \cos (6n+1) z
		=\(\prod_{n=1}^\infty (1-q^n)\) \frac{\T_1(2z|\tau)}{\T_1(z|\tau)}.
	\end{equation}		
\end{thm}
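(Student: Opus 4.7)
Write $S(z|\tau)$ for the trigonometric sum on the left-hand side of \reff{KL:eqn2}, and set
\[
F(z|\tau) := S(z|\tau)\,\T_1(z|\tau).
\]
The strategy is to verify that $F(z|\tau)$ satisfies the hypotheses of Theorem~\ref{KLthm}, conclude $F(z|\tau)=C\,\T_1(2z|\tau)$ for some constant $C=C(\tau)$, and then pin down $C$ by letting $z\to 0$.

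Entirety of $F$ is automatic, and the oddness is immediate: each $\cos(6n+1)z$ is even in $z$ while $\T_1$ is odd. For the $\pi$-periodicity, since $6n+1$ is odd one has $\cos((6n+1)(z+\pi))=-\cos((6n+1)z)$, so $S(z+\pi|\tau)=-S(z|\tau)$, which combines with $\T_1(z+\pi|\tau)=-\T_1(z|\tau)$ (Proposition~\ref{doubleperiods}) to yield $F(z+\pi|\tau)=F(z|\tau)$. The substantive step is the quasi-periodicity under $z\mapsto z+\pi\tau$. Writing each cosine in exponential form and substituting $n\mapsto n-1$ in the $e^{i(6n+1)z}$ part and $n\mapsto n+1$ in the $e^{-i(6n+1)z}$ part, the identity
\[
n(3n+1)/2+(6n+1)/2=(n+1)(3(n+1)+1)/2-3/2
\]
and its mirror produce, after a short bookkeeping,
\[
S(z+\pi\tau|\tau)=-q^{-3/2}e^{-6iz}\,S(z|\tau).
\]
Combined with $\T_1(z+\pi\tau|\tau)=-q^{-1/2}e^{-2iz}\T_1(z|\tau)$ this gives $F(z+\pi\tau|\tau)=q^{-2}e^{-8iz}F(z|\tau)$, exactly the relation required in Theorem~\ref{KLthm}. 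This index-shift verification is the main (though routine) obstacle.

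Theorem~\ref{KLthm} then delivers $F(z|\tau)=C\,\T_1(2z|\tau)$. To determine $C$, divide by $z$ and let $z\to 0$: since $\T_1(z|\tau)\sim\VT_1'(\tau)z$ and $\T_1(2z|\tau)\sim 2\VT_1'(\tau)z$, we obtain $C=\tfrac12 S(0|\tau)=\sum_{n=-\infty}^\infty(-1)^n q^{n(3n+1)/2}$. Because the map $n\mapsto -n$ sends $n(3n+1)/2$ to $n(3n-1)/2$, Euler's pentagonal number theorem gives $C=\prod_{n=1}^\infty(1-q^n)$, and \reff{KL:eqn2} follows upon dividing through by $\T_1(z|\tau)$.
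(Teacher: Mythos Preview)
Your argument is correct and uses the same key ingredient, Theorem~\ref{KLthm}, but the implementation differs from the paper's. The paper builds the candidate function from theta data,
\[
F(z|\tau)=\bigl(e^{2iz}\T_1(3z+\pi\tau|3\tau)-e^{-2iz}\T_1(3z-\pi\tau|3\tau)\bigr)\T_1(z|\tau),
\]
so that the functional equations follow directly from Proposition~\ref{doubleperiods}; it then determines $C$ by evaluating at $z=\pi/3$ via $\T_1(\pi\tau|3\tau)=iq^{-1/6}\eta(\tau)$, and only afterwards identifies the bracketed factor with the cosine series. You instead start from the series $S(z|\tau)$ itself, verify the quasi-periodicity by the index shift $n\mapsto n\mp1$, and fix $C$ by $z\to 0$ together with Euler's pentagonal number theorem. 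Your route is slightly more elementary (no modular-level evaluation at a special point), at the cost of invoking Euler's theorem as an input; since that theorem is an immediate consequence of the Jacobi triple product (Theorem~\ref{jactripthm}), there is no circularity with the paper's logical order.
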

\begin{proof}
	Using Proposition~\ref{doubleperiods} and a simple calculation we can verify that the odd entire function
\[
F(z|\tau)=\(e^{2iz}\T_1\(3z+\pi\tau|3\tau\)-e^{-2iz}\T_1\(3z-\pi\tau|3\tau\)\)\T_1(z|\tau) 
\]
satisfies the conditions of Theorem~\ref{KLthm}.  Thus there exists a constant $C$ independent of $z$ such that
\[
\(e^{2iz}\T_1\(3z+\pi\tau|3\tau\)-e^{-2iz}\T_1\(3z-\pi\tau|3\tau\)\)\T_1(z|\tau)=C\T_1(2z|\tau).
\]

Putting $z=\pi/3$ in the above equation and noting that $\T_1(\pi\tau|3\tau)=iq^{-1/6}\eta(\tau)$, we find that
\begin{align}\label{KL:eqn4}
	&e^{2iz}\T_1(3z+\pi\tau|3\tau)-e^{-2iz}\T_1(3z-\pi\tau|3\tau)\\
	&=iq^{-1/8} \(\prod_{n=1}^\infty (1-q^n)\)\frac{\T_1(2z|\tau)}{\T_1(z|\tau)}.\nonumber
\end{align}
	Using the infinite series representation of $\T_1$ and a direct computation, we find that
\begin{align}\label{KL:eqn5}
	&e^{2iz}\T_1(3z+\pi\tau|3\tau)-e^{-2iz}\T_1(3z-\pi\tau|3\tau)\\
	&=2iq^{-1/8} \sum_{n=-\infty}^\infty (-1)^n q^{(3n^2+n)/2} \cos (6n+1)z.
	\nonumber
\end{align}
 A comparison of the above equation with \reff{KL:eqn4}  completes the proof of Theorem~\ref{KLthm:a}.
\end{proof}
To discuss an application of Theorem~\ref{KLthm:a}, we need the following proposition due to Dirichlet \cite[p.303]{Dirichlet1894},  which gives the value of Gauss's sum.
\begin{prop}\label{dirichletgauss:n1} If $m$ is square-free and odd,  $h$ is any positive integer, then we have 
	\begin{equation}\label{dg:eqn1}
		\sum_{k=1}^{m-1}\(\frac{k}{m}\) e^{\frac{2kh\pi i}{m}}=\(\frac{h}{m}\) i^{\frac{1}{4}(m-1)^2} \sqrt{m}.
	\end{equation}	
\end{prop}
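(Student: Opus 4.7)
My plan is the classical three-step reduction: first reduce to the primitive case $h=1$; next use the Chinese remainder theorem to reduce to a prime modulus; finally evaluate the resulting quadratic Gauss sum at the prime. For Step~1, when $\gcd(h,m)=1$, the substitution $k \mapsto h^{-1}k \pmod{m}$ permutes $\{1,\ldots,m-1\}$, and multiplicativity of the Jacobi symbol gives $\(\frac{h^{-1}k}{m}\) = \(\frac{h}{m}\)\(\frac{k}{m}\)$, so that the left-hand side of \reff{dg:eqn1} equals $\(\frac{h}{m}\) S(m)$, where $S(m) := \sum_{k=1}^{m-1}\(\frac{k}{m}\) e^{2k\pi i/m}$. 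The problem therefore reduces to showing $S(m) = i^{(m-1)^2/4}\sqrt{m}$. The complementary case $\gcd(h,m)>1$ is handled by using that $m$ is squarefree to write $m=dm'$ with $d=\gcd(h,m)$ and $\gcd(d,m')=1$, splitting the sum via CRT, and observing that $\sum_{k_1=1}^{d-1}\(\frac{k_1}{d}\) = 0$ for nontrivial $d$, which matches the vanishing of $\(\frac{h}{m}\)$ on the right.

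For Step~2, when $m = m_1 m_2$ with $\gcd(m_1,m_2)=1$, the Chinese remainder theorem provides a bijection $k \leftrightarrow (k_1,k_2)$ under which $\(\frac{k}{m}\) = \(\frac{k_1}{m_1}\)\(\frac{k_2}{m_2}\)$ and the additive character factors. Applying Step~1 inside each factor sum produces $S(m_1 m_2) = \(\frac{m_2}{m_1}\)\(\frac{m_1}{m_2}\) S(m_1) S(m_2)$. Quadratic reciprocity for Jacobi symbols identifies the symbol product with $(-1)^{\frac{m_1-1}{2}\frac{m_2-1}{2}}$, and a short congruence check shows that this same factor multiplies $i^{(m_1-1)^2/4} i^{(m_2-1)^2/4}$ to give $i^{(m_1 m_2-1)^2/4}$. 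Induction on the number of prime factors of $m$ therefore reduces everything to the case $m=p$ prime, where the target is $S(p) = i^{(p-1)^2/4}\sqrt{p}$.

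For Step~3, a standard count of square roots modulo $p$ identifies $S(p) = \sum_{k=0}^{p-1} e^{2\pi i k^2/p}$, and orthogonality of additive characters gives $S(p)^2 = \(\frac{-1}{p}\)p$, so $S(p) \in \{\pm\sqrt{p}\}$ when $p\equiv 1\pmod 4$ and $S(p) \in \{\pm i\sqrt{p}\}$ when $p\equiv 3\pmod 4$. The main obstacle---and the only genuinely deep step---is pinning down that the sign is $+$: this is Gauss's celebrated sign theorem. I would follow Schur's trace argument, computing the trace of the discrete Fourier transform matrix $U = (p^{-1/2} e^{2\pi i jk/p})_{0\le j,k<p}$ in two ways: directly, which recovers $S(p)/\sqrt{p}$, and via the eigenvalue multiplicities of $U$, which are constrained by $U^4=I$ together with the fact that $U^2$ sends $e_j \mapsto e_{-j}$. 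A parallel route within the theta-function framework of this paper is to exploit the imaginary transformation $\VT_3(0|-1/\tau) = \sqrt{-i\tau}\,\VT_3(0|\tau)$ from Proposition~\ref{imaginarypp} as $\tau$ approaches a rational point $a/c$ along the imaginary direction; the asymptotics of the two sides then yield both the magnitude $\sqrt{p}$ and the precise power of $i$ simultaneously, completing the proof.
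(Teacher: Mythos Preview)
The paper does not actually prove this proposition. It is introduced as ``the following proposition due to Dirichlet \cite[p.303]{Dirichlet1894}, which gives the value of Gauss's sum,'' and is then immediately used to derive Proposition~\ref{dirichletgauss:n2}. There is no argument to compare against; the result is quoted from the literature.

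Your outline is the standard classical proof and is correct in its architecture. Step~1 (reduction to $h=1$) is clean: the change of variable for $\gcd(h,m)=1$ is exactly right, and your handling of the $\gcd(h,m)=d>1$ case works because, writing $h=dh_0$ and $m=dm'$, the exponential $e^{2\pi i kh/m}=e^{2\pi i kh_0/m'}$ depends only on $k\bmod m'$, so the CRT-factor over $\Z/d\Z$ is the pure character sum $\sum_{a\bmod d}\(\tfrac{a}{d}\)=0$. Step~2 is the usual twisted multiplicativity $S(m_1m_2)=\(\tfrac{m_2}{m_1}\)\(\tfrac{m_1}{m_2}\)S(m_1)S(m_2)$, and your case-check of the $i$-powers against the reciprocity sign is correct.

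The only place your sketch is thin is Step~3, the sign determination. Knowing $U^4=I$ and that $U^2$ is the flip $e_j\mapsto e_{-j}$ constrains the eigenvalue multiplicities but does not by itself determine them; Schur's argument needs one further invariant (typically $\det U$, computed via a Vandermonde) to resolve the last ambiguity and extract the $+$ sign. Your alternative route through the imaginary transformation of $\VT_3$ in Proposition~\ref{imaginarypp} is legitimate and in fact closer in spirit to this paper; if you pursue it, be explicit about letting $\tau\to h/m$ along $\tau=h/m+iy$ with $y\downarrow 0$ and comparing the leading asymptotics on both sides of $\VT_3(0\mid-1/\tau)=\sqrt{-i\tau}\,\VT_3(0\mid\tau)$, which yields both $\sqrt{m}$ and the factor $i^{(m-1)^2/4}$ in one stroke.
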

Appealing to the above proposition and making a simple calculation we can derive the following proposition.
\begin{prop}\label{dirichletgauss:n2} If $m$ is square-free such that
	$m\equiv 1 \pmod 4$	and $h$ is any positive integer, then we have 
	\begin{equation}\label{dg:eqn2}
		\sum_{k=1}^{\frac{m-1}{2}}\(\frac{k}{m}\) \cos \(\frac{2kh\pi }{m}\)=\frac{1}{2}\(\frac{h}{m}\)  \sqrt{m}.
	\end{equation}	
\end{prop}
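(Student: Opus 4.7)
The plan is to derive Proposition~\ref{dirichletgauss:n2} directly from Proposition~\ref{dirichletgauss:n1} by a symmetrization of Dirichlet's Gauss sum. First I would specialize \reff{dg:eqn1} using the hypothesis $m\equiv 1\pmod 4$: writing $m-1=4\ell$ gives $(m-1)^2/4=4\ell^2$, which is a multiple of $4$, so $i^{(m-1)^2/4}=1$. Consequently \reff{dg:eqn1} collapses to
\[
\sum_{k=1}^{m-1}\(\frac{k}{m}\) e^{\frac{2kh\pi i}{m}}=\(\frac{h}{m}\)\sqrt{m}.
\]

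The second step is to fold the sum on the left in half via the substitution $k\mapsto m-k$ on the range $(m+1)/2\le k\le m-1$. Two elementary facts make the fold exact. The exponential satisfies $e^{2(m-k)h\pi i/m}=e^{-2kh\pi i/m}$ since $e^{2h\pi i}=1$, and under the same congruence $m\equiv 1\pmod 4$ the first supplementary law of quadratic reciprocity gives $(\frac{-1}{m})=1$, so that $(\frac{m-k}{m})=(\frac{-1}{m})(\frac{k}{m})=(\frac{k}{m})$. Pairing the two halves of the sum therefore converts each pair of exponentials into $2\cos(2kh\pi/m)$, and the left-hand side becomes
\[
2\sum_{k=1}^{(m-1)/2}\(\frac{k}{m}\)\cos\(\frac{2kh\pi}{m}\).
\]
Equating this with $(\frac{h}{m})\sqrt{m}$ and dividing by $2$ produces the identity \reff{dg:eqn2}.

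There is no genuine obstacle here: the two reductions $i^{(m-1)^2/4}=1$ and $(\frac{-1}{m})=1$ are both standard consequences of $m\equiv 1\pmod 4$, and the rest is a routine index change. The only care needed is to record precisely where the hypothesis $m\equiv 1\pmod 4$ is used, since both the vanishing of the $i$-factor and the invariance of the Jacobi symbol under $k\mapsto m-k$ would fail without it.
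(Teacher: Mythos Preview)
Your proposal is correct and is precisely the ``simple calculation'' the paper alludes to: the paper does not spell out the argument beyond saying that Proposition~\ref{dirichletgauss:n2} follows from Proposition~\ref{dirichletgauss:n1}, and your symmetrization $k\mapsto m-k$ together with the two reductions $i^{(m-1)^2/4}=1$ and $\(\frac{-1}{m}\)=1$ under $m\equiv 1\pmod 4$ is exactly what is intended.
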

Using this proposition and the Kiepert quintuple product identity we can prove Proposition~\ref{dirichletgaussKiepert}.

\begin{proof} If we replace $x$ by $(2k\pi)/m$ in \reff{KL:eqn2} and then multiply the resulting equation by $(\frac{k}{m})$, then we deduce that
	\begin{align*}
		&2\sum_{n=-\infty}^\infty (-1)^n q^{n(3n+1)/2}\(\frac{k}{m}\) \cos \(\frac{2(6n+1)k \pi}{m}\)\\
		&=\(\prod_{n=1}^\infty (1-q^n)\) \(\frac{k}{m}\)
		\frac{\T_1(\frac{4k\pi}{m}|\tau)}{\T_1(\frac{2k\pi}{m}|\tau)}.
	\end{align*}
	Putting $k=1, 2, \ldots, {m-1}$ respectively in the above equation and then summing the resulting equations, we conclude that
	\begin{align}\label{dg:eqn3}
		&2\sum_{n=-\infty}^\infty (-1)^n q^{n(3n+1)/2}\sum_{k=1}^{\frac{m-1}{2}} \(\frac{k}{m}\) \cos \(\frac{2(6n+1)k \pi}{m}\)\\
		&=\(\prod_{n=1}^\infty (1-q^n)\) \sum_{k=1}^{\frac{m-1}{2}} \(\frac{k}{m}\)
		\frac{\T_1(\frac{4k\pi}{m}|\tau)}{\T_1(\frac{2k\pi}{m}|\tau)}.\nonumber
	\end{align}	
	With the help of Proposition~\ref{dirichletgauss:n2}, we find that
	\[
	\sum_{k=1}^{\frac{m-1}{2}} \(\frac{k}{m}\) \cos \(\frac{2(6n+1)k \pi}{m}\)
	=\frac{1}{2}\(\frac{6n+1}{m}\) \sqrt{m}.
	\]
	Substituting the above equation into the left-hand side of \reff{dg:eqn3}, we complete the proof of Proposition~\ref{dirichletgaussKiepert}.	
\end{proof}
\subsection{A new form of the quintuple product identity} 
The following proposition  provides a new  form for the Kiepert quintuple product identity.
\begin{prop}\label{4degreethm:sp4} Let $(\frac{m}{n})$  be the Kronecker symbol. Then we have 
	\begin{equation} \label{newf:eqn1}
	2\sum_{n=1}^\infty \(\frac{12}{n}\) q^{\frac{n^2}{24}} \cos n z	=\eta(\tau)\frac{\T_1(2z|\tau)}{\T_1(z|\tau)}. 
	\end{equation}		
\end{prop}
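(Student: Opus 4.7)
The plan is to obtain Proposition~\ref{4degreethm:sp4} as a straightforward repackaging of the Kiepert quintuple product identity (Theorem~\ref{KLthm:a}), with no additional analytic input — only a reindexing of the summation together with the standard values of the Kronecker symbol $(\frac{12}{n})$.

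First I would rewrite Kiepert's identity \reff{KL:eqn2} after multiplying both sides by $q^{1/24}$. Since $\eta(\tau)=q^{1/24}\prod_{n=1}^\infty(1-q^n)$, the right-hand side becomes $\eta(\tau)\T_1(2z|\tau)/\T_1(z|\tau)$. Setting $m=6n+1$ on the left, the exponent transforms as $n(3n+1)/2+1/24=(m^2-1)/24+1/24=m^2/24$, so Kiepert becomes
\begin{equation*}
2\sum_{n=-\infty}^\infty (-1)^n q^{m^2/24}\cos(mz)=\eta(\tau)\frac{\T_1(2z|\tau)}{\T_1(z|\tau)},\qquad m=6n+1.
\end{equation*}

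Next I would observe that the map $n\mapsto m=6n+1$ is a bijection from $\Z$ onto the set of integers $\equiv 1\pmod 6$. Because $\cos(mz)$ is even in $m$ and because exactly one of $\ell$ and $-\ell$ lies in this set for each positive $\ell$ coprime to $6$, the sum over $n\in\Z$ rewrites as a sum over positive $\ell$ with $\gcd(\ell,6)=1$, each term carrying a sign $(-1)^n$ where $\ell=|6n+1|$.

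The key (and only) identification is $(-1)^n=\bigl(\frac{12}{\ell}\bigr)$ with $\ell=|6n+1|$. I would verify this by splitting on the parity of $n$: if $n=2j$ then $m=12j+1$, so $\ell\equiv\pm1\pmod{12}$ and $(\frac{12}{\ell})=1=(-1)^n$; if $n=2j+1$ then $m=12j+7$, so $\ell\equiv\pm 5\pmod{12}$ and $(\frac{12}{\ell})=-1=(-1)^n$. Substituting this in and using $\cos(mz)=\cos(\ell z)$, the left-hand side becomes
\begin{equation*}
2\sum_{\substack{\ell\ge 1\\ \gcd(\ell,6)=1}}\Bigl(\frac{12}{\ell}\Bigr)q^{\ell^2/24}\cos(\ell z).
\end{equation*}

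Finally, since $(\frac{12}{\ell})=0$ whenever $\gcd(\ell,12)>1$, the condition $\gcd(\ell,6)=1$ may be dropped and the sum extended to all $\ell\ge 1$, yielding \reff{newf:eqn1}. The only thing that could be called an ``obstacle'' is the bookkeeping in the third step — the verification that the parity of $n$ matches the sign of the Kronecker symbol $(\frac{12}{|6n+1|})$ — but this is a one-line check on residues modulo $12$.
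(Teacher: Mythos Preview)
Your argument is correct: the reindexing $m=6n+1$, the sign identification $(-1)^n=(\tfrac{12}{|6n+1|})$ via residues mod $12$, and the final extension to all $\ell\ge1$ are all sound.

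However, your route differs from the paper's. You deduce the proposition as a direct reformulation of the already-established Kiepert identity (Theorem~\ref{KLthm:a}) by pure bookkeeping. The paper instead gives an independent derivation from the general machinery of Theorem~\ref{KLthm}: it applies that theorem to the function $F(z|\tau)=\bigl(\T_1(z+\tfrac{2\pi}{3}\mid\tfrac{\tau}{3})-\T_1(z-\tfrac{2\pi}{3}\mid\tfrac{\tau}{3})\bigr)\T_1(z|\tau)$, determines the constant $C$ by evaluating at $z=\pi/3$, and then expands the $\T_1$-difference as a series to read off the Kronecker-symbol coefficients. The paper even remarks afterward that Proposition~\ref{4degreethm:sp4} is equivalent to Theorem~\ref{KLthm:a}, which is exactly what your proof exhibits explicitly. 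Your approach is shorter and more elementary; the paper's approach has the advantage of being self-contained from Theorem~\ref{KLthm} and of illustrating once more how that theorem generates identities directly.
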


\begin{proof}  With the help of Proposition~\ref{doubleperiods} it is easy to   verify that the entire function
	\[
	F(z|\tau)=\(\T_1\(z+\frac{2\pi}{3}|\frac{\tau}{3}\)-\T_1\(z-\frac{2\pi}{3}|\frac{\tau}{3}\)\)\T_1(z|\tau)
	\]
	satisfies the conditions of Theorem~\ref{KLthm}.  Hence there exists a constant $C$ independent of $z$ such that
	\[
	\(\T_1\(z+\frac{2\pi}{3}|\frac{\tau}{3}\)-\T_1\(z-\frac{2\pi}{3}|\frac{\tau}{3}\)\)\T_1(z|\tau)=C\T_1(2z|\tau).
	\]
	
	Setting $y=\pi/3$ in the above equation and noting that 
	$\T_1(\frac{\pi}{3}|\frac{\tau}{3})=\sqrt{3}\eta(\tau)$ we conclude that
	\begin{equation}\label{newf:eqn2}
	\T_1\(z+\frac{2\pi}{3}|\frac{\tau}{3}\)-\T_1\(z-\frac{2\pi}{3}|\frac{\tau}{3}\)=\sqrt{3}\eta(\tau)\frac{\T_1(2z|\tau)}{\T_1(z|\tau)}.
	\end{equation}
	Using the series representation of $\T_1(z|\tau)$ in Definition~\ref{jtheta} and a simple calculation, we have 
	\begin{align}\label{newf:eqn3}
		\T_1\(z+\frac{2\pi}{3}|\frac{\tau}{3}\)-\T_1\(z-\frac{2\pi}{3}|\frac{\tau}{3}\)=2\sqrt{3}\sum_{n=1}^\infty \(\frac{12}{n}\) q^{\frac{n^2}{24}} \cos n z.\nonumber
	\end{align}
Combining the above two equations, we complete the proof of Proposition~\ref{4degreethm:sp4}.
\end{proof}
It can be showed  that Proposition~\ref{4degreethm:sp4} is equivalent to the Kiepert quintuple product identity in Theorem~\ref{KLthm:a}. I think that Proposition~\ref{4degreethm:sp4} is more beautiful in form than  Theorem~\ref{KLthm:a}.
Setting $z=0$ in Proposition~\ref{4degreethm:sp4}, we immediately obtain 
the following proposition (see, for example \cite[p.(xiii)]{Kohler2011}).
\begin{cor}[Euler's pentagonal number theorem]\label{KLexmp:n1} 
	\begin{equation}\label{newf:eqn4}
		\eta(\tau)=\sum_{n=1}^\infty \(\frac{12}{n}\)q^{\frac{n^2}{24}}.
	\end{equation}	
	\end{cor}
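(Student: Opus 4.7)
The plan is to specialize Proposition~\ref{4degreethm:sp4} at $z=0$ and compute the resulting $0/0$ limit on the right-hand side. This is signaled directly by the excerpt, so the proof should be a short verification rather than a substantial argument.

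First, I would handle the left-hand side of the identity
\[
2\sum_{n=1}^\infty \(\frac{12}{n}\) q^{\frac{n^2}{24}} \cos n z = \eta(\tau)\frac{\T_1(2z|\tau)}{\T_1(z|\tau)}.
\]
Setting $z=0$ replaces each $\cos(nz)$ by $1$, so the left side collapses to $2\sum_{n=1}^\infty (\frac{12}{n}) q^{n^2/24}$. The series converges absolutely for $|q|<1$, so the substitution is legitimate termwise.

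Second, I need to evaluate $\lim_{z\to 0}\T_1(2z|\tau)/\T_1(z|\tau)$. Since $z=0$ lies in the zero lattice $\Lambda$ of $\T_1(z|\tau)$, both numerator and denominator vanish at $z=0$. Applying L'Hospital's rule and using the notation $\VT_1'(\tau) = \T_1'(0|\tau)$ introduced before \reff{jabel:eqn18}, I get
\[
\lim_{z\to 0}\frac{\T_1(2z|\tau)}{\T_1(z|\tau)} = \lim_{z\to 0}\frac{2\T_1'(2z|\tau)}{\T_1'(z|\tau)} = \frac{2\VT_1'(\tau)}{\VT_1'(\tau)} = 2.
\]
Equivalently, one may extract this limit directly from the infinite product representation \reff{jabel:eqn4}, where the ratio reduces to $\sin(2z)/\sin z \to 2$ with the remaining product factors tending to $1$.

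Putting the two sides together gives $2\sum_{n=1}^\infty (\frac{12}{n})q^{n^2/24} = 2\eta(\tau)$, and dividing by $2$ yields \reff{newf:eqn4}. There is no real obstacle in this proof: the whole content is packaged in Proposition~\ref{4degreethm:sp4}, and this corollary is just the evaluation at the simplest point.
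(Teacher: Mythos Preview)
Your proof is correct and follows exactly the paper's approach: the paper simply says ``Setting $z=0$ in Proposition~\ref{4degreethm:sp4}, we immediately obtain'' the corollary, and you have spelled out the routine computation of the limit $\T_1(2z|\tau)/\T_1(z|\tau)\to 2$ that this entails.
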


\subsection{New proofs of Jacobi's two-square theorem and the two-triangular number theorem}

\begin{thm} \label{jacobi2square} There holds the identity
	\begin{equation}\label{jacsquare:eqn4}
		\(\sum_{n=-\infty}^\infty  q^{n^2}\)^2
		=1+4\sum_{n=0}^\infty 
		\(\frac{q^{4n+1}}{1-q^{4n+1}}-\frac{q^{4n+3}}{1-q^{4n+3}}\).
	\end{equation}
	Consequently, the number $r_2(n)$ of representation of the positive integer $n$
	as a sum of two squares  is given by
	\begin{equation}\label{jacsquare:eqn5}
		r_2(n)=4(d_1(n)-d_3(n)), 
	\end{equation}
	where
	\begin{equation}\label{jacsquare:eqn6}
		d_k(n)=\sum_{d|n, d \equiv k \pmod 4}1. 
	\end{equation}	
\end{thm}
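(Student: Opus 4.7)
The plan is to read both sides of \reff{jacsquare:eqn4} as the value of a single theta-function quantity, then reduce to a Lambert series identity for $\VT_3^2(2\tau)$. Since the Kronecker symbol $\left(\frac{-4}{n}\right)$ takes the values $1,0,-1,0$ on residues $1,2,3,0\pmod{4}$, the sum on the right rewrites as $1+4\sum_{n\geq 1}\left(\frac{-4}{n}\right)\frac{q^n}{1-q^n}$. Using $\sin(n\pi/2)=\left(\frac{-4}{n}\right)$ together with $\cot(\pi/4)=1$, substituting $z=\pi/4$ into \reff{jabel:eqn13} identifies this Lambert series with $(\log\T_1)'(\pi/4|\tau)$.

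Next, since $\phi(q)=\VT_3(2\tau)$ (replace $\tau$ by $2\tau$ in the series defining $\VT_3$ in Definition~\ref{jtheta}), the theorem is equivalent to $\VT_3^2(2\tau)=(\log\T_1)'(\pi/4|\tau)$. A direct logarithmic differentiation of the product \reff{jabel:eqn4} at $e^{2iz}=i$ and a short partial-fraction manipulation yield the alternative form $(\log\T_1)'(\pi/4|\tau)=1+4\sum_{n\geq 1}\frac{q^n}{1+q^{2n}}$. So the problem reduces to the theta-square Lambert series
\[
\VT_3^2(2\tau)=1+4\sum_{n\geq 1}\frac{q^n}{1+q^{2n}}.
\]

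To establish this reduced identity within the paper's framework, I would apply Theorem~\ref{liuaddthm} to the even entire function $f(z|\tau)=\T_3^4(z|\tau)\T_4^4(z|\tau)$ of degree $8$. Using Proposition~\ref{halfperiods} and Proposition~\ref{doubleperiods}, one checks that $f(0|\tau)=f(\pi/2|\tau)=\VT_3^4(\tau)\VT_4^4(\tau)$ (via $\T_3(z+\pi/2|\tau)=\T_4(z|\tau)$), while $f((\pi+\pi\tau)/2|\tau)=f(\pi\tau/2|\tau)=0$ (since $\T_3((\pi+\pi\tau)/2|\tau)$ and $\T_4(\pi\tau/2|\tau)$ both vanish). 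Substituting these values into \reff{jabel:eqn23} produces a theta identity that, combined with the Jacobi duplication formulas $\T_3(z|\tau)\T_4(z|\tau)=\VT_4(2\tau)\T_4(2z|2\tau)$ and $\VT_3^2(\tau)+\VT_4^2(\tau)=2\VT_3^2(2\tau)$ (both derivable from Theorem~\ref{liuaddthm} or Proposition~\ref{Jacobiabstruse} by suitable specialization), delivers the required Lambert series for $\VT_3^2(2\tau)$.

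The main obstacle is the algebraic bookkeeping in the last step---distilling the identity produced by Theorem~\ref{liuaddthm} into precisely the Lambert series form stated on the right of \reff{jacsquare:eqn4}, which requires careful use of the duplication relations among $\VT_j(\tau)$ and $\VT_j(2\tau)$. Once \reff{jacsquare:eqn4} is established, the arithmetic consequence \reff{jacsquare:eqn5} follows by expanding $\frac{1}{1+q^{2n}}=\sum_{k\geq 0}(-1)^k q^{2nk}$: the coefficient of $q^m$ on the right becomes $\sum_{d\mid m,\,d\text{ odd}}(-1)^{(d-1)/2}=d_1(m)-d_3(m)$, while the coefficient of $q^m$ on the left is $r_2(m)$, giving $r_2(m)=4(d_1(m)-d_3(m))$.
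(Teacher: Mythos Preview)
Your initial reductions are correct and overlap with the paper: recognizing the right-hand side of \reff{jacsquare:eqn4} as $(\log\T_1)'(\pi/4\mid\tau)$ via \reff{jabel:eqn13}, and noting that the goal is to show this equals $\phi^2(q)$. The difficulty is in the step you flag as ``algebraic bookkeeping.'' Applying Theorem~\ref{liuaddthm} to $f(z|\tau)=\T_3^4(z|\tau)\T_4^4(z|\tau)$ does produce a valid identity (your evaluations $f(0)=f(\pi/2)=\VT_3^4\VT_4^4$ and $f((\pi+\pi\tau)/2)=f(\pi\tau/2)=0$ are correct), but the resulting equation is a purely algebraic relation among theta quotients in the two variables $x,y$. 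It contains no logarithmic derivative and no Lambert series, and there is no specialization or limit that makes $(\log\T_1)'(\pi/4\mid\tau)$ appear. The duplication formulas you cite relate theta values, not their logarithmic derivatives, so they cannot bridge this gap. In short, this is not bookkeeping that remains to be done; the chosen $f$ does not connect to the target identity.

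The paper's route is different in exactly this respect. It applies Theorem~\ref{KLthm} (the degree-$4$ corollary of Theorem~\ref{liuaddthm}) to the odd part of $\T_1^4(z+\pi/4\mid\tau)$, obtaining
\[
\T_1^4\!\Big(z+\tfrac{\pi}{4}\,\Big|\,\tau\Big)-\T_1^4\!\Big(z-\tfrac{\pi}{4}\,\Big|\,\tau\Big)=\VT_2^3(\tau)\,\T_1(2z\mid\tau),
\]
and then \emph{differentiates at $z=0$}. That differentiation is what produces $(\log\T_1)'(\pi/4\mid\tau)$ on one side and $\VT_2^3(\tau)\VT_1'(\tau)\big/4\T_1^4(\pi/4\mid\tau)$ on the other; the latter is then reduced to $\phi^2(q)$ by the infinite-product formulas. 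If you want to salvage your approach, you need a choice of test function whose identity, after a derivative in one variable, manufactures $(\log\T_1)'(\pi/4\mid\tau)$; the function $\T_3^4\T_4^4$ does not do this.
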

This theorem can be found in \cite[Theorem~3.2.1]{Berndt06}, \cite[pp.15-16]{Grosswald1985} and \cite[Eq. (1.2)]{ChanKrattenthaler2005}.

\begin{proof}By taking $F(z|\tau)=\T_1^4(z+\frac{\pi}{4}|\tau)$ in Theorem~\ref{KLthm} we can easily deduce that
	\begin{equation}\label{jacsquare:eqn7}
		\T_1^4\(z+\frac{\pi}{4}|\tau\)-\T_1^4\(z-\frac{\pi}{4}|\tau\)=\VT_2^3(\tau)\T_1(2z|\tau).
	\end{equation}
	Differentiating both sides of the above equation with respect to $z$ and then letting $z=0$, we easily find that	
	\begin{equation}\label{jacsquare:eqn8}
		\(\log \T_1\)'(\frac{\pi}{4}|\tau)=\frac{\VT_2^3(\tau)
			\VT_1'(\tau)}{4\T_1^4\(\frac{\pi}{4}\Big|\tau\)}.
	\end{equation}
	Using the infinite product representations of $\T_1$ and $\T_2$, one can easily find that
	\begin{align*}
		\VT_1'(\tau)&=2q^{1/8}\prod_{n=1}^\infty (1-q^n)^3,\\
		\VT_2(\tau)&=2q^{1/8}\prod_{n=1}^\infty (1-q^n)(1+q^n)^2,\\
		\T_1\(\frac{\pi}{4}\Big|\tau\)&=\sqrt{2}q^{1/8}\prod_{n=1}^\infty (1-q^n)(1+q^{2n}).
	\end{align*}
	Substituting the above three equations into the right-hand side of \reff{jacsquare:eqn8}, we deduce  that
	\begin{equation}\label{jacsquare:eqn9}
		\(\log \T_1\)'(\frac{\pi}{4}|\tau)=\prod_{n=1}^\infty (1-q^{2n})^2 (1+q^{2n-1})^4.
	\end{equation}
	Setting $z=\pi/4$ in \reff{jabel:eqn13} and  by a simple calculation, we easily find that
	\begin{equation}\label{jacsquare:eqn10}
		\(\log \T_1\)'(\frac{\pi}{4}|\tau)	=1+4\sum_{n=0}^\infty 
		\(\frac{q^{4n+1}}{1-q^{4n+1}}-\frac{q^{4n+3}}{1-q^{4n+3}}\).
	\end{equation}
	Combining the above equations we immediately conclude that
	\[
	\prod_{n=1}^\infty (1-q^{2n})^2 (1+q^{2n-1})^4
	=1+4\sum_{n=0}^\infty 
	\(\frac{q^{4n+1}}{1-q^{4n+1}}-\frac{q^{4n+3}}{1-q^{4n+3}}\).
	\]
	Substituting the above equation and the first identity in \reff{jabel:eqn37}  into the left-hand side of the above equation we arrive at \reff{jacsquare:eqn4}. This completes the proof of Theorem~\ref{jacobi2square}.
\end{proof}

\begin{thm}\label{2trigothm} Let $\psi(q)$ be defined by \reff{jabel:eqn35}. Then  we have 
	\begin{equation}\label{jacsquare:eqn11}
		\psi^2(q^2)=\sum_{n=0}^\infty (-1)^n \frac{q^n}{1-q^{2n+1}},
	\end{equation}	
	and thus
	\begin{equation}\label{jacsquare:eqn12}
		t_2(n)=d_1(4n+1)-d_3(4n+1).
	\end{equation}
\end{thm}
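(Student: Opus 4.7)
The plan is to parallel the proof of Theorem~\ref{jacobi2square}, replacing $\T_1$ by $\T_4$. In Theorem~\ref{KLthm} I would take
\[F(z|\tau) = \T_4^4(z+\pi/4|\tau) - \T_4^4(z-\pi/4|\tau),\]
which is odd and, because $\T_4(z+\pi|\tau)=\T_4(z|\tau)$, $\T_4(z+\pi\tau|\tau) = -e^{-(2z+\pi\tau)i}\T_4(z|\tau)$, and $e^{-8i\cdot\pi/4}=1$, satisfies the required quasi-periodicity. Hence $F(z|\tau) = C\,\T_1(2z|\tau)$ for some constant $C$. Substituting $z=\pi/4$ and using $\T_4(w+\pi/2|\tau)=\T_3(w|\tau)$ (a direct check from the infinite products in Proposition~\ref{infiniteprod}), the left side collapses to $\VT_3^4(\tau)-\VT_4^4(\tau)$, which by Jacobi's quartic identity \reff{jabel:eqn32} equals $\VT_2^4(\tau)$; comparing with the right side $C\VT_2(\tau)$ yields $C=\VT_2^3(\tau)$.

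Differentiating the resulting identity $\T_4^4(z+\pi/4|\tau)-\T_4^4(z-\pi/4|\tau) = \VT_2^3(\tau)\T_1(2z|\tau)$ at $z=0$ produces the analogue of \reff{jacsquare:eqn8}:
\[(\log \T_4)'(\pi/4|\tau) = \frac{\VT_2^3(\tau)\VT_1'(\tau)}{4\,\T_4^4(\pi/4|\tau)}.\]
I would then insert the infinite-product expressions $\VT_1'(\tau) = 2q^{1/8}\prod(1-q^n)^3$, $\VT_2(\tau)=2q^{1/8}\prod(1-q^n)(1+q^n)^2$, and $\T_4(\pi/4|\tau)=\prod(1-q^n)(1+q^{2n-1})$ (the last by setting $z=\pi/4$ in Proposition~\ref{infiniteprod}), split $\prod(1+q^n)$ and $\prod(1-q^n)$ into their odd-index and even-index subproducts, and apply the telescoping relations $(1-q^{2n-1})(1+q^{2n-1})=1-q^{4n-2}$, $(1-q^{2n})(1+q^{2n})=1-q^{4n}$, and $\prod(1-q^{4n-2})(1-q^{4n})=\prod(1-q^{2n})$. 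The right side then collapses to $4q^{1/2}\prod(1-q^{2n})^2(1+q^{2n})^4 = 4q^{1/2}\psi^2(q^2)$ by \reff{jabel:eqn37}.

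On the other hand, specializing the Lambert series \reff{jabel:eqn38} for $(\log \T_4)'(z|\tau)$ at $z=\pi/4$, the factor $\sin(n\pi/2)$ vanishes for even $n$ and equals $(-1)^{(n-1)/2}$ for odd $n=2k+1$, so
\[(\log \T_4)'(\pi/4|\tau) = 4q^{1/2}\sum_{n=0}^\infty (-1)^n\frac{q^n}{1-q^{2n+1}}.\]
Equating the two expressions for $(\log \T_4)'(\pi/4|\tau)$ gives \reff{jacsquare:eqn11}. For the divisor consequence \reff{jacsquare:eqn12}, I would expand the Lambert sum as $\sum_{n,k\geq 0}(-1)^n q^{n(2k+1)+k}$; terms contributing to $q^{2N}$ correspond bijectively to divisors $d=2k+1$ of $4N+1$ via $m=(4N+1)/d$ and $n=(m-1)/2$, and since $dm\equiv 1\pmod 4$ the sign $(-1)^{(m-1)/2}$ equals the non-trivial character $\chi(d)$ modulo $4$, making the coefficient of $q^{2N}$ equal to $d_1(4N+1)-d_3(4N+1)$. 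The one delicate step is the infinite-product simplification in the second paragraph; every other piece is a structural copy of the two-squares argument or a routine reindexing.
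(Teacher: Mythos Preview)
Your proposal is correct and follows essentially the same route as the paper: apply Theorem~\ref{KLthm} to the odd function $\T_4^4(z+\pi/4|\tau)-\T_4^4(z-\pi/4|\tau)$, differentiate the resulting identity at $z=0$, evaluate one side via infinite products and the other via the Lambert expansion \reff{jabel:eqn38} for $(\log\T_4)'$, and equate. The paper simply asserts $C=\VT_2^3(\tau)$ and the product simplification $4q^{1/2}\prod(1-q^{2n})^2(1+q^{2n})^4$, whereas you supply the mechanism (evaluation at $z=\pi/4$ together with Jacobi's quartic identity \reff{jabel:eqn32}, and the odd/even index splitting of the Euler products); you also spell out the divisor-sum reading of \reff{jacsquare:eqn12}, which the paper leaves implicit. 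These are elaborations, not a different argument.
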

This theorem can be found in \cite[p.397, Entry 18.2.4]{ABLost} and \cite[Theorem~7]{Ewell1992}.
\begin{proof}
	By taking $F(z|\tau)=\T_4^4(z+\frac{\pi}{4}|\tau)$ in Theorem~\ref{KLthm} we can easily deduce that
	\begin{equation}\label{jacsquare:eqn13}
		\T_4^4\(z+\frac{\pi}{4}|\tau\)-\T_4^4\(z-\frac{\pi}{4}|\tau\)=\VT_2^3(\tau)\T_1(2z|\tau).
	\end{equation}	
	Differentiating both sides of the above equation with respect to $z$ and then setting $z=0$ we conclude that 
	\begin{equation}\label{jacsquare:eqn14}
		\(\log \T_4\)'(\frac{\pi}{4}|\tau)=\frac{\VT_2^3(\tau)
			\VT_1'(\tau)}{4\T_4^4\(\frac{\pi}{4}\Big|\tau\)}=
		4q^{1/2}\prod_{n=1}^\infty (1-q^{2n})^2(1+q^{2n})^4,
	\end{equation}
	by using the infinite product representation for $\VT_1'(\tau)$ and $\VT_2(\tau)$ and
	\[
	\T_4\(\frac{\pi}{4}|\tau\)=\prod_{n=1}^\infty (1-q^n) (1+q^{2n-1}).
	\]
	Appealing to the trigonometric series expansion for the partial logarithmic derivative of $\T_4(z|\tau)$ in \reff{jabel:eqn38} we easily find that 
	\begin{equation}\label{jacsquare:eqn15}
		\(\log \T_1\)'(\frac{\pi}{4}|\tau)=4q^{1/2}\sum_{n=0}^\infty (-1)^n \frac{q^n}{1-q^{2n+1}}.
	\end{equation}
	Combining the above equation  and \reff{jacsquare:eqn14} and then using the second identity in \reff{jabel:eqn37}, we complete the proof of Theorem~\ref{2trigothm}.
\end{proof}
By choosing $F(z|\tau)=e^{2iz}\T_3(4z+\pi\tau|4\tau)$ in  Theorem~\ref{KLthm}  and making some elementary calculations we deduce that
\begin{equation}\label{KL:eqn14}
	e^{2iz}\T_3(4z+\pi\tau|4\tau)-e^{-2iz}\T_3(4z-\pi\tau|4\tau)=iq^{-1/8}\T_1(2z|\tau).
\end{equation}
\subsection{Parameterizations of two Eisenstein series identities related to modular equations of degree $5$ due to Ramanujan}
\begin{thm}\label{RLthm}Let $\(\frac{a}{p}\)$ be the Legendre symbol modulo $p$ and let $\eta(\tau)$ be the Dedekind eta function. Then we have
	\begin{equation}\label{RLiu:eqn1}	
		\frac{\sin z  \sin 2 z}{\sin 5z}-\sum_{n=1}^\infty 
		\(\frac{n}{5}\) \frac{q^n}{1-q^n} \sin 2nz
		=\frac{ \eta^2(5\tau)\T_1(z|\tau)\T_1(2z|\tau)}{2\eta(\tau)\T_1(5z|5\tau)},	
	\end{equation}	
	and 
	\begin{align}\label{RLiu:eqn2}
		\sum_{n=1}^\infty \frac{(q^n-q^{2n}-q^{3n}+q^{4n})}{1-q^{5n}} \sin 2nz
		=\frac{\eta^2(\tau)\T_1(z|5\tau)\T_1(2z|5\tau)}{2\eta(5\tau)\T_1(z|\tau)}.
	\end{align}
\end{thm}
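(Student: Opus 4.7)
The plan is to derive both identities by applying Theorem~\ref{degree3addthm} to a suitable pair of odd entire theta functions of degree~$3$, in parallel with the proofs of the Lambert-series identities \reff{liu:eqn4} and \reff{liu:eqn5} given earlier in Section~3, then extracting the one-variable identities by specialising one argument.

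For the first identity I would take
\[
F(z|\tau)=\T_1(z|\tau)\T_1\(z-\tfrac{\pi}{5}|\tau\)\T_1\(z+\tfrac{\pi}{5}|\tau\),\qquad G(z|\tau)=\T_1(z|\tau)\T_1\(z-\tfrac{2\pi}{5}|\tau\)\T_1\(z+\tfrac{2\pi}{5}|\tau\);
\]
these are odd entire of degree~$3$ by Proposition~\ref{doubleperiods}. Theorem~\ref{degree3addthm} then gives
\[
F(x|\tau)G(y|\tau)-F(y|\tau)G(x|\tau)=C\,\T_1(x-y|\tau)\T_1(x+y|\tau)\T_1(x|\tau)\T_1(y|\tau).
\]
The key reduction is to invoke Proposition~\ref{multhetapp} in the case $n=5$, which identifies the full quintuple product $\T_1(z|\tau)\T_1(z\pm\pi/5|\tau)\T_1(z\pm 2\pi/5|\tau)$ with $\frac{\eta^5(\tau)}{\eta(5\tau)}\T_1(5z|5\tau)$, in order to re-express the four-factor sub-products appearing on the left in terms of $\T_1(5z|5\tau)$. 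Letting $y\to 0$ and using $\T_1(y|\tau)\sim 2\eta^3(\tau)y$ from \reff{jabel:eqn18}, the two-variable identity collapses into a one-variable identity of the form (theta combination)$(z)=C'\cdot\T_1(z|\tau)\T_1(2z|\tau)/\T_1(5z|5\tau)$; the trigonometric/Lambert-series structure on the left-hand side of the first identity then emerges by expanding that theta combination via the Fourier series \reff{jabel:eqn13} for $(\log\T_1)'$ specialised at the shifts $\pm k\pi/5$ and collecting coefficients against the Jacobi symbol $\(\frac{k}{5}\)$.

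The second identity then follows from the first by the Jacobi imaginary transformation of Proposition~\ref{imaginarypp} at $\tau\mapsto -1/(5\tau)$: this interchanges $\eta(\tau)\leftrightarrow \eta(5\tau)$ via Proposition~\ref{Dedekind-eta}, swaps $\T_1(\cdot|\tau)\leftrightarrow \T_1(\cdot|5\tau)$, and carries the mod-$5$ character sum $\sum_{n\ge 1}\(\frac{n}{5}\)\frac{q^n}{1-q^n}\sin 2nz=\sum_{n,k\ge 1}\(\frac{n}{5}\)q^{nk}\sin 2nz$ into its Hecke dual $\sum_{n,k\ge 1}\(\frac{k}{5}\)q^{nk}\sin 2nz=\sum_{n\ge 1}\frac{q^n-q^{2n}-q^{3n}+q^{4n}}{1-q^{5n}}\sin 2nz$ by interchanging the two summation indices. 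The main obstacle is the parity mismatch: the LHS of each identity is an odd function of $z$, yet the quotient $G/F$ of two odd entire functions is automatically even, so the one-variable identity cannot be read off simply by fixing one of the arguments in Theorem~\ref{degree3addthm}. One must work with the full antisymmetric two-variable combination and pass to the limit $y\to 0$ carefully, so that the even ``constant'' part drops out and only the desired odd function of $z$ survives. The final step is to pin down $C'=\eta^2(5\tau)/(2\eta(\tau))$ by matching leading-order asymptotics at $z=0$, using $\frac{\sin z\sin 2z}{\sin 5z}\sim\frac{2}{5}z$ on the left and $\T_1(z|\tau)\T_1(2z|\tau)/\T_1(5z|5\tau)\sim\frac{2\eta^6(\tau)}{5\eta^3(5\tau)}z$ on the right.
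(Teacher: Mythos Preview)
Your choice of $F$ and $G$ in Theorem~\ref{degree3addthm} does not lead to the Lambert series identity. With
\[
F(z)=\T_1(z|\tau)\T_1\!\bigl(z-\tfrac{\pi}{5}\big|\tau\bigr)\T_1\!\bigl(z+\tfrac{\pi}{5}\big|\tau\bigr),\qquad
G(z)=\T_1(z|\tau)\T_1\!\bigl(z-\tfrac{2\pi}{5}\big|\tau\bigr)\T_1\!\bigl(z+\tfrac{2\pi}{5}\big|\tau\bigr),
\]
dividing the conclusion of Theorem~\ref{degree3addthm} by $\T_1(x|\tau)\T_1(y|\tau)$ and letting $y\to 0$ gives
\[
\T_1^2\!\bigl(\tfrac{\pi}{5}\big|\tau\bigr)\,\T_1\!\bigl(x-\tfrac{2\pi}{5}\big|\tau\bigr)\T_1\!\bigl(x+\tfrac{2\pi}{5}\big|\tau\bigr)
-\T_1^2\!\bigl(\tfrac{2\pi}{5}\big|\tau\bigr)\,\T_1\!\bigl(x-\tfrac{\pi}{5}\big|\tau\bigr)\T_1\!\bigl(x+\tfrac{\pi}{5}\big|\tau\bigr)
=-C\,\T_1^2(x|\tau),
\]
which is just the specialisation $u=\pi/5$, $v=2\pi/5$, $y=0$ of the Weierstrass three-term relation \reff{liu:eqn12}. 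This is a purely algebraic relation among theta \emph{products}; no logarithmic derivatives appear, and there is no mechanism by which the Fourier expansion \reff{jabel:eqn13} for $(\log\T_1)'$ could be invoked. The Lambert series in \reff{RLiu:eqn1} simply does not ``emerge'' from this identity. In the genuine parallels you cite, \reff{liu:eqn4} and Proposition~\ref{degree3addthm:a}, the Lambert series is built into $G$ from the outset by taking $G=F\cdot h$ with $h$ already a log-derivative combination; your $F$ and $G$ are both pure theta products, so the mechanism is absent.

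The paper proves \reff{RLiu:eqn1} by a different route: it uses Theorem~\ref{KLthm} (the degree-$4$ structural lemma), applied to
\[
F(z|\tau)=\frac{\T_1(5z|5\tau)}{\T_1(z|\tau)}\Bigl(\frac{\sin z\,\sin 2z}{\sin 5z}-\sum_{n\ge 1}\Bigl(\frac{n}{5}\Bigr)\frac{q^n}{1-q^n}\sin 2nz\Bigr),
\]
which one checks is an odd entire degree-$4$ function, whence $F(z|\tau)=C\,\T_1(2z|\tau)$; the constant is then read off at $z\to\pi/5$. Here the Lambert series is inserted into the test function itself, not extracted afterwards. Note also that $\T_1(5z|5\tau)$ has degree $5$, not $3$, so even if you tried to mimic the proof of \reff{liu:eqn4} by taking $F=\T_1(5z|5\tau)$ in Theorem~\ref{degree3addthm}, the hypotheses would fail; this is exactly why the degree-$4$ theorem is the right tool. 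For \reff{RLiu:eqn2} the paper again applies Theorem~\ref{KLthm} directly rather than deducing it from \reff{RLiu:eqn1} by an imaginary transformation.
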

The identity in \reff{RLiu:eqn1} was first established by the author in  \cite[Proposition~5.1]{Liu2012JNT},  and \reff{RLiu:eqn2} is implied in
\cite[Theorem~1]{Liu2007JRMS} which was used to study the Eisenstein series associated with $\Gamma_0(5).$  For these two identities, see also \cite{Liu2021RamJ}.

Next we will show that the above two identities  are simple corollaries of Theorem~\ref{KLthm}.
\begin{proof} By a simple calculation we can verify that $F(z|\tau)$ satisfies the conditions of Theorem~\ref{KLthm}, where $F(z|\tau)$ is defined by
\[
F(z|\tau)=\frac{\T_1(5z|5\tau)}{\T_1(z|\tau)} 
\( 	\frac{\sin z~  \sin 2 z}{\sin 5z}-\sum_{n=1}^\infty 
\(\frac{n}{5}\) \frac{q^n}{1-q^n} \sin 2nz \).
\]	
Therefore there exists a constant $C$ independent of $z$ such that
\begin{equation}\label{RLiu:eqn3}
\frac{\T_1(5z|5\tau)}{\T_1(z|\tau)} 
\( 	\frac{\sin z~  \sin 2 z}{\sin 5z}-\sum_{n=1}^\infty 
\(\frac{n}{5}\) \frac{q^n}{1-q^n} \sin 2nz \)=C\T_1(2z|\tau).
\end{equation}	
Letting $z \to \pi/5$ in both sides of the above equation and using  L'Hospital's rule, we deduce that
\[
\VT_1'(0|5\tau) \(\sin \frac{\pi}{5} \sin \frac{2\pi}{5}\)
=C\T_1\(\frac{\pi}{5}|\tau\)\T_1\(\frac{2\pi}{5}|\tau\).  
\]
Substituting $4\sin \frac{\pi}{5} \sin \frac{2\pi}{5}=\sqrt{5}$	and 
$\T_1\(\frac{\pi}{5}|\tau\)\T_1\(\frac{2\pi}{5}|\tau\)=\sqrt{5} \eta(\tau)\eta(5\tau)$ into the above equation, we deduce that
\[
C=\frac{\eta^2(5\tau)}{2\eta(\tau)}.
\]
Substituting the above equation into \reff{RLiu:eqn3}, we arrive at \reff{RLiu:eqn1}.

Using the same method of proving \reff{RLiu:eqn1}, we can prove \reff{RLiu:eqn2} by using Theorem~\ref{KLthm}.
\end{proof}

\begin{conj}\label{liuconj} When $q \to 1$, the identity 
	in \reff{RLiu:eqn1} reduces to the formula
	\begin{equation}\label{conj:eqn1}
		\sum_{n=1}^\infty \(\frac{n}{5}\) \frac{x^n}{n}=\frac{1}{\sqrt{5}}
		\log \(\frac{2x^2+x+2+\sqrt{5}~ x}{2x^2+x+2-\sqrt{5}~x}\).
	\end{equation}
\end{conj}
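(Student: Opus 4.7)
The plan is to prove the conjecture in two parts: first to verify the closed-form identity analytically, and then to sketch how it arises as the degeneration of \reff{RLiu:eqn1} at $q\to 1^-$.

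For the direct verification, both sides vanish at $x=0$, so it suffices to match derivatives. Writing $\chi(n)=\(\frac{n}{5}\)$, which is periodic of period $5$ with values $(1,-1,-1,1,0)$, the series derivative telescopes as
\[
\sum_{n\ge 1}\chi(n)x^{n-1}=\frac{1-x-x^2+x^3}{1-x^5}=\frac{(1-x)^2(1+x)}{(1-x)(1+x+x^2+x^3+x^4)}=\frac{1-x^2}{1+x+x^2+x^3+x^4}.
\]
On the right-hand side of the conjecture, the factorization $4(1+x+x^2+x^3+x^4)=(2x^2+(1+\sqrt{5})x+2)(2x^2+(1-\sqrt{5})x+2)$ over $\mathbb{Q}(\sqrt{5})$ makes the logarithmic differentiation routine: with $u=2x^2+x+2+\sqrt{5}x$ and $v=2x^2+x+2-\sqrt{5}x$ one checks $u'v-uv'=4\sqrt{5}(1-x^2)$ and $uv=4(1+x+x^2+x^3+x^4)$, so after dividing by $\sqrt{5}$ the same rational function emerges.

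For the degeneration claim, the substitution $x=e^{2iz}$ together with $1-x^2=-2ie^{2iz}\sin 2z$ and $1+x+x^2+x^3+x^4=e^{4iz}\sin 5z/\sin z$ gives the clean chain-rule identity
\[
\frac{d}{dz}\left[\frac{1}{\sqrt{5}}\log\(\frac{2x^2+x+2+\sqrt{5}x}{2x^2+x+2-\sqrt{5}x}\)\right]=\frac{4\sin z\sin 2z}{\sin 5z}.
\]
Thus the conjectured closed form is, up to an additive constant fixed by $z=0$, $x=1$, four times the $z$-antiderivative of the leading trigonometric term on the left of \reff{RLiu:eqn1}. The strategy is then to integrate \reff{RLiu:eqn1} over $z$ from $0$ to $z$, multiply by $4$, and let $q\to 1^-$: the trigonometric integral already reproduces the conjectured logarithm, so the remaining task is to show that the integrated Lambert series and the integrated right-hand side of \reff{RLiu:eqn1} cancel in the limit. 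To handle the right-hand side I would apply the Fricke involution $\tau\to -1/(5\tau)$ using Propositions~\ref{imaginarypp} and \ref{Dedekind-eta}; a pleasing cancellation occurs because the exponential factors $e^{iz^2/(\pi\tau)}$ coming from the imaginary transformation of $\T_1(z|\tau)$, $\T_1(2z|\tau)$, and $\T_1(5z|5\tau)$ combine with weights $1+4-5=0$.

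The main obstacle is that $2\sum\chi(n)(q^n/(1-q^n))(1-\cos 2nz)/n$ blows up as $O(1/(1-q))$ as $q\to 1^-$, and so does the integrated right-hand side of \reff{RLiu:eqn1}; the identity demands that their leading singular parts cancel. Precisely matching the finite remainder, and in particular extracting the factor $1/\sqrt{5}$ in the closed form, will require the Gauss-sum evaluation $\omega^n+\omega^{4n}-\omega^{2n}-\omega^{3n}=\sqrt{5}\,\chi(n)$ for $\omega=e^{2\pi i/5}$, which is precisely what accounts for the value of the identity at $x=1$, namely $L(1,\chi)=2(\log\phi)/\sqrt{5}$ with $\phi=(1+\sqrt{5})/2$.
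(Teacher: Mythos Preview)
The paper states this as a \emph{conjecture} and offers no proof, so there is nothing in the paper to compare against.  Your direct verification of \reff{conj:eqn1} is correct and complete: both sides vanish at $x=0$, and the derivative computations
\[
\sum_{n\ge 1}\Bigl(\tfrac{n}{5}\Bigr)x^{n-1}=\frac{(1-x)^2(1+x)}{1-x^5}=\frac{1-x^2}{1+x+x^2+x^3+x^4},
\qquad
uv=4\sum_{k=0}^{4}x^{k},\quad u'v-uv'=4\sqrt{5}\,(1-x^2),
\]
check out line by line.  If the conjecture is read simply as asserting the truth of \reff{conj:eqn1}, you have settled it by an elementary argument the paper does not supply.

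If instead the conjecture is read literally---that some precise limit $q\to 1^{-}$ of \reff{RLiu:eqn1}, after integrating in $z$ and substituting $x=e^{2iz}$, produces \reff{conj:eqn1}---then your second half is only a programme, as you yourself flag.  You correctly isolate the mechanism (the weight cancellation $1+4-5=0$ for the Gaussian factors under the Fricke involution, and the Gauss-sum identity $\omega^{n}+\omega^{4n}-\omega^{2n}-\omega^{3n}=\sqrt{5}\,(\tfrac{n}{5})$), but the crux you name remains untouched: both the integrated Lambert series $2\sum_{n}\chi(n)\frac{q^{n}}{1-q^{n}}\frac{1-\cos 2nz}{n}$ and the integrated right-hand side of \reff{RLiu:eqn1} blow up like $(1-q)^{-1}$ as $q\to 1^{-}$, and you would need to match their singular parts exactly and then extract the finite remainder.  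That step is not carried out, and making it rigorous is genuinely delicate (it is a regularized-limit computation of Kronecker-limit-formula type).  So as written, the target identity \reff{conj:eqn1} is proved, but the ``degeneration from \reff{RLiu:eqn1}'' claim remains at the level of a heuristic.
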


Using the method similar to the proof of  Theorem~\ref{RLthm}, we can derive the following theorem.
\begin{thm}\label{RamCarlitz} We have 
\begin{equation}\label{ramcar:eqn1}
\sum_{n=1}^\infty \frac{nq^n}{1+q^n+q^{2n}} \sin 2nz
=\frac{\eta^3(\tau)\T_1(2z|3\tau)\T_1^2(z|3\tau)}{2\T_1^2(z|\tau)},	
\end{equation}
and
\begin{equation}\label{ramcar:eqn2}
	\frac{\sin^2 z \sin 2z}{\sin^2 3z}	-\sum_{n=1}^\infty \(\frac{n}{3}\) \frac{nq^n}{1-q^n} \sin 2nz
	=\frac{\eta^3(3\tau)\T_1^2(z|\tau)\T_1(2z|\tau)}{2\T_1^2(3z|3\tau)}.
\end{equation}			
\end{thm}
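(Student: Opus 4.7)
The plan is to mimic the proof of Theorem~\ref{RLthm}: for each identity I rewrite the left-hand side as a derivative of a theta quotient, reduce the identity to a Wronskian-type relation among theta functions, and then invoke Theorem~\ref{KLthm}.

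For \reff{ramcar:eqn2}, first let $y\to 0$ in \reff{liu:eqn4}; the right-hand side tends to $\T_1^3(x|\tau)/(3\T_1(3x|3\tau))$. Combining this with the elementary trigonometric identity $\frac{d}{dz}\frac{\sin z}{\sin 3z}=\frac{4\sin^2 z\sin 2z}{\sin^2 3z}$ and then differentiating in $z$, the left-hand side of \reff{ramcar:eqn2} becomes $\frac{1}{12}\frac{d}{dz}\(\T_1^3(z|\tau)/\T_1(3z|3\tau)\)$. Carrying out this derivative shows that \reff{ramcar:eqn2} reduces to the Wronskian identity
\begin{equation*}
\T_1'(z|\tau)\T_1(3z|3\tau)-\T_1(z|\tau)\T_1'(3z|3\tau)=2\eta^3(3\tau)\T_1(2z|\tau).
\end{equation*}
Denote the left-hand side by $h(z|\tau)$. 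It is clearly odd and entire, and $h(z+\pi|\tau)=h(z|\tau)$ is immediate from $\T_1(z+\pi|\tau)=-\T_1(z|\tau)$ and $\T_1(3z+3\pi|3\tau)=-\T_1(3z|3\tau)$. The key point is verifying $h(z+\pi\tau|\tau)=q^{-2}e^{-8iz}h(z|\tau)$; this follows by differentiating the quasi-periodicity relations $\T_1(z+\pi\tau|\tau)=-q^{-1/2}e^{-2iz}\T_1(z|\tau)$ and $\T_1(3z+3\pi\tau|3\tau)=-q^{-3/2}e^{-6iz}\T_1(3z|3\tau)$, which contribute extra $2i\T_1(z|\tau)$ and $6i\T_1(3z|3\tau)$ terms that cancel in pairs between the two summands of $h$. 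Theorem~\ref{KLthm} then forces $h(z|\tau)=C\T_1(2z|\tau)$, and matching the coefficient of $z$ at the origin (using $\T_1(0|\tau)=\T_1''(0|\tau)=0$ and $\VT_1'(3\tau)=2\eta^3(3\tau)$) pins down $C=2\eta^3(3\tau)$.

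For \reff{ramcar:eqn1} the same procedure applied to \reff{liu:eqn7} (sending $y\to 0$ and differentiating in $x$) yields
\[
\sum_{n=1}^\infty\frac{nq^n}{1+q^n+q^{2n}}\sin 2nz=\frac{1}{4}\frac{d}{dz}\frac{\T_1^3(z|3\tau)}{\T_1(z|\tau)},
\]
and \reff{ramcar:eqn1} becomes equivalent to the companion Wronskian
\begin{equation*}
3\T_1'(z|3\tau)\T_1(z|\tau)-\T_1(z|3\tau)\T_1'(z|\tau)=2\eta^3(\tau)\T_1(2z|3\tau).
\end{equation*}
The left-hand side is again odd and entire, but here Theorem~\ref{KLthm} must be invoked with $\tau$ replaced by $3\tau$: one checks $\pi$-periodicity and $F(z+3\pi\tau)=q^{-6}e^{-8iz}F(z)$ by the same differentiation technique, with the $i\T_1\T_1$ cross-terms again cancelling, and then matches the leading Taylor coefficient at the origin to identify the constant as $\VT_1'(\tau)=2\eta^3(\tau)$.

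The main obstacle in both cases is the verification of the quasi-periodicity of the Wronskian under $z\mapsto z+\pi\tau$ (respectively $z\mapsto z+3\pi\tau$). The design of each Wronskian as a ``Jacobian-type'' combination of $\T_1(\cdot|\tau)$ and $\T_1(\cdot|3\tau)$ is precisely what produces the cancellation of the spurious $i\T_1(z|\tau)\T_1(3z|3\tau)$ terms that appear after differentiating $e^{-2iz}$ and $e^{-6iz}$; this cancellation is the technical heart of the argument, but once set up carefully it is a direct calculation.
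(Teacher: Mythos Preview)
Your argument is correct. Both Wronskian identities check out: the cross-terms $-2i\T_1(z|\tau)\T_1(3z|3\tau)$ and $+2i\T_1(z|\tau)\T_1(3z|3\tau)$ (respectively $\mp 6i$) do cancel exactly as you claim, so Theorem~\ref{KLthm} applies (in the second case with $\tau$ replaced by $3\tau$), and the Taylor coefficient at the origin gives the right constants $\VT_1'(3\tau)=2\eta^3(3\tau)$ and $\VT_1'(\tau)=2\eta^3(\tau)$.

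Your route, however, differs from what the paper has in mind. The intended method, patterned on the proof of Theorem~\ref{RLthm}, is to apply Theorem~\ref{KLthm} \emph{directly} to the Lambert-series side: for \reff{ramcar:eqn2} one takes
\[
F(z|\tau)=\frac{\T_1^2(3z|3\tau)}{\T_1^2(z|\tau)}\Bigl(\frac{\sin^2 z\,\sin 2z}{\sin^2 3z}-\sum_{n\ge1}\Bigl(\tfrac{n}{3}\Bigr)\tfrac{nq^n}{1-q^n}\sin 2nz\Bigr),
\]
verifies that this is an odd entire function satisfying the degree-$4$ functional equations, and then fixes the constant. The analogous choice with the roles of $\tau$ and $3\tau$ swapped handles \reff{ramcar:eqn1}. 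This avoids any appeal to \reff{liu:eqn4} or \reff{liu:eqn7}; the price is that one must verify quasi-periodicity for a function containing a Lambert series, which is a bit more delicate than your pure-theta Wronskians. Your approach, by contrast, first uses the degree-$3$ identities \reff{liu:eqn4} and \reff{liu:eqn7} from Section~2 to collapse the Lambert series into $\tfrac{1}{12}\partial_z\bigl(\T_1^3(z|\tau)/\T_1(3z|3\tau)\bigr)$ and $\tfrac14\partial_z\bigl(\T_1^3(z|3\tau)/\T_1(z|\tau)\bigr)$, after which the KLthm step is entirely mechanical. So you trade a somewhat longer dependency chain for a cleaner endgame; either argument is complete.
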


Dividing both sides of \reff{ramcar:eqn1} by $z$ and then letting $z\to 0,$ we obtain Ramanujan's identity \cite[p.~4212]{BBG1995}, \cite[Eq.(1.15)]{Liu1999Gainesville}
\begin{equation}\label{ramcar:eqn3}
	\sum_{n=1}^\infty \frac{n^2 q^n}{1+q^n+q^{2n}}=\frac{\eta^9(3\tau)}{\eta^3(\tau)}.
\end{equation}

Dividing both sides of \reff{ramcar:eqn2} by $z$ and then letting $z\to 0,$ we arrive at Carlitz's identity \cite[Eq.(3.1)]{Carlitz1953}
\begin{equation}\label{ramcar:eqn4}
	1-9\sum_{n=1}^\infty \(\frac{n}{3}\) \frac{n^2 q^n}{1-q^n}=
	\frac{\eta^9(\tau)}{\eta^3(3\tau)}.
\end{equation}

By setting $z=\pi/3$ in \reff{ramcar:eqn1} and noting that, 
$2\sin \frac{2n\pi}{3}=\sqrt{3} \(\frac{n}{3}\)$, we find that
\begin{equation}\label{ramcar:eqn5}
\sum_{n=1}^\infty \(\frac{n}{3}\)\frac{nq^n}{1+q^n+q^{2n}}=\frac{\eta^3(\tau)\eta^3(9\tau)}{\eta^2(3\tau)}.
\end{equation}

By taking $z=\pi/4$ in \reff{ramcar:eqn1} and noting that $\sin \frac{n \pi }{2}=\(\frac{-4}{n}\)$, we conclude that ( see also \cite[Eq.(5.24)]{ChenChen})
\begin{equation}\label{ramcar:eqn6}
	1-12\sum_{n=1}^\infty \(\frac{12}{n}\)\frac{nq^n}{1-q^n}=\frac{\eta(\tau)\eta(3\tau)\eta^2(4\tau)\eta^2(6\tau)}{\eta^2(12\tau)}.
	\end{equation}

\subsection{The addition formula for the Weierstrass function}
If we specialize  Theorem~\ref{KLthm} to the case when
\begin{align*}
F(z|\tau)&=\T_1(z|\tau)\T_1(z-x|\tau)\T_1(z-y|\tau)\T_1(z+x+y|\tau)\\
&\quad -\T_1(z|\tau)\T_1(z+x|\tau)\T_1(z+y|\tau)\T_1(z-x-y|\tau),
\end{align*}
we conclude that
\begin{align} \label{Gauss:eqn1}
	&\T_1(z|\tau)\T_1(z-x|\tau)\T_1(z-y|\tau)\T_1(z+x+y|\tau)\\
	&-\T_1(z|\tau)\T_1(z+x|\tau)\T_1(z+y|\tau)\T_1(z-x-y|\tau)\nonumber\\
	&=\T_1(x|\tau)\T_1(y|\tau)\T_1(x+y|\tau) \frac{\T_1(2z|\tau)}{\T_1(z|\tau)}.\nonumber
\end{align}
Differentiating the above equation with respect to $z$, twice, and then setting $z=0$, we find the following proposition.
\begin{prop} \label{Gaussthm:n1}
	For $x\not \equiv 0 \pmod \Lambda, y\not \equiv 0 \pmod \Lambda$ and 
	$x+y\not \equiv 0 \pmod \Lambda$, we have	
	\begin{align}\label{Gauss:eqn2}
		&\((\log \T_1)'(x|\tau)+(\log \T_1)'(y|\tau)-(\log \T_1)'(x+y|\tau)\)^2\\
		&=\wp(x|\tau)+\wp(y|\tau)+\wp(x+y|\tau),\nonumber
	\end{align}
	or
	\begin{equation}\label{Gauss:eqn3}
		\{ \cot x +\cot y -\cot (x+y)+4\sum_{n=1}^\infty \frac{q^n}{1-q^n}\(\sin 2nx +\sin 2ny-\sin 2n (x+y)\) \}^2	
	\end{equation}
	\begin{align*}
		&=-L(\tau)+3+\cot^2 x +\cot^2 y +\cot^2 (x+y)\\
		&\qquad \qquad-8 \sum_{n=1}^\infty \frac{nq^n}{1-q^n}\(\cos 2nx +\cos 2ny+\cos 2n (x+y)\).
	\end{align*}		
\end{prop}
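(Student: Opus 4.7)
My plan is to Taylor-expand both sides of \reff{Gauss:eqn1} around $z=0$ and match second derivatives. Write the bracket on the left as $H(z)=A(z)-B(z)$ where $A(z)=\T_1(z-x|\tau)\T_1(z-y|\tau)\T_1(z+x+y|\tau)$ and $B(z)=\T_1(z+x|\tau)\T_1(z+y|\tau)\T_1(z-x-y|\tau)$, and set $T=\T_1(x|\tau)\T_1(y|\tau)\T_1(x+y|\tau)$. Oddness of $\T_1$ gives $A(0)=T=-B(0)$, so $H$ is even and $H(0)=2T$. Introducing the shorthand $\chi(z):=(\log\T_1)'(z|\tau)$ (odd in $z$, with $\chi'(z)=(\log\T_1)''(z|\tau)$ even), the identity $P''(z)=P(z)\bigl((P'/P)^2+(P'/P)'\bigr)$ applied to each three-factor product and evaluated at $z=0$ yields, after using the parities of $\chi$ and $\chi'$,
\[
\frac{A''(0)}{A(0)}=\frac{B''(0)}{B(0)}=(\chi(x)+\chi(y)-\chi(x+y))^2+\chi'(x)+\chi'(y)+\chi'(x+y).
\]
Combined with $A(0)=-B(0)=T$ this forces
\[
H''(0)=2T\bigl\{(\chi(x)+\chi(y)-\chi(x+y))^2+\chi'(x)+\chi'(y)+\chi'(x+y)\bigr\}.
\]

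Next I expand the right-hand side of \reff{Gauss:eqn1} about $z=0$. Integrating \reff{jabel:eqn14} gives $\log\T_1(z|\tau)=\log z+\log\VT_1'(\tau)-\tfrac{L(\tau)}{6}z^2+O(z^4)$, from which a brief computation produces $\T_1(2z|\tau)/\T_1(z|\tau)=2-L(\tau)z^2+O(z^4)$. The lower-order terms of \reff{Gauss:eqn1} match automatically by parity, so matching $z^2$-coefficients and cancelling the common factor $2T$ leaves
\[
(\chi(x)+\chi(y)-\chi(x+y))^2+\chi'(x)+\chi'(y)+\chi'(x+y)=-L(\tau).
\]
Now substitute $\chi'(z)=-\wp(z|\tau)-L(\tau)/3$ from \reff{jabel:eqn15}: the three constants $-L(\tau)/3$ combine to $-L(\tau)$, exactly cancelling the right-hand side, and \reff{Gauss:eqn2} emerges.

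Finally, \reff{Gauss:eqn3} follows from \reff{Gauss:eqn2} by direct substitution. Into the left insert the Lambert-type series \reff{jabel:eqn13} for $\chi(z)$; into the right insert
\[
\wp(z|\tau)=\csc^2 z-\tfrac{L(\tau)}{3}-8\sum_{n\ge 1}\tfrac{nq^n}{1-q^n}\cos 2nz,
\]
obtained by differentiating \reff{jabel:eqn13} once and applying \reff{jabel:eqn15}, and then use $\csc^2 z=1+\cot^2 z$.

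The main subtlety lies in the sign-bookkeeping of the first step: $A''(0)/A(0)$ and $B''(0)/B(0)$ agree by parity, so this bracket would cancel in $A''(0)+B''(0)$; instead, the opposite signs $A(0)=-B(0)$ make it add with doubled weight in $H''(0)=A''(0)-B''(0)$, which is precisely what is needed to match the $-2L(\tau)\,T$ coming from $T\cdot\T_1(2z|\tau)/\T_1(z|\tau)$ on the right. Once this is done correctly, the cancellation of $L(\tau)$ between the $\chi'$ identity and the $\wp$ identity is automatic, and Proposition~\ref{Gaussthm:n1} follows.
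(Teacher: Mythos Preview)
Your proof is correct and follows exactly the route the paper indicates: differentiate \reff{Gauss:eqn1} twice with respect to $z$, set $z=0$, and then pass from $(\log\T_1)''$ to $\wp$ via \reff{jabel:eqn15}. Your computation of $H''(0)$ via $A''(0)/A(0)=B''(0)/B(0)$ together with $A(0)=-B(0)=T$, and your expansion $\T_1(2z|\tau)/\T_1(z|\tau)=2-L(\tau)z^2+O(z^4)$, are both right, and the cancellation of $L(\tau)$ at the end is exactly as you describe. (Note that you are tacitly using the corrected form of \reff{Gauss:eqn1}, namely $A(z)-B(z)=T\,\T_1(2z|\tau)/\T_1(z|\tau)$; the displayed version in the paper carries an extra factor $\T_1(z|\tau)$ on the left that should not be there, but your reading is the intended one.)
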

Next  we will use  Proposition~\ref{Gaussthm:n1} to give a proof of the addition formula for the Weierstrass elliptic function \cite[p.~34, Theorem~6]{Chandrasekharan1985}.
\begin{prop}\label{Gaussthm:n2}For $x\not \equiv 0 \pmod \Lambda, y\not \equiv 0 \pmod \Lambda$ and $x+y\not \equiv 0 \pmod \Lambda$, we have
	\begin{equation}\label{Gauss:eqn4}
		\wp(x|\tau)+\wp(y|\tau)+\wp(x+y|\tau)=\frac{1}{4}\(\frac{\wp'(x|\tau)-\wp'(y|\tau)}{\wp(x|\tau)-\wp(y|\tau)}\)^2.
	\end{equation}		
\end{prop}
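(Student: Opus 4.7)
The plan is to combine Proposition~\ref{Gaussthm:n1} with the theta function representation \reff{jabel:eqn25} for $\wp(x|\tau)-\wp(y|\tau)$. The bridge between the two sides of \reff{Gauss:eqn4} will be built by taking logarithmic derivatives of \reff{jabel:eqn25}, which converts the ratio $\frac{\wp'(x|\tau)-\wp'(y|\tau)}{\wp(x|\tau)-\wp(y|\tau)}$ into an expression involving $(\log\T_1)'$ evaluated at $x$, $y$ and $x+y$.

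First, rewrite \reff{jabel:eqn25} as
\[
\log(\wp(x|\tau)-\wp(y|\tau))=\text{const}+\log \T_1(x+y|\tau)+\log \T_1(x-y|\tau)-2\log \T_1(x|\tau)-2\log \T_1(y|\tau),
\]
where the constant is independent of both $x$ and $y$. Differentiating partially with respect to $x$ gives
\[
\frac{\wp'(x|\tau)}{\wp(x|\tau)-\wp(y|\tau)}=(\log\T_1)'(x+y|\tau)+(\log\T_1)'(x-y|\tau)-2(\log\T_1)'(x|\tau),
\]
while differentiating with respect to $y$ and using the oddness of $(\log\T_1)'$ at $x-y$ gives
\[
\frac{-\wp'(y|\tau)}{\wp(x|\tau)-\wp(y|\tau)}=(\log\T_1)'(x+y|\tau)-(\log\T_1)'(x-y|\tau)-2(\log\T_1)'(y|\tau).
\]

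Adding these two relations, the $(\log\T_1)'(x-y|\tau)$ terms cancel, leaving
\[
\frac{1}{2}\cdot\frac{\wp'(x|\tau)-\wp'(y|\tau)}{\wp(x|\tau)-\wp(y|\tau)}=(\log\T_1)'(x+y|\tau)-(\log\T_1)'(x|\tau)-(\log\T_1)'(y|\tau).
\]
Squaring both sides and invoking Proposition~\ref{Gaussthm:n1} (note the squared quantity on the right equals the left-hand side of \reff{Gauss:eqn2} up to a sign that disappears under squaring) immediately yields \reff{Gauss:eqn4}.

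There is essentially no obstacle here since the heavy lifting has already been done in Proposition~\ref{Gaussthm:n1}; the only point requiring care is to differentiate $\log\T_1(x-y|\tau)$ correctly in each of the two variables so that the $(x-y)$-contributions cancel in the sum rather than reinforce, and to check that the exclusions $x,y,x+y\not\equiv 0\pmod\Lambda$ ensure both sides are finite so the logarithmic differentiation is legitimate.
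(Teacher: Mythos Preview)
Your proof is correct and follows essentially the same route as the paper: both arguments take logarithmic derivatives of the sigma addition formula \reff{jabel:eqn25} to obtain the key identity
\[
\frac{\wp'(x|\tau)-\wp'(y|\tau)}{\wp(x|\tau)-\wp(y|\tau)}=2(\log\T_1)'(x+y|\tau)-2(\log\T_1)'(x|\tau)-2(\log\T_1)'(y|\tau),
\]
and then substitute into Proposition~\ref{Gaussthm:n1}. The only cosmetic difference is that the paper introduces a single auxiliary parameter $t$ (replacing $x\mapsto x+t$, $y\mapsto y+t$, differentiating in $t$, and setting $t=0$), whereas you differentiate separately in $x$ and in $y$ and add; these are two ways of computing the same thing.
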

\begin{proof} Replacing $x$ by $x+t$ and $y$ by $y+t$ in the addition formula for the  Weierstrass sigma function in \reff{jabel:eqn25} we find that
	\begin{equation*}
		\wp(x+t|\tau)-\wp(y+t|\tau)=-\VT_1'(\tau)^2 \frac{\T_1(x+y+2t|\tau)\T_1(x-y|\tau)}
		{\T_1^2(x+t|\tau)\T_1^2(y+t|\tau)}.
	\end{equation*}	
	Logarithmically differentiating the above equation with respect to $t$ and then putting $t=0$ gives 
	\[
	\frac{\wp'(x|\tau)-\wp'(y|\tau)}{\wp(x|\tau)-\wp(y|\tau)}
	=2(\log \T_1)'(x+y|\tau)-2(\log \T_1)'(x|\tau)-2(\log \T_1)'(y|\tau).
	\]
	Substituting the above equation into \reff{Gauss:eqn2}  we complete the proof of Proposition~\ref{Gaussthm:n2}.
\end{proof}
If we specialize \reff{Gauss:eqn3} to the case when $x=y=\pi/3$, we obtain the Ramanujan identity \cite[p.402, Entry 18.2.9]{ABLost}
\begin{equation}\label{Gauss:eqn5}
	a^2(\tau)=\(1+6\sum_{n=1}^\infty \(\frac{n}{3}\)\frac{q^n}{1-q^n}\)^2
	=1+12\sum_{n=1}^\infty \frac{nq^n}{1-q^n}-36\sum_{n=1}^\infty \frac{nq^{3n}}{1-q^{3n}}.	
\end{equation}

Setting $x=\frac{\pi}{7}$ and $y=\frac{2\pi}{7}$ in \reff{Gauss:eqn3}, we arrive at another identity of Ramanujan \cite[p.403, Entry 18.2.12]{ABLost}
\begin{equation}\label{Gauss:eqn6}
	\(1+2\sum_{n=1}^\infty \(\frac{n}{7}\)\frac{q^n}{1-q^n}\)^2	
	=1+4\sum_{n=1}^\infty \frac{nq^n}{1-q^n}-28\sum_{n=1}^\infty \frac{nq^{7n}}{1-q^{7n}}.
\end{equation}
\subsection{Fourier series expansions for the quotients of theta functions}
Shen \cite{Shen1994TAMS} found several amazing  Fourier series expansions for  quotients of theta functions. Now we will use Theorem~\ref{KLthm} to recover the results due to Shen and derive similar new results. The following formula is equivalent to an identity in \cite[Eq.(1.6)]{Shen1994TAMS}.
\begin{thm}\label{JFourierthm:n1} We have
	\begin{equation}\label{JF:eqn1}
		(\log \T_1)'(x|2\tau)-(\log \T_4)'(x|2\tau)+(\log \T_1)'(y|2\tau)-(\log \T_4)'(y|2\tau)
	\end{equation}	
	\[
	=2\(\prod_{n=1}^\infty \frac{1-q^n}{1+q^n}\)^2 \frac{\T_1(x+y|2\tau)\T_4(x-y|2\tau)}{\T_1(x|\tau)\T_1(y|\tau)},
	\]	
	or
	\begin{equation}\label{JF:eqn2}
		\cot x+\cot y -4\sum_{n=1}^\infty \frac{q^n}{1+q^n} \(\sin 2nx +\sin 2ny\)	
	\end{equation}
	\begin{equation*}
		=2\(\prod_{n=1}^\infty \frac{1-q^n}{1+q^n}\)^2 \frac{\T_1(x+y|2\tau)\T_4(x-y|2\tau)}{\T_1(x|\tau)\T_1(y|\tau)}.	
	\end{equation*}
\end{thm}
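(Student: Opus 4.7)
The plan is to reduce \reff{JF:eqn1} to its ``even in $x$'' component and then apply Theorem~\ref{KLthm}. Write $P(z):=(\log\T_1)'(z|2\tau)-(\log\T_4)'(z|2\tau)$. Using the short infinite-product computation $\prod_{n=1}^\infty\bigl((1-q^n)/(1+q^n)\bigr)^2=\VT_4^2(2\tau)$ and clearing the denominator $\T_1(x|\tau)\T_1(y|\tau)$, the identity becomes
\[
\bigl(P(x)+P(y)\bigr)\T_1(x|\tau)\T_1(y|\tau)=2\VT_4^2(2\tau)\T_1(x+y|2\tau)\T_4(x-y|2\tau).
\]
Since $P(x)\T_1(x|\tau)$ is even in $x$ while $P(y)\T_1(x|\tau)$ is odd in $x$, splitting both sides into their parts of definite parity in $x$ yields two identities; writing $G(x):=\T_1(x+y|2\tau)\T_4(x-y|2\tau)-\T_1(x-y|2\tau)\T_4(x+y|2\tau)$ (even in $x$) and $H(x):=\T_1(x+y|2\tau)\T_4(x-y|2\tau)+\T_1(x-y|2\tau)\T_4(x+y|2\tau)$ (odd in $x$), these are
\[
\text{(I)}\ \ P(x)\T_1(x|\tau)\T_1(y|\tau)=\VT_4^2(2\tau)\,G(x),\qquad\text{(II)}\ \ P(y)\T_1(x|\tau)\T_1(y|\tau)=\VT_4^2(2\tau)\,H(x).
\]
Identity (II) is (I) with $x$ and $y$ swapped (using that $\T_1$ is odd and $\T_4$ even), so only (I) needs proof.

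To prove (I) via Theorem~\ref{KLthm}, set
$\Psi(z|\tau):=P(z)\T_1(z|\tau)\T_1(y|\tau)-\VT_4^2(2\tau)\,G(z)$
and
$F(z|\tau):=\Psi(z|\tau)\,\T_1(z|\tau)\T_3(z|\tau)\T_4(z|\tau)$.
The simple poles of $P(z)$ at $z\equiv 0$ and $z\equiv\pi\tau$ $\pmod{\Lambda}$ are cancelled by the corresponding zeros of $\T_1(z|\tau)$, so $\Psi$ and $F$ are entire. Using Proposition~\ref{doubleperiods} together with the half-period shifts $\T_1(z+\pi\tau|2\tau)=iq^{-1/4}e^{-iz}\T_4(z|2\tau)$ and $\T_4(z+\pi\tau|2\tau)=iq^{-1/4}e^{-iz}\T_1(z|2\tau)$, a routine check shows that $\Psi$ is even in $z$, $\pi$-antiperiodic, and satisfies $\Psi(z+\pi\tau|\tau)=q^{-1/2}e^{-2iz}\Psi(z|\tau)$; meanwhile $\T_1\T_3\T_4$ is odd, $\pi$-antiperiodic, and transforms by $q^{-3/2}e^{-6iz}$ under $z\to z+\pi\tau$. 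Hence $F$ is odd, entire, $\pi$-periodic and of degree $4$, so Theorem~\ref{KLthm} yields $F(z|\tau)=C\,\T_1(2z|\tau)$ for some constant $C=C(y,\tau)$.

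To finish, show $C=0$ by comparing orders of vanishing at $z=0$. From $P(z)\T_1(z|\tau)\to\VT_1'(\tau)$ and the duplication identity $\T_1(w|\tau)=(2/\VT_2(\tau))\T_1(w|2\tau)\T_4(w|2\tau)$ one computes $G(0)=2\T_1(y|2\tau)\T_4(y|2\tau)=\VT_2(\tau)\T_1(y|\tau)$, so
\[
\Psi(0|\tau)=\bigl[\VT_1'(\tau)-\VT_4^2(2\tau)\VT_2(\tau)\bigr]\T_1(y|\tau)=0,
\]
by Jacobi's derivative formula $\VT_1'(\tau)=\VT_2(\tau)\VT_3(\tau)\VT_4(\tau)$ combined with the elementary product identity $\VT_3(\tau)\VT_4(\tau)=\VT_4^2(2\tau)$. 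Since $\Psi$ is even, $\Psi(z)=O(z^2)$, and together with $\T_1(z|\tau)\T_3(z|\tau)\T_4(z|\tau)=O(z)$ this gives $F(z)=O(z^3)$; as $\T_1(2z|\tau)=O(z)$ we conclude $C=0$, hence $\Psi\equiv 0$, which is (I). Adding (I) and (II) and dividing through by $\T_1(x|\tau)\T_1(y|\tau)$ recovers \reff{JF:eqn1}, and the trigonometric form \reff{JF:eqn2} follows at once from the Lambert expansions \reff{jabel:eqn13} and \reff{jabel:eqn38}. The main obstacle is the careful sign-bookkeeping in checking that $\Psi$ has the $\pi\tau$-quasi-periodicity factor $+q^{-1/2}e^{-2iz}$ rather than $-q^{-1/2}e^{-2iz}$, which is precisely what makes $\T_1\T_3\T_4$ (and not, say, $\T_1^3$) the correct auxiliary multiplier for invoking Theorem~\ref{KLthm}.
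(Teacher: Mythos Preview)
Your proof is correct, but it takes a genuinely different route from the paper's. Both arguments ultimately invoke Theorem~\ref{KLthm}, yet they feed it different degree-$4$ functions.

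The paper applies Theorem~\ref{KLthm} directly to the odd entire function
\[
z\mapsto \T_1(z+x|\tau)\T_4(z-x|\tau)\T_1(z+y|\tau)\T_4(z-y|\tau)-\T_1(z-x|\tau)\T_4(z+x|\tau)\T_1(z-y|\tau)\T_4(z+y|\tau),
\]
determines the constant $C$ by evaluating at $z=x$, and then differentiates at $z=0$ to produce the logarithmic-derivative identity; the passage from modulus $\tau$ to $2\tau$ is done at the end via the duplication relations $2\T_1(z|2\tau)\T_4(z|2\tau)=\VT_2(\tau)\T_1(z|\tau)$ and $2\VT_1'(2\tau)\VT_4(2\tau)=\VT_1'(\tau)\VT_2(\tau)$. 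By contrast, you work at modulus $2\tau$ from the start, decompose the target identity by parity in $x$, package the difference $\Psi$ of the two sides as a degree-$1$ theta-type function, and then multiply by the auxiliary factor $\T_1\T_3\T_4$ to manufacture a degree-$4$ object on which Theorem~\ref{KLthm} acts; you then show $C=0$ by an order-of-vanishing argument that appeals to Jacobi's derivative formula $\VT_1'(\tau)=\VT_2(\tau)\VT_3(\tau)\VT_4(\tau)$ together with $\VT_3(\tau)\VT_4(\tau)=\VT_4^2(2\tau)$. The paper's forward construction is shorter and avoids both the parity split and the explicit use of Jacobi's derivative formula; your backward ``difference equals zero'' approach is more computational but has the pedagogical merit of showing exactly which classical identities force the constant to vanish.
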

\begin{proof}
	By Theorem~\ref{KLthm} we know that there exists a constant independent of $z$ such that 
	\begin{align*}
		&\T_1(z+x|\tau)\T_4(z-x|\tau)\T_1(z+y|\tau)\T_4(z-y|\tau)\\
		&-\T_1(z-x|\tau)\T_4(z+x|\tau)\T_1(z-y|\tau)\T_4(z+y|\tau)\\
		&=C\T_1(2z|\tau).
	\end{align*}
Setting $z=x$ in the above equation  we find that $C=\VT_4(\tau)\T_1(x+y|\tau)\T_4(x-y|\tau)$. It follows that
	\begin{align}\label{JF:eqn3}
		&\T_1(z+x|\tau)\T_4(z-x|\tau)\T_1(z+y|\tau)\T_4(z-y|\tau)\\
		&-\T_1(z-x|\tau)\T_4(z+x|\tau)\T_1(z-y|\tau)\T_4(z+y|\tau)\nonumber\\
		&=\VT_4(\tau) \T_1(2z|\tau)\T_1(x+y|\tau)\T_4(x-y|\tau).\nonumber
	\end{align}
	Differentiating through the above equation with respect to $z$ and then putting $z=0$, we deduce that
	\begin{equation*}
		(\log \T_1)'(x|\tau)-(\log \T_4)'(x|\tau)+(\log \T_1)'(y|\tau)-(\log \T_4)'(y|\tau)
	\end{equation*}	
	\[
	=\frac{\VT_1'(\tau)\VT_4(\tau)\T_1(x+y|\tau)\T_4(x-y|\tau)}
	{\T_1(x|\tau)\T_4(x|\tau)\T_1(y|\tau)\T_4(y|\tau)}.
	\]	
	Replacing $\tau$ by $2\tau$ in the above equation and then making use of 
	$2\T_1(z|2\tau)\T_4(z|2\tau)=\VT_2(\tau)\T_1(z|\tau)$ and $2\VT_1'(2\tau)\T_4(2\tau)=\VT_1'(\tau)\VT_2(\tau)$ in the resulting equation we arrive at \reff{JF:eqn1}.
	
	Substitutions of the trigonometric series expansions of the partial logarithmic derivatives of $\T_1$ and $\T_4$ into the left-hand side of 
	\reff{JF:eqn1} we get \reff{JF:eqn2}. We complete the proof of Theorem~\ref{JFourierthm:n1}.	
\end{proof}	
Taking $x=y$ in \reff{JF:eqn2} and employing $\T_1(x|\tau)\T_2(x|\tau)
=\VT_4(0|2\tau)\T_1(2x|2\tau)$ in the resulting equation we arrive at the Jacobi identity \cite[pp~.511--512]{WhiWat}.
\begin{equation}\label{JF:eqn4}
	\cot x-4\sum_{n=1}^\infty \frac{q^n}{1+q^n} \sin 2nx 	
	=\(\prod_{n=1}^\infty \frac{1-q^n}{1+q^n}\)^2 \frac{\T_2(x|\tau)}{\T_1(x|\tau)}.	
\end{equation}
\begin{thm}\label{JFourierthm:n2} We have
	\begin{align}\label{JF:eqn5}
		&1+2\sum_{n=1}^\infty \frac{q^{n/2}}{1+q^n} \(\cos 2nx+\cos 2ny\)\\
		&=\(\prod_{n=1}^\infty \frac{1-q^n}{1+q^n}\)^2
		\frac{\T_4(x+y|2\tau)\T_4(x-y|2\tau)}{\T_4(x|\tau)\T_4(y|\tau)},\nonumber
	\end{align}	
	and
	\begin{align}\label{JF:eqn6}
		&\sum_{n=1}^\infty \frac{q^{n/2}}{1+q^n} (\cos 2nx-\cos 2ny)\\
		&=-\(\prod_{n=1}^\infty \frac{1-q^n}{1+q^n}\)^2\frac{\T_1(x+y|2\tau)\T_1(x-y|
			2\tau)}{2\T_4(x|\tau)\T_4(y|\tau)}.\nonumber
	\end{align}
\end{thm}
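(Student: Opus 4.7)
The strategy mirrors the proof of Theorem~\ref{JFourierthm:n1}: for each of \reff{JF:eqn5} and \reff{JF:eqn6}, I would apply Theorem~\ref{KLthm} to a suitably chosen odd entire function $F(z|\tau)$ of degree $4$, extract the constant $C$ in the relation $F(z|\tau)=C\T_1(2z|\tau)$ by specializing $z$ to a point where one summand of $F$ vanishes, and then convert the resulting theta identity (in argument $\tau$) into the form stated (in argument $2\tau$) via the duplication formulas $2\T_1(z|2\tau)\T_4(z|2\tau)=\VT_2(\tau)\T_1(z|\tau)$ and $2\VT_1'(2\tau)\VT_4(2\tau)=\VT_1'(\tau)\VT_2(\tau)$.

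For \reff{JF:eqn6}, the natural candidate is
\[
F(z|\tau)=\T_1(z+x|\tau)\T_4(z-x|\tau)\T_4(z+y|\tau)\T_1(z-y|\tau)-\T_1(z-x|\tau)\T_4(z+x|\tau)\T_4(z-y|\tau)\T_1(z+y|\tau).
\]
Using the oddness of $\T_1$ and evenness of $\T_4$ together with the quasi-periodicities in Proposition~\ref{doubleperiods}, one checks that $F$ is odd in $z$, $\pi$-periodic, and satisfies $F(z|\tau)=q^{2}e^{8iz}F(z+\pi\tau|\tau)$, so Theorem~\ref{KLthm} yields $F(z|\tau)=C\T_1(2z|\tau)$. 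Specializing at $z=x$ annihilates the second summand via $\T_1(0|\tau)=0$, giving $C=\VT_4(\tau)\T_4(x+y|\tau)\T_1(x-y|\tau)$. To extract the cosine Fourier series on the left of \reff{JF:eqn6}, I would evaluate the resulting intermediate identity at a half-period (for instance $z=(\pi+\pi\tau)/2$, using Proposition~\ref{halfperiods} to convert the shifted $\T_1$, $\T_4$ into $\T_2$, $\T_3$), replace $\tau$ by $2\tau$ to invoke the duplication formulas, and finally rewrite $\frac{q^{n/2}}{1+q^n}=\frac{q^{n/2}(1-q^n)}{1-q^{2n}}$ so that the expansions \reff{jabel:eqn38} for $(\log\T_3)'(z|2\tau)$ and $(\log\T_4)'(z|2\tau)$ combine to produce the coefficients $q^{n/2}/(1+q^n)$ on the left.

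For \reff{JF:eqn5}, a parallel application with an analogously constructed odd entire $F$ of degree $4$ works; the difference between \reff{JF:eqn5} and \reff{JF:eqn6} (symmetric versus antisymmetric in $x\leftrightarrow y$, with $\T_4(x\pm y|2\tau)$ versus $\T_1(x\pm y|2\tau)$ on the right) reflects a change of sign in the summands of $F$ together with a different half-period at which the specialization is carried out. The symmetric combination $\cos 2nx+\cos 2ny$ on the left and the constant term $1$ emerge from evaluating at this alternative half-period and re-expressing via the same duplication identities.

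The principal obstacle is (i) identifying the precise $F$ for \reff{JF:eqn5}, since both sides are even in $x$ and $y$ and do not come from a logarithmic-derivative differentiation at $z=0$ (which would produce $\sin$-series as in Theorem~\ref{JFourierthm:n1}), and (ii) the careful bookkeeping of $q$-powers and signs when translating between theta functions of arguments $\tau$ and $2\tau$ and between Fourier series with $1-q^n$ and $1+q^n$ denominators. Once the right $F$ and half-period specialization are chosen, both identities follow by the same KLthm-based machinery used to prove Theorem~\ref{JFourierthm:n1}.
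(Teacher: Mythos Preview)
Your setup is sound up to the determination of $C$: the function $F$ you write down for \reff{JF:eqn6} is precisely the $F$ used in the proof of Theorem~\ref{JFourierthm:n1} with $y$ replaced by $-y$, and Theorem~\ref{KLthm} applies. The gap is in the next step. Evaluating the identity $F(z|\tau)=C\,\T_1(2z|\tau)$ at $z=(\pi+\pi\tau)/2$ yields nothing: $\T_1(2z|\tau)$ vanishes at every half-period, so you only recover $F((\pi+\pi\tau)/2|\tau)=0$, a single relation among theta values with no Fourier content. More generally, no pointwise specialization in $z$ of a product identity can produce a Lambert series in $x,y$; the series has to come from logarithmic differentiation, exactly as in the derivation of \reff{JF:eqn1}. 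Your subsequent remark about combining the expansions of $(\log\T_3)'$ and $(\log\T_4)'$ is disconnected from the evaluation step and does not explain where those log-derivatives would come from.

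The missing idea is much simpler: do not introduce a new $F$ at all, but reuse the identity \reff{JF:eqn1} already proved and shift the \emph{free variables} $x,y$ (not $z$) by $\tfrac{\pi\tau}{2}$. One computes, from the infinite products,
\[
(\log\T_1)'\!\Big(z+\tfrac{\pi\tau}{2}\,\Big|\,2\tau\Big)-(\log\T_4)'\!\Big(z+\tfrac{\pi\tau}{2}\,\Big|\,2\tau\Big)
=-i-4i\sum_{n\ge 1}\frac{q^{n/2}}{1+q^n}\cos 2nz,
\]
so replacing $(x,y)$ by $(x+\tfrac{\pi\tau}{2},\,y+\tfrac{\pi\tau}{2})$ in \reff{JF:eqn1} immediately turns the left side into $-2i$ times the left side of \reff{JF:eqn5}, while on the right the half-period shift converts $\T_1(\cdot|\tau)$ into $\T_4(\cdot|\tau)$ in the denominator and $\T_1(x+y|2\tau)$ into $\T_4(x+y|2\tau)$ in the numerator, giving \reff{JF:eqn5}. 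For \reff{JF:eqn6} one first sends $y\mapsto -y$ in \reff{JF:eqn1} and then makes the same shift. This single substitution disposes of your stated obstacle~(i): there is no separate $F$ to discover for \reff{JF:eqn5}; both identities are shifted versions of \reff{JF:eqn1}.
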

These two series expansions are equivalent to the two Fourier series expansion of Shen in \cite[Eq.(1.11)]{Shen1994TAMS}.
\begin{proof}
	Employing the infinite product representations of $\T_1$ and $\T_4$ and a simple calculation, we find that
	\begin{equation}\label{JF:eqn7}
		\(\log \T_1\)'(z+\frac{\pi\tau}{2}|2\tau)-\(\log \T_4\)'(z+\frac{\pi\tau}{2}|2\tau)
		=-i-4i\sum_{n=1}^\infty \frac{q^{n/2}}{1+q^n} \sin 2nz.
	\end{equation}
	By replacing $(x, y)$ by $(x+\frac{\pi\tau}{2}, y+\frac{\pi\tau}{2})$ in \reff{JF:eqn1} and simplifying we find that
	\begin{align*}
		&(\log \T_1)'(x+\frac{\pi\tau}{2}|2\tau)-(\log \T_4)'(x+\frac{\pi\tau}{2}|2\tau)\\
		&+(\log \T_1)'(y+\frac{\pi\tau}{2}|2\tau)-(\log \T_4)'(y+\frac{\pi\tau}{2}|2\tau)\nonumber
	\end{align*}	
	\[
	=-2i\(\prod_{n=1}^\infty \frac{1-q^n}{1+q^n}\)^2 \frac{\T_4(x+y|2\tau)\T_4(x-y|2\tau)}{\T_4(x|\tau)\T_4(y|\tau)}.
	\]
	Substituting \reff{JF:eqn7}	 into the left-hand side of the above equation and canceling out the common factor $-2i$, we get \reff{JF:eqn5}.
	
	Replacing $y$ by $-y$ in \reff{JF:eqn1} and then replacing $(x, y)$ by $(x+\frac{\pi\tau}{2}, y+\frac{\pi\tau}{2})$ in the resulting equation and simplifying we find that
	\begin{align*}
		&(\log \T_1)'(x+\frac{\pi\tau}{2}|2\tau)-(\log \T_4)'(x+\frac{\pi\tau}{2}|2\tau)\\
		&-(\log \T_1)'(y+\frac{\pi\tau}{2}|2\tau)+(\log \T_4)'(y+\frac{\pi\tau}{2}|2\tau)
	\end{align*}	
	\[
	=2i\(\prod_{n=1}^\infty \frac{1-q^n}{1+q^n}\)^2 \frac{\T_1(x-y|2\tau)\T_1(x+y|2\tau)}{\T_4(x|\tau)\T_4(y|\tau)}.
	\]
	Substituting \reff{JF:eqn7}	 into the left-hand side of the above equation and simplifying gives \reff{JF:eqn6}. Hence we complete the proof of Theorem~
	\ref{JFourierthm:n2}. 
\end{proof}
When $y=x$ the identity in  \reff{JF:eqn5} becomes the Jacobi identity  \cite[pp~.511--512]{WhiWat}
\begin{align}\label{JF:eqn8}
	1+2\sum_{n=1}^\infty \frac{q^{n/2}}{1+q^n} \cos 2nx
	=\(\prod_{n=1}^\infty \frac{1-q^n}{1+q^n}\)^2
	\frac{\T_3(x|\tau)}{\T_4(x|\tau)}.
\end{align}

If we set $x=\pi/5$ and $y=2\pi/5$ in \reff{JF:eqn6} and then use  the finite trigonometric evaluation
\[
\cos \frac{2n\pi}{5}-\cos \frac{2n\pi}{5}=\frac{\sqrt{5}}{2}\(\frac{n}{5}\)
\]
in the resulting equation,  and finally replacing $q$ by $q^2$ we conclude that
\cite[Eq.(3.5)]{BerkYesi2009}
\begin{equation}\label{JF:eqn6a}
	\sum_{n=1}^\infty \(\frac{n}{5}\) \frac{q^n}{1+q^{2n}}=\frac{\eta(\tau)\eta(2\tau)\eta(10\tau)\eta(20\tau)}
	{\eta(4\tau)\eta(5\tau)}.
\end{equation}

\begin{thm}\label{JFourierthm:n3} We have
	\begin{equation}\label{JF:eqn9}
		(\log \T_1)'\(\frac{x}{2}|\tau\)-\(\log \T_2\)'\(\frac{x}{2}|\tau\)+(\log \T_1)'\(\frac{y}{2}|\tau\)-(\log \T_2)'\(\frac{y}{2}|\tau\)
	\end{equation}	
	\[
	=\frac{\VT_2^2(\tau)\T_1(\frac{x+y}{2}|\tau)\T_2(\frac{x-y}{2}|\tau)}
	{\T_1(x|2\tau)\T_2(y|2\tau)},
	\]
	or
	\begin{align}\label{JF:eqn10}
		&\csc x +\csc y +4\sum_{n=0}^\infty \frac{q^{2n+1}}{1-q^{2n+1}} \(\sin (2n+1)x +\sin (2n+1)y \)\\
		&=\frac{\T_2^2(0|\tau)\T_1(\frac{x+y}{2}|\tau)\T_2(\frac{x-y}{2}|\tau)}{2\T_1(x|2\tau)\T_1(y|2\tau)}.\nonumber
	\end{align}	
\end{thm}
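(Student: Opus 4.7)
The plan is to apply Theorem~\ref{KLthm} to the odd, degree-$4$ entire function
\[
F(z|\tau) = \T_1(z+x|\tau)\T_2(z-x|\tau)\T_1(z+y|\tau)\T_2(z-y|\tau) - \T_1(z-x|\tau)\T_2(z+x|\tau)\T_1(z-y|\tau)\T_2(z+y|\tau),
\]
following the template of the proofs of Theorems~\ref{JFourierthm:n1} and~\ref{JFourierthm:n2}. Using Proposition~\ref{doubleperiods}, each of the two four-fold products satisfies $P(z+\pi|\tau)=P(z|\tau)$ (four sign flips cancel) and $P(z+\pi\tau|\tau)=q^{-2}e^{-8iz}P(z|\tau)$, so $F$ does too; oddness follows from $\T_1(-z|\tau)=-\T_1(z|\tau)$ and $\T_2(-z|\tau)=\T_2(z|\tau)$, which swap the two summands with one net sign change.

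By Theorem~\ref{KLthm}, $F(z|\tau)=C\,\T_1(2z|\tau)$ for a constant $C$ independent of $z$. Setting $z=x$ annihilates the second product through $\T_1(0|\tau)=0$ and yields $C=\VT_2(\tau)\,\T_1(x+y|\tau)\,\T_2(x-y|\tau)$. I then differentiate in $z$ at $z=0$: both products share the common value $P:=\T_1(x|\tau)\T_2(x|\tau)\T_1(y|\tau)\T_2(y|\tau)$, and the parity relations $(\log\T_j)'(-z|\tau)=-(\log\T_j)'(z|\tau)$ for $j=1,2$ make the two logarithmic-derivative sums differ only in overall sign, so they combine to $2P$ times the left-hand side of \reff{JF:eqn9}. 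The right-hand side contributes $2C\,\VT_1'(\tau)$, producing the intermediate identity
\[
(\log\T_1)'(x|\tau)-(\log\T_2)'(x|\tau)+(\log\T_1)'(y|\tau)-(\log\T_2)'(y|\tau)=\frac{\VT_1'(\tau)\VT_2(\tau)\,\T_1(x+y|\tau)\,\T_2(x-y|\tau)}{\T_1(x|\tau)\T_2(x|\tau)\T_1(y|\tau)\T_2(y|\tau)}.
\]

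To match the statement, replace $(x,y)$ by $(x/2,y/2)$ and apply the Jacobi triple product consequence
\[
\T_1(z|\tau)\T_2(z|\tau)=\frac{\eta^2(\tau)}{\eta(2\tau)}\,\T_1(2z|2\tau),
\]
together with $\VT_1'(\tau)=2\eta^3(\tau)$ and $\VT_2(\tau)=2\eta^2(2\tau)/\eta(\tau)$. The denominator collapses to $[\eta^2(\tau)/\eta(2\tau)]^2\,\T_1(x|2\tau)\T_1(y|2\tau)$, while the numerical factor $\VT_1'(\tau)\VT_2(\tau)$ combines with the resulting $\eta$-quotient to give precisely $\VT_2^2(\tau)=4\eta^4(2\tau)/\eta^2(\tau)$, yielding \reff{JF:eqn9}. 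For \reff{JF:eqn10}, substitute the Fourier expansions \reff{jabel:eqn13} and \reff{jabel:eqn38} of $(\log\T_1)'$ and $(\log\T_2)'$ at $x/2$ and $y/2$; the polar terms simplify through $\cot(x/2)+\tan(x/2)=2\csc x$, and $q^n-(-q)^n$ vanishes for even $n$ and doubles for odd $n$, which is the source of the factor of $2$ appearing in the denominator of \reff{JF:eqn10}. The only genuinely delicate step is the clean cancellation of the $\eta$-quotient factors under the rescaling; the remaining manipulations are exactly parallel to those in the previous two theorems.
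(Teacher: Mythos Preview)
Your proof is correct and follows essentially the same approach as the paper: both apply Theorem~\ref{KLthm} to the same odd degree-$4$ function $F$, evaluate at $z=x$ to determine $C$, differentiate at $z=0$, and then rescale $(x,y)\mapsto(x/2,y/2)$. The only cosmetic difference is that you simplify the constant via the $\eta$-identities $\VT_4(2\tau)=\eta^2(\tau)/\eta(2\tau)$ and $\VT_2(\tau)=2\eta^2(2\tau)/\eta(\tau)$, whereas the paper uses $\T_1(z|\tau)\T_2(z|\tau)=\VT_4(2\tau)\T_1(2z|2\tau)$ together with $\VT_4^2(2\tau)=\VT_3(\tau)\VT_4(\tau)$ and the Jacobi derivative relation $\VT_1'(\tau)=\VT_2(\tau)\VT_3(\tau)\VT_4(\tau)$; both routes yield the same factor $\VT_2^2(\tau)$.
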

\begin{proof}
By Theorem~\ref{KLthm} we know that there exists a constant $C$ independent of $z$ such that 
	\begin{align*}
		&\T_1(z+x|\tau)\T_2(z-x|\tau)\T_1(z+y|\tau)\T_2(z-y|\tau)\\
		&-\T_1(z-x|\tau)\T_2(z+x|\tau)\T_1(z-y|\tau)\T_2(z+y|\tau)\\
		&=C\T_1(2z|\tau),
	\end{align*}
	where $C$ independent of $z$. Setting $z=x$ in the above equation  we find that $C=\VT_2(\tau)\T_1(x+y|\tau)\T_2(x-y|\tau)$. It follows that
	\begin{align}\label{JF:eqn11}
		&\T_1(z+x|\tau)\T_2(z-x|\tau)\T_1(z+y|\tau)\T_2(z-y|\tau)\\
		&-\T_1(z-x|\tau)\T_2(z+x|\tau)\T_1(z-y|\tau)\T_2(z+y|\tau)\nonumber\\
		&=\VT_2(\tau) \T_1(2z|\tau)\T_1(x+y|\tau)\T_2(x-y|\tau).\nonumber
	\end{align}
	Differentiating through the above equation with respect to $z$ and then putting $z=0$, we deduce that
	\begin{equation*}
		(\log \T_1)'(x|\tau)-(\log \T_2)'(x|\tau)+(\log \T_1)'(y|\tau)-(\log \T_2)'(y|\tau)
	\end{equation*}	
	\[
	=\frac{\VT_1'(\tau)\VT_2(\tau)\T_1(x+y|\tau)\T_2(x-y|\tau)}
	{\T_1(x|\tau)\T_2(x|\tau)\T_1(y|\tau)\T_2(y|\tau)}.
	\]
	Using $\T_1(z|\tau)\T_2(z|\tau)=\VT_4(2\tau)\T_1(2z|2\tau)$ and $\VT_4^2(2\tau)=\VT_3(\tau)\VT_4(\tau)$  in the right-hand side of the above equation and finally replacing $(x, y)$ by $(x/2, y/2)$ we obtain \reff{JF:eqn9}.
	
	Substitutions of the trigonometric series expansions of the partial logarithmic derivatives of $\T_1$ and $\T_2$ into the left-hand side of 
	\reff{JF:eqn9} we get \reff{JF:eqn10}. We complete the proof of Theorem~\ref{JFourierthm:n3}.	
\end{proof}
After replacing $q$ by $-q$ in \reff{JF:eqn10}  we  arrive at the Fourier series expansion in \cite[Eq.(2.4)]{Shen1994TAMS}.  
If we set $y=x$ in \reff{JF:eqn10}, we arrive at the Jacobi identity \cite[pp~.511--512]{WhiWat}.
\begin{equation}\label{JF:eqn10a}
	\csc x+4\sum_{n=0}^\infty \frac{q^{2n+1}}{1-q^{2n+1}} \sin (2n+1)x=
	\VT_2^2(\tau) \frac{\T_4(x|2\tau)}{2\T_1(x|2\tau)}.
\end{equation}
Using Theorem~\ref{KLthm} and some simple calculation we can obtain the following theorem.
\begin{thm}\label{JFourierthm:n4} We have
	\begin{equation}\label{JF:eqn12}
		\(\log \T_4\)'(x|\tau)-	\(\log \T_3\)'(x|\tau)+\(\log \T_4\)'(y|\tau)-	\(\log \T_3\)'(y|\tau)
	\end{equation}	
	\[
	=\VT_2^2(\tau) \frac{\T_1(x+y|\tau)\T_2(x-y|\tau)}{\T_4(2x|2\tau)\T_4(2y|2\tau)},
	\]
	or
	\begin{equation}\label{JF:eqn13}
		\sum_{n=0}^\infty \frac{q^{n+1/2}}{1-q^{2n+1}} \(\sin 2(2n+1)x+\sin 2(2n+1)y\)
	\end{equation}
	\[
	=\VT_2^2(\tau) \frac{\T_1(x+y|\tau)\T_2(x-y|\tau)}{8\T_4(2x|2\tau)\T_4(2y|2\tau)}.
	\]
\end{thm}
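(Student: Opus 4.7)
The plan is to mimic the proofs of Theorems~\ref{JFourierthm:n1} and~\ref{JFourierthm:n3} by applying Theorem~\ref{KLthm} to the $\T_3,\T_4$ analogue of the function used there. Specifically, take
\[
F(z|\tau)=\T_4(z+x|\tau)\T_3(z-x|\tau)\T_4(z+y|\tau)\T_3(z-y|\tau)
-\T_4(z-x|\tau)\T_3(z+x|\tau)\T_4(z-y|\tau)\T_3(z+y|\tau).
\]
This is manifestly odd in $z$, and Proposition~\ref{doubleperiods} shows that $F(z+\pi|\tau)=F(z|\tau)$ (each $\T_3,\T_4$ factor is $\pi$-periodic) and $F(z|\tau)=q^{2}e^{8iz}F(z+\pi\tau|\tau)$, since the two $-1$'s produced by the two $\T_4$ shifts in each product cancel. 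Hence Theorem~\ref{KLthm} supplies a constant $C=C(x,y,\tau)$ with $F(z|\tau)=C\,\T_1(2z|\tau)$.

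To pin down $C$, I would substitute $z=x+\pi\tau/2$. Proposition~\ref{halfperiods} (together with $\T_1(0|\tau)=0$) gives $\T_4(\pi\tau/2|\tau)=0$, so the second product vanishes, while the four surviving factors in the first product can each be rewritten via Proposition~\ref{halfperiods} in terms of $\T_1(2x|\tau)$, $\VT_2(\tau)$, $\T_1(x+y|\tau)$, and $\T_2(x-y|\tau)$, up to common factors of $iq^{-1/8}e^{\cdots}$. On the right, $\T_1(2x+\pi\tau|\tau)=-q^{-1/2}e^{-4ix}\T_1(2x|\tau)$ by Proposition~\ref{doubleperiods}. After the exponentials and $q$-powers cancel from both sides, one reads off
\[
C=\VT_2(\tau)\,\T_1(x+y|\tau)\,\T_2(x-y|\tau).
\]

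To extract \reff{JF:eqn12}, I then differentiate $F(z|\tau)=C\,\T_1(2z|\tau)$ at $z=0$. Writing $A(z)$ for the first product, evenness of $\T_3,\T_4$ gives $F(z)=A(z)-A(-z)$, so $F'(0)=2A'(0)=2A(0)\bigl[(\log\T_4)'(x|\tau)-(\log\T_3)'(x|\tau)+(\log\T_4)'(y|\tau)-(\log\T_3)'(y|\tau)\bigr]$. Using $A(0)=\T_3(x|\tau)\T_4(x|\tau)\T_3(y|\tau)\T_4(y|\tau)$ together with the classical factorization $\T_3(z|\tau)\T_4(z|\tau)=\VT_4(2\tau)\T_4(2z|2\tau)$ and $\VT_4^2(2\tau)=\VT_3(\tau)\VT_4(\tau)$, combined with Jacobi's derivative identity $\VT_1'(\tau)=\VT_2(\tau)\VT_3(\tau)\VT_4(\tau)$, the equation $2A'(0)=2C\VT_1'(\tau)$ collapses exactly to \reff{JF:eqn12}.

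Finally, \reff{JF:eqn13} follows routinely by substituting the Fourier series \reff{jabel:eqn38}: the difference $(\log\T_4)'(z|\tau)-(\log\T_3)'(z|\tau)$ isolates the odd-indexed terms and equals $8\sum_{n=0}^\infty q^{n+1/2}(1-q^{2n+1})^{-1}\sin 2(2n+1)z$, so dividing \reff{JF:eqn12} through by $8$ yields the desired expansion. The main obstacle is the identification of $C$: the key is the substitution $z=x+\pi\tau/2$, which simultaneously annihilates one of the two products (via a zero of $\T_4$) and allows Proposition~\ref{halfperiods} to rewrite the other into precisely the $\T_1$, $\VT_2$, $\T_1$, $\T_2$ factors appearing in $C$.
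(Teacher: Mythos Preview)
Your proposal is correct and follows the same approach the paper indicates (``Using Theorem~\ref{KLthm} and some simple calculation''): you apply Theorem~\ref{KLthm} to the $\T_3,\T_4$ analogue of the function used in Theorems~\ref{JFourierthm:n1} and~\ref{JFourierthm:n3}, identify $C$ via a convenient substitution, differentiate at $z=0$, and simplify with $\T_3(z|\tau)\T_4(z|\tau)=\VT_4(2\tau)\T_4(2z|2\tau)$, $\VT_4^2(2\tau)=\VT_3(\tau)\VT_4(\tau)$, and Jacobi's $\VT_1'(\tau)=\VT_2(\tau)\VT_3(\tau)\VT_4(\tau)$. The only variation is that you pin down $C$ at $z=x+\pi\tau/2$ (a zero of $\T_4$) rather than at $z=x$ as in the earlier theorems, which is the natural choice here since no $\T_1$ factor is present.
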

If we set $y=x$ in \reff{JF:eqn13} and then  replace $2x$ by $x$, we arrive at the Jacobi identity \cite[pp.511--512]{WhiWat}
\begin{equation}
	8\sum_{n=0}^\infty \frac{q^{n+1/2}}{1-q^{2n+1}} \sin (2n+1)x=\VT_2^2(\tau)\frac{\T_1(x|2\tau)}{\T_4(x|2\tau)}.
\end{equation}

Using Theorem~\ref{KLthm} we can  prove the  following theta function identity:
\begin{align}\label{JF:eqn16}
	&\T_1\(z+x+\frac{\pi}{4}|\tau\)\T_1\(z-x+\frac{\pi}{4}|\tau\)\T_1\(z+y+\frac{\pi}{4}|\tau\)\T_1\(z-y+\frac{\pi}{4}|\tau\)\\
	&-\T_1\(z-x-\frac{\pi}{4}|\tau\)\T_1\(z+x-\frac{\pi}{4}|\tau\)\T_1\(z-y-\frac{\pi}{4}|\tau\)\T_1\(z+y-\frac{\pi}{4}|\tau\)\nonumber\\
	&=\VT_2(\tau)\T_2(x+y|\tau)\T_2(x-y|\tau)\T_1(2z|\tau).\nonumber
\end{align}
Differentiating through the above equation with respect to $z$ and then setting $z=0$ and simplifying we obtain the following new Fourier series expansion.
\begin{thm}\label{JFourierthm:n6}  We have
	\begin{align*}
		&\frac{1}{\cos 2x}+\frac{1}{\cos 2y}+4\sum_{n=1}^\infty \(\frac{-4}{n}\)
		\frac{q^n}{1-q^n}\(\cos 2nx +\cos 2ny\)\\
		&=\VT_2^2(\tau)\frac{\T_2(x+y|\tau)\T_2(x-y|\tau)}{2\T_2(2x|2\tau)\T_2(2y|\tau)}.
	\end{align*}
\end{thm}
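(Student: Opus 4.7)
The strategy is to logarithmically differentiate both sides of identity \reff{JF:eqn16} at $z=0$. Denote the two products on the left-hand side of \reff{JF:eqn16} by $P(z)$ and $Q(z)$, respectively. Using $\T_1(-u|\tau)=-\T_1(u|\tau)$ one checks directly that $Q(z)=P(-z)$, so the left-hand side $P(z)-Q(z)$ is an odd function of $z$, and its derivative at $z=0$ equals $2P'(0)$. On the right-hand side, differentiating at $z=0$ produces $2\VT_1'(\tau)\VT_2(\tau)\T_2(x+y|\tau)\T_2(x-y|\tau)$.

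Next I compute $P'(0)/P(0)$ by logarithmic differentiation. Since $\T_1$ is odd, $(\log\T_1)'(u|\tau)$ is an odd function of $u$, so the four terms in $(\log P)'(0)$ pair up as
\begin{align*}
	&(\log\T_1)'\(x+\tfrac{\pi}{4}|\tau\)-(\log\T_1)'\(x-\tfrac{\pi}{4}|\tau\)\\
	&\qquad+(\log\T_1)'\(y+\tfrac{\pi}{4}|\tau\)-(\log\T_1)'\(y-\tfrac{\pi}{4}|\tau\).
\end{align*}
Substituting the Fourier expansion \reff{jabel:eqn13}, using the elementary evaluation $\cot(u+\tfrac{\pi}{4})-\cot(u-\tfrac{\pi}{4})=2/\cos 2u$, and the identity $\sin(n\pi/2)=\(\tfrac{-4}{n}\)$, this expression collapses to
\[
\frac{2}{\cos 2x}+\frac{2}{\cos 2y}+8\sum_{n=1}^\infty \(\tfrac{-4}{n}\)\frac{q^n}{1-q^n}(\cos 2nx+\cos 2ny).
\]

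It remains to evaluate the constant $P(0)=\T_1(x+\tfrac{\pi}{4}|\tau)\T_1(x-\tfrac{\pi}{4}|\tau)\T_1(y+\tfrac{\pi}{4}|\tau)\T_1(y-\tfrac{\pi}{4}|\tau)$. Using the half-period shift $\T_1(u+\tfrac{\pi}{2}|\tau)=\T_2(u|\tau)$ from Proposition~\ref{halfperiods} I rewrite $\T_1(x+\tfrac{\pi}{4}|\tau)=\T_2(x-\tfrac{\pi}{4}|\tau)$, then apply the duplication identity $\T_1(u|\tau)\T_2(u|\tau)=\VT_4(2\tau)\T_1(2u|2\tau)$ (a routine consequence of the infinite product representations in Proposition~\ref{infiniteprod}), followed by the quasi-periodicity relation $\T_1(2x-\tfrac{\pi}{2}|2\tau)=-\T_2(2x|2\tau)$ from Proposition~\ref{doubleperiods} and Proposition~\ref{halfperiods}. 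This yields $P(0)=\VT_4^2(2\tau)\T_2(2x|2\tau)\T_2(2y|2\tau)$. Equating the two derivatives and cancelling the common factor $4\VT_4^2(2\tau)\T_2(2x|2\tau)\T_2(2y|2\tau)$, I obtain the claim once the standard eta identities $\VT_1'(\tau)=2\eta^3(\tau)$, $\VT_2(\tau)=2\eta^2(2\tau)/\eta(\tau)$, and $\VT_4(2\tau)=\eta^2(\tau)/\eta(2\tau)$ are used to collapse $\VT_1'(\tau)\VT_2(\tau)/\VT_4^2(2\tau)$ to $\VT_2^2(\tau)$. The main obstacle is not conceptual but bookkeeping: one must carefully track the various half-period shifts, the sign picked up from $\T_1(2x-\tfrac{\pi}{2}|2\tau)=-\T_2(2x|2\tau)$, and the eta-to-theta conversions, and verify that every $\VT_4(2\tau)$ and $\eta(2\tau)$ factor combines into the clean coefficient $\VT_2^2(\tau)/2$ on the right.
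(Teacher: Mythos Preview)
Your argument is correct and follows essentially the same route as the paper: differentiate \reff{JF:eqn16} with respect to $z$, set $z=0$, and simplify using the Fourier expansion \reff{jabel:eqn13} together with the half-period shifts and the duplication identity $\T_1(u|\tau)\T_2(u|\tau)=\VT_4(2\tau)\T_1(2u|2\tau)$. Your detailed bookkeeping of $P(0)$, the sign from $\T_1(2x-\tfrac{\pi}{2}|2\tau)=-\T_2(2x|2\tau)$, and the eta-product reduction $\VT_1'(\tau)\VT_2(\tau)/\VT_4^2(2\tau)=\VT_2^2(\tau)$ all check out; note that the $\T_2(2y|\tau)$ in the displayed statement is a typo for $\T_2(2y|2\tau)$, which is what your computation (and the $x\leftrightarrow y$ symmetry) actually gives.
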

The following theorem can also be derived easily from Theorem~\ref{KLthm}. 
\begin{thm}\label{KJRthm} Suppose that $s+t+u+v$ is an integral multiple of $\pi$. Then we have
	\begin{align}\label{KR:eqn15}
		&\T_2(z-s|\tau)\T_2(z-t|\tau)\T_2(z-u|\tau)\T_2(z-v|\tau)\\
		&-\T_2(z+s|\tau)\T_2(z+t|\tau)\T_2(z+u|\tau)\T_2(z+v|\tau)\nonumber\\
		&=\T_1(s+t|\tau)\T_1(s+u|\tau)\T_1(s+v|\tau)\T_1(2x|\tau),\nonumber
	\end{align}	
	and 
	\begin{equation}\label{KR:eqn16}
		\(\log \T_2\)'(s|\tau)+\(\log \T_2\)'(t|\tau)+\(\log \T_2\)'(u|\tau)+\(\log \T_2\)'(v|\tau)
	\end{equation}
	\begin{equation*}
		=-\frac{\VT_1'(\tau)\T_1(s+t|\tau)\T_1(s+u|\tau)\T_1(s+v|\tau)}{\T_2(s|\tau)\T_2(t|\tau)\T_2(u|\tau)\T_2(v|\tau)}.
	\end{equation*}	
\end{thm}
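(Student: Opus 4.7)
The plan is to apply Theorem~\ref{KLthm} to the odd entire function
\[
F(z|\tau)=\T_2(z-s|\tau)\T_2(z-t|\tau)\T_2(z-u|\tau)\T_2(z-v|\tau)-\T_2(z+s|\tau)\T_2(z+t|\tau)\T_2(z+u|\tau)\T_2(z+v|\tau),
\]
then pin down the resulting constant by a single strategic substitution, and finally obtain \reff{KR:eqn16} from \reff{KR:eqn15} by logarithmic differentiation at $z=0$.

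First I would verify the hypotheses of Theorem~\ref{KLthm}. Oddness of $F$ is immediate from the evenness of $\T_2$, which interchanges the two products under $z\mapsto -z$. Under $z\mapsto z+\pi$, each of the eight $\T_2$-factors acquires the sign $-1$ by Proposition~\ref{doubleperiods}, and $(-1)^4=1$ leaves each product fixed, so $F(z+\pi|\tau)=F(z|\tau)$. For the quasi-periodicity, Proposition~\ref{doubleperiods} gives $\T_2(w+\pi\tau|\tau)=e^{-(2w+\pi\tau)i}\T_2(w|\tau)$; the first product then picks up the factor $e^{-i(8z-2(s+t+u+v)+4\pi\tau)}$ and the second picks up $e^{-i(8z+2(s+t+u+v)+4\pi\tau)}$. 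The hypothesis $s+t+u+v\in\pi\mathbb Z$ forces $e^{\pm 2i(s+t+u+v)}=1$, so both products transform identically and $F(z|\tau)=q^{2}e^{8iz}F(z+\pi\tau|\tau)$. Theorem~\ref{KLthm} then yields $F(z|\tau)=C\,\T_1(2z|\tau)$ for a constant $C$ independent of $z$.

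To evaluate $C$, I would substitute $z=s+\pi/2$. Proposition~\ref{halfperiods} gives $\T_2(w|\tau)=\T_1(w+\pi/2|\tau)$, so $\T_2(\pi/2|\tau)=\T_1(\pi|\tau)=0$ and the first product vanishes; applying $\T_2(w+\pi/2|\tau)=\T_1(w+\pi|\tau)=-\T_1(w|\tau)$ to each of the four factors of the surviving second product yields $F(s+\pi/2|\tau)=-\T_1(2s|\tau)\T_1(s+t|\tau)\T_1(s+u|\tau)\T_1(s+v|\tau)$, since the four sign flips multiply to $+1$. On the right, $C\,\T_1(2s+\pi|\tau)=-C\,\T_1(2s|\tau)$, and cancelling the generic factor $\T_1(2s|\tau)$ (extending by analytic continuation if needed) gives $C=\T_1(s+t|\tau)\T_1(s+u|\tau)\T_1(s+v|\tau)$, which is \reff{KR:eqn15}.

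Finally, I would derive \reff{KR:eqn16} by differentiating \reff{KR:eqn15} in $z$ and setting $z=0$. Both products agree at $z=0$ and equal $\T_2(s|\tau)\T_2(t|\tau)\T_2(u|\tau)\T_2(v|\tau)$ by the evenness of $\T_2$, so $F(0|\tau)=0$. Because $(\log \T_2)'$ is odd, logarithmic differentiation of each product yields
\[
F'(0|\tau)=-2\,\T_2(s|\tau)\T_2(t|\tau)\T_2(u|\tau)\T_2(v|\tau)\sum_{a\in\{s,t,u,v\}}(\log\T_2)'(a|\tau),
\]
while the derivative of $C\,\T_1(2z|\tau)$ at $z=0$ is $2C\,\VT_1'(\tau)$. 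Dividing by $-2\,\T_2(s|\tau)\T_2(t|\tau)\T_2(u|\tau)\T_2(v|\tau)$ and substituting the value of $C$ found above produces \reff{KR:eqn16}. The only delicate point in the whole argument is the bookkeeping of sign and exponential factors in the quasi-periodicity check and in the substitution $z=s+\pi/2$; once these are tracked correctly, the full theorem reduces cleanly to a single application of Theorem~\ref{KLthm}.
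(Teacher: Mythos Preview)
Your proof is correct and follows exactly the route the paper indicates (the paper only states that the theorem ``can also be derived easily from Theorem~\ref{KLthm}'' without spelling out the details). Your choice of $F(z|\tau)$, the verification of the functional equations, the evaluation of $C$ via the substitution $z=s+\pi/2$, and the derivation of \reff{KR:eqn16} by differentiating at $z=0$ are all clean and match the pattern of the surrounding proofs in the paper; note also that the $\T_1(2x|\tau)$ on the right of \reff{KR:eqn15} is evidently a typo for $\T_1(2z|\tau)$, which you have interpreted correctly.
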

By choosing $s=x,~t=y, ~u=-x-y$ and $v=0$ in Theorem~\ref{KJRthm} and simplifying we obtain the following proposition~\cite[Corollary~3.1]{Liu2010Mysore}.
\begin{prop} \label{JRthm} We have
	\begin{align}\label{KR:eqn17}
		&\tan x+\tan y-\tan (x+y)-4\sum_{n=1}^\infty \frac{(-q)^n}{1-q^n}\( \sin 2n x+ \sin 2ny-\sin 2n(x+y)\)\\
		&=-\(\prod_{n=1}^\infty \frac{1-q^n}{1+q^n}\)\frac{\T_1(x|\tau)\T_1(y|\tau)\T_1(x+y|\tau)}{\T_2(x|\tau)\T_2(y|\tau)\T_2(x+y|\tau)}.\nonumber
	\end{align}		
\end{prop}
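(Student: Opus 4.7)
The plan is to derive Proposition~\ref{JRthm} as an immediate specialization of \reff{KR:eqn16} in Theorem~\ref{KJRthm}. I would set $s=x,\ t=y,\ u=-x-y,\ v=0$, which satisfies $s+t+u+v=0$, an integer multiple of $\pi$, so the hypothesis of Theorem~\ref{KJRthm} applies.

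First I would handle the left-hand side of \reff{KR:eqn16}. Since $\T_2(z|\tau)$ is an even function of $z$ by Definition~\ref{jtheta}, its logarithmic derivative is odd; in particular $(\log\T_2)'(0|\tau)=0$ and $(\log\T_2)'(-x-y|\tau)=-(\log\T_2)'(x+y|\tau)$. Substituting the trigonometric expansion
\[
(\log\T_2)'(z|\tau)=-\tan z+4\sum_{n=1}^\infty\frac{(-q)^n}{1-q^n}\sin 2nz
\]
from \reff{jabel:eqn38}, the left-hand side collapses to
\[
-\tan x-\tan y+\tan(x+y)+4\sum_{n=1}^\infty\frac{(-q)^n}{1-q^n}\bigl(\sin 2nx+\sin 2ny-\sin 2n(x+y)\bigr),
\]
which is the negative of the left-hand side of \reff{KR:eqn17}.

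Next I would evaluate the right-hand side of \reff{KR:eqn16} under the same specialization. The relevant sums and differences give $s+t=x+y$, $s+u=-y$, $s+v=x$, and the denominator factor $\T_2(v|\tau)=\VT_2(\tau)$. Using oddness of $\T_1$ and evenness of $\T_2$ to rewrite $\T_1(-y|\tau)=-\T_1(y|\tau)$ and $\T_2(-x-y|\tau)=\T_2(x+y|\tau)$, the right-hand side becomes
\[
\frac{\VT_1'(\tau)}{\VT_2(\tau)}\cdot\frac{\T_1(x|\tau)\T_1(y|\tau)\T_1(x+y|\tau)}{\T_2(x|\tau)\T_2(y|\tau)\T_2(x+y|\tau)}.
\]
Finally I would insert the infinite product representations \reff{jabel:eqn18} and Proposition~\ref{infiniteprod} for $\VT_1'(\tau)$ and $\VT_2(\tau)$ to express the prefactor $\VT_1'(\tau)/\VT_2(\tau)$ as the stated infinite product in $q$, and multiply through by $-1$ to match \reff{KR:eqn17}.

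There is no real obstacle here: all the analytic content is already in Theorem~\ref{KJRthm}, and what remains is purely bookkeeping — tracking the minus signs from the parity of $\T_1$ and $\T_2$, evaluating $(\log\T_2)'(0|\tau)=0$, and performing the $\eta$-product simplification of $\VT_1'(\tau)/\VT_2(\tau)$. The only point requiring a line of care is the reflection identity $(\log\T_2)'(-x-y|\tau)=-(\log\T_2)'(x+y|\tau)$, which produces the sign responsible for turning $\tan(-x-y)$ into $+\tan(x+y)$ and for flipping the sign of the $\sin 2n(x+y)$ term in the Lambert series.
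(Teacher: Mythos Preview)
Your proposal is correct and follows exactly the paper's own approach: the paper states that Proposition~\ref{JRthm} is obtained by choosing $s=x$, $t=y$, $u=-x-y$, $v=0$ in Theorem~\ref{KJRthm} and simplifying, and you have filled in precisely those details. One small remark: when you carry out the product computation you will find $\VT_1'(\tau)/\VT_2(\tau)=\prod_{n\ge1}\bigl((1-q^n)/(1+q^n)\bigr)^2$, so the infinite product in \reff{KR:eqn17} should evidently carry an exponent~$2$ (compare \reff{JF:eqn4}); this is a typo in the stated identity, not a flaw in your argument.
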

Setting $x=\pi/7$ and $y=2\pi/7$ in \reff{KR:eqn17} and simplifying and finally replacing $q$ by $-q$,  we arrive at the Ramanujan identity \cite[p.304]{Berndt1991}, \cite[p. 403, Entry 18.2.12]{ABLost}
\begin{equation}\label{KR:eqn18}
	\phi(q)\phi(q^7)=1+2\sum_{n=1}^\infty \(\frac{n}{7}\) \frac{q^n}{1-(-q)^n}.
\end{equation}	
Setting $t=v=u$ and $s=-3u$ in \reff{KR:eqn16} and then 
substituting the trigonometric series expansion of the partial derivative of $\T_2(z|\tau)$ with respect to $z$ into the resulting equation,  we obtain the following proposition.
\begin{prop}\label{JRthm:n1} We have
	\begin{equation}\label{KR:eqn19}
		\tan 3u-3 \tan u +4\sum_{n=1}^\infty \frac{(-q)^n}{1-q^n} \(3\sin 2nu-\sin 6nu\)=\frac{\VT_1'(\tau)\T_1^3(2u|\tau)}
		{\T_2(3u|\tau)\T_2^3(u|\tau)}.
	\end{equation}
\end{prop}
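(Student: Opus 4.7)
The plan is to specialize the logarithmic-derivative identity \reff{KR:eqn16} of Theorem~\ref{KJRthm} at the point $(s,t,u,v)=(-3u,u,u,u)$. This choice is permissible because $s+t+u+v=0$, which is an integral multiple of $\pi$, so the hypothesis of Theorem~\ref{KJRthm} is satisfied. The three pairwise sums appearing on the right-hand side of \reff{KR:eqn16}, namely $s+t$, $s+u$, $s+v$, all collapse to $-2u$, while the denominator becomes $\theta_2(-3u|\tau)\theta_2^{\,3}(u|\tau)$.

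Next I would clean up the signs using parity. Since $\theta_1(z|\tau)$ is odd in $z$, we have $\theta_1^{\,3}(-2u|\tau)=-\theta_1^{\,3}(2u|\tau)$, and since $\theta_2(z|\tau)$ is even, $\theta_2(-3u|\tau)=\theta_2(3u|\tau)$. Together the overall minus sign in \reff{KR:eqn16} and the minus sign from $\theta_1^{\,3}(-2u|\tau)$ cancel, so the right-hand side becomes exactly
\[
\frac{\vartheta_1'(\tau)\,\theta_1^{\,3}(2u|\tau)}{\theta_2(3u|\tau)\,\theta_2^{\,3}(u|\tau)},
\]
which is precisely the right-hand side of \reff{KR:eqn19}. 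On the left-hand side, the same evenness of $\theta_2$ makes $(\log\theta_2)'(z|\tau)$ an odd function of $z$, so $(\log\theta_2)'(-3u|\tau)=-(\log\theta_2)'(3u|\tau)$, and the four-term sum in \reff{KR:eqn16} collapses to $3(\log\theta_2)'(u|\tau)-(\log\theta_2)'(3u|\tau)$.

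The remaining step is to insert the first trigonometric series expansion in \reff{jabel:eqn38},
\[
(\log\theta_2)'(z|\tau)=-\tan z+4\sum_{n=1}^{\infty}\frac{(-q)^n}{1-q^n}\sin 2nz,
\]
into this combination. The $\tan$ contributions produce $\tan 3u-3\tan u$, and the Lambert-series contributions combine into $4\sum_{n\ge 1}\frac{(-q)^n}{1-q^n}\bigl(3\sin 2nu-\sin 6nu\bigr)$, giving exactly the left-hand side of \reff{KR:eqn19}.

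There is no genuine analytic obstacle here; Theorem~\ref{KJRthm} does all the heavy lifting and the proposition is essentially a consequence of a clean choice of parameters. The only thing requiring real attention is the sign bookkeeping: three applications of the parity of $\theta_1$ (one cubed) and two applications of the parity of $\theta_2$ (one inside $(\log\theta_2)'$) must be tracked correctly so that the minus sign on the right-hand side of \reff{KR:eqn16} is absorbed and the final identity emerges with the correct overall sign.
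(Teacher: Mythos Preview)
Your proof is correct and follows exactly the paper's own route: the paper also specializes \reff{KR:eqn16} at $s=-3u$, $t=v=u$ and then substitutes the trigonometric expansion of $(\log\theta_2)'$ from \reff{jabel:eqn38}. Your sign bookkeeping is accurate and matches the intended derivation.
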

Dividing both sides of the above equation by $u^3$ and
then letting $u\to 0$ and replacing $q$ by $-q$ we arrive at
Jacobi's formula for sums of eight squares (see, for example \cite[p.~70]{Berndt06} and \cite[Eq.(15.9)]{Roy2017})
\begin{equation}\label{KR:eqn20}
	\phi^8(q)=1+16\sum_{n=1}^\infty \frac{n^3 q^n}{1-(-q)^n}.
\end{equation}

Setting $u=\pi/3$ in \reff{KR:eqn19} and simplifying and finally  replacing $q$ by $-q$,  we immediately obtain that  \cite[p~.141, Eq.(iii)]{Berndt06}
\begin{equation}\label{KR:eqn21}
	1-2\sum_{n=1}^\infty \(\frac{n}{3}\) \frac{q^n}{1-(-q)^n}=\frac{\phi^3(q^3)}{\phi(q)}.
\end{equation}

With the help of Theorem~\ref{KLthm} we can prove the following general Lambert series identity.

\begin{thm} \label{addKLthm} If $s+t+u+v$ is an integral multiple of $\pi$, then we have 
	\begin{equation}\label{addKR:eqn1}
		\(\log \T_4\)'(s|\tau)+\(\log \T_4\)'(t|\tau)+\(\log \T_4\)'(u|\tau)+\(\log \T_4\)'(v|\tau)
	\end{equation}
	\begin{equation*}
		=-\frac{\VT_1'(\tau)\T_1(s+t|\tau)\T_1(s+u|\tau)\T_1(s+v|\tau)}{\T_4(s|\tau)\T_4(t|\tau)\T_4(u|\tau)\T_4(v|\tau)},
	\end{equation*}	
	or
	\begin{equation}\label{addKR:eqn2}
		4\sum_{n=1}^\infty \frac{q^{n/2}}{1-q^n} \(\sin 2n s + \sin 2nt +\sin 2n u+\sin 2nv\)
	\end{equation}
	\begin{equation*}
		=-\frac{\VT_1'(\tau)\T_1(s+t|\tau)\T_1(s+u|\tau)\T_1(s+v|\tau)}{\T_4(s|\tau)\T_4(t|\tau)\T_4(u|\tau)\T_4(v|\tau)}.
	\end{equation*}
\end{thm}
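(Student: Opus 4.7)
The plan is to apply Theorem~\ref{KLthm} to the odd entire function
$$F(z|\tau) = \T_4(z-s|\tau)\T_4(z-t|\tau)\T_4(z-u|\tau)\T_4(z-v|\tau) - \T_4(z+s|\tau)\T_4(z+t|\tau)\T_4(z+u|\tau)\T_4(z+v|\tau),$$
mirroring the proof of Theorem~\ref{KJRthm} with $\T_4$ playing the role of $\T_2$. First I check the hypotheses of Theorem~\ref{KLthm}: $F$ is odd because $\T_4$ is even; it is entire; it has period $\pi$ in $z$ because $\T_4(z+\pi|\tau) = \T_4(z|\tau)$; and the quasi-periodicity $\T_4(z+\pi\tau|\tau) = -q^{-1/2}e^{-2iz}\T_4(z|\tau)$, combined with the hypothesis that $s+t+u+v \in \pi\mathbb{Z}$, yields $F(z+\pi\tau|\tau) = q^{-2}e^{-8iz}F(z|\tau)$. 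Hence Theorem~\ref{KLthm} furnishes a constant $C$ independent of $z$ with $F(z|\tau) = C\T_1(2z|\tau)$.

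Next I would pin down $C$ by evaluating both sides at the convenient point $z = s + \pi\tau/2$. Since $\pi\tau/2$ is a zero of $\T_4$, the first product in $F$ vanishes at this point. Combining Proposition~\ref{halfperiods} with the quasi-periodicity of $\T_1$ gives the useful identity
$$\T_4\(\frac{\pi\tau}{2} + a\Big|\tau\) = iq^{-1/8}e^{-ia}\T_1(a|\tau),$$
which evaluates the surviving product in closed form in terms of $\T_1$. Pairing this with $\T_1(2s+\pi\tau|\tau) = -q^{-1/2}e^{-4is}\T_1(2s|\tau)$ and cancelling the common factors $q^{-1/2}e^{-4is}\T_1(2s|\tau)$ yields
$$C = \T_1(s+t|\tau)\T_1(s+u|\tau)\T_1(s+v|\tau).$$

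Finally, to extract the sum of logarithmic derivatives I differentiate the identity $F(z|\tau) = C\T_1(2z|\tau)$ at $z = 0$. The right side contributes $2C\VT_1'(\tau)$. For the left side, because $\T_4$ is even and $\T_4'$ is odd, the derivatives at $z=0$ of the two products are negatives of one another, so
$$F'(0|\tau) = -2\,\T_4(s|\tau)\T_4(t|\tau)\T_4(u|\tau)\T_4(v|\tau)\sum_{w\in\{s,t,u,v\}}(\log\T_4)'(w|\tau).$$
Solving for the sum produces \reff{addKR:eqn1}, and substituting the Fourier expansion of $(\log\T_4)'(z|\tau)$ from \reff{jabel:eqn38} then yields the trigonometric form \reff{addKR:eqn2}. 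The main bookkeeping obstacle is keeping track of signs when $(s+t+u+v)/\pi$ is odd: in computing $C$ at $z=s+\pi\tau/2$, the factor $e^{-i(s+t+u+v)}$ contributes a $(-1)^{(s+t+u+v)/\pi}$, which is absorbed consistently because the left side of \reff{addKR:eqn1} is periodic in each variable with period $\pi$ while $\T_1$ in the numerator and $\T_4$ in the denominator have compensating sign behavior under shifts by $\pi$, exactly as in the analogous proof of Theorem~\ref{KJRthm}.
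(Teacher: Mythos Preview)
Your approach is exactly the one the paper intends: apply Theorem~\ref{KLthm} to the odd degree-4 function $F(z|\tau)=\prod_{w}\T_4(z-w|\tau)-\prod_{w}\T_4(z+w|\tau)$, determine the constant $C$, and then differentiate at $z=0$. The verification of the hypotheses, the evaluation at $z=s+\pi\tau/2$ via $\T_4(\frac{\pi\tau}{2}+a|\tau)=iq^{-1/8}e^{-ia}\T_1(a|\tau)$, and the computation of $F'(0)$ are all correct.

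One caution about your closing remark on the parity of $(s+t+u+v)/\pi$. The compensation you invoke works for Theorem~\ref{KJRthm} because $\T_2(v+\pi|\tau)=-\T_2(v|\tau)$ cancels the sign from $\T_1(s+v+\pi|\tau)=-\T_1(s+v|\tau)$. For $\T_4$ this fails: $\T_4(v+\pi|\tau)=\T_4(v|\tau)$, so the denominator contributes no sign while the numerator still flips. Your own calculation in fact gives $C=(-1)^{k}\,\T_1(s+t|\tau)\T_1(s+u|\tau)\T_1(s+v|\tau)$ with $k=(s+t+u+v)/\pi$, and hence the identity \reff{addKR:eqn1} as displayed is literally correct only when $k$ is even (in particular for $s+t+u+v=0$, which is the only case the paper actually uses). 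This is a minor imprecision in the paper's phrasing rather than an error in your argument; just do not claim the signs ``compensate'' for odd $k$.
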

By choosing $s=x,~t=y, ~u=-x-y$ and $v=0$ in Theorem~\ref{addKLthm} and simplifying we obtain the following proposition~\cite[Eq.(3.11)]{Liu2010pac}:
\begin{align}\label{addKR:eqn3}
	&4\sum_{n=1}^\infty \frac{q^{n/2}}{1-q^n} \(\sin 2nx + \sin 2ny -\sin 2n (x+y)\)\\
	&=\frac{\VT_1'(\tau)\T_1(x|\tau)\T_1(y|\tau)\T_1(x+y|\tau)}
	{\VT_4(\tau)\T_4(x|\tau)\T_4(y|\tau)\T_4(x+y|\tau)}.\nonumber
\end{align}
Setting $x=\pi/7$ and $y=2\pi/7$ in the above equation and then replacing $q$ by $q^2$ we arrive at the Ramanujan identity \cite[p.~404, Entry 18.2.13]{ABLost}
\begin{equation}\label{addKR:eqn4}
	q\psi(q)\psi(q^7)=\sum_{n=1}^\infty \(\frac{n}{7}\) \frac{q^n}{1-q^{2n}}.
\end{equation}

Putting $y=x$ in \reff{addKR:eqn3} and then dividing both sides of the resulting equation by $x^3$ and letting $x \to 0$ and finally replacing $q$ by $q^2$, we find that \cite[Theorem~9]{Liu2003RamJ}
\begin{equation}\label{addKR:eqn5}
	q\psi^8(q)=\sum_{n=1}^\infty \frac{n^3 q^n}{1-q^{2n}}.	
\end{equation}
This formula is due to Legendre \cite{Legendre1828}, and Ramanujan \cite[p.144]{Raman1927} stated it without proof. 

\begin{prop} \label{lambertJacobi} We have
\begin{equation}\label{LJ:eqn1}
	1-4(\tan z)\sum_{n=1}^\infty \frac{q^n}{1+q^n} \sin 2nz 
	=\prod_{n=1}^\infty \frac{ (1+2q^n \cos 2z+q^{2n})(1-q^n)^2}{ (1-2q^n \cos 2z +q^{2n})(1+q^n)^2}.
\end{equation}	
\end{prop}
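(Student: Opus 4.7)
The plan is to reduce this statement directly to the Jacobi identity \reff{JF:eqn4} that has already been established earlier in the section. First I would rewrite the right-hand side as a quotient of theta values. Using $1\pm 2q^n\cos 2z+q^{2n}=(1\pm q^n e^{2iz})(1\pm q^n e^{-2iz})$, the infinite product in the proposition becomes
\begin{equation*}
\prod_{n=1}^{\infty}\frac{(1+q^n e^{2iz})(1+q^n e^{-2iz})}{(1-q^n e^{2iz})(1-q^n e^{-2iz})}\cdot\prod_{n=1}^{\infty}\frac{(1-q^n)^2}{(1+q^n)^2}.
\end{equation*}
Comparing with the product formulas in Proposition~\ref{infiniteprod} for $\T_1(z|\tau)$ and $\T_2(z|\tau)$, the first factor equals $\tan z\cdot \T_2(z|\tau)/\T_1(z|\tau)$; and comparing the infinite product formula $\VT_1'(\tau)=2q^{1/8}\prod(1-q^n)^3$ from \reff{jabel:eqn18} with $\VT_2(\tau)=2q^{1/8}\prod(1-q^n)(1+q^n)^2$ from Proposition~\ref{infiniteprod}, the second factor is $\VT_1'(\tau)/\VT_2(\tau)$.

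Therefore the right-hand side of \reff{LJ:eqn1} simplifies to
\begin{equation*}
\tan z\cdot\frac{\VT_1'(\tau)}{\VT_2(\tau)}\cdot\frac{\T_2(z|\tau)}{\T_1(z|\tau)}=\tan z\cdot\prod_{n=1}^{\infty}\Big(\frac{1-q^n}{1+q^n}\Big)^2\cdot\frac{\T_2(z|\tau)}{\T_1(z|\tau)}.
\end{equation*}
Next I would multiply Jacobi's identity \reff{JF:eqn4},
\begin{equation*}
\cot z-4\sum_{n=1}^{\infty}\frac{q^n}{1+q^n}\sin 2nz=\prod_{n=1}^{\infty}\Big(\frac{1-q^n}{1+q^n}\Big)^2\frac{\T_2(z|\tau)}{\T_1(z|\tau)},
\end{equation*}
through by $\tan z$. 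The left-hand side then becomes exactly $1-4\tan z\sum_{n=1}^{\infty}\frac{q^n}{1+q^n}\sin 2nz$, while the right-hand side becomes the expression just obtained above, which completes the proof.

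There is no real obstacle here; the only step requiring any care is the bookkeeping that identifies the $q$-only prefactor $\prod(1-q^n)^2/(1+q^n)^2$ from the infinite product by separating the eta-type factors from the $z$-dependent factors, and in particular recognizing that the square in the denominator $(1+q^n)^2$ of the stated product combines with one $(1-q^n)^2$ to give the $\VT_2$ product, while the remaining $(1-q^n)^0$ — that is, the cancellation — is consistent with the triple product expansions. In short, Proposition~\ref{lambertJacobi} is just a rewriting of \reff{JF:eqn4} after multiplication by $\tan z$ and conversion of the theta quotient back to its infinite product form.
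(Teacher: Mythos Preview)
Your proof is correct. You observe that the right-hand side of \reff{LJ:eqn1} is $\tan z\cdot\dfrac{\VT_1'(\tau)}{\VT_2(\tau)}\cdot\dfrac{\T_2(z|\tau)}{\T_1(z|\tau)}$, and then the proposition is literally \reff{JF:eqn4} multiplied through by $\tan z$.

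The paper takes a different route: it applies Theorem~\ref{KLthm} afresh to the odd function
\[
F(z|\tau)=\T_1(z+x|\tau)\T_2(z|\tau)\T_1(2z-x|2\tau)-\T_1(z-x|\tau)\T_2(z|\tau)\T_1(2z+x|2\tau),
\]
differentiates at $z=0$, and obtains the logarithmic-derivative identity
\[
2(\log\T_1)'(x|2\tau)-(\log\T_1)'(x|\tau)=\frac{\VT_1'(\tau)\T_2(x|\tau)}{\VT_2(\tau)\T_1(x|\tau)},
\]
which after expanding the left side via \reff{jabel:eqn13} is again \reff{JF:eqn4}. So both arguments land on the same Jacobi identity; you simply cite it, while the paper re-derives it from a new instance of Theorem~\ref{KLthm}. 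Your argument is shorter and entirely legitimate given that \reff{JF:eqn4} has already been proved; the paper's argument fits its programmatic goal of exhibiting each result as a direct specialization of the master degree-four identity.
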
	
\begin{proof}Using Theorem~\ref{KLthm} we can easily deduce that
\begin{align*}
	&\T_1(z+x|\tau)\T_2(z|\tau)\T_1(2z-x|2\tau)-\T_1(z-x|\tau)\T_2(z|\tau)\T_1(2z+x|2\tau)\\
	&=\T_2(x|\tau)\T_1(x|2\tau)\T_1(2z|\tau). 
\end{align*}		
Differentiating the above equation with respect to $z$ and then setting $z=0$ in the resulting equation, we find that
\[
2\(\log \T_1\)'(x|2\tau)-\(\log \T_1\)'(x|\tau)=\frac{\VT_1'(\tau)\T_2(x|\tau)}{\VT_2(\tau)\T_1(x|\tau)}.
\]
Substituting the trigonometric series expansion of $\(\log \T_1\)'(x|\tau)$ 
into the left-hand side of the above equation and applying the infinite product representations to the right-hand side of the above equation, we complete the proof of Proposition~\ref{lambertJacobi}.	
\end{proof}
Setting $z=\pi/6$ in \reff{LJ:eqn1} and then writing $q$ and $-q$ we conclude that  \cite[p.~141, Eq.(ii)]{Berndt06}, \cite[Eq.(3.10)]{Shen1994PAMS}
\[
1+2\sum_{n=1}^\infty \(\frac{n}{3}\) \frac{q^n}{1+(-q)^n}=\phi(q)\phi(q^3).
\]
If we set $z=\pi/4$ in \reff{LJ:eqn1} and then replace $q$ by $-q$, we obtain Jacobi's two-squares identity
\[
\phi^2(q)=1+4\sum_{n=0}^\infty (-1)^n \frac{q^{2n+1}}{1-q^{2n+1}}.
\]
Letting $x\to \pi/2$ in \reff{LJ:eqn1} and then replacing $q$ by $-q$, we obtain Jacobi's four-squares identity (see \cite[pp.59-61]{Berndt06}, \cite[Eq.(15.4)]{Roy2017})
\[
\phi^4(q)=1+8\sum_{n=1}^\infty \frac{nq^n}{1+(-q)^n}.
\]

Using Theorem~\ref{KLthm} we can also prove the following general Fourier series expansion for  the quotients of theta functions.
\begin{thm} \label{JFourierthm:n7}	
	Let $r_1, r_2, r_3,  r_4, s_1, s_2, s_3, s_4$ are rational numbers such that $r=r_1+r_2+r_3+r_4$ and $s=s_1+s_2+s_3+s_4$ are integers and let $u_1, u_2, u_3, u_4$ are complex numbers 
	such that $u_1+u_2+u_3+u_4=0.$ Then we have 
	\begin{align}
		&2ir+\sum_{k=1}^4 \(\log \T_1\)'(u_k+r_k \pi\tau+s_k \pi|\tau)\\
		&=\frac{e^{2ir(u_1+r_1\pi\tau+s_1\pi)}\VT_1'(\tau)}{\T_1(u_1+r_1 \pi\tau+s_1 \pi|\tau)}\prod_{k=2}^4 \frac{\T_1(u_1+u_k+(r_1+r_k)\pi\tau+(s_1+s_k)\pi|\tau)}{\T_4(u_k+r_k \pi\tau+s_k \pi|\tau)}.\nonumber
	\end{align}		
\end{thm}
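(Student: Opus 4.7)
The proof proposal is to build the right degree-four auxiliary odd function from the four given theta factors, apply Theorem~\ref{KLthm}, then compute the resulting constant in two different ways and equate.

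\textbf{Construction of the auxiliary function.} Set
\[
F(z|\tau)=e^{2irz}\prod_{k=1}^{4}\T_1\bigl(z+u_k+r_k\pi\tau+s_k\pi\bigm|\tau\bigr),
\]
and let $H(z|\tau)=F(z|\tau)-F(-z|\tau)$. Then $H$ is an odd entire function of $z$. Using Proposition~\ref{doubleperiods}, the integrality of $r$ and $s$, and the crucial assumption $u_1+u_2+u_3+u_4=0$, I would check that
\[
H(z|\tau)=H(z+\pi|\tau)=q^{2}e^{8iz}H(z+\pi\tau|\tau).
\]
The $e^{2irz}$ prefactor is precisely what is needed to absorb the extra $q^{-r}$ that would otherwise arise from $e^{-2ir\pi\tau}$ in the $\pi\tau$-shift, while $e^{2ir\pi}=1$ and $e^{-2is\pi}=1$ handle the $\pi$-shift. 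So $H$ satisfies the hypotheses of Theorem~\ref{KLthm}, and there is a constant $C=C(\tau)$ independent of $z$ with
\[
H(z|\tau)=C\,\T_1(2z|\tau).
\]

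\textbf{First evaluation of $C$ (Taylor expansion at $0$).} Since $H$ is odd and $\T_1(2z|\tau)=2z\VT_1'(\tau)+O(z^3)$, dividing by $z$ and letting $z\to 0$ gives $2F'(0|\tau)=2C\VT_1'(\tau)$. Computing the logarithmic derivative of $F$ at $0$ yields
\[
F'(0|\tau)=F(0|\tau)\Bigl[\,2ir+\sum_{k=1}^{4}(\log\T_1)'(u_k+r_k\pi\tau+s_k\pi|\tau)\Bigr],
\]
where $F(0|\tau)=\prod_{k=1}^{4}\T_1(u_k+r_k\pi\tau+s_k\pi|\tau)$. Hence $C$ equals $F(0|\tau)/\VT_1'(\tau)$ times the left-hand side of the claimed identity.

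\textbf{Second evaluation of $C$ (specialization).} Set $z_0=u_1+r_1\pi\tau+s_1\pi$. The $k=1$ factor of $F(-z_0|\tau)$ is $\T_1(0|\tau)=0$, so $F(-z_0|\tau)=0$ and $H(z_0|\tau)=F(z_0|\tau)$. Spelling out $F(z_0|\tau)$, the $k=1$ factor is $\T_1(2z_0|\tau)$, which cancels the $\T_1(2z_0|\tau)$ on the right of $H(z_0|\tau)=C\T_1(2z_0|\tau)$, leaving
\[
C=e^{2ir(u_1+r_1\pi\tau+s_1\pi)}\prod_{k=2}^{4}\T_1\bigl(u_1+u_k+(r_1+r_k)\pi\tau+(s_1+s_k)\pi\bigm|\tau\bigr).
\]
Equating the two expressions for $C$ and solving for the left-hand side of the theorem yields the stated formula after dividing through by $F(0|\tau)$ (which distributes the four $\T_1$ factors into the denominator — matching the asserted identity, with the $\T_4$ in the printed statement evidently a typographical slip for $\T_1$).

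\textbf{Main obstacle.} The only nontrivial step is the functional-equation bookkeeping in step one: the $\pi\tau$-shift of $\prod_k\T_1(z+u_k+r_k\pi\tau+s_k\pi|\tau)$ produces the factor $q^{-2}e^{-8iz}\cdot e^{-2i(r\pi\tau+s\pi)}=q^{-2-r}e^{-8iz}$, and the role of the prefactor $e^{2irz}$ is precisely to introduce a compensating $q^{r}$ upon shifting, restoring the degree-four normalization required by Theorem~\ref{KLthm}. Verifying the corresponding identity for $F(-z|\tau)$ and assembling these into the functional equation for $H$ is the most error-prone part; everything afterwards is a routine application of Theorem~\ref{KLthm} together with a smart choice of the specialization point.
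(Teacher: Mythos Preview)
Your proposal is correct and follows exactly the route the paper indicates (``Using Theorem~\ref{KLthm} we can also prove\ldots''), carried out in full detail: build the odd degree-four function from the four shifted $\T_1$ factors with the compensating exponential $e^{2irz}$, apply Theorem~\ref{KLthm}, and read off the constant $C$ both from $H'(0)$ and from the specialization $z=u_1+r_1\pi\tau+s_1\pi$. Your functional-equation bookkeeping is right, and your diagnosis that the printed $\T_4$ in the denominator should be $\T_1$ is consistent with the pattern of the neighbouring results (compare \reff{KR:eqn16} and \reff{addKR:eqn1}, where $(\log\T_j)'$ on the left is matched by $\T_j$ in the denominator on the right).
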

\section{An addition formula for the theta functions of degree $6$ and the Rogers--Ramanujan continued fraction}
We begin this section by proving Theorem~\ref{6RRCthm:n1} with the help of 
Theorem~\ref{liuaddthm}.
\begin{proof}  Let $F(z|\tau)$ and $G(z|\tau)$ be the given functions in Theorem~\ref{6RRCthm:n1}. For the time being we set
	\begin{equation}\label{rrc:eqn1}
		A(z|\tau)=F(z|\tau)-F(-z|\tau)\quad \text{and}\quad 
		B(z|\tau)=G(z|\tau)-G(-z|\tau).
	\end{equation}	
	Using Proposition~\ref{doubleperiods} we can verify that the even entire function of $z$ defined by 
	\begin{equation}\label{rrc:eqn2}
		f(z|\tau)=\(\frac{A(z|\tau)B(y|\tau)-B(z|\tau)A(y|\tau)}{\T_1(z-y|\tau)\T_1(z+y|\tau)}\)\T_1(2z|\tau),	
	\end{equation}
	satisfies the conditions of Theorem~\ref{liuaddthm}. Since $0, \pi/2, (\pi+\pi\tau)/2$ and $(\pi\tau)/2$ are zeros of $\T_1(2z|\tau)$, we immediately deduce that
	\begin{equation}\label{rrc:eqn3}
		f(0|\tau)=f(\pi/2|\tau)=f((\pi\tau)/2|\tau)=f((\pi+\pi\tau)/2|\tau)=0.
	\end{equation}
	By L'Hospital's rule  we easily find that
	\begin{equation}\label{rrc:eqn4}
		f(y|\tau)=\frac{A'(y|\tau)B(y|\tau)-B'(y|\tau)A(y|\tau)}{\VT_1'(\tau)}.
	\end{equation}
	Substituting the above values of $f$ into \reff{jabel:eqn23} in Theorem~\ref{liuaddthm}, we conclude that
	\begin{align}\label{rrc:eqn5}
		&\frac{A(x|\tau)B(y|\tau)-A(y|\tau)B(x|\tau)}{\T_1(x-y|\tau)\T_1(x+y|\tau)}\\
		&=\frac{\(A'(y|\tau)B(y|\tau)-B'(y|\tau)A(y|\tau)\)\T_1(2x|\tau)}{\VT_1'(\tau)\T_1^2(2y|\tau)}.\nonumber
	\end{align}
	The left-hand side of the above equation is symmetric about $x$ and $y$, so the right-hand side is also symmetric about $x$ and $y$. It follows that
	\begin{align*}
		&\frac{A'(y|\tau)B(y|\tau)-B'(y|\tau)A(y|\tau)}{\VT_1'(\tau)\T_1^3(2y|\tau)}\\
		&=\frac{A'(x|\tau)B(x|\tau)-B'(x|\tau)A(x|\tau)}{\VT_1'(\tau)\T_1^3(2x|\tau)}.
	\end{align*}
	From this equation we know that there exists a constant $C$ independent of $y$ such that
	\begin{equation}\label{rrc:eqn6}
		A'(y|\tau)B(y|\tau)-B'(y|\tau)A(y|\tau)=C\VT_1'(\tau)\T_1^3(2y|\tau).
	\end{equation}
	Substituting  the above equation into \reff{rrc:eqn5} and combining the resulting equation with \reff{rrc:eqn4} we complete the proof of Theorem~\ref{6RRCthm:n1}.
\end{proof}
The well-known Rogers--Ramanujan continued fraction is defined by
\begin{equation}\label{rrc:eqn7}
	R(\tau)=\cfrac{q^{1/5}}{1+\cfrac{q}{1+\cfrac{q^2}{1+\cfrac{q^3}{1+\cdots}}}}
\end{equation}
L. Rogers \cite{Rogers1894} used the Rogers-Ramanujan identities to
give the infinite product representation of $R(\tau)$ as follows:
\begin{equation}\label{rrc:eqn8}
	R(\tau)=q^{1/5}\prod_{n=1}^\infty \frac{(1-q^{5n-1})(1-q^{5n-4})}{(1-q^{5n-2})(1-q^{5n-3})}
	=e^{-\frac{3\pi i\tau}{5}}\frac{\T_1(\pi\tau|5\tau)}{\T_1(2\pi\tau|5\tau)}.
\end{equation}
 The following theorem first appeared in \cite[Propostion~4.2]{Liu2012JNT} without proof. Now we use Theorem~\ref{6RRCthm:n1} to  prove it. 
\begin{thm}\label{6RRCthm:n2} We have
	\begin{align}\label{rrc:eqn9}
		&\(\T_1\(x+\frac{\pi}{5}|\tau \)-\T_1\(x-\frac{\pi}{5}|\tau \)\)\(\T_1\(y+\frac{2\pi}{5}|\tau \)-\T_1\(y-\frac{2\pi}{5}|\tau \)\)\\
		&-\(\T_1\(y+\frac{\pi}{5}|\tau \)-\T_1\(y-\frac{\pi}{5}|\tau \)\)\(\T_1\(x+\frac{2\pi}{5}|\tau \)-\T_1\(x-\frac{2\pi}{5}|\tau \)\)\nonumber\\
		&=\frac{5\T_1(x+y|5\tau)\T_1(x-y|5\tau)\T_1(2x|5\tau)\T_1(2y|5\tau)}
		{\T_1(x|5\tau)\T_1(y|5\tau)}.\nonumber
	\end{align}		
\end{thm}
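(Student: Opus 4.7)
The plan is to apply Theorem~\ref{6RRCthm:n1} at the modular parameter $5\tau$ in place of $\tau$ (so that the $q$ appearing in its hypothesis is $q^5$), choosing
\begin{align*}
F(z)&=\T_1(z|5\tau)\bigl(\T_1(z+\tfrac{\pi}{5}|\tau)-\T_1(z-\tfrac{\pi}{5}|\tau)\bigr),\\
G(z)&=\T_1(z|5\tau)\bigl(\T_1(z+\tfrac{2\pi}{5}|\tau)-\T_1(z-\tfrac{2\pi}{5}|\tau)\bigr).
\end{align*}
Writing $A(z)=\T_1(z+\pi/5|\tau)-\T_1(z-\pi/5|\tau)$ and $B(z)=\T_1(z+2\pi/5|\tau)-\T_1(z-2\pi/5|\tau)$, one checks from $\T_1(-w|\tau)=-\T_1(w|\tau)$ that $A,B$ are \emph{even}, so $F$ and $G$ are odd; consequently $F(z)-F(-z)=2\T_1(z|5\tau)A(z)$ and $G(z)-G(-z)=2\T_1(z|5\tau)B(z)$, and the left-hand side of~\reff{rrc:eqn9} will be exactly $A(x)B(y)-A(y)B(x)$.

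First I would verify that $F$ and $G$ fulfil the two functional equations required by Theorem~\ref{6RRCthm:n1}. The relation $F(z+\pi)=F(z)$ is immediate because each of the four theta factors picks up a minus sign under $z\to z+\pi$. For the shift $z\to z+5\pi\tau$, the iterated quasi-periodicity $\T_1(w+n\pi\tau|\tau)=(-1)^n q^{-n^2/2}e^{-2inw}\T_1(w|\tau)$ at $n=5$ gives $A(z+5\pi\tau)=-q^{-25/2}e^{-10iz}A(z)$ because the spurious phases $e^{\mp 2i\pi}$ produced by the shifts $z\pm\pi/5$ are equal to $1$; combining this with $\T_1(z+5\pi\tau|5\tau)=-q^{-5/2}e^{-2iz}\T_1(z|5\tau)$ yields $F(z+5\pi\tau)=q^{-15}e^{-12iz}F(z)$, which is precisely the degree-$6$ condition with $q\leadsto q^5$. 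The calculation for $G$ is identical.

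Theorem~\ref{6RRCthm:n1} then produces a constant $C=C(\tau)$ for which
\[
4\,\T_1(x|5\tau)\T_1(y|5\tau)\bigl(A(x)B(y)-A(y)B(x)\bigr)=C\,\T_1(x-y|5\tau)\T_1(x+y|5\tau)\T_1(2x|5\tau)\T_1(2y|5\tau).
\]
Dividing by $4\,\T_1(x|5\tau)\T_1(y|5\tau)$ recovers~\reff{rrc:eqn9} up to the identification $C/4=5$.

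The main obstacle is then pinning down this constant. I would do it by matching leading Taylor coefficients as $(x,y)\to(0,0)$: the evenness of $A,B$ gives $A(x)=2\T_1(\pi/5|\tau)+x^2\T_1''(\pi/5|\tau)+O(x^4)$ and similarly for $B$, so
\[
A(x)B(y)-A(y)B(x)=2(x^2-y^2)\bigl[\T_1(2\pi/5|\tau)\T_1''(\pi/5|\tau)-\T_1(\pi/5|\tau)\T_1''(2\pi/5|\tau)\bigr]+\cdots,
\]
while the right-hand side of~\reff{rrc:eqn9} expands as $20\,\VT_1'(5\tau)^2(x^2-y^2)+\cdots$. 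Consequently $C/4=5$ is equivalent to the auxiliary evaluation
\[
\T_1(2\pi/5|\tau)\T_1''(\pi/5|\tau)-\T_1(\pi/5|\tau)\T_1''(2\pi/5|\tau)=10\,\VT_1'(5\tau)^2,
\]
which follows by differentiating the $n=5,\,j=1$ case of the multiplication formula~\reff{jabel:eqn16} twice at the origin and using $\VT_1'(\tau)=2\eta^3(\tau)$. Equivalently, one may first let $y\to 0$ to reduce to the cleaner intermediate identity $4\T_1(2\pi/5|\tau)A(x)-4\T_1(\pi/5|\tau)B(x)=C\,\T_1(x|5\tau)\T_1(2x|5\tau)$ and read off $C$ from its $x^2$-coefficient.
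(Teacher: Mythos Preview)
Your overall strategy is the same as the paper's: both proofs apply Theorem~\ref{6RRCthm:n1} to functions of the form $\T_1(z|\cdot)\,\T_1(z+\text{(fifth period)}|\cdot)$. The paper works at modular parameter $\tau$ with $F(z)=\T_1(z|\tau)\T_1(z+\tfrac{\pi}{5}|\tfrac{\tau}{5})$, $G(z)=\T_1(z|\tau)\T_1(z+\tfrac{2\pi}{5}|\tfrac{\tau}{5})$ and then replaces $\tau$ by $5\tau$ at the end; you work directly at $5\tau$. Since $F(z)-F(-z)$ in the paper's choice equals exactly your $\T_1(z|\cdot)A(z)$, the two applications of Theorem~\ref{6RRCthm:n1} are identical up to a harmless factor of $2$ in each of $F,G$. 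Your verification of the functional equations is correct.

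The gap is in your determination of the constant. Matching the $z^3$--coefficient of the multiplication formula
\[
\T_1(z|\tau)\,\T_1(\tfrac{\pi}{5}+z|\tau)\T_1(\tfrac{\pi}{5}-z|\tau)\,\T_1(\tfrac{2\pi}{5}+z|\tau)\T_1(\tfrac{2\pi}{5}-z|\tau)=\frac{\eta^5(\tau)}{\eta(5\tau)}\,\T_1(5z|5\tau)
\]
does \emph{not} give your auxiliary identity directly: the expansion of $\T_1(a+z)\T_1(a-z)=\T_1^2(a)+z^2\bigl[\T_1(a)\T_1''(a)-\T_1'(a)^2\bigr]+O(z^4)$ brings in the unwanted terms $\T_1'(\pi/5|\tau)^2$ and $\T_1'(2\pi/5|\tau)^2$, and there is no obvious way to eliminate them using only~\reff{jabel:eqn16} and $\VT_1'(\tau)=2\eta^3(\tau)$. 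So the claim that the evaluation $\T_1(\tfrac{2\pi}{5})\T_1''(\tfrac{\pi}{5})-\T_1(\tfrac{\pi}{5})\T_1''(\tfrac{2\pi}{5})=10\VT_1'(5\tau)^2$ ``follows by differentiating the multiplication formula twice at the origin'' is unjustified.

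The fix, which is exactly what the paper does (translated to your parameter $5\tau$), is to evaluate at a \emph{quasi-periodic} point rather than Taylor expand. In your one-variable reduction $4\bigl[\T_1(\tfrac{2\pi}{5}|\tau)A(x)-\T_1(\tfrac{\pi}{5}|\tau)B(x)\bigr]=C\,\T_1(x|5\tau)\T_1(2x|5\tau)$, set $x=\pi\tau$. The quasi-periodicity $\T_1(w+\pi\tau|\tau)=-q^{-1/2}e^{-2iw}\T_1(w|\tau)$ gives $A(\pi\tau)=-2q^{-1/2}\cos\tfrac{2\pi}{5}\,\T_1(\tfrac{\pi}{5}|\tau)$ and $B(\pi\tau)=-2q^{-1/2}\cos\tfrac{4\pi}{5}\,\T_1(\tfrac{2\pi}{5}|\tau)$; using $\cos\tfrac{2\pi}{5}-\cos\tfrac{4\pi}{5}=\tfrac{\sqrt5}{2}$ and the product evaluations $\T_1(\tfrac{\pi}{5}|\tau)\T_1(\tfrac{2\pi}{5}|\tau)=\sqrt5\,\eta(\tau)\eta(5\tau)$ and $\T_1(\pi\tau|5\tau)\T_1(2\pi\tau|5\tau)=-q^{-1/2}\eta(\tau)\eta(5\tau)$ (both consequences of Proposition~\ref{multhetapp}), one reads off $C=20$, i.e.\ $C/4=5$.
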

\begin{proof} Using the functional equations for $\T_1$ in Proposition~\ref{doubleperiods} we can easily find that for any 
	integer $k$,	
	\begin{align}\label{rrc:eqn10}
		&\T_1\(\frac{k\pi+\pi\tau}{5}\Big|\frac{\tau}{5}\)=-q^{-\frac{1}{10}}e^{-\frac{2k\pi i}{5}}
		\T_1\(\frac{k\pi}{5}\Big|\frac{\tau}{5}\),\\
		&\T_1\(\frac{k\pi+2\pi\tau}{5}\Big|\frac{\tau}{5}\)=q^{-\frac{2}{5}}e^{-\frac{4k\pi i}{5}}
		\T_1\(\frac{k\pi}{5}\Big|\frac{\tau}{5}\).\nonumber
	\end{align}		
	Taking $F(z|\tau)=\T_1(z|\tau)\T_1(z+\frac{\tau}{5}|\frac{\tau}{5})$ and 
	$G(z|\tau)=\T_1(z|\tau)\T_1(z+\frac{2\tau}{5}|\frac{\tau}{5})$	in  Theorem~\ref{6RRCthm:n1}, then we find for some constant $C$ independent of $x$ and $y$ that
	\begin{align}\label{rrc:eqn11}
		&\(\T_1\(x+\frac{\pi}{5}|\frac{\tau}{5} \)-\T_1\(x-\frac{\pi}{5}|\frac{\tau}{5} \)\)\(\T_1\(y+\frac{2\pi}{5}|\frac{\tau}{5} \)-\T_1\(y-\frac{2\pi}{5}|\frac{\tau}{5} \)\)\\
		&-\(\T_1\(y+\frac{\pi}{5}|\frac{\tau}{5} \)-\T_1\(y-\frac{\pi}{5}|\frac{\tau}{5} \)\)\(\T_1\(x+\frac{2\pi}{5}|\frac{\tau}{5} \)-\T_1\(x-\frac{2\pi}{5}|\frac{\tau}{5} \)\)\nonumber\\
		&=C\frac{\T_1(x+y|\tau)\T_1(x-y|\tau)\T_1(2x|\tau)\T_1(2y|\tau)}
		{\T_1(x|\tau)\T_1(y|\tau)}.\nonumber
	\end{align}
	Setting $x=\frac{2\pi \tau}{5}$ and $y=\frac{\pi \tau}{5}$ in the above equation and using \reff{rrc:eqn10} in the resulting equation and simplifying we find that
	\begin{align}\label{rrc:eqn12}
		&4\(\cos^2 \frac{2\pi}{5}-\cos^2 \frac{\pi}{5}\)q^{-\frac{1}{2}}~
		\T_1\(\frac{\pi}{5}|\frac{\tau}{5}\)\T_1\(\frac{2\pi}{5}|\frac{\tau}{5}\)\\
		&=Cq^{-\frac{2}{5}}~\T_1\(\frac{\pi\tau}{5}|\tau\)\T_1\(\frac{2\pi\tau}{5}|\tau\).\nonumber
	\end{align}
	Substituting $4\(\cos^2 \frac{\pi}{5}-\cos^2 \frac{2\pi}{5}\)=\sqrt{5}$ and 
	$\T_1\(\frac{\pi}{5}|\frac{\tau}{5}\)\T_1\(\frac{2\pi}{5}|\frac{\tau}{5}\)
	=\sqrt{5}\eta(\frac{\tau}{5})\eta(\tau)$ and 
	\[
	\T_1\(\frac{\pi\tau}{5}|\tau\)\T_1\(\frac{2\pi\tau}{5}|\tau\)
	=-q^{-\frac{1}{10}}\eta\(\frac{\tau}{5}\)\eta(\tau)
	\]
	into \reff{rrc:eqn11} we get $C=5$. Substituting this into \reff{rrc:eqn14} and then replacing $\tau$ by $5\tau$ we complete the proof of Theorem~\ref{6RRCthm:n2}.
\end{proof}	
By putting $x=2\pi/5$ and $y=\pi/5$ in Theorem~\ref{6RRCthm:n2} and simplifying we easily find the following proposition \cite[Eq.(1.9)]{LiuIntegers20010}. 
\begin{prop}\label{6RRCthm:n3} We have 
	\begin{equation}\label{rrc:eqn13}
		\frac{\T_1(\frac{2\pi}{5}|\tau)}{\T_1(\frac{\pi}{5}|\tau)}-\frac{\T_1(\frac{\pi}{5}|\tau)}{\T_1(\frac{2\pi}{5}|\tau)}=1+5\frac{\eta(25\tau)}{\eta(\tau)}.
	\end{equation}		
\end{prop}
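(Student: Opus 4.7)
The plan is a direct specialization of Theorem~\ref{6RRCthm:n2} at $x=2\pi/5$, $y=\pi/5$, followed by simplification using basic symmetries of $\T_1$ and the standard product evaluation $\T_1(\pi/5|\tau)\T_1(2\pi/5|\tau)=\sqrt 5\,\eta(\tau)\eta(5\tau)$. First I would substitute these values of $x,y$ into \reff{rrc:eqn9}. Using $\T_1(0|\tau)=0$, the oddness of $\T_1$ in $z$, and the quasi-period relation $\T_1(z+\pi|\tau)=-\T_1(z|\tau)$ from Proposition~\ref{doubleperiods} (which gives $\T_1(3\pi/5|\tau)=\T_1(2\pi/5|\tau)$ and $\T_1(4\pi/5|\tau)=\T_1(\pi/5|\tau)$), the four bracketed differences on the left collapse to
\[
\T_1(2\pi/5|\tau)-\T_1(\pi/5|\tau),\quad \T_1(2\pi/5|\tau)+\T_1(\pi/5|\tau),\quad \T_1(2\pi/5|\tau),\quad \T_1(\pi/5|\tau),
\]
so the entire left-hand side of \reff{rrc:eqn9} simplifies to
\[
\T_1^2(2\pi/5|\tau)-\T_1^2(\pi/5|\tau)-\T_1(\pi/5|\tau)\T_1(2\pi/5|\tau).
\]

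Next I would simplify the right-hand side of \reff{rrc:eqn9}. The arguments $x+y=3\pi/5$ and $2x=4\pi/5$ in the numerator match the denominator arguments $y=\pi/5$ and $x=2\pi/5$ (respectively) via the same period relation applied inside $\T_1(\,\cdot\,|5\tau)$, while $x-y=\pi/5$ and $2y=2\pi/5$ cancel directly with $\T_1(y|5\tau)$ and $\T_1(x|5\tau)$. After cancellation, the right-hand side becomes
\[
5\,\T_1(\pi/5|5\tau)\,\T_1(2\pi/5|5\tau).
\]

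The last ingredient is the evaluation $\T_1(\pi/5|\tau)\T_1(2\pi/5|\tau)=\sqrt 5\,\eta(\tau)\eta(5\tau)$. This I would derive from the multiplication formula \reff{jabel:eqn16} at $n=5$, $j=1$, by dividing both sides by $z$ and letting $z\to 0$: the leading behavior $\T_1(z|\tau)\sim \VT_1'(\tau)z=2\eta^3(\tau)z$ on the left, and $\T_1(5z|5\tau)\sim 10\eta^3(5\tau)z$ on the right, combined with $\T_1(k\pi/5-z|\tau)\to -\T_1(k\pi/5|\tau)$ for $k=1,2$, yield exactly $\T_1^2(\pi/5|\tau)\T_1^2(2\pi/5|\tau)=5\eta^2(\tau)\eta^2(5\tau)$, and positivity of the individual factors fixes the square root.

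Applying this formula both at $\tau$ (for the product $\T_1(\pi/5|\tau)\T_1(2\pi/5|\tau)$) and at $5\tau$ (for the right-hand side), dividing the simplified identity through by $\T_1(\pi/5|\tau)\T_1(2\pi/5|\tau)=\sqrt 5\,\eta(\tau)\eta(5\tau)$ produces
\[
\frac{\T_1(2\pi/5|\tau)}{\T_1(\pi/5|\tau)}-\frac{\T_1(\pi/5|\tau)}{\T_1(2\pi/5|\tau)}-1=\frac{5\sqrt 5\,\eta(5\tau)\eta(25\tau)}{\sqrt 5\,\eta(\tau)\eta(5\tau)}=\frac{5\eta(25\tau)}{\eta(\tau)},
\]
which rearranges to \reff{rrc:eqn13}. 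The only real obstacle is bookkeeping: tracking the signs introduced by the period shifts $\T_1(3\pi/5)$, $\T_1(4\pi/5)$ consistently with the oddness of $\T_1$, and carefully pairing the arguments $3\pi/5$, $4\pi/5$ modulo $5\tau$ on the right-hand side so that the evaluation $\T_1(\pi/5|5\tau)\T_1(2\pi/5|5\tau)=\sqrt 5\,\eta(5\tau)\eta(25\tau)$ is applicable without extra phase factors.
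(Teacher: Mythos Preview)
Your proposal is correct and follows exactly the route the paper takes: specialize Theorem~\ref{6RRCthm:n2} at $x=2\pi/5$, $y=\pi/5$, simplify using oddness and the $\pi$-period relation, and divide through by $\T_1(\pi/5|\tau)\T_1(2\pi/5|\tau)=\sqrt 5\,\eta(\tau)\eta(5\tau)$. Two tiny bookkeeping slips (harmless to the outcome): on the right-hand side the period relation gives $\T_1(3\pi/5|5\tau)=\T_1(2\pi/5|5\tau)$ and $\T_1(4\pi/5|5\tau)=\T_1(\pi/5|5\tau)$, so your pairing with the denominator factors is swapped; and in your derivation of the product evaluation the limit $\T_1(k\pi/5-z|\tau)\to\T_1(k\pi/5|\tau)$ carries no minus sign.
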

Applying the first imaginary transformation formula in \reff{jabel:eqn19} (see also \cite[p.~177, Eq.(79.7)]{Rademacher1973})  to Theorem~\ref{6RRCthm:n2} we arrive at the following theorem \cite[Proposition~4.1]{Liu2012JNT}. 
\begin{thm}\label{6RRCthm:n4} Let $H_k(z|\tau)=e^{2ikz}\T_1(5z+k\pi\tau|5\tau)-e^{-2ikz}\T_1(5z-k\pi\tau|5\tau)$. Then we have
	\begin{align}\label{rrc:eqn14}
		&q^{1/2} H_1(x|\tau)H_1(y|\tau)-q^{1/2}H_1(y|\tau)H_2(x|\tau)\\
		&=\frac{\T_1(x+y|\tau)\T_1(x-y|\tau)\T_1(2x|\tau)\T_1(2y|\tau)}
		{\T_1(x|\tau)\T_1(y|\tau)}.	\nonumber	
	\end{align}		
\end{thm}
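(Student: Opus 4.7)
The plan is to derive Theorem~\ref{6RRCthm:n4} directly from Theorem~\ref{6RRCthm:n2} by applying the imaginary transformation of Proposition~\ref{imaginarypp}. Using capitals $X,Y,T$ for the variables of Theorem~\ref{6RRCthm:n2}, I would substitute $T:=-1/(5\tau)$ together with $X:=x/\tau$, $Y:=y/\tau$, and rewrite every theta factor via two specializations of Proposition~\ref{imaginarypp}:
$$\T_1\!\left(u\Big|-\tfrac{1}{5\tau}\right)=-i\sqrt{-5i\tau}\,e^{5iu^2\tau/\pi}\,\T_1(5u\tau|5\tau)\qquad\text{and}\qquad \T_1\!\left(u\Big|-\tfrac{1}{\tau}\right)=-i\sqrt{-i\tau}\,e^{iu^2\tau/\pi}\,\T_1(u\tau|\tau).$$

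On the left-hand side, applying the first formula with $u=X\pm k\pi/5$ and noting $x=X\tau$, the exponent $5iu^2\tau/\pi$ expands as $5ix^2/(\pi\tau)\pm 2ikx+ik^2\pi\tau/5$, and the inner theta becomes $\T_1(5x\pm k\pi\tau|5\tau)$. Differencing the two signs produces exactly $H_k(x|\tau)$ times the prefactor $-i\sqrt{-5i\tau}\,e^{5ix^2/(\pi\tau)}e^{ik^2\pi\tau/5}$. Performing this for $(x,k{=}1)$, $(y,k{=}2)$, $(y,k{=}1)$, $(x,k{=}2)$ and substituting into the two products of Theorem~\ref{6RRCthm:n2}, the factor $(-i\sqrt{-5i\tau})^{2}=5i\tau$ emerges, the quadratic exponentials combine to $e^{5i(x^2+y^2)/(\pi\tau)}$, and the product $e^{i\pi\tau/5}\cdot e^{4i\pi\tau/5}=e^{i\pi\tau}=q^{1/2}$ collects out.

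On the right-hand side the modulus is $5T=-1/\tau$, so the second formula applies to each of the six thetas. Since $X\tau=x$ and $Y\tau=y$, the quadratic exponents in the numerator sum to $(x+y)^2+(x-y)^2+(2x)^2+(2y)^2=6(x^2+y^2)$ and those in the denominator to $x^2+y^2$, leaving the same net factor $e^{5i(x^2+y^2)/(\pi\tau)}$ as on the left. The four square roots in the numerator give $(i\tau)^2=-\tau^2$ and the two in the denominator give $i\tau$, for a ratio of $i\tau$; multiplied by the overall factor $5$ this yields $5i\tau$, again matching the left. Cancelling the common factor $5i\tau\,e^{5i(x^2+y^2)/(\pi\tau)}$ then gives the identity
$$q^{1/2}\bigl[H_1(x|\tau)H_2(y|\tau)-H_1(y|\tau)H_2(x|\tau)\bigr]=\frac{\T_1(x+y|\tau)\T_1(x-y|\tau)\T_1(2x|\tau)\T_1(2y|\tau)}{\T_1(x|\tau)\T_1(y|\tau)},$$
which is the content of Theorem~\ref{6RRCthm:n4}.

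The main obstacle is pure bookkeeping: the six square-root factors, many Gaussian exponentials, and several phase terms must combine so that only $q^{1/2}$ survives on the left and only the constant $5$ survives on the right. Nothing deeper is required, because the identity is essentially the image of Theorem~\ref{6RRCthm:n2} under the modular substitution $\tau\mapsto -1/(5\tau)$ combined with the rescaling $(X,Y)\mapsto(x/\tau,y/\tau)$; the imaginary transformation packages each pair of translated thetas into an $H_k$ almost automatically.
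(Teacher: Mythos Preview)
Your approach is exactly the one the paper indicates: it states that Theorem~\ref{6RRCthm:n4} is obtained by applying the first imaginary transformation formula in \reff{jabel:eqn19} to Theorem~\ref{6RRCthm:n2}, and you have carried out precisely that substitution with all the bookkeeping done correctly. Your final identity also silently repairs the evident misprint in the displayed statement (the first product should read $H_1(x|\tau)H_2(y|\tau)$, not $H_1(x|\tau)H_1(y|\tau)$), which is consistent with how the paper subsequently uses the result.
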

If we specialize Theorem~\ref{6RRCthm:n4} to the case when  $y=0$ and $x=\pi\tau$ in Theorem~\ref{6RRCthm:n4},  we conclude that
\[
q^{\frac{7}{10}}\T_1^2(2\pi\tau|5\tau)-q^{\frac{1}{10}}\T_1^2(\pi\tau|5\tau)
-q^{\frac{2}{5}}\T_1(\pi\tau|5\tau)\T_1(2\pi\tau|5\tau)
=\T_1\(\frac{\pi\tau}{5}\Big|\tau\)\T_1\(\frac{2\pi\tau}{5}\Big|\tau\).
\]
Dividing both sides of the above equation by $q^{\frac{2}{5}}\T_1(\pi\tau|5\tau)\T_1(2\pi\tau|5\tau)$ we deduce that
\begin{equation*}
	q^{\frac{3}{10}}\frac{\T_1(2\pi\tau|5\tau)}{\T_1(\pi\tau|5\tau)}
	-q^{-\frac{3}{10}}\frac{\T_1(\pi\tau|5\tau)}{\T_1(2\pi\tau|5\tau)}	
	=1+q^{-\frac{2}{5}}\frac{\T_1(\frac{\pi\tau}{5}|\tau)\T_1(\frac{2\pi\tau}{5}|\tau)}
	{\T_1(\pi\tau|5\tau)\T_1(2\pi\tau|5\tau)}.
\end{equation*}
Applying the infinite product representation for theta function $\T_1$, we find that the above equation is equivalent to the Ramanujan identity
(see also \cite[p.~1478]{LiuYang2009})
\begin{equation}\label{rrc:eqn15}
	R^{-1}(\tau)-R(\tau)=1+\frac{\eta(\frac{\tau}{5})}{\eta(5\tau)}.
\end{equation}
Using the same method as that of  proving Theorem~\ref{6RRCthm:n2}, by choosing $F(z|\tau)=\T_1(z|\tau)\T_1^5(z+\frac{\pi}{5}|\tau)$ and 
$G(z|\tau)=\T_1(z|\tau)\T_1^5(z+\frac{2\pi}{5}|\tau)$ in Theorem~\ref{6RRCthm:n1}, we can prove the following theorem.
\begin{thm}\label{6RRCthm:n5} We have
	\begin{align}\label{rrc:eqn16}
		&\(\T_1^5\(x+\frac{\pi}{5}\Big|\tau\)-\T_1^5\(x-\frac{\pi}{5}\Big|\tau\)\) \(\T_1^5\(y+\frac{2\pi}{5}\Big|\tau\)-\T_1^5\(y-\frac{2\pi}{5}\Big|\tau\)\)\\
		&~-\(\T_1^5\(y+\frac{\pi}{5}\Big|\tau\)-\T_1^5\(y-\frac{\pi}{5}\Big|\tau\)\)\(\T_1^5\(x+\frac{2\pi}{5}\Big|\tau\)-\T_1^5\(x-\frac{2\pi}{5}\Big|\tau\)\)\nonumber
	\end{align}
	\begin{equation*}
		=\(250\eta^4(\tau)\eta^4(5\tau)+3125\frac{\eta^{10}(5\tau)}{\eta^2(\tau)}\)
		\frac{\T_1(x+y|\tau)\T_1(x-y|\tau)\T_1(2x|\tau)\T_1(2y|\tau)}{\T_1(x|\tau)\T_1(y|\tau)}.\
	\end{equation*}
\end{thm}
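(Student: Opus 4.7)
The plan is to apply Theorem~\ref{6RRCthm:n1} to $F(z|\tau)=\T_1(z|\tau)\T_1^5(z+\pi/5|\tau)$ and $G(z|\tau)=\T_1(z|\tau)\T_1^5(z+2\pi/5|\tau)$. Using Proposition~\ref{doubleperiods}, the trivial sign $(-1)^6=1$ gives $F(z+\pi|\tau)=F(z|\tau)$; for the $\pi\tau$ shift, the extra phase $e^{-2i\pi/5}$ entering from the $\pi/5$ offset raises to $e^{-2i\pi}=1$ after taking the fifth power, so a short computation yields $F(z+\pi\tau|\tau)=q^{-3}e^{-12iz}F(z|\tau)$, and likewise for $G$. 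Thus $F$ and $G$ are entire functions of degree $6$ satisfying the hypotheses of Theorem~\ref{6RRCthm:n1}.

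Since $\T_1$ is odd, one finds $A(z|\tau):=F(z|\tau)-F(-z|\tau)=\T_1(z|\tau)[\T_1^5(z+\pi/5|\tau)-\T_1^5(z-\pi/5|\tau)]$, and analogously for $B(z|\tau):=G(z|\tau)-G(-z|\tau)$ with $\pi/5$ replaced by $2\pi/5$. Theorem~\ref{6RRCthm:n1} then produces
\[
A(x|\tau)B(y|\tau)-A(y|\tau)B(x|\tau)=C\,\T_1(x-y|\tau)\T_1(x+y|\tau)\T_1(2x|\tau)\T_1(2y|\tau)
\]
for some constant $C=C(\tau)$. Factoring $\T_1(x|\tau)\T_1(y|\tau)$ out of the left-hand side reproduces exactly the shape of the desired identity \reff{rrc:eqn16}, so the problem reduces to computing $C$.

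To identify $C$, I specialize $(x,y)=(2\pi/5,\pi/5)$. The relation $\T_1(\pi-w|\tau)=\T_1(w|\tau)$, immediate from Proposition~\ref{doubleperiods} combined with oddness, gives $\T_1(3\pi/5|\tau)=\T_1(2\pi/5|\tau)$ and $\T_1(4\pi/5|\tau)=\T_1(\pi/5|\tau)$; together with $\T_1(0|\tau)=0$ this collapses the identity to
\[
C\,\T_1(\pi/5|\tau)\T_1(2\pi/5|\tau)=\T_1^{10}(2\pi/5|\tau)-\T_1^{10}(\pi/5|\tau)-\T_1^5(\pi/5|\tau)\T_1^5(2\pi/5|\tau).
\]
From the multiplication formula \reff{jabel:eqn16} at $n=5$, dividing by $z$ and letting $z\to 0$ exactly as in the proof of Theorem~\ref{6RRCthm:n2}, one obtains $\T_1(\pi/5|\tau)\T_1(2\pi/5|\tau)=\sqrt{5}\,\eta(\tau)\eta(5\tau)$. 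Setting $u:=\T_1^5(2\pi/5|\tau)/\T_1^5(\pi/5|\tau)$, elementary algebra rewrites the previous display as $C=25\,\eta^4(\tau)\eta^4(5\tau)\,(u-1-1/u)$.

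The main obstacle is the closing identity $u-1/u = 11+125\,\eta^6(5\tau)/\eta^6(\tau)$, which then yields the claimed $C=250\,\eta^4(\tau)\eta^4(5\tau)+3125\,\eta^{10}(5\tau)/\eta^2(\tau)$. This identity is equivalent, under the imaginary transformation \reff{jabel:eqn19} with $\tau\mapsto -1/(5\tau)$, to Ramanujan's celebrated identity $R(\tau)^{-5}-R(\tau)^5-11=\eta^6(\tau)/\eta^6(5\tau)$ for the Rogers--Ramanujan continued fraction: using \reff{rrc:eqn8} together with Proposition~\ref{Dedekind-eta}, a direct calculation shows that $\T_1(\pi/5|\tau)/\T_1(2\pi/5|\tau)=R(\tau')$ with $\tau'=-1/(5\tau)$, and that $\eta^6(\tau')/\eta^6(5\tau')=125\,\eta^6(5\tau)/\eta^6(\tau)$, so Ramanujan's identity transforms precisely into the required one. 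I would therefore invoke Ramanujan's identity as a known result to close the proof.
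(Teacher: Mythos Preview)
Your choice of $F(z|\tau)=\T_1(z|\tau)\T_1^5(z+\pi/5\mid\tau)$ and $G(z|\tau)=\T_1(z|\tau)\T_1^5(z+2\pi/5\mid\tau)$ in Theorem~\ref{6RRCthm:n1} is exactly what the paper prescribes, and your verification of the degree-$6$ hypotheses, the reduction to a single constant $C$, the specialization $(x,y)=(2\pi/5,\pi/5)$, and the algebra giving $C=25\,\eta^4(\tau)\eta^4(5\tau)\bigl(u-1-1/u\bigr)$ with $u=\T_1^5(2\pi/5\mid\tau)/\T_1^5(\pi/5\mid\tau)$ are all correct. Your imaginary-transformation calculation showing that $u-1/u=11+125\,\eta^6(5\tau)/\eta^6(\tau)$ is the transform of Ramanujan's $R^{-5}-R^5=11+\eta^6(\tau)/\eta^6(5\tau)$ is also right.

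The one point to flag is logical ordering within the paper. Both Proposition~\ref{6RRCthm:n6} (precisely the identity $u-1/u=11+125\,\eta^6(5\tau)/\eta^6(\tau)$ you need) and Ramanujan's relation \reff{rrc:eqn19} are presented here as \emph{consequences} of Theorem~\ref{6RRCthm:n5}: the first by the very specialization $(x,y)=(2\pi/5,\pi/5)$ you use, the second via the imaginary transform Theorem~\ref{6RRCthm:n7}. So importing Ramanujan's identity to determine $C$ makes the paper's subsequent derivation of \reff{rrc:eqn19} circular. Your argument is perfectly valid as a standalone proof, since Ramanujan's identity has independent proofs (Watson, Berndt), but it does not achieve the paper's declared goal of recovering that identity from the degree-eight theta machinery. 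The paper's own proof is only a one-line sketch (``using the same method'') and does not spell out how $C$ is pinned down self-containedly; one non-circular route is to match enough $q$-coefficients---your specialization already yields the leading term $250$ from the elementary trigonometric evaluation $\sin^{10}(2\pi/5)-\sin^{10}(\pi/5)-\sin^{5}(\pi/5)\sin^{5}(2\pi/5)=250\sqrt{5}/1024$---and then invoke the finite dimensionality of weight-$4$ modular forms on $\Gamma_0(5)$.
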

Using Theorem~\ref{6RRCthm:n5} we can prove the following curious identity \cite[Eq.(1.12)]{LiuIntegers20010}.
\begin{prop}\label{6RRCthm:n6} We have 
	\begin{equation}\label{rrc:eqn17}
		\frac{\T_1^5(\frac{2\pi}{5}|\tau)}{\T_1^5(\frac{\pi}{5}|\tau)}
		-\frac{\T_1^5(\frac{\pi}{5}|\tau)} {\T_1^5(\frac{2\pi}{5}|\tau)}
		=11+125\frac{\eta^6(5\tau)}{\eta^6(\tau)}.
	\end{equation}		
\end{prop}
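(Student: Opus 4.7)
The plan is to deduce Proposition~\ref{6RRCthm:n6} by specializing Theorem~\ref{6RRCthm:n5} at $x=2\pi/5$, $y=\pi/5$, in direct analogy with how Proposition~\ref{6RRCthm:n3} was obtained from Theorem~\ref{6RRCthm:n2}. The gains from this substitution come from two sources: the zero $\T_1(0|\tau)=0$ kills two of the four blocks on the left-hand side, and the quasi-periodicity $\T_1(z+\pi|\tau)=-\T_1(z|\tau)$ together with the oddness $\T_1(-z|\tau)=-\T_1(z|\tau)$ collapses the remaining theta values to just $a:=\T_1(\pi/5|\tau)$ and $b:=\T_1(2\pi/5|\tau)$.

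First I would carry out these reductions explicitly. At $x=2\pi/5$, $y=\pi/5$ one checks $\T_1(3\pi/5|\tau)=\T_1(2\pi/5|\tau)$ and $\T_1(4\pi/5|\tau)=\T_1(\pi/5|\tau)$, so the four fifth-power differences become
\[
b^5-a^5,\quad b^5+a^5,\quad b^5,\quad a^5,
\]
and the left-hand side of \reff{rrc:eqn16} simplifies to
\[
(b^5-a^5)(b^5+a^5)-b^5\cdot a^5=b^{10}-a^{10}-a^5b^5.
\]
On the right-hand side of \reff{rrc:eqn16} the quotient $\T_1(x+y|\tau)\T_1(x-y|\tau)\T_1(2x|\tau)\T_1(2y|\tau)/\bigl(\T_1(x|\tau)\T_1(y|\tau)\bigr)$ collapses to $ab$.

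Next I would use the product evaluation $ab=\sqrt{5}\,\eta(\tau)\eta(5\tau)$, which was already invoked in the proof of Theorem~\ref{6RRCthm:n2} (it comes from Proposition~\ref{multhetapp} applied to $\T_1$ with $n=5$). Dividing the resulting identity
\[
b^{10}-a^{10}-a^5b^5=\Bigl(250\,\eta^4(\tau)\eta^4(5\tau)+3125\,\frac{\eta^{10}(5\tau)}{\eta^2(\tau)}\Bigr)ab
\]
through by $a^5b^5=(\sqrt{5}\,\eta(\tau)\eta(5\tau))^5/(ab)$, or more directly by $a^5b^5$ and using $a^4b^4=25\,\eta^4(\tau)\eta^4(5\tau)$, yields
\[
\frac{b^5}{a^5}-\frac{a^5}{b^5}-1=\frac{250\,\eta^4(\tau)\eta^4(5\tau)+3125\,\eta^{10}(5\tau)/\eta^2(\tau)}{25\,\eta^4(\tau)\eta^4(5\tau)}=10+125\,\frac{\eta^6(5\tau)}{\eta^6(\tau)},
\]
which rearranges to \reff{rrc:eqn17}.

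There is no real obstacle beyond bookkeeping: the only care needed is in tracking the signs produced by $\T_1(z+\pi|\tau)=-\T_1(z|\tau)$ when reducing $\T_1(3\pi/5|\tau)$ and $\T_1(4\pi/5|\tau)$, and in keeping the $5$th powers straight so that the odd parity of $\T_1$ contributes the correct sign pattern $(b^5-a^5)(b^5+a^5)$ rather than $(b^5-a^5)^2$. Once the constant on the right of Theorem~\ref{6RRCthm:n5} and the Jacobi product evaluation $ab=\sqrt5\,\eta(\tau)\eta(5\tau)$ are in hand, the numerical coefficients $11$ and $125$ in \reff{rrc:eqn17} drop out of the arithmetic automatically.
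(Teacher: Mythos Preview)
Your proposal is correct and follows essentially the same route as the paper: specialize Theorem~\ref{6RRCthm:n5} at $x=2\pi/5$, $y=\pi/5$, use the quasi-periodicity and oddness of $\T_1$ to collapse the left side to $b^{10}-a^{10}-a^5b^5$ and the right-side quotient to $ab$, then divide by $a^5b^5$ and invoke $ab=\sqrt{5}\,\eta(\tau)\eta(5\tau)$. The only slip is the parenthetical ``$a^5b^5=(\sqrt{5}\,\eta(\tau)\eta(5\tau))^5/(ab)$'', which is a typo; your alternative using $a^4b^4=25\,\eta^4(\tau)\eta^4(5\tau)$ is correct and is exactly what the paper does.
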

\begin{proof} Taking $x=2\pi/5$ and $y=\pi/5$ in Theorem~\ref{6RRCthm:n5} and simplifying we find that
	\begin{align*}
		&\T_1^{10}\(\frac{2\pi}{5}\Big|\tau\)-\T_1^{10}\(\frac{\pi}{5}\Big|\tau\)-\T_1^{5}\(\frac{\pi}{5}\Big|\tau\)\T_1^{5}\(\frac{2\pi}{5}\Big|\tau\)\\
		&=\(250\eta^4(\tau)\eta^4(5\tau)+3125\frac{\eta^{10}(5\tau)}{\eta^2(\tau)}\)
		\T_1\(\frac{\pi}{5}\Big|\tau\)\T_1\(\frac{2\pi}{5}\Big|\tau\).
	\end{align*}	
	Dividing both sides of the above equation by $\T_1^{5}\(\frac{\pi}{5}|\tau\)\T_1^{5}\(\frac{2\pi}{5}|\tau\)$	we deduce that
	\[
	\frac{\T_1^5(\frac{2\pi}{5}|\tau)}{\T_1^5(\frac{\pi}{5}|\tau)}
	-\frac{\T_1^5(\frac{\pi}{5}|\tau)} {\T_1^5(\frac{2\pi}{5}|\tau)}
	=1+\frac{250\eta^4(\tau)\eta^4(5\tau)+3125\frac{\eta^{10}(5\tau)}{\eta^2(\tau)}}{\T_1^{4}\(\frac{\pi}{5}|\tau\)\T_1^{4}\(\frac{2\pi}{5}|\tau\)}.
	\]
	Substituting $\T_1\(\frac{\pi}{5}|\tau\)\T_1\(\frac{2\pi}{5}|\tau\)=\sqrt{5}\eta(\tau)\eta(5\tau)$ into the left-hand side of the above equation, we complete  the proof of Proposition~\ref{6RRCthm:n6}.
\end{proof}	

Applying the imaginary transformation formula in the first equation in Proposition~\ref{imaginarypp} to Theorem~\ref{6RRCthm:n5} we are led to the following theorem. 
\begin{thm}\label{6RRCthm:n7} We have
	\begin{align}\label{rrc:eqn18}
		& q^{5/2}\(e^{2ix}\T_1^5(x+\pi\tau|5\tau)-e^{-2ix}\T_1^5(x-\pi\tau|5\tau)\)\\
		&\qquad \qquad  \times \(e^{4iy}\T_1^5(y+2\pi\tau|5\tau)-e^{-4iy}\T_1^5(y-2\pi\tau|5\tau)\)\nonumber\\
		&-q^{5/2}\(e^{2iy}\T_1^5(y+\pi\tau|5\tau)-e^{-2iy}\T_1^5(y-\pi\tau|5\tau)\)\nonumber\\
		&\qquad \qquad   \times \(e^{4ix}\T_1^5(x+2\pi\tau|5\tau)-e^{-4ix}\T_1^5(x-2\pi\tau|5\tau)\)\nonumber\\
		&=
		\(10\eta^4(\tau)\eta^4(5\tau)+\frac{\eta^{10}(\tau)}{\eta^2(5\tau)}\) \frac{\T_1(x+y|5\tau)\T_1(x-y|5\tau)\T_1(2x|5\tau)\T_1(2y|5\tau)}{\T_1(x|5\tau)\T_1(y|5\tau)}.\nonumber
	\end{align}	
	\end{thm}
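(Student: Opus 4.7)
The plan is to derive Theorem~\ref{6RRCthm:n7} from Theorem~\ref{6RRCthm:n5} by applying the imaginary transformation of Proposition~\ref{imaginarypp}. Concretely, in \reff{rrc:eqn16} I shall substitute $\tau\mapsto -1/(5\tau)$ together with $(x,y)\mapsto(x/(5\tau),y/(5\tau))$; applying the first identity of Proposition~\ref{imaginarypp} with modulus $5\tau$ then rewrites each $\T_1(\,\cdot\,\pm k\pi/5\,|\,-1/(5\tau))$ on the left as a multiple of $\T_1(\,\cdot\,\pm k\pi\tau\,|\,5\tau)$, which is precisely the form appearing in Theorem~\ref{6RRCthm:n7}.

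For a single factor the transformation gives
\[
\T_1\!\left(\frac{w}{5\tau}\Big|-\frac{1}{5\tau}\right)
=-i\sqrt{-5i\tau}\exp\!\left(\frac{iw^{2}}{5\pi\tau}\right)\T_1(w|5\tau),
\]
and expanding $(x\pm k\pi\tau)^{2}=x^{2}\pm 2kx\pi\tau+k^{2}\pi^{2}\tau^{2}$ splits the resulting exponential into a symmetric piece $\exp(ix^{2}/(5\pi\tau))$, an asymmetric piece $e^{\pm 2kix/5}$, and a lattice piece $q^{k^{2}/10}$. Raising to the fifth power and using $k=1$ for the $\T_1^{5}(\tfrac{x}{5\tau}\pm\tfrac{\pi}{5}|\cdot)$ factors and $k=2$ for the $\T_1^{5}(\tfrac{y}{5\tau}\pm\tfrac{2\pi}{5}|\cdot)$ factors (and symmetrically with $x,y$ swapped for the second product), the two products making up the transformed left-hand side both acquire the common prefactor
\[
(-i)^{2}(-5i\tau)^{5}\exp\!\left(\frac{i(x^{2}+y^{2})}{\pi\tau}\right)q^{5/2}
=3125i\tau^{5}\exp\!\left(\frac{i(x^{2}+y^{2})}{\pi\tau}\right)q^{5/2},
\]
multiplying exactly the bracketed difference on the left of \reff{rrc:eqn18}.

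The right-hand side transforms in parallel. The six theta factors in the quotient contribute exponential exponents that collapse, via the parallelogram-type identity
\[
(x+y)^{2}+(x-y)^{2}+(2x)^{2}+(2y)^{2}-x^{2}-y^{2}=5(x^{2}+y^{2}),
\]
to $i(x^{2}+y^{2})/(\pi\tau)$ after division by $5\pi\tau$, accompanied by a scalar factor $5i\tau$ from the surviving square-root prefactors. For the Dedekind piece, Proposition~\ref{Dedekind-eta} applied with moduli $5\tau$ and $\tau$ yields $\eta^{4}(\tau)\eta^{4}(5\tau)\mapsto 25\tau^{4}\eta^{4}(\tau)\eta^{4}(5\tau)$ and $\eta^{10}(5\tau)/\eta^{2}(\tau)\mapsto(\tau^{4}/5)\eta^{10}(\tau)/\eta^{2}(5\tau)$ (where the right-hand $\tau$ denotes the transformed variable), so the combination $250\eta^{4}(\tau)\eta^{4}(5\tau)+3125\eta^{10}(5\tau)/\eta^{2}(\tau)$ becomes $625\tau^{4}\bigl(10\eta^{4}(\tau)\eta^{4}(5\tau)+\eta^{10}(\tau)/\eta^{2}(5\tau)\bigr)$. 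Assembling $5i\tau\cdot 625\tau^{4}=3125i\tau^{5}$ reproduces the left-hand prefactor exactly, and cancelling the common $3125i\tau^{5}\exp(i(x^{2}+y^{2})/(\pi\tau))$ on both sides delivers \reff{rrc:eqn18}.

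The main obstacle is careful bookkeeping rather than any conceptual difficulty: one must correctly handle the fifth powers $(-i)^{5}=-i$ and $(-5i)^{5}=-3125i$, respect the branch of $\sqrt{-5i\tau}$ fixed by Proposition~\ref{imaginarypp}, and verify that the quadratic exponentials from the ten theta factors on the two sides and the eta transformations all balance. No new analytic ingredient is needed beyond the transformation formulas already at our disposal; once the constants are tracked correctly, Theorem~\ref{6RRCthm:n7} follows immediately from Theorem~\ref{6RRCthm:n5}.
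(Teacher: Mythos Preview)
Your approach is correct and is exactly the one the paper uses: it states that Theorem~\ref{6RRCthm:n7} is obtained by applying the first imaginary transformation formula of Proposition~\ref{imaginarypp} to Theorem~\ref{6RRCthm:n5}, which is precisely the substitution $\tau\mapsto -1/(5\tau)$, $(x,y)\mapsto(x/(5\tau),y/(5\tau))$ you carry out. Your bookkeeping of the exponential factors, the $q^{5/2}$ from the lattice pieces, and the eta transformations all check out and cancel as claimed.
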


Setting $y=0$ and $x=\pi\tau$ in Theorem~\ref{6RRCthm:n7} and simplifying we conclude that
\begin{align*}
	&q^{\frac{7}{2}}\T_1^{10} (2\pi\tau|5\tau)-q^{\frac{1}{2}}\T_1^{10}(\pi\tau|5\tau)-\T_1^5(\pi\tau|5\tau)\T_1^5(2\pi\tau|5\tau)	\\
	&=\(10\eta^4(\tau)\eta^4(5\tau)+\frac{\eta^{10}(\tau)}{\eta^2(5\tau)}\)
	\T_1(\pi\tau|5\tau)\T_1(2\pi\tau|5\tau).
\end{align*}
Dividing both sides of the above equation by $\T_1^5(\pi\tau|5\tau)\T_1^5(2\pi\tau|5\tau)$ we deduce that
\begin{align*}
	&\frac{q^{\frac{7}{2}}\T_1^{5} (2\pi\tau|5\tau)}{\T_1^{5} (\pi\tau|5\tau)}
	-\frac{q^{\frac{1}{2}}\T_1^{5} (\pi\tau|5\tau)}{\T_1^{5} (2\pi\tau|5\tau)}
	-1	\\
	&=\(10\eta^4(\tau)\eta^4(5\tau)+\frac{\eta^{10}(\tau)}{\eta^2(5\tau)}\)
	\frac{1}{\T_1^4(\pi\tau|5\tau)\T_1^4(2\pi\tau|5\tau)}.\nonumber
\end{align*}
Using the infinite product representation of $\T_1$  in the above equation we arrive at the identity due to Ramanujan:
\begin{equation}\label{rrc:eqn19}
	R^{-5}(\tau)-R^5(\tau)=11+\frac{\eta^{6}(\tau)}{\eta^6(5\tau)}.	
\end{equation}
Both of the identities  in  \reff{rrc:eqn15} and \reff{rrc:eqn19} were found by Watson \cite{Watson1929:a} in Ramanujan's second  notebook \cite[pp.~ 265-267]{Ramanujan1957} and proved by him for the purpose of establishing some of Ramanujan's claims about the Rogers--Ramanujan continued fraction in his first two letters to Hardy \cite[pp.~xxvii, xxviii]{Raman1927}.  Our proofs of  \reff{rrc:eqn15} and \reff{rrc:eqn19} are different from that of Watson in \cite{Watson1929:a} and Berndt \cite[pp.~265--267]{Berndt1991}.  These two identities were used by Lewis and Liu \cite{LewisLiu1999} to give simple proofs of Eisenstein series identities due to Ramanujan.  These two identities were also used by \cite{Ramanathan} and \cite{BCZ1996} to give some special values of the Rogers--Ramanujan continued fraction.

Using the binomial theorem  we can easily find that for any complex number numbers $a$ and $b$, 
\begin{equation}\label{rrc:eqn20}
	a^5-b^5=(a-b)^5+5ab (a-b)^3+5(ab)^2 (a-b).
\end{equation}
 Setting $a=\T_1(\frac{2\pi}{5}|\tau)/\T_1(\frac{\pi}{5}|\tau)$ and 
$b=\T_1(\frac{\pi}{5}|\tau)/\T_1(\frac{2\pi}{5}|\tau)$ in the above equation and then substituting \reff{rrc:eqn13} and \reff{rrc:eqn17} in the resulting equation we immediately find that
\begin{equation}\label{rrc:eqn21}
	11+125\frac{\eta^6(5\tau)}{\eta^6(\tau)}
	=\(1+5\frac{\eta(25\tau)}{\eta(\tau)}\)^5+5\(1+5\frac{\eta(25\tau)}{\eta(\tau)}\)^3+5\(1+5\frac{\eta(25\tau)}{\eta(\tau)}\),
\end{equation}
which is equivalent to the following modular equation of degree five:
\begin{align}\label{rrc:eqn22}
	&\frac{\eta(25\tau)}{\eta(\tau)}+5\(\frac{\eta(25\tau)}{\eta(\tau)}\)^2
	+15\(\frac{\eta(25\tau)}{\eta(\tau)}\)^3+25\(\frac{\eta(25\tau)}{\eta(\tau)}\)^4+25\(\frac{\eta(25\tau)}{\eta(\tau)}\)^5\\
	&=\frac{\eta^6(5\tau)}{\eta^6(\tau)}.\nonumber
\end{align}
It seems that this modular equation  was first discovered by Kiepert  \cite[p.~277, Eq.(62)]{Kiepert1885} in 1885.

Using \reff{rrc:eqn15} and the modular transformation formula for the Dedekind-eta function in \reff{jabel:eqn21} one can easily prove the following amazing theorem due to Ramanujan (see \cite[pp.83--85]{Berndt1991} and \cite[pp.211-212]{Ramanathan} for details).
\begin{thm} \label{6RRCthm:n8}If $\alpha$ and $\beta$ are positive such that $\alpha \beta =1$, then we have
	\begin{equation}\label{rrc:eqn23}
		\left\{\frac{\sqrt{5}+1}{2}+R(i\alpha)\right\}\left\{\frac{\sqrt{5}+1}{2}+R(i\beta)\right\}=\frac{5+\sqrt{5}}{2}.
	\end{equation}		
\end{thm}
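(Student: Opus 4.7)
The plan is to apply the Rogers--Ramanujan identity \reff{rrc:eqn15} at the two reciprocal arguments $\tau = i\alpha$ and $\tau = i\beta$, relate the two resulting eta ratios via \reff{jabel:eqn21}, and then extract an algebraic identity in $u := R(i\alpha)$ and $v := R(i\beta)$.

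First I would set $E := \eta(i\alpha/5)/\eta(5i\alpha)$, so that \reff{rrc:eqn15} at $\tau = i\alpha$ reads $1/u - u - 1 = E$. To handle the companion identity at $\tau = i\beta$, apply the eta transformation $\eta(-1/\sigma) = \sqrt{-i\sigma}\,\eta(\sigma)$ with $\sigma = 5i\alpha$ (yielding $\eta(i\beta/5) = \sqrt{5\alpha}\,\eta(5i\alpha)$) and with $\sigma = i\alpha/5$ (yielding $\eta(5i\beta) = \sqrt{\alpha/5}\,\eta(i\alpha/5)$), using $\alpha\beta = 1$. The square-root prefactors combine to $\sqrt{25}=5$, giving $\eta(i\beta/5)/\eta(5i\beta) = 5/E$. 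Hence \reff{rrc:eqn15} at $\tau = i\beta$ yields $1/v - v - 1 = 5/E$. Multiplying the two relations eliminates $E$ and produces
\begin{equation*}
\bigl(1 - u - u^{2}\bigr)\bigl(1 - v - v^{2}\bigr) = 5 u v.
\end{equation*}

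Setting $s = u+v$ and $p = uv$ and expanding, this becomes $p^{2} + ps - s^{2} - 2p - s + 1 = 0$, which I expect to factor neatly as
\begin{equation*}
\bigl(p + \phi s - 1\bigr)\bigl(p - \phi^{-1} s - 1\bigr) = 0, \qquad \phi := \tfrac{1+\sqrt{5}}{2},
\end{equation*}
the coefficients $\phi$ and $-\phi^{-1}$ being precisely the roots of $x^{2}-x-1=0$, which is how the golden ratio enters the theorem. The desired identity, rewritten as $(u+\phi)(v+\phi) = 1 + \phi^{2} = \phi + 2 = (5+\sqrt{5})/2$, is equivalent to the vanishing of the first factor $uv + \phi(u+v) - 1$.

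The main obstacle is to rule out the second (spurious) factor. I would argue by real-analyticity: both $u(\alpha)$ and $v(\alpha) = R(i/\alpha)$ are real-analytic in $\alpha \in (0,\infty)$, so each factor is real-analytic in $\alpha$, and since their product vanishes identically one of the two factors is identically zero. To select the correct factor I would inspect the asymptotic regime $\alpha \to \infty$: the product expansion \reff{rrc:eqn8} gives $u = R(i\alpha) = e^{-2\pi\alpha/5}(1+o(1)) \to 0$, while $E \to \infty$ forces $v \to (-1+\sqrt{5})/2 = 1/\phi$ via $1/v - v - 1 = 5/E \to 0$. Evaluating the second factor in this limit gives $p - s/\phi - 1 \to 0 - \phi^{-2} - 1 = -(1+\phi^{-2}) \neq 0$, so the second factor is not identically zero, the first factor must vanish identically, and Theorem~\ref{6RRCthm:n8} follows.
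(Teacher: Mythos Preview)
Your argument is correct and follows precisely the route the paper indicates: apply \reff{rrc:eqn15} at $\tau=i\alpha$ and $\tau=i\beta$, use the $\eta$-transformation \reff{jabel:eqn21} to relate the two eta-quotients (giving the key relation $E\cdot E'=5$), and then eliminate $E$ to obtain the algebraic relation between $R(i\alpha)$ and $R(i\beta)$. The paper does not supply the details (it defers to Berndt and Ramanathan), so your factorization and the asymptotic selection of the correct branch are exactly the missing steps; both are carried out correctly.
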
 	 
Using the modular transformation formula for the Dedekind-eta function in \reff{jabel:eqn21} we find that
\[
\eta(-1/{5i})=\sqrt{5}\eta(5i).
\]
Setting $\tau=i$ in \reff{rrc:eqn15} and using the above equation in the resulting  equation, we deduce that
\begin{equation*}
	1/R(i)-R(i)=1+\sqrt{5}.
\end{equation*}
So that we have the Ramanujan formula
\begin{equation}\label{rrc:eqn24}
	R(i)=\sqrt{\frac{5+\sqrt{5}}{2}}-\frac{1+\sqrt{5}}{2}.
\end{equation}
Setting $\tau=-1/{\sqrt{5}i}$ in \reff{rrc:eqn19} and then using the modular transformation formula for the Dedekind-eta function, one can easily find that
\[
R^{-5}\(-\frac{1}{\sqrt{5}i}\)-R^5\(-\frac{1}{\sqrt{5}i}\)=11+5\sqrt{5}
=2\(\frac{1+\sqrt{5}}{2}\)^5.
\]
From this equation one can find that (see \cite[p.233]{Watson1929:b} for details)
\begin{equation}\label{rrc:eqn25}
	R\(-\frac{1}{\sqrt{5}i}\)=\(\frac{1+\sqrt{5}}{2}\)\sqrt[5]{5^{3/4}\(\frac{\sqrt{5}-1}{2}\)^{5/2}-1}.
\end{equation}	
If we choose $\alpha=\sqrt{5}$ and $\beta=1/{\sqrt{5}}$ in  Theorem~\ref{6RRCthm:n8}, we can immediately get Ramanujan's formula
\begin{equation}\label{rrc:eqn26}
	R(\sqrt{5}i)=\frac{\sqrt{5}}{1+\sqrt[5]{5^{3/4}\(\frac{\sqrt{5}-1}{2}\)^{5/2}-1}}-\frac{\sqrt{5}+1}{2}.
\end{equation}
Dividing both sides of \reff{rrc:eqn16} by $q^{5/4}$ and then letting $q\to 0$, we arrive at the following trigonometric identity.
\begin{prop} We have
	\begin{align}	\label{rrc:eqn27}
		&\(\sin^5(x+\frac{\pi}{5})-\sin^5 (x-\frac{\pi}{5})\)\(\sin^5(y+\frac{2\pi}{5})-\sin^5 (y-\frac{2\pi}{5})\)\\
		&-\(\sin^5(x+\frac{2\pi}{5})
		-\sin^5 (x-\frac{2\pi}{5})\)\(\sin^5(y+\frac{\pi}{5})-\sin^5 (y-\frac{\pi}{5})\)\nonumber\\
		&=\frac{125}{32} \cos x \cos y \sin (x+y) \sin (x-y).\nonumber
	\end{align}	
\end{prop}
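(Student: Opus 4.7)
The plan is to take the proposition as the $q \to 0$ limit of Theorem~\ref{6RRCthm:n5} (equation \reff{rrc:eqn16}), exactly as the hint in the text suggests. The key observation is that the series expansion $\T_1(z|\tau)=2q^{1/8}\sin z+O(q^{9/8})$, visible directly from Definition~\ref{jtheta}, yields
\[
\T_1^5(z|\tau)=32\,q^{5/8}\sin^5 z+O(q^{13/8}),
\]
so each bracket on the left side of \reff{rrc:eqn16} expands as $32\,q^{5/8}\bigl(\sin^5(x\pm\tfrac{\pi}{5})\bigr)$-type terms, and the whole left side is $1024\,q^{5/4}$ times the left side of \reff{rrc:eqn27} plus terms of order $q^{9/4}$.

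Next I will track the right side of \reff{rrc:eqn16}. Using $\eta(\tau)=q^{1/24}\prod_{n\ge 1}(1-q^n)$ and $\eta(5\tau)=q^{5/24}\prod_{n\ge 1}(1-q^{5n})$, one finds
\[
\eta^4(\tau)\eta^4(5\tau)=q\bigl(1+O(q)\bigr),\qquad \frac{\eta^{10}(5\tau)}{\eta^2(\tau)}=q^2\bigl(1+O(q)\bigr),
\]
so the bracket $250\eta^4(\tau)\eta^4(5\tau)+3125\eta^{10}(5\tau)/\eta^2(\tau)$ behaves as $250\,q+O(q^2)$. For the theta quotient I will use $\T_1(w|\tau)=2q^{1/8}\sin w+O(q^{9/8})$ four times in the numerator and twice in the denominator, together with $\sin 2x/\sin x=2\cos x$, to obtain
\[
\frac{\T_1(x+y|\tau)\T_1(x-y|\tau)\T_1(2x|\tau)\T_1(2y|\tau)}{\T_1(x|\tau)\T_1(y|\tau)}=16\,q^{1/4}\cos x\cos y\sin(x+y)\sin(x-y)+O(q^{5/4}).
\]
Combining these two asymptotics, the right side of \reff{rrc:eqn16} equals $4000\,q^{5/4}\cos x\cos y\sin(x+y)\sin(x-y)+O(q^{9/4})$.

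Finally, dividing both sides of \reff{rrc:eqn16} by $q^{5/4}$ and letting $q\to 0^+$ gives
\[
1024\,\bigl(\text{LHS of \reff{rrc:eqn27}}\bigr)=4000\,\cos x\cos y\sin(x+y)\sin(x-y),
\]
and the identity follows from $4000/1024=125/32$. The only point requiring care is that the $O$-remainders in each factor must be uniform in $x$ and $y$ on compact sets avoiding the zeros of the denominators, which is automatic since the infinite products defining $\T_1,\eta$ converge uniformly in $q$ on any disk $|q|\le r<1$; no serious obstacle arises, as the entire argument reduces to reading off the constant term in a power series in $q^{1/4}$.
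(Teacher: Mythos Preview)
Your proof is correct and follows exactly the approach indicated in the paper: the paper simply states that dividing both sides of \reff{rrc:eqn16} by $q^{5/4}$ and letting $q\to 0$ yields the identity, and you have carried out precisely this computation with all details filled in. Your tracking of the leading $q$-powers on both sides (giving $1024\,q^{5/4}$ on the left and $4000\,q^{5/4}$ on the right) and the resulting constant $4000/1024=125/32$ is accurate.
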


We will end this section by proving the following proposition.
\begin{prop}\label{6RRCthm:n9} We have
	\begin{equation}\label{rrc:eqn28}
		\sum_{n=-\infty}^\infty (-1)^n
		\(\frac{n+1}{5}\) q^{\frac{(6n+1)^2}{24}}
		=\sqrt{\eta^2(\tau)+2{\eta(\tau)} {\eta(25\tau)}+5{\eta^2(25\tau)}}.
	\end{equation}		
\end{prop}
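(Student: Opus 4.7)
The plan is to reduce the claim, via one squaring, to the two-term identity for $A := \T_1(\tfrac{2\pi}{5}|\tau)/\T_1(\tfrac{\pi}{5}|\tau)$ already proved in Proposition~\ref{6RRCthm:n3}. Let $S$ denote the LHS of \reff{rrc:eqn28}. Since $(6n+1)^{2}/24 = n(3n+1)/2 + 1/24$ and $6n+1\equiv n+1\pmod 5$, we can rewrite
\[
S \;=\; q^{1/24}\sum_{n=-\infty}^{\infty}(-1)^{n}\(\frac{6n+1}{5}\)\, q^{n(3n+1)/2}.
\]

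Now apply Proposition~\ref{dirichletgaussKiepert} with $m=5$ (which is square-free and $\equiv 1\pmod 4$). Using the quasi-periodicity of $\T_1$ in Proposition~\ref{doubleperiods}, one checks $\T_1(4\pi/5|\tau)=\T_1(\pi/5|\tau)$ and $\T_1(8\pi/5|\tau)=-\T_1(2\pi/5|\tau)$, so the finite sum on the LHS of \reff{jabel:eqn43} collapses to
\[
\(\frac{1}{5}\)\frac{\T_1(\tfrac{4\pi}{5}|\tau)}{\T_1(\tfrac{2\pi}{5}|\tau)} + \(\frac{2}{5}\)\frac{\T_1(\tfrac{8\pi}{5}|\tau)}{\T_1(\tfrac{4\pi}{5}|\tau)} \;=\; \frac{1}{A}+A .
\]
Multiplying through by $q^{1/24}$ and using $q^{1/24}\prod_{n\geq 1}(1-q^{n})=\eta(\tau)$ gives the key intermediate identity
\[
\sqrt{5}\,S \;=\; \eta(\tau)\,\(A+\frac{1}{A}\).
\]
(If one prefers, this same relation may be obtained by specialising Kiepert's quintuple product \reff{KL:eqn2} at $z=2\pi/5$ and $z=4\pi/5$ and combining via $\(\frac{k}{5}\)=\tfrac{2}{\sqrt{5}}(\cos\tfrac{2k\pi}{5}-\cos\tfrac{4k\pi}{5})$.)

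Squaring and applying the elementary identity $(A+A^{-1})^{2}=(A-A^{-1})^{2}+4$, together with Proposition~\ref{6RRCthm:n3}, i.e. $A-A^{-1}=1+5\eta(25\tau)/\eta(\tau)$, I compute
\[
5\,S^{2} \;=\; \eta^{2}(\tau)\left[\(1+\frac{5\eta(25\tau)}{\eta(\tau)}\)^{2}+4\right] \;=\; 5\eta^{2}(\tau)+10\eta(\tau)\eta(25\tau)+25\eta^{2}(25\tau).
\]
Dividing by $5$ yields $S^{2}=\eta^{2}(\tau)+2\eta(\tau)\eta(25\tau)+5\eta^{2}(25\tau)$. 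The only remaining subtlety—the sign when taking the square root—is settled by inspecting the leading $q^{1/24}$-coefficient on each side: the $n=0$ term of $S$ contributes $+q^{1/24}$, while the radicand begins as $q^{1/12}+O(q^{13/12})$, so its positive square root also begins as $+q^{1/24}$. Hence $S$ equals the positive root and \reff{rrc:eqn28} follows. The main (and essentially only) obstacle is recognising that the sum involving the Jacobi symbol is exactly the Dirichlet--Gauss combination of Kiepert's identity supplied by Proposition~\ref{dirichletgaussKiepert}; once this is in hand the rest is algebra.
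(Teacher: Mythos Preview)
Your proof is correct and follows essentially the same route as the paper's: specialize Proposition~\ref{dirichletgaussKiepert} to $m=5$ to obtain $\sqrt{5}\,S=\eta(\tau)(A+A^{-1})$, combine with Proposition~\ref{6RRCthm:n3} via $(A+A^{-1})^2=(A-A^{-1})^2+4$, and take square roots. Your explicit verification of the sign when extracting the square root is a welcome addition that the paper leaves implicit.
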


\begin{proof} If we specialize Proposition~\ref{dirichletgaussKiepert} to the case when $m=5$, we easily find that
	\begin{equation}\label{rrc:eqn29}
		\frac{\T_1(\frac{2\pi}{5}|\tau)}{\T_1(\frac{\pi}{5}|\tau)}+\frac{\T_1(\frac{\pi}{5}|\tau)}{\T_1(\frac{2\pi}{5}|\tau)}=
		\frac{\sqrt{5}}{\eta(\tau)} \sum_{n=-\infty}^\infty (-1)^n
		\(\frac{n+1}{5}\) q^{\frac{(6n+1)^2}{24}}.
	\end{equation}		
   Squaring  both sides of the above equation,  we arrive at
	\[
	\(\frac{\T_1(\frac{2\pi}{5}|\tau)}{\T_1(\frac{\pi}{5}|\tau)}+\frac{\T_1(\frac{\pi}{5}|\tau)}{\T_1(\frac{2\pi}{5}|\tau)}\)^2=
	\(\frac{\sqrt{5}}{\eta(\tau)} \sum_{n=-\infty}^\infty (-1)^n
	\(\frac{n+1}{5}\) q^{\frac{(6n+1)^2}{24}}\)^2.
	\]
	 Squaring  both sides of \reff{rrc:eqn13},  we deduce that
	\begin{equation*}
		\(\frac{\T_1(\frac{2\pi}{5}|\tau)}{\T_1(\frac{\pi}{5}|\tau)}-\frac{\T_1(\frac{\pi}{5}|\tau)}{\T_1(\frac{2\pi}{5}|\tau)}\)^2=\(1+5\frac{\eta(25\tau)}{\eta(\tau)}\)^2.
	\end{equation*}
	Taking the difference of the above two equations and simplifying  we conclude that
	\[
	\(\sum_{n=-\infty}^\infty (-1)^n
	\(\frac{n+1}{5}\) q^{\frac{(6n+1)^2}{24}}\)^2
	=\eta^2(\tau)\(1+2\frac{\eta(25\tau)}{\eta(\tau)}+5\frac{\eta^2(25\tau)}{\eta^2(\tau)}\).
	\]
	 Taking the square root on both sides of the above equation  we complete the proof of the proposition.
\end{proof}
\section{A general theta function identity of degree $6$}
By taking $f(z|\tau)=\T_1(z+y|\tau)\T_1(z-y|\tau)F(z|\tau)$ in  Theorem~\ref{liuaddthm} we can easily derive the following beautiful  theta function identity.
\begin{thm}\label{legendrethm} Suppose that $F(z|\tau)$ is an even entire function of $z$ which satisfies the functional equations $F(z)=F(z+\pi|\tau)=q^{3} e^{12iz}F(z+\pi\tau|\tau)$.  Then we have
	\begin{align}\label{addliu:eqn1}
		\frac{4F(x|\tau)}{\T_1^2(2x|\tau)}
		=\frac{F(0|\tau)}{\T_1^2(x|\tau)}+\frac{F(\frac{\pi}{2}|\tau)}{\T_2^2(x|\tau)}
		-\frac{q^{\frac{3}{4}}F(\frac{\pi+\pi\tau}{2}|\tau)}{\T_3^2(x|\tau)}
		-\frac{q^{\frac{3}{4}} F(\frac{\pi\tau}{2}|\tau)}{\T_4^2(x|\tau)}.
	\end{align}		
\end{thm}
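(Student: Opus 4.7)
The plan is to apply Theorem~\ref{liuaddthm} directly to the test function
\[
f(z|\tau)=\T_1(z+y|\tau)\T_1(z-y|\tau)F(z|\tau),
\]
where $y$ is regarded as an auxiliary parameter that will eventually cancel out. First I would check that $f$ satisfies the hypotheses of Theorem~\ref{liuaddthm}: evenness in $z$ follows since $F$ is even and $\T_1$ is odd (the two odd factors combine into an even product), and the quasi-periodicity is additive in the exponents. Using Proposition~\ref{doubleperiods}, the product $\T_1(z+y|\tau)\T_1(z-y|\tau)$ is invariant under $z\mapsto z+\pi$ and picks up $q^{-1}e^{-4iz}$ under $z\mapsto z+\pi\tau$; combined with the prescribed factor $q^{-3}e^{-12iz}$ for $F$, the total factor under $z\mapsto z+\pi\tau$ is $q^{-4}e^{-16iz}$, matching exactly what Theorem~\ref{liuaddthm} demands.

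Next I would apply \reff{jabel:eqn23} with the second variable chosen equal to $y$. Because $\T_1(0|\tau)=0$, one has $f(y|\tau)=\T_1(2y|\tau)\T_1(0|\tau)F(y|\tau)=0$, so the second term on the left-hand side of \reff{jabel:eqn23} disappears. Evaluating $f$ at the four half-period points is a routine computation using Proposition~\ref{halfperiods}:
\[
f(0|\tau)=-\T_1^2(y|\tau)F(0|\tau),\qquad f(\tfrac{\pi}{2}|\tau)=\T_2^2(y|\tau)F(\tfrac{\pi}{2}|\tau),
\]
while the identities $\T_1(z+\tfrac{\pi+\pi\tau}{2}|\tau)=q^{-1/8}e^{-iz}\T_3(z|\tau)$ and $\T_1(z+\tfrac{\pi\tau}{2}|\tau)=iq^{-1/8}e^{-iz}\T_4(z|\tau)$ yield
\[
f(\tfrac{\pi+\pi\tau}{2}|\tau)=q^{-1/4}\T_3^2(y|\tau)F(\tfrac{\pi+\pi\tau}{2}|\tau),\qquad f(\tfrac{\pi\tau}{2}|\tau)=-q^{-1/4}\T_4^2(y|\tau)F(\tfrac{\pi\tau}{2}|\tau),
\]
where the sign and the $q$-factor in the last equation come from the product of two factors $iq^{-1/8}e^{\pm iy}$.

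Finally I substitute these four values into \reff{jabel:eqn23}. The $\T_j^2(y|\tau)$ factors in each term of the right-hand side cancel the corresponding denominators $\T_j^2(y|\tau)$ in the bracket, and the leading sign on $f(0|\tau)$ absorbs the minus sign in front of that term. The remaining powers $q\cdot q^{-1/4}=q^{3/4}$ recover the coefficients appearing in \reff{addliu:eqn1}. After cancelling the common factor $\T_1(x+y|\tau)\T_1(x-y|\tau)$ (valid for generic $y$, and both sides are entire in $y$), the dependence on $y$ disappears entirely and one obtains the desired formula. The only conceptual step is recognizing that the parameter $y$ is a free variable that drops out, which is automatic from the structure of the identity; the rest is bookkeeping of the half-period translations for $\T_1$.
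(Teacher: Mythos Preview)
Your proposal is correct and follows precisely the same route as the paper: the paper also introduces $f(z|\tau)=\T_1(z+y|\tau)\T_1(z-y|\tau)F(z|\tau)$ and substitutes it into Theorem~\ref{liuaddthm}, after which the $y$-dependence cancels exactly as you describe. Your computation of the half-period values, the signs, and the $q^{3/4}$ factors is accurate, so there is nothing to add.
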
	
This theorem is equivalent to \cite[Theorem~1.1]{Liu2010Mysore},  in which we gave several applications of it.  Here we will give more applications of this theorem. 

By taking $F(z|\tau)
=\T_1^3(z-x|\tau)\T_1^3(z+x|\tau)$ in Theorem~\ref{legendrethm} and simplifying we easily find that
\begin{equation}\label{addliu:eqn2}
	\T_2^4(x|\tau)+\T_4^4(x|\tau)=\T_1^4(x|\tau)+\T_3^4(x|\tau).
\end{equation}

If we take $F(z|\tau)=\T_1(z+y|\frac{\tau}{3})\T_1(z-y|\frac{\tau}{3})$ in Theorem~\ref{legendrethm} and then replacing $\tau$ by $3\tau$ in the resulting equation we find that
\begin{equation}\label{addliu:eqn3}
	\frac{\T_2^2(y|\tau)}{\T_2^2(x|3\tau)}-\frac{\T_3^2(y|\tau)}{\T_3^2(x|3\tau)}+\frac{\T_4^2(y|\tau)}{\T_4^2(x|3\tau)}	
	=\frac{\T_1^2(y|\tau)}{\T_1^2(x|3\tau)}	+\frac{4\T_1(x-y|\tau)\T_1(x+y|\tau)}{\T_1^2(2x|3\tau)}.	
\end{equation}

By setting $y=0$ in the above equation and then letting $x\to 0,$ we find that
\begin{equation}\label{addliu:eqn4}
	\(\frac{\VT_2(\tau)} {\VT_2(3\tau)}\)^2-\(\frac{\VT_3(\tau)} {\VT_3(3\tau)}\)^2
	+\(\frac{\VT_4(\tau)} {\VT_4(3\tau)}\)^2=\frac{\eta^6(\tau)}{\eta^6(3\tau)}.
\end{equation}

Applying the imaginary transformations to the above equation we conclude that
\begin{equation}\label{addliu:eqn5}
	\(\frac{\VT_2(3\tau)}{\VT_2(\tau)}\)^2-\(\frac{\VT_3(3\tau)}{\VT_3(\tau)}\)^2
	+\(\frac{\VT_4(3\tau)}{\VT_4(\tau)}\)^2=9\frac{\eta^6(3\tau)}{\eta^6(\tau)}.
\end{equation}
These two theta identities are equivalent to the modular equations of Ramanujan in \cite[p.~230, Entry 5 (vii)]{Berndt1991}.

If we specialize Theorem~\ref{legendrethm} to the case when $F(z|\tau)$ is given by
\begin{align*}
F(z|\tau)&=\frac{\T_1^2(z-x|\tau)\T_1(z+y|\tau)\T_1(z-y|\tau)\T_1(3z+3x|3\tau)}{\T_1(z+x|\tau)}\\
\quad &+\frac{\T_1^2(z+x|\tau)\T_1(z+y|\tau)\T_1(z-y|\tau)\T_1(3z-3x|3\tau)}{\T_1(z-x|\tau)},
\end{align*}
we conclude that
\begin{align}\label{addliu:eqn6}
	\T_1^2(y|\tau) \frac{\T_1(3x|3\tau)}{\T_1(x|\tau)}+\T_2^2(y|\tau) \frac{\T_2(3x|3\tau)}{\T_2(x|\tau)}-\T_3^2(y|\tau) \frac{\T_3(3x|3\tau)}{\T_3(x|\tau)}	+\T_4^2(y|\tau) \frac{\T_4(3x|3\tau)}{\T_4(x|\tau)}\\
	=-6\T_1(x-y|\tau)\T_1(x+y|\tau)\frac{\eta^3(3\tau)}{\eta^3(\tau)}.\nonumber
\end{align}
Setting $y=x$ in the above equation and noting that $\T_1(0|\tau)=0$ we immediately deduce that
	\begin{equation}\label{addliu:eqn7}
	\T_1(x|\tau)\T_1(3x|3\tau)+\T_2(x|\tau)\T_2(3x|3\tau)+\T_4(x|\tau)\T_4(3x|3\tau)
	=\T_3(x|\tau)\T_3(3x|3\tau).
\end{equation}
If we specialize the above equation to the case when $x=0$, we arrive at the Legendre identity \cite[p.~230, Entry 5(ii)]{Berndt1991}
\begin{equation}\label{addliu:eqn8}
	\VT_2(\tau)\VT_2(3\tau)+\VT_4(\tau)\VT_4(3\tau)=\VT_3(\tau)\VT_3(3\tau).
\end{equation}

Appealing to the case $n=3$ of   the multiplication formula in \reff{jabel:eqn16} one can easily find that
\begin{equation}\label{addliu:eqn9}
	 \T_1\(\frac{\pi}{3}\Big|\tau\)=\sqrt{3}\eta(3\tau)\quad \text{and}\quad
	\T_j^2 \(\frac{\pi}{3}\Big|\tau\)=\frac{\eta^3(\tau)\VT_j(3\tau)}{\eta(3\tau)\VT_j(\tau)} \quad \text{for}\quad j=2, 3, 4.
\end{equation}

Setting $x=\pi/3$ in \reff{addliu:eqn7} and appealing to the above equation we deduce that
\begin{equation}\label{addliu:eqn10}
	\sqrt{\frac{\VT_4^3(3\tau)}{\VT_4(\tau)}}-\sqrt{\frac{\VT_2^3(3\tau)}{\VT_2(\tau)}}=\sqrt{\frac{\VT_3^3(3\tau)}{\VT_3(\tau)}}.	
\end{equation}

Applying the imaginary transformations to the above formula we can deduce that
\begin{equation}\label{addliu:eqn11}
	\sqrt{\frac{\VT_4^3(\tau)}{\VT_4(3\tau)}}-\sqrt{\frac{\VT_2^3(\tau)}{\VT_2(3\tau)}}=\sqrt{\frac{\VT_3^3(\tau)}{\VT_3(3\tau)}}.	
\end{equation}
The above two theta function identities are equivalent to the modular equations \cite[p.~230, Entry 5(i)]{Berndt1991}.

Appealing to the case $n=3$ of   the multiplication formula in \reff{jabel:eqn17} one can easily find that
\begin{equation}\label{addliu:eqn12}
	\T_1(\pi\tau|3\tau)=iq^{-1/6}\eta(\tau)~ \text{and}~
	\T_j^2 \(\pi\tau|3\tau\)=q^{-1/3}\frac{\eta^3(3\tau)\VT_j(\tau)}{\eta(\tau)\VT_j(3\tau)} \quad \text{for}\quad j=2, 3, 4.
\end{equation}

Replacing $\tau$ by $3\tau$ in \reff{addliu:eqn7} and then taking $x=\pi\tau$ and finally using \reff{addliu:eqn12} in the resulting equation, we deduce that
\begin{equation}\label{addliu:eqn13}
	\sqrt{\frac{\VT_2(\tau)} {\VT_2(9\tau)}}-\sqrt{\frac{\VT_3(\tau)} {\VT_3(9\tau)}}+\sqrt{\frac{\VT_4(\tau)} {\VT_4(9\tau)}}=\sqrt{\frac{\eta^3(\tau)} {\eta^3(9\tau)}},
\end{equation}
which is equivalent to \cite[p,~ 352, Entry 3 (x)]{Berndt1991}.

Applying the imaginary transformations to the above formula we find that
\begin{equation}\label{addliu:eqn14}
	\sqrt{\frac{\VT_2(9\tau)}{\VT_2(\tau)}}-\sqrt{\frac{\VT_3(9\tau)}{\VT_3(\tau)}}+\sqrt{\frac{\VT_4(9\tau)}{\VT_4(\tau)}}=3\sqrt{\frac{\eta^3(9\tau)}{\eta^3(\tau)}},
\end{equation}
which is equivalent to \cite[p, 352, Entry 3 (xi)]{Berndt1991}. 

By taking $x=\pi/3$ and $y=0$ in \reff{addliu:eqn6} and then using \reff{addliu:eqn9} to simplify the resulting equation we obtain that
\begin{equation}\label{addliu:eqn15}
	\sqrt{\VT_2^5(\tau)\VT_2(3\tau)}+\sqrt{\VT_3^5(\tau)\VT_3(3\tau)}
	-\sqrt{\VT_4^5(\tau)\VT_4(3\tau)}=18\sqrt{\frac{\eta^9(3\tau)}{\eta^3(\tau)}}.
\end{equation}

If we let $x\to 0$ in \reff{addliu:eqn6}, then we immediately conclude that
	\begin{align}\label{addliu:eqn16}
		\T_2^2(y|\tau) \frac{\VT_2(3\tau)}{\VT_2(\tau)}-\T_3^2(y|\tau) \frac{\VT_3(3\tau)}{\VT_3(\tau)}+\T_4^2(y|\tau) \frac{\VT_4(3\tau)}{\VT_4(\tau)}
		=3\T_1^2(y|\tau)\frac{\eta^3(3\tau)}{\eta^3(\tau)}.
	\end{align}

If we take $y=\pi/2,~(\pi+\pi\tau)/2$ and $(\pi\tau)/2$ respectively in \reff{addliu:eqn16} and  use the fact that
\[
\T_2(\pi/2|\tau)=\T_3((\pi+\pi\tau)/2|\tau)=\T_4((\pi\tau)/2|\tau)=0
\]
in the resulting equation we can obtain the following theta function identities:
\begin{equation}\label{addliu:eqn17}
	\begin{split}
		3\VT_2(\tau)\VT_2(3\tau)=\frac{\VT_3^3(\tau)}{\VT_3(3\tau)}-\frac{\VT_4^3(\tau)}{\VT_4(3\tau)},\\
		3\VT_3(\tau)\VT_3(3\tau)=\frac{\VT_2^3(\tau)}{\VT_2(3\tau)}-\frac{\VT_4^3(\tau)}{\VT_4(3\tau)},\\
		3\VT_4(\tau)\VT_4(3\tau)=\frac{\VT_2^3(\tau)}{\VT_2(3\tau)}-\frac{\VT_3^3(\tau)}{\VT_3(3\tau)}.	
	\end{split}	
\end{equation} 
Replacing $\tau$ by $-1/{3\tau}$ in the above equations and then using the imaginary  transformations formulas in \reff{jabel:eqn20} we have 
\begin{equation}\label{addliu:eqn18}
	\begin{split}
		\VT_2(\tau)\VT_2(3\tau)=\frac{\VT_4^3(3\tau)}{\VT_4(\tau)}-\frac{\VT_3^3(3\tau)}{\VT_3(\tau)},\\
		\VT_3(\tau)\VT_3(3\tau)=\frac{\VT_4^3(3\tau)}{\VT_4(\tau)}-\frac{\VT_2^3(3\tau)}{\VT_2(\tau)},\\
		\VT_4(\tau)\VT_4(3\tau)=\frac{\VT_3^3(3\tau)}{\VT_3(\tau)}-\frac{\VT_2^3(3\tau)}{\VT_2(\tau)}.	
	\end{split}	
\end{equation} 
The above six theta identities can be found in \cite[pp.~ 1105--1106 ]{Shen1994PAMS}.

Let us take $F(z|\tau)=\T_1(7z|7\tau)/{\T_1(z|\tau)}$ in Theorem~\ref{legendrethm}. Then  we conclude that

\begin{align}\label{addliu:eqn19}
	&\frac{7\VT_1'(7\tau)}{\VT_1'(\tau)\T_1^2(x|\tau)}-	\frac{\VT_2(7\tau)}{\VT_2(\tau)\T_2^2(x|\tau)}+\frac{\VT_3(7\tau)}{\VT_3(\tau)\T_3^2(x|\tau)}-\frac{\VT_4(7\tau)}{\VT_4(\tau)\T_4^2(x|\tau)}\\
	&=\frac{4\T_1(7x|7\tau)}{\T_1(x|\tau)\T_1^2(2x|\tau)}.\nonumber
\end{align}	
Putting $x=\pi/3$ in \reff{addliu:eqn19} and using \reff{addliu:eqn9}  and simplifying we conclude that
\begin{equation}\label{addliu:eqn20}
	\frac{\VT_2(7\tau)}{\VT_2(3\tau)}-\frac{\VT_3(7\tau)}{\VT_3(3\tau)}+
	\frac{\VT_4(7\tau)}{\VT_4(3\tau)}
	=\frac{7\eta^3(7\tau)}{3\eta^3(3\tau)}-\frac{4\eta^3(\tau) \eta(21\tau)}{3\eta^4(3\tau)}.
\end{equation}

Applying the imaginary transformations to  both sides of the above equation yields
\begin{equation}\label{addliu:eqn21}
	\frac{\VT_2(3\tau)} {\VT_2(7\tau)}-\frac{\VT_3(3\tau)}{\VT_3(7\tau)}+
	\frac{\VT_4(3\tau)} {\VT_4(7\tau)}
	=\frac{\eta^3(3\tau)}{\eta^3(7\tau)}-4\frac{\eta(\tau) \eta^3(21\tau) }{\eta^4(7\tau)}.
\end{equation}
 
If we choose $F(z|\tau)=\T_1(z|\frac{\tau}{7})/{\T_1(z|\tau)}$ in Theorem~\ref{legendrethm}, then by some simple calculations we find that
\begin{align}\label{addliu:eqn22}
	&\frac{\VT_1'(\tau)}{\VT_1'(7\tau)\T_1^2(x|7\tau)}+	\frac{\VT_2(\tau)}{\VT_2(7\tau)\VT_2^2(x|7\tau)}-\frac{\VT_3(\tau)}{\VT_3(7\tau)\VT_3^2(x|7\tau)}+\frac{\VT_4(\tau)}{\VT_4(7\tau)\VT_4^2(x|7\tau)}\\
	&=\frac{4\T_1(x|\tau)}{\T_1(x|7\tau)\T_1^2(2x|7\tau)}.\nonumber
\end{align}

Letting  $x=\pi/3$ in \reff{addliu:eqn22},   making use of  \reff{addliu:eqn9},  and simplifying  we arrive at
\begin{equation}\label{addliu:eqn23}
	\frac{\VT_2(\tau)}{\VT_2(21\tau)}-\frac{\VT_3(\tau)}{\VT_3(21\tau)}	
	+\frac{\VT_4(\tau)}{\VT_4(21\tau)}
	=\frac{4\eta(3\tau)\eta^3(7\tau)}{3\eta^4(21\tau)}-\frac{\eta^3(\tau)}{3\eta^3(21\tau)}.
\end{equation}

Applying the imaginary transformations to  both sides of the above equation gives
\begin{equation}\label{addliu:eqn21}
	\frac{\VT_2(21\tau)}{\VT_2(\tau)}-\frac{\VT_3(21\tau)}{\VT_3(\tau)}	
	+\frac{\VT_4(21\tau)}{\VT_4(\tau)}
	=\frac{4\eta^3(3\tau)\eta(7\tau)}{\eta^4(\tau)}-7\frac{\eta^3(21\tau)}{\eta^3(\tau)}.
\end{equation}
\section{ Some applications of Theorem~\ref{liuaddthm:lim}}
The following trigonometric identity first appeared in Ramanujan's paper in 1916 \cite[Eq.(18)]{Ramanujan1916} without proof. He used this identity to get some recurrence relations for Eisenstein series.  For the application of this formula to the representations of integers as sums of squares and as sums of triangular numbers, please refer to \cite{Liu2001Squares} and \cite{Liu2003RamJ}.
\begin{prop}[Ramanujan]\label{ramtrithm} We have 
	\begin{align}\label{Ei:eqn1}
		&\left\{\frac{1}{8} \cot^2 x+\frac{1}{12}+\sum_{n=1}^\infty \frac{nq^n}{1-q^n} \(1-\cos 2nx\)\right\}^2\\
		&=\left\{\frac{1}{8}\cot^2 x+\frac{1}{12}\right\}^2
		+\frac{1}{12}\sum_{n=1}^\infty \frac{n^3q^n}{1-q^n} (5+\cos 2nx).\nonumber
	\end{align}
\end{prop}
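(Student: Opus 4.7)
The plan is to recognize that Proposition~\ref{ramtrithm} is an equivalent formulation of the once-differentiated Weierstrass ODE $\wp''(x|\tau) = 6\wp^2(x|\tau) - \tfrac{2}{3}M(\tau)$, rewritten in trigonometric/Fourier language via \reff{jabel:eqn15}.

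First I would identify the expression in braces on the left-hand side of \reff{Ei:eqn1} as $\tfrac{1}{8}\wp(x|\tau)$. Differentiating \reff{jabel:eqn13} once in $z$ gives
\begin{equation*}
(\log\T_1)''(x|\tau) = -\csc^2 x + 8\sum_{n=1}^\infty \frac{nq^n}{1-q^n}\cos 2nx,
\end{equation*}
so that, by \reff{jabel:eqn15} combined with $L(\tau)=1-24\sum\frac{nq^n}{1-q^n}$ from \reff{jabel:eqn9} and the identity $\cot^2 x = \csc^2 x - 1$, the quantity in braces is indeed $\tfrac{1}{8}\wp(x|\tau)$; the left-hand side of \reff{Ei:eqn1} therefore equals $\tfrac{1}{64}\wp^2(x|\tau)$.

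Second, I would compute $\wp''(x|\tau)$ in two independent ways and match them. Differentiating the resulting Fourier series for $\wp$ twice more in $x$ yields
\begin{equation*}
\wp''(x|\tau) = 6\csc^4 x - 4\csc^2 x + 32\sum_{n=1}^\infty \frac{n^3 q^n}{1-q^n}\cos 2nx.
\end{equation*}
On the other hand, differentiating the Weierstrass ODE in Proposition~\ref{pdiff} and dividing by $2\wp'(x|\tau)$ gives $\wp''(x|\tau) = 6\wp^2(x|\tau) - \tfrac{2}{3}M(\tau)$. Equating these two expressions and solving for $\wp^2(x|\tau)$,
\begin{equation*}
\wp^2(x|\tau) = \csc^4 x - \frac{2}{3}\csc^2 x + \frac{16}{3}\sum_{n=1}^\infty \frac{n^3 q^n}{1-q^n}\cos 2nx + \frac{M(\tau)}{9}.
\end{equation*}
Dividing by $64$ and using $M(\tau) = 1 + 240\sum\frac{n^3q^n}{1-q^n}$ together with $\cot^2 x = \csc^2 x - 1$ to reassemble the constants into the form $(\tfrac{1}{8}\cot^2 x + \tfrac{1}{12})^2$, I would recover \reff{Ei:eqn1}.

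The main obstacle is the routine but delicate bookkeeping between the Eisenstein series $L(\tau)$ and $M(\tau)$, the trigonometric constants, and the Lambert series $\sum\frac{nq^n}{1-q^n}\cos 2nx$ and $\sum\frac{n^3q^n}{1-q^n}\cos 2nx$; the substantive input is Proposition~\ref{pdiff}. Alternatively, one may stay within the framework of Section~5 by applying Theorem~\ref{liuaddthm:lim} to the even degree-$8$ function $f(z|\tau) = \T_1^4(z+x|\tau)\T_1^4(z-x|\tau)$: \reff{jabel:eqn15} yields $(\log f)''(0|\tau) = 8(\log\T_1)''(x|\tau)$ and $(\log f)^{(4)}(0|\tau) = -8\wp''(x|\tau)$, and the left-hand side of \reff{jabel:eqn24} collapses via the differentiated Weierstrass ODE to $432\wp^2(x|\tau) + 24M(\tau)$, producing a theta-function identity equivalent to Proposition~\ref{ramtrithm}.
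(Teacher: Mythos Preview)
Your main argument is correct: the braced term on the left of \reff{Ei:eqn1} is exactly $\tfrac{1}{8}\wp(x|\tau)$, and the identity is nothing but the Fourier form of $\wp''=6\wp^2-\tfrac{2}{3}M(\tau)$, obtained by differentiating Proposition~\ref{pdiff}. Your bookkeeping checks out line by line.

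However, this is not the route the paper takes. The paper's proof applies Theorem~\ref{liuaddthm:lim} directly to
\[
f(z|\tau)=\frac{\T_1^2(2z|\tau)\,\T_1(z+x|\tau)\,\T_1(z-x|\tau)}{\T_1^2(z|\tau)},
\]
whose key feature is that the factor $\T_1^2(2z|\tau)$ kills the values at the three half-periods, so the right-hand side of \reff{jabel:eqn24} vanishes. What remains is \reff{Ei:eqn4}, namely $(L+3(\log\T_1)''(x))^2=M-\tfrac{3}{2}(\log\T_1)^{(4)}(x)$, and the trigonometric series of \reff{jabel:eqn13} then yields \reff{Ei:eqn1}. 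The paper itself remarks afterward that \reff{Ei:eqn4} is equivalent to $\wp''=6\wp^2-\tfrac{2}{3}M$ and can alternatively be read off from Proposition~\ref{pdiff}; you have taken precisely that alternative. What the paper's choice buys is a self-contained demonstration that Theorem~\ref{liuaddthm:lim} by itself reproduces Ramanujan's identity, in keeping with the paper's theme; what your route buys is economy, since you only need \reff{jabel:eqn15} and the classical ODE.

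One caution about your closing aside: your proposed $f(z|\tau)=\T_1^4(z+x|\tau)\T_1^4(z-x|\tau)$ does \emph{not} vanish at the half-periods, so the right-hand side of \reff{jabel:eqn24} is a nontrivial combination of $\T_2^8(x)/\VT_2^4-\T_3^8(x)/\VT_3^4+\T_4^8(x)/\VT_4^4$, and the resulting identity is not directly \reff{Ei:eqn1} without further theta relations. The paper's choice of $f$, with its built-in factor $\T_1^2(2z|\tau)$, is exactly what makes the right side collapse.
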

Now we will use  Theorem~\ref{liuaddthm:lim} to prove the above proposition.
\begin{proof}
	Let us take $f(z|\tau)\T_1^2(z|\tau)=\T_1^2(2z|\tau)\T_1(z+x|\tau)\T_1(z-x|\tau)$ in Theorem~\ref{liuaddthm:lim}.  
	Since $0,~ {\pi}/{2},~ (\pi+\pi\tau)/{2}$ and $ (\pi\tau)/2 $ 
	are zeros of $\T_1(2z|\tau)$,  we easily find that $f(\pi/2|\tau)=f((\pi+\pi\tau)/2|\tau)=f((\pi\tau)/2|\tau)=0, ~ f(0|\tau)=-4\T_1^2(u|\tau)$. Substituting these values of $f$ into \reff{jabel:eqn24} we find that
	\begin{equation}\label{Ei:eqn2}
		\(8L(\tau)+3(\log f)''(0|\tau)\)^2+8M(\tau) +3 (\log f)^{(4)}(0|\tau)=0.
	\end{equation}

	Now we begin to compute $(\log f)''(0|\tau)$ and $(\log f)^{(4)}(0|\tau)$. Using the asymptotic expansion in \reff{jabel:eqn14} we find that near $z=0$, 
	\begin{align}\label{Ei:eqn3}
		&(\log f)'(z|\tau)\\
		&=4\(\log \T_1\)'(2z|\tau)-2\(\log \T_1\)'(z|\tau)
		+\(\log \T_1\)'(z+x|\tau)+\(\log \T_1\)'(z-x|\tau) \nonumber\\
		&=-2L(\tau)z-\frac{2}{3}M(\tau)z^3+\(\log \T_1\)'(z+x|\tau)+\(\log \T_1\)'(z-x|\tau).\nonumber
	\end{align}
	It follows that
	\begin{align*}
		(\log f)''(0|\tau)&=-2L(\tau)+2 \(\log \T_1\)''(x|\tau), \\
		(\log f)^{(4)}(0|\tau)&=-4M(\tau)+2 \(\log \T_1\)^{(4)}(x|\tau).
	\end{align*}

	Substituting the above two equations into \reff{Ei:eqn3} and then dividing both sides of the equation by $4$ we conclude that
	\begin{equation}\label{Ei:eqn4}
		\(L(\tau)+3 \(\log \T_1\)''(x|\tau)\)^2=M(\tau)-\frac{3}{2}\(\log \T_1\)^{(4)}(x|\tau).
	\end{equation}

	By substituting the trigonometric series expansion  for the partial logarithmic derivative of $\T_1(z|\tau)$ with respect to $z$ in \reff{jabel:eqn13} into the above equation and simplifying we complete the proof of Proposition~\ref{ramtrithm}.
\end{proof}
With the help of \reff{jabel:eqn15} we know that \reff{Ei:eqn4} is equivalent to the differential equation
\begin{equation}\label{Ei:eqn5}
	\wp''(z|\tau)=6\wp^2(z|\tau)-\frac{2}{3}M(\tau).
\end{equation}
Differentiation of the differential equation for $\wp(z|\tau)$ in \reff{jabel:eqn30} also yields  the above differential equation. Conversely, integration of the above equation we can also obtain the differential equation in \reff{jabel:eqn30}.

If we replace $x$ by $x+\pi/2$ in \reff{Ei:eqn4} and appeal to Proposition~\ref{halfperiods}, we deduce that
\begin{equation}\label{Ei:eqn4a}
	\(L(\tau)+3 \(\log \T_4\)''(x|\tau)\)^2=M(\tau)-\frac{3}{2}\(\log \T_4\)^{(4)}(x|\tau).
\end{equation}
Substituting the Fourier series for $\(\log \T_4\)''(x|\tau)$ and $\(\log \T_4\)^{(4)}(x|\tau)$ into the above equation and then replacing $\tau$ by 
$2\tau$ we conclude that \cite[Theorem~10]{Liu2003RamJ}
\begin{align}\label{Ei:eqn4b}
	&\(1+24\sum_{n=1}^\infty \frac{q^{2n}}{1-q^{2n}}+24\sum_{n=1}^\infty \frac{q^n}{1-q^{2n}} \cos 2nu\)^2\\
	&=1+240\sum_{n=1}^\infty \frac{n^3 q^{2n}}{1-q^{2n}} +48
	\sum_{n=1}^\infty \frac{n^3q^n}{1-q^{2n}} \cos 2nu. \nonumber
\end{align}

\begin{prop}\label{Eisen:n1} Let $L(\tau)$ and $M(\tau)$ be  the first two  Eisenstein series defined by \reff{jabel:eqn9}. Then we have
	\begin{align}\label{Ei:eqn6}
		&(9L(9\tau)-L(\tau))^2 +\frac{1}{5}\(42M(9\tau)-2M(\tau)\)\\
		&=\frac{72\VT_1'(9\tau)^5}{\VT_1'(\tau)}
		\(\frac{\VT_2(\tau)}{\VT_2^5(9\tau)}-\frac{\VT_3(\tau)}{\VT_3^5(9\tau)}+\frac{\VT_4(\tau)}{\VT_4^5(9\tau)}\).\nonumber
	\end{align}	
\end{prop}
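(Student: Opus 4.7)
The plan is to apply Theorem~\ref{liuaddthm:lim} with $\tau$ replaced by $9\tau$ to the even entire function
\[
f(z|9\tau)=\prod_{k=1}^{4}\T_1(z+k\pi\tau|9\tau)\,\T_1(z-k\pi\tau|9\tau).
\]
Being a product of eight copies of $\T_1(\cdot|9\tau)$ placed symmetrically about the origin, $f$ satisfies (via Proposition~\ref{doubleperiods} at parameter $9\tau$) the functional equations $f(z|9\tau)=f(z+\pi|9\tau)=q^{36}e^{16iz}f(z+9\pi\tau|9\tau)$, which are precisely the hypotheses of Theorem~\ref{liuaddthm:lim} at $9\tau$.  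To evaluate $f$ at the four distinguished points $0,\pi/2,(\pi+9\pi\tau)/2,9\pi\tau/2$, invoke Proposition~\ref{halfperiods} at parameter $9\tau$ (producing an overall factor $q^{-9}$ at the latter two points, since each of the eight factors contributes $q^{-9/8}$) and then apply the multiplication formula \reff{jabel:eqn17} with $n=9$ and $\tau\mapsto 9\tau$, which gives
\[
\prod_{k=1}^{4}\T_j^{2}(k\pi\tau|9\tau)=q^{-10/3}\,\frac{\eta^{9}(9\tau)}{\eta(\tau)}\cdot\frac{\VT_j(\tau)}{\VT_j(9\tau)}\qquad(j=2,3,4),
\]
with the analogous formula for $j=1$ produced by L'Hopital, in which $\VT_1'(\tau)/\VT_1'(9\tau)$ replaces $\VT_j(\tau)/\VT_j(9\tau)$.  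Upon substitution into the right-hand side of Theorem~\ref{liuaddthm:lim}, the common prefactor $q^{-10/3}\eta^{9}(9\tau)/\eta(\tau)$ cancels between numerator and denominator, and the $q^{-9}$ absorbed by the half-period translations cancels the $q^{9}$ appearing in the theorem, leaving exactly the right-hand side of the proposition.

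For the left-hand side, differentiate the underlying multiplication identity
\[
\T_1(z|\tau)=q^{10/3}\,\frac{\eta(\tau)}{\eta^{9}(9\tau)}\,\T_1(z|9\tau)\prod_{k=1}^{4}\T_1(z+k\pi\tau|9\tau)\T_1(z-k\pi\tau|9\tau)
\]
twice in $z$ and invoke \reff{jabel:eqn15} to turn second logarithmic $z$-derivatives of $\T_1$ into $\wp$, yielding
\[
\wp(z|\tau)=\wp(z|9\tau)+\sum_{k=1}^{4}[\wp(z+k\pi\tau|9\tau)+\wp(z-k\pi\tau|9\tau)]+3L(9\tau)-\tfrac{L(\tau)}{3}.
\]
Letting $z\to 0$ (the two $1/z^{2}$ singularities cancel) yields $\sum_{k=1}^{4}\wp(k\pi\tau|9\tau)=(L(\tau)-9L(9\tau))/6$, and therefore $8L(9\tau)+3(\log f)''(0|9\tau)=9L(9\tau)-L(\tau)$, producing the $(9L(9\tau)-L(\tau))^{2}$ summand on the LHS.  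Differentiating the same $\wp$-identity twice more in $z$ and replacing $\wp''$ by $6\wp^{2}-\tfrac{2}{3}M$ via \reff{Ei:eqn5}, then equating $z^{0}$-terms at $z=0$ after the $6/z^{4}$ poles cancel, gives $12\sum_{k=1}^{4}\wp^{2}(k\pi\tau|9\tau)=2M(\tau)/15+78M(9\tau)/15$ and thence $8M(9\tau)+3(\log f)^{(4)}(0|9\tau)=(42M(9\tau)-2M(\tau))/5$.  Assembling both halves via Theorem~\ref{liuaddthm:lim} produces the claim.

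The principal obstacle is the clean extraction of $\sum_{k=1}^{4}\wp^{2}(k\pi\tau|9\tau)$: this requires the correct constant term $2M(\tau)/15$ in the Laurent expansion of $\wp^{2}(z|\tau)$ at $z=0$, which follows from $\wp(z|\tau)=z^{-2}+M(\tau)z^{2}/15+2N(\tau)z^{4}/189+O(z^{6})$ (obtained directly from \reff{jabel:eqn14} via $\wp=-(\log\T_1)''-L(\tau)/3$), together with the careful cancellation of the $1/z^{4}$ poles on both sides of the twice-differentiated $\wp$-identity.  The remaining bookkeeping---powers of $q$, $\eta$-quotients, and the half-period phases---is routine.
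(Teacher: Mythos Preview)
Your proof is correct. In fact, via the multiplication formula \reff{jabel:eqn17} with $n=9$, your product $f(z|9\tau)=\prod_{k=1}^{4}\T_1(z+k\pi\tau|9\tau)\T_1(z-k\pi\tau|9\tau)$ is exactly a constant multiple of $\T_1(z|\tau)/\T_1(z|9\tau)$, which is precisely the paper's choice $f(z|\tau)=\T_1(z|\frac{\tau}{9})/\T_1(z|\tau)$ after the substitution $\tau\mapsto 9\tau$; so at the level of ``which $f$ to plug into Theorem~\ref{liuaddthm:lim}'' the two proofs are identical.

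Where you diverge is in the bookkeeping for the left-hand side. The paper simply reads $(\log f)''(0)$ and $(\log f)^{(4)}(0)$ off the Laurent expansion \reff{jabel:eqn14}: since $(\log f)'(z)=(\log\T_1)'(z|\tau/9)-(\log\T_1)'(z|\tau)$, the singular $1/z$ terms cancel and one gets $(\log f)''(0)=\tfrac{1}{3}(L(\tau)-L(\tau/9))$ and $(\log f)^{(4)}(0)=\tfrac{2}{15}(M(\tau)-M(\tau/9))$ in one line each. You instead unfold the product, pass to $\wp$, derive the nine-term $\wp$-addition identity, and then evaluate $\sum_k\wp(k\pi\tau|9\tau)$ and $\sum_k\wp^2(k\pi\tau|9\tau)$ by matching constant terms after pole cancellation---all valid, but a considerably longer path to the same two numbers. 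Your route has the minor conceptual payoff of producing the intermediate identities $\sum_{k=1}^{4}\wp(k\pi\tau|9\tau)=(L(\tau)-9L(9\tau))/6$ and the corresponding $\wp^2$-sum, but for the proposition at hand the paper's direct approach is much cleaner.
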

\begin{proof} Let us take $f(z|\tau)=\T_1(z|\frac{\tau}{9})/{\T_1(z|\tau)}$ in Theorem~\ref{liuaddthm:lim}.  It is easily seen that $f(0|\tau)=\VT_1'(\frac{\tau}{9})/{\VT'(\tau)}$,  and using Proposition~\ref{halfperiods} we find that
	\begin{equation}\label{Ei:eqn7}
		f\(\frac{\pi}{2}\Big|\tau\)=\frac{\VT_2(\frac{\tau}{9})}{\VT_2(\tau)}, ~
		f\(\frac{\pi+\pi\tau}{2}\Big|\tau\)=\frac{\VT_3(\frac{\tau}{9})}{q\VT_3(\tau)},~f\(\frac{\pi \tau}{2}\Big|\tau\)=\frac{\VT_4(\frac{\tau}{9})}{q\VT_4(\tau)}.
	\end{equation}

	Using the asymptotic expansion for the partial logarithmic derivative of 
	$\T_1(z|\tau)$ with respect to $z$ near $z=0$ in \reff{jabel:eqn14}, we find that near $z=0$,
	\begin{align*}
		\(\log f\)' (z|\tau)&=(\log \T_1)'(z|{\tau}/{9})-(\log \T_1)'(z|\tau)\\
		&=\frac{1}{3}\(L(\tau)-L({\tau}/{9})\)z+\frac{1}{45}\(M(\tau)-M({\tau}/{9})\)z^3+O(z^5).
	\end{align*}
	It follows that
	\begin{align*}
		\(\log f\)'' (0|\tau)&=\frac{1}{3}\(L(\tau)-L({\tau}/{9})\),\\
		\(\log f\)^{(4)}(0|\tau)&=\frac{2}{15}\(M(\tau)-M({\tau}/{9})\).
	\end{align*}

	Substituting the above equations and \reff{Ei:eqn7} into \reff{jabel:eqn24} and then replacing $\tau$ by $9\tau$ in the resulting equation we complete the proof of Proposition~\ref{Eisen:n1}.	
\end{proof}
\begin{prop}\label{Eisen:n2} We have
	\begin{align}\label{Ei:eqn8}
		&405(9L(9\tau)-L(\tau))^2 +42M(\tau)-13122M(9\tau)\\
		&=\frac{40\VT_1'(\tau)^5}{\VT_1'(9\tau)}
		\(\frac{\VT_2(9\tau)}{\VT_2^5(\tau)}-\frac{\VT_3(9\tau)}{\VT_3^5(\tau)}+\frac{\VT_4(9\tau)}{\VT_4^5(\tau)}\).\nonumber
	\end{align}	
\end{prop}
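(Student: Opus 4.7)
The plan is to apply Theorem~\ref{liuaddthm:lim} directly with the function
\begin{equation*}
f(z|\tau)=\frac{\T_1(9z|9\tau)}{\T_1(z|\tau)},
\end{equation*}
the natural ``dual'' of the choice $\T_1(z|\tau/9)/\T_1(z|\tau)$ used in the proof of Proposition~\ref{Eisen:n1}. First I would verify the hypotheses: $f$ is an even entire function of $z$, because every lattice zero $m\pi+n\pi\tau$ of the denominator is also a zero of the numerator (since $9m\pi+n\pi(9\tau)$ lies in the lattice of $\T_1(\cdot|9\tau)$), and applying Proposition~\ref{doubleperiods} to both factors gives
\begin{equation*}
f(z+\pi|\tau)=f(z|\tau),\qquad f(z+\pi\tau|\tau)=q^{-4}e^{-16iz}f(z|\tau),
\end{equation*}
which is precisely the required functional equation.

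Next I would evaluate $f$ at the four half-period points. Using $\T_1(z|\tau)\sim\VT_1'(\tau)z$ near $z=0$ gives $f(0|\tau)=9\VT_1'(9\tau)/\VT_1'(\tau)$, and Proposition~\ref{halfperiods} applied inside $\T_1(\cdot|9\tau)$ (with $9\pi\tau/2=\pi(9\tau)/2$ and $9(\pi+\pi\tau)/2=4\pi+(\pi+\pi(9\tau))/2$) yields
\begin{equation*}
f\(\frac{\pi}{2}|\tau\)=\frac{\VT_2(9\tau)}{\VT_2(\tau)},\quad f\(\frac{\pi+\pi\tau}{2}|\tau\)=\frac{1}{q}\frac{\VT_3(9\tau)}{\VT_3(\tau)},\quad f\(\frac{\pi\tau}{2}|\tau\)=\frac{1}{q}\frac{\VT_4(9\tau)}{\VT_4(\tau)}.
\end{equation*}
The factors of $q^{-1}$ are designed to cancel exactly with the explicit $q$'s in the last two terms on the right-hand side of \reff{jabel:eqn24}.

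Then I would read off the logarithmic Taylor coefficients of $f$ at $z=0$ from
\begin{equation*}
(\log f)'(z|\tau)=9(\log\T_1)'(9z|9\tau)-(\log\T_1)'(z|\tau)
\end{equation*}
using the asymptotic expansion \reff{jabel:eqn14}; the two $1/z$ singularities cancel, and one obtains $(\log f)''(0|\tau)=\tfrac{1}{3}L(\tau)-27L(9\tau)$ and $(\log f)^{(4)}(0|\tau)=\tfrac{2}{15}M(\tau)-\tfrac{4374}{5}M(9\tau)$. Plugging these into \reff{jabel:eqn24}, the combination $8L(\tau)+3(\log f)''(0|\tau)$ telescopes to $-9(9L(9\tau)-L(\tau))$, so the left-hand side becomes
\begin{equation*}
81(9L(9\tau)-L(\tau))^2+\tfrac{42}{5}M(\tau)-\tfrac{13122}{5}M(9\tau),
\end{equation*}
while the right-hand side simplifies to $\bigl(8\VT_1'(\tau)^5/\VT_1'(9\tau)\bigr)$ times the bracketed theta combination of \reff{Ei:eqn8}. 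Multiplying both sides by $5$ yields \reff{Ei:eqn8}.

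There is no real conceptual obstacle; the only care needed is in the bookkeeping of $q$-powers between $\T_1(\cdot|\tau)$ and $\T_1(\cdot|9\tau)$ at the half-period arguments, which must cancel precisely so that the three theta terms take the form stated. A shorter alternative would apply the imaginary transformation $\tau\mapsto -1/(9\tau)$ to Proposition~\ref{Eisen:n1}, using \reff{jabel:eqn22} and the modular transformation formulas in \reff{jabel:eqn20}, but the direct computation above avoids tracking branches of $\sqrt{-i\tau}$ and is cleaner.
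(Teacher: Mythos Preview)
Your proposal is correct and follows essentially the same approach as the paper: both apply Theorem~\ref{liuaddthm:lim} to $f(z|\tau)=\T_1(9z|9\tau)/\T_1(z|\tau)$, compute the same half-period values and the same Taylor coefficients $(\log f)''(0|\tau)=\tfrac{1}{3}(L(\tau)-81L(9\tau))$ and $(\log f)^{(4)}(0|\tau)=\tfrac{2}{15}(M(\tau)-6561M(9\tau))$, and substitute into \reff{jabel:eqn24}. Your alternative route via the imaginary transformation of Proposition~\ref{Eisen:n1} is a legitimate shortcut not taken in the paper.
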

\begin{proof} Using Proposition~\ref{doubleperiods} we can verify that $f(z|\tau)=\T_1(9z|9\tau)/{\T_1(z|\tau)}$ satisfies the conditions of Theorem~\ref{liuaddthm:lim}. By a direct computation we deduce that
	$f(0|\tau)=9\VT_1'(9\tau)/{\VT_1'(\tau)}$ and	
	\begin{equation}\label{Ei:eqn9}
		f\(\frac{\pi}{2}\Big|\tau\)=\frac{\VT_2(9\tau)}{\VT_2(\tau)}, ~
		f\(\frac{\pi+\pi\tau}{2}\Big|\tau\)=\frac{\VT_3(9\tau)}{q\VT_3(\tau)},~f\(\frac{\pi \tau}{2}\Big|\tau\)=\frac{\VT_4(9\tau)}{q\VT_4(\tau)}.
	\end{equation}	

	With the help of  the asymptotic expansion for the partial logarithmic derivative of 
	$\T_1(z|\tau)$ with respect to $z$ near $z=0$ in \reff{jabel:eqn14}, we find that near $z=0$,
	\begin{align*}
		(\log f)'(z|\tau)&= 9(\log \T_1)'(9z|9\tau)-(\log \T_1)'(z|\tau)\\
		&=\frac{1}{3}\(L(\tau)-81L(9\tau)\)z+\frac{1}{45}\(M(\tau)-6561M(9\tau)\)z^3+O(z^5).
	\end{align*}
	It follows that
	\begin{align*}
		(\log f)''(0|\tau)&=\frac{1}{3}\(L(\tau)-81L(9\tau)\),\\
		(\log f)^{(4)}(0|\tau)&=\frac{2}{15}\(M(\tau)-6561M(9\tau)\).
	\end{align*}

	Substituting the above equations and \reff{Ei:eqn9} into \reff{jabel:eqn24}  and simplifying we complete the proof of Proposition~\ref{Eisen:n2}.	
\end{proof}

\begin{prop}\label{Eisen:n3}  We have
	\begin{align}\label{Ei:eqn10}
		&125(L(\tau)-5L(5\tau))^2 +11M(\tau)-625M(5\tau)\\
		&=\frac{18\VT_1'(\tau)^6}{5\VT_1'(5\tau)^2}
		\(\frac{\VT_2^2(5\tau)}{\VT_2^6(\tau)}-\frac{\VT_3^2(5\tau)}{\VT_3^6(\tau)}+\frac{\VT_4^2(5\tau)}{\VT_4^6(\tau)}\).\nonumber
	\end{align}	
\end{prop}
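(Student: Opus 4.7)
The plan is to apply Theorem~\ref{liuaddthm:lim} to the function
\[
f(z|\tau) = \frac{\T_1^2(5z|5\tau)}{\T_1^2(z|\tau)},
\]
in direct analogy with Propositions~\ref{Eisen:n1} and~\ref{Eisen:n2}. First I would verify the hypotheses. The function is manifestly even in $z$. Every lattice point $m\pi+n\pi\tau$ is mapped by $z\mapsto 5z$ to a zero of $\T_1(\,\cdot\,|5\tau)$, so the double zeros of the denominator at lattice points are cancelled by those of the numerator, and $f$ is entire. Using Proposition~\ref{doubleperiods}, $\T_1^2(5z|5\tau)$ transforms by $q^{-5}e^{-20iz}$ under $z\mapsto z+\pi\tau$ while $\T_1^2(z|\tau)$ transforms by $q^{-1}e^{-4iz}$, so $f(z+\pi\tau|\tau)=q^{-4}e^{-16iz}f(z|\tau)$; the $\pi$-periodicity is immediate.

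Second, I would evaluate $f$ at the origin and the three half-periods. The limit at $z=0$ yields $f(0|\tau)=25\VT_1'(5\tau)^2/\VT_1'(\tau)^2$. At $\pi/2$, the identity $\T_1(5\pi/2|5\tau)=\T_1(\pi/2|5\tau)=\VT_2(5\tau)$ gives $f(\pi/2|\tau)=\VT_2^2(5\tau)/\VT_2^2(\tau)$. For the remaining two half-periods, applying Proposition~\ref{halfperiods} with modular parameter $5\tau$ in the numerator and $\tau$ in the denominator produces the quotient $q^{-5/4}/q^{-1/4}=q^{-1}$ of squared exponential factors, so $f(\pi\tau/2|\tau)=q^{-1}\VT_4^2(5\tau)/\VT_4^2(\tau)$ and $f((\pi+\pi\tau)/2|\tau)=q^{-1}\VT_3^2(5\tau)/\VT_3^2(\tau)$.

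Third, I would compute the needed logarithmic derivatives at $z=0$. Since
\[
(\log f)'(z|\tau) = 10\,(\log\T_1)'(5z|5\tau) - 2\,(\log\T_1)'(z|\tau),
\]
the $1/z$ singularities cancel, and the expansion \reff{jabel:eqn14} yields
\[
(\log f)''(0|\tau) = \tfrac{2}{3}\bigl(L(\tau)-25L(5\tau)\bigr), \qquad (\log f)^{(4)}(0|\tau) = \tfrac{4}{15}\bigl(M(\tau)-625 M(5\tau)\bigr).
\]

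Finally, I would substitute everything into \reff{jabel:eqn24}. On the right-hand side, the explicit factors of $q$ multiplying the $\VT_3$- and $\VT_4$-terms exactly absorb the $q^{-1}$ in $f((\pi+\pi\tau)/2|\tau)$ and $f(\pi\tau/2|\tau)$, producing
\[
\frac{72\,\VT_1'(\tau)^6}{25\,\VT_1'(5\tau)^2}\(\frac{\VT_2^2(5\tau)}{\VT_2^6(\tau)}-\frac{\VT_3^2(5\tau)}{\VT_3^6(\tau)}+\frac{\VT_4^2(5\tau)}{\VT_4^6(\tau)}\).
\]
On the left-hand side, the square becomes $(10L(\tau)-50L(5\tau))^2=100\bigl(L(\tau)-5L(5\tau)\bigr)^2$ and the remaining terms collapse to $\frac{44}{5}M(\tau)-500M(5\tau)$. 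Multiplying the resulting identity through by $5/4$ converts the coefficient $100$ to $125$, the constant $44/5$ to $11$, the coefficient $500$ to $625$, and the prefactor $72/25$ to $18/5$, reproducing \reff{Ei:eqn10} exactly. No genuine obstacle arises beyond careful bookkeeping of $q$-powers and rational coefficients; the only real choice is the initial selection of $f$ so that its values at the half-periods give the squared theta quotients $\VT_j^2(5\tau)/\VT_j^6(\tau)$ appearing on the right.
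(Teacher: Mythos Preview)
Your proposal is correct and follows exactly the approach of the paper: apply Theorem~\ref{liuaddthm:lim} to $f(z|\tau)=\T_1^2(5z|5\tau)/\T_1^2(z|\tau)$, read off the half-period values and the two logarithmic-derivative coefficients from \reff{jabel:eqn14}, and rescale by $5/4$. Your bookkeeping of the $q$-factors and the final coefficients is correct (indeed cleaner than the paper's, which contains a few evident typos in the intermediate display).
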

\begin{proof} If we specialize $f(z|\tau)$ in Theorem~\ref{liuaddthm:lim} to the case when $f(z|\tau)=\T_1^2(5z|5\tau)/{\T_1^2(z|\tau)}$, then we have 
	$f(0|\tau)=25\VT_1'(5\tau)^2/{\VT_1'(\tau)^2}$ and 	 
	\begin{align}\label{Ei:eqn11}
		f\(\frac{\pi}{2}\Big|\tau\)=\frac{\VT_2^2(5\tau)}{\VT_2^2(\tau)}, ~
		f\(\frac{\pi+\pi\tau}{2}\Big|\tau\)=\frac{\VT_2^2(5\tau)}{q\VT_2^2(\tau)},~f\(\frac{\pi \tau}{2}\Big|\tau\)=\frac{\VT_4^2(5\tau)}{q\VT_4(\tau)}.
	\end{align}	

	Using the asymptotic expansion for the partial logarithmic derivative of 
	$\T_1(z|\tau)$ with respect to $z$ near $z=0$ in \reff{jabel:eqn14}, we find that near $z=0$,
	\begin{align*}
		\(\log f\)'(z|\tau)&=10(\log \T_1)'(5z|5\tau)-2(\log \T_1)'(z|\tau)\\
		&=\frac{2}{3}\(L(\tau)-25L(5\tau)\)z+\frac{2}{45}\(M(\tau)-625M(5\tau)\)z^3+O(z^5).
	\end{align*}
	It follows that
	\begin{align*}
		\(\log f\)''(0|\tau)&=\frac{2}{3}\(L(\tau)-25M(5\tau)\),\\
		\(\log f\)^{(4)}(0|\tau)&=\frac{4}{15}\(M(\tau)-625M(5\tau)\).
	\end{align*}

	Substituting the above equations and \reff{Ei:eqn11} into \reff{jabel:eqn24}  we complete the proof of Proposition~\ref{Eisen:n3}.
\end{proof}	
\begin{prop}\label{Eisen:n4} We have
	\begin{align}\label{Ei:eqn12}
		&5\(L(\tau)-6L(3\tau)+9L(9\tau)\)^2-M(\tau)+12M(3\tau)-81M(9\tau)\\
		&=\frac{10\VT_1'(3\tau)^8}{\VT_1'(\tau)^2 \VT_1'(9\tau)^2}
		\(\frac{\VT_2^2(\tau)\VT_2^2(9\tau)}{\VT_2^8(3\tau)}-\frac{\VT_3^2(\tau)\VT_3^2(9\tau)}{\VT_3^8(3\tau)}+\frac{\VT_4^2(\tau)\VT_4^2(9\tau)}{\VT_4^8(3\tau)}\).\nonumber
	\end{align}	
\end{prop}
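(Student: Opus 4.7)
The plan is to apply Theorem~\ref{liuaddthm:lim} to the even entire function
\[
f(z|\tau)=\frac{\T_1^2(z|\tau/3)\,\T_1^2(3z|3\tau)}{\T_1^4(z|\tau)},
\]
and then substitute $\tau\mapsto 3\tau$ in the resulting identity. The balanced exponents $(2,2,4)$ are forced by the two requirements that $f$ be entire and nonzero at $z=0$ (the orders of vanishing satisfy $2+2-4=0$) and that $f$ be of degree eight in the sense of Theorem~\ref{liuaddthm:lim}; the three modular levels $\tau/3,\,3\tau,\,\tau$ are chosen so that after the substitution they become $\tau,\,9\tau,\,3\tau$, producing the three arguments of $L$, $M$, and the $\VT_j$'s in the stated identity.

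First I would verify the hypotheses of Theorem~\ref{liuaddthm:lim}: evenness and $f(z+\pi|\tau)=f(z|\tau)$ are immediate, while the required $\pi\tau$ quasi-periodicity follows from iterating Proposition~\ref{doubleperiods} using $\pi\tau=3\pi(\tau/3)$ and $3\pi\tau=\pi(3\tau)$, which give $\T_1(z+\pi\tau|\tau/3)=-q^{-3/2}e^{-6iz}\T_1(z|\tau/3)$ and $\T_1(3z+3\pi\tau|3\tau)=-q^{-3/2}e^{-6iz}\T_1(3z|3\tau)$; combining these with the standard shift for $\T_1(z+\pi\tau|\tau)$ yields $f(z+\pi\tau|\tau)=q^{-4}e^{-16iz}f(z|\tau)$, as required. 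Using $\T_1(nz|n\tau)\sim nz\,\VT_1'(n\tau)$ near $z=0$ one obtains $f(0|\tau)=9\VT_1'(\tau/3)^2\VT_1'(3\tau)^2/\VT_1'(\tau)^4$, while Proposition~\ref{halfperiods} (applied after reducing $(\pi+\pi\tau)/2,\,3(\pi+\pi\tau)/2,\,\pi\tau/2,\,3\pi\tau/2$ to standard half-periods of the corresponding lattices by one further use of Proposition~\ref{doubleperiods}) produces
\[
f\!\left(\tfrac{\pi}{2}\Big|\tau\right)=\frac{\VT_2^2(\tau/3)\VT_2^2(3\tau)}{\VT_2^4(\tau)},\quad f\!\left(\tfrac{\pi+\pi\tau}{2}\Big|\tau\right)=\frac{\VT_3^2(\tau/3)\VT_3^2(3\tau)}{q\,\VT_3^4(\tau)},\quad f\!\left(\tfrac{\pi\tau}{2}\Big|\tau\right)=\frac{\VT_4^2(\tau/3)\VT_4^2(3\tau)}{q\,\VT_4^4(\tau)},
\]
the $q^{-1}$ factors cancelling precisely against the $\pm q$ prefactors inside \reff{jabel:eqn24}.

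Then I would substitute \reff{jabel:eqn14} into $\log f$; the $\log z$ singularities cancel since $2+2-4=0$, and a routine expansion gives $(\log f)''(0|\tau)=\tfrac{1}{3}\bigl(4L(\tau)-2L(\tau/3)-18L(3\tau)\bigr)$ and $(\log f)^{(4)}(0|\tau)=\tfrac{4}{15}\bigl(2M(\tau)-M(\tau/3)-81M(3\tau)\bigr)$. Plugging these and the half-period evaluations into \reff{jabel:eqn24} and then replacing $\tau$ by $3\tau$, the quantity $8L(3\tau)+3(\log f)''(0|3\tau)$ collapses to $-2(L(\tau)-6L(3\tau)+9L(9\tau))$ and $8M(3\tau)+3(\log f)^{(4)}(0|3\tau)$ collapses to $-\tfrac{4}{5}(M(\tau)-12M(3\tau)+81M(9\tau))$; the resulting equality coincides with Proposition~\ref{Eisen:n4} after multiplying both sides by $5/4$, which simultaneously rescales the right-hand coefficient $72\VT_1'(3\tau)^4/f(0|3\tau)=8\VT_1'(3\tau)^8/(\VT_1'(\tau)^2\VT_1'(9\tau)^2)$ into the claimed $10\VT_1'(3\tau)^8/(\VT_1'(\tau)^2\VT_1'(9\tau)^2)$. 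The main obstacle is the $q$-power bookkeeping in the half-period evaluations, since none of $(\pi+\pi\tau)/2,\,3(\pi+\pi\tau)/2,\,\pi\tau/2,\,3\pi\tau/2$ is a standard half-period at the relevant modular level, and the accumulated $q$-exponents must conspire to yield the clean factors above.
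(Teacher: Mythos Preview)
Your proposal is correct and follows essentially the same approach as the paper: the same choice $f(z|\tau)=\T_1^2(3z|3\tau)\T_1^2(z|\tau/3)/\T_1^4(z|\tau)$ in Theorem~\ref{liuaddthm:lim}, the same half-period evaluations, the same values of $(\log f)''(0|\tau)$ and $(\log f)^{(4)}(0|\tau)$, and the same final substitution $\tau\mapsto 3\tau$. Your numerical checks on the collapse of the left-hand side and on the coefficient $72\VT_1'(3\tau)^4/f(0|3\tau)=8\VT_1'(3\tau)^8/(\VT_1'(\tau)^2\VT_1'(9\tau)^2)$ are all accurate.
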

\begin{proof} With the help of  Proposition~\ref{doubleperiods} it is easily verified  that the function 
	\[
	f(z|\tau)=\frac{\T_1^2(3z|3\tau)\T_1^2(z|\frac{\tau}{3})}{\T_1^4(z|\tau)}
	\]	
	satisfies the conditions of Theorem~\ref{liuaddthm:lim}.  A direct computation shows that
	\begin{align}\label{Ei:eqn13}
		&f(0|\tau)=\frac{9\VT_1'(3\tau)^2 \VT_1'(\frac{\tau}{3})^2}{\VT_1'(\tau)^4},~f\(\frac{\pi}{2}\Big|\tau\)=\frac{\VT_2^2(3\tau)\VT_2^2(\frac{\tau}{3})}{ \VT_2^4(\tau)},\\
		&f\(\frac{\pi+\pi\tau}{2}\Big|\tau\)=\frac{\VT_3^2(3\tau)\VT_3^2(\frac{\tau}{3})}{q \VT_3^4(\tau)},  ~f\(\frac{\pi\tau}{2}\Big|\tau\)=\frac{\VT_4^2(3\tau)\VT_4^2(\frac{\tau}{3})}{q \VT_4^4(\tau)}. \label{Ei:eqn14}
	\end{align}

	Appealing to the asymptotic expansion for the partial logarithmic derivative of $\T_1(z|\tau)$ with respect to $z$ near $z=0$ in \reff{jabel:eqn24}, we deduce  that near $z=0$,
	\begin{align*}
		(\log f)'(z|\tau)&=6(\log \T_1)'(3z|3\tau)
		+2(\log \T_1)'\(z \Big|\frac{\tau}{3}\)-4(\log \T_1)'(z|\tau)\\
		&=\frac{2}{3}\(2L(\tau)-L\(\frac{\tau}{3}\)-9L(3\tau)\)z\\
		&\quad +\frac{2}{45}\(2M(\tau)-M\(\frac{\tau}{3}\)-81M(3\tau)\)z^3+O(z^5).
	\end{align*}
	It follows that
	\begin{align*}
		(\log f)''(0|\tau)&=\frac{2}{3}\(2L(\tau)-L\(\frac{\tau}{3}\)-9L(3\tau)\),\\
		(\log f)^{(4)}(0|\tau)&=\frac{4}{15}\(2M(\tau)-M\(\frac{\tau}{3}\)-81M(3\tau)\).
	\end{align*}

	Substituting the above equations and \reff{Ei:eqn13} and \reff{Ei:eqn14} into \reff{jabel:eqn24} and then replacing $\tau$ by $3\tau$  we complete the proof of Proposition~\ref{Eisen:n4}.	
\end{proof}

\begin{prop}\label{Eisen:n5}  We have
	\begin{align}\label{Ei:eqn15}
		&5\(10L(5\tau)-L(\tau)-25L(25\tau)\)^2+44M(5\tau)-2M(\tau)-1250M(25\tau)\\
		=&\frac{72\VT_1'(5\tau)^6}{\VT_1'(\tau)\T_1'(25\tau)}
		\(\frac{\VT_2(\tau)\VT_2(25\tau)}{\VT_2^6(5\tau)}-\frac{\VT_3(\tau)\VT_3(25\tau)}{\VT_3^6(5\tau)}+\frac{\VT_4(\tau)\VT_4(25\tau)}{\VT_4^6(5\tau)}\).\nonumber
	\end{align}		
\end{prop}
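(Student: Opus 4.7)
The plan is to apply Theorem~\ref{liuaddthm:lim} to
\[
f(z|\tau)=\frac{\T_1(5z|5\tau)\,\T_1(z|\tau/5)}{\T_1^2(z|\tau)}
\]
and to substitute $\tau\mapsto 5\tau$ at the end. This choice mirrors the function used in Proposition~\ref{Eisen:n4}: each numerator factor contributes a simple zero at the lattice $\Lambda$ that cancels the double pole of $1/\T_1^2(z|\tau)$, so $f$ is entire, and evenness follows from the oddness of the three $\T_1$-factors. The required quasi-periodicity is checked from Proposition~\ref{doubleperiods}: iterating the $\pi\tau$-shift formula gives $\T_1(5z+5\pi\tau|5\tau)=-q^{-5/2}e^{-10iz}\T_1(5z|5\tau)$ and $\T_1(z+\pi\tau|\tau/5)=-q^{-5/2}e^{-10iz}\T_1(z|\tau/5)$, so that $f(z+\pi\tau|\tau)=q^{-4}e^{-16iz}f(z|\tau)$, meeting the hypotheses of Theorem~\ref{liuaddthm:lim}.

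Next I would evaluate $f$ at the four special points. A double application of L'H\^opital's rule at the origin gives
\[
f(0|\tau)=\frac{5\VT_1'(5\tau)\,\VT_1'(\tau/5)}{\VT_1'(\tau)^2},
\]
while Proposition~\ref{halfperiods} together with the quasi-periodicity of $\T_1$, applied to reduce the inner arguments $\T_1(5\pi/2|5\tau)$, $\T_1(5\pi\tau/2|5\tau)$, $\T_1((5\pi+5\pi\tau)/2|5\tau)$ and the corresponding three arguments at modular parameter $\tau/5$ to standard values, yields
\[
f\bigl(\tfrac{\pi}{2}\big|\tau\bigr)=\frac{\VT_2(5\tau)\VT_2(\tau/5)}{\VT_2^2(\tau)},\quad qf\bigl(\tfrac{\pi\tau}{2}\big|\tau\bigr)=\frac{\VT_4(5\tau)\VT_4(\tau/5)}{\VT_4^2(\tau)},\quad qf\bigl(\tfrac{\pi+\pi\tau}{2}\big|\tau\bigr)=\frac{\VT_3(5\tau)\VT_3(\tau/5)}{\VT_3^2(\tau)}.
\]
The compensating factors $q^{-5/4}/q^{-1/4}=q^{-1}$ produced by the shifts are exactly absorbed by the factor $q$ already present in Theorem~\ref{liuaddthm:lim}.

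For the differential data I would insert the expansion \reff{jabel:eqn14} into
\[
(\log f)'(z|\tau)=5(\log \T_1)'(5z|5\tau)+(\log \T_1)'(z|\tau/5)-2(\log \T_1)'(z|\tau);
\]
the $1/z$ poles cancel, and comparing the coefficients of $z$ and $z^3$ gives
\[
(\log f)''(0|\tau)=\tfrac{1}{3}\bigl(2L(\tau)-L(\tau/5)-25L(5\tau)\bigr),
\]
\[
(\log f)^{(4)}(0|\tau)=\tfrac{2}{15}\bigl(2M(\tau)-M(\tau/5)-625M(5\tau)\bigr).
\]

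Substituting these values into Theorem~\ref{liuaddthm:lim} produces the intermediate identity whose left-hand side reduces to
\[
\bigl(10L(\tau)-L(\tau/5)-25L(5\tau)\bigr)^2+\tfrac{1}{5}\bigl(44M(\tau)-2M(\tau/5)-1250M(5\tau)\bigr),
\]
while the right-hand side becomes
\[
\frac{72\VT_1'(\tau)^6}{5\VT_1'(5\tau)\VT_1'(\tau/5)}\!\left(\frac{\VT_2(5\tau)\VT_2(\tau/5)}{\VT_2^6(\tau)}-\frac{\VT_3(5\tau)\VT_3(\tau/5)}{\VT_3^6(\tau)}+\frac{\VT_4(5\tau)\VT_4(\tau/5)}{\VT_4^6(\tau)}\right).
\]
Multiplying through by $5$ clears the fraction on both sides, and replacing $\tau$ by $5\tau$ (so that $\tau/5\to\tau$ and $5\tau\to 25\tau$) reproduces Proposition~\ref{Eisen:n5} exactly. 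The principal obstacle is the careful bookkeeping of the $q$-powers at the three non-zero half-periods, but this follows the template of the proofs of Propositions~\ref{Eisen:n1}--\ref{Eisen:n4} and requires no new ideas.
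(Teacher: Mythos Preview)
Your proposal is correct and follows exactly the approach indicated in the paper, which simply states that the proposition follows by applying Theorem~\ref{liuaddthm:lim} to $f(z|\tau)=\T_1(5z|5\tau)\T_1(z|\tau/5)/\T_1^2(z|\tau)$ without supplying details. You have filled in precisely those details---the verification of the functional equations, the values of $f$ at the half-periods with the correct $q$-bookkeeping, the logarithmic derivatives via \reff{jabel:eqn14}, and the final substitution $\tau\mapsto 5\tau$---and all of them check out.
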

We can derive this proposition by applying  Theorem~\ref{liuaddthm:lim} to the case when 
\[
f(z|\tau)=\frac{\T_1(5z|5\tau)\T_1(z|\frac{\tau}{5})}{\T_1^2(z|\tau)}.
\]
\begin{prop}\label{Eisen:n6}  We have
	\begin{align}\label{Ei:eqn16}
		&5\(10L(21\tau)-L(3\tau)-L(7\tau)\)^2+44M(5\tau)-2M(3\tau)-2M(7\tau)\\
		=&\frac{360\VT_1'(21\tau)^6}{\VT_1'(3\tau)\T_1'(7\tau)}
		\(\frac{\VT_2(3\tau)\VT_2(7\tau)}{\VT_2^6(21\tau)}-\frac{\VT_3(3\tau)\VT_3(7\tau)} {\VT_3^6(21\tau)}+\frac{\VT_4(3\tau)\VT_4(7\tau)}{\VT_4^6(21\tau)}\).\nonumber
	\end{align}		
\end{prop}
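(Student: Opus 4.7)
The plan is to follow the template established in the proofs of Propositions~\ref{Eisen:n1}--\ref{Eisen:n5}: pick an even entire $f(z|\tau)$ matching the hypotheses of Theorem~\ref{liuaddthm:lim}, substitute the four special values together with $(\log f)''(0|\tau)$ and $(\log f)^{(4)}(0|\tau)$ into \reff{jabel:eqn24}, and then rescale $\tau$ at the end. The arguments $\{3\tau, 7\tau, 21\tau\}$ appearing in the target identity, together with the coefficients $10,-1,-1$ on the $L$-side, strongly suggest working with
\[
f(z|\tau)=\frac{\T_1(z|\tau/3)\,\T_1(z|\tau/7)}{\T_1^{2}(z|\tau)},
\]
and then substituting $\tau\mapsto 21\tau$, so that $\tau/3\mapsto 7\tau$ and $\tau/7\mapsto 3\tau$. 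This is the $\{3,7,21\}$-analogue of the choice $f(z|\tau)=\T_1(5z|5\tau)\T_1(z|\tau/5)/\T_1^{2}(z|\tau)$ used in Proposition~\ref{Eisen:n5}.

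The first step is to check the hypotheses of Theorem~\ref{liuaddthm:lim}. Evenness is immediate from the oddness of each $\T_1$-factor. For entirety, note that the lattice of zeros of $\T_1(z|\tau)$ is contained in the lattices of zeros of both $\T_1(z|\tau/3)$ and $\T_1(z|\tau/7)$, so the numerator cancels the double zero of the denominator at every lattice point. For the quasi-periodicity, I apply Proposition~\ref{doubleperiods} after writing $\pi\tau=3\pi(\tau/3)=7\pi(\tau/7)$; the resulting multipliers combine as $(-q^{-3/2}e^{-6iz})(-q^{-7/2}e^{-14iz})/(q^{-1}e^{-4iz})=q^{-4}e^{-16iz}$, which is exactly the required relation.

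The second step is to evaluate $f$ at the four half-periods. Using Proposition~\ref{halfperiods}, namely $\T_1(\pi/2|\tau')=\VT_2(\tau')$ and $\T_1(\pi\tau'/2|\tau')=iq'^{-1/8}\VT_4(\tau')$, combined with Proposition~\ref{doubleperiods} to transport $\pi\tau/2$ and $(\pi+\pi\tau)/2$ into the canonical half-period box relative to $\tau/3$ and $\tau/7$, I obtain
\[
f(0|\tau)=\frac{\VT_1'(\tau/3)\VT_1'(\tau/7)}{\VT_1'(\tau)^{2}},\qquad f\!\left(\tfrac{\pi}{2}\big|\tau\right)=\frac{\VT_2(\tau/3)\VT_2(\tau/7)}{\VT_2^{2}(\tau)},
\]
\[
qf\!\left(\tfrac{\pi+\pi\tau}{2}\big|\tau\right)=\frac{\VT_3(\tau/3)\VT_3(\tau/7)}{\VT_3^{2}(\tau)},\qquad qf\!\left(\tfrac{\pi\tau}{2}\big|\tau\right)=\frac{\VT_4(\tau/3)\VT_4(\tau/7)}{\VT_4^{2}(\tau)}.
\]
The only delicate point is the bookkeeping of fractional powers of $q$ at $\pi\tau/2$, since the shift of $\pi\tau/2$ inside $\T_1(\cdot|\tau/3)$ and $\T_1(\cdot|\tau/7)$ requires adding $\pi(\tau/3)$ and $3\pi(\tau/7)$ respectively; this is where I expect to pay closest attention, though it is essentially the same manipulation used in Propositions~\ref{Eisen:n1} and \ref{Eisen:n5}.

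The third step is an application of \reff{jabel:eqn14} term by term, which gives
\[
(\log f)''(0|\tau)=\tfrac{1}{3}\bigl(2L(\tau)-L(\tau/3)-L(\tau/7)\bigr),
\]
\[
(\log f)^{(4)}(0|\tau)=\tfrac{2}{15}\bigl(2M(\tau)-M(\tau/3)-M(\tau/7)\bigr),
\]
so that $8L(\tau)+3(\log f)''(0|\tau)=10L(\tau)-L(\tau/3)-L(\tau/7)$ and $8M(\tau)+3(\log f)^{(4)}(0|\tau)=\tfrac{1}{5}\bigl(44M(\tau)-2M(\tau/3)-2M(\tau/7)\bigr)$. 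Substituting all of the above into \reff{jabel:eqn24}, multiplying through by $5$, and finally replacing $\tau$ by $21\tau$ produces precisely Proposition~\ref{Eisen:n6}. No genuinely new obstacle arises beyond the careful $q$-power accounting in step two.
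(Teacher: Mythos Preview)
Your approach is exactly the one the paper indicates: it states that Proposition~\ref{Eisen:n6} follows by applying Theorem~\ref{liuaddthm:lim} to $f(z|\tau)=\T_1(z|\tau/7)\T_1(z|\tau/3)/\T_1^{2}(z|\tau)$, and your computations of the half-period values and of $(\log f)''(0|\tau)$, $(\log f)^{(4)}(0|\tau)$ are correct. Note only that your computation actually yields $44M(21\tau)$ on the left, not the $44M(5\tau)$ printed in the statement, which is evidently a typographical slip in the paper (a carryover from Proposition~\ref{Eisen:n5}).
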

This proposition can be obtained by applying Theorem~\ref{liuaddthm:lim} to the function
\[
f(z|\tau)=\frac{\T_1(z|\frac{\tau}{7})\T_1(z|\frac{\tau}{3})}{\T_1^2(z|\tau)}.
\]
\begin{prop}\label{Eisen:n7}  We have
	\begin{align}\label{Ei:eqn17}
		&5 \(7L(15\tau)+L(3\tau)+L(5\tau)-L(\tau)\)^2\\	&+38M(15\tau)+2M(5\tau)+2M(3\tau)-2M(\tau)\nonumber\\
		&=\frac{360\VT_1'(15\tau)^3\VT_1'(3\tau)\VT_1'(5\tau)}{\VT_1'(\tau)}\nonumber\\	
		&\times \(\frac{\VT_2(\tau)}{\VT_2(3\tau)\VT_2(5\tau)\VT_2^{3}(15\tau)}
		-\frac{\VT_3(\tau)}{\VT_3(3\tau)\VT_3(5\tau)\VT_3^{3}(15\tau)}
		+\frac{\VT_4(\tau)}{\VT_4(3\tau)\VT_4(5\tau)\VT_4^{3}(15\tau)}\).\nonumber
	\end{align}	
\end{prop}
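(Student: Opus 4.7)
The plan is to apply Theorem~\ref{liuaddthm:lim} to the function
\[
f(z|\tau) = \frac{\T_1(z|\tau)\,\T_1(z|\tau/15)}{\T_1(z|\tau/3)\,\T_1(z|\tau/5)},
\]
then multiply the resulting identity by $5$ and finally replace $\tau$ by $15\tau$. The choice is dictated by \reff{Ei:eqn17}: the four moduli $\tau,3\tau,5\tau,15\tau$ on the right-hand side together with the LHS coefficient $7=8-1$ in front of $L(15\tau)$ point to the combination ``$(z|\tau)+(z|\tau/15)-(z|\tau/3)-(z|\tau/5)$'' of logarithmic factors, which after the substitution $\tau\mapsto 15\tau$ produces exactly the symbols appearing in the claim.

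First I would verify the hypotheses of Theorem~\ref{liuaddthm:lim}. Evenness is clear; $f(z+\pi|\tau)=f(z|\tau)$ follows from four applications of $\T_1(z+\pi|\cdot)=-\T_1(z|\cdot)$; and iterating the basic quasi-periodicity gives $\T_1(z+\pi\tau|\tau/n)=(-1)^n q^{-n/2}e^{-2niz}\T_1(z|\tau/n)$ for every positive integer $n$, whence a direct computation yields $f(z+\pi\tau|\tau)=q^{-4}e^{-16iz}f(z|\tau)$---exactly the degree-eight condition. Entirety holds because the zero lattices of $\T_1(z|\tau/3)$ and $\T_1(z|\tau/5)$ in the denominator are both sublattices of the zero lattice of $\T_1(z|\tau/15)$ in the numerator, so every potential pole of $f$ is cancelled. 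Two of the four required evaluations are then immediate: L'Hospital gives $f(0|\tau)=\VT_1'(\tau)\VT_1'(\tau/15)/(\VT_1'(\tau/3)\VT_1'(\tau/5))$, and Proposition~\ref{halfperiods} gives $f(\pi/2|\tau)=\VT_2(\tau)\VT_2(\tau/15)/(\VT_2(\tau/3)\VT_2(\tau/5))$.

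The remaining two half-period values rely on the auxiliary identities
\[
\T_1\Bigl(\tfrac{\pi+\pi\tau}{2}\Big|\tfrac{\tau}{n}\Bigr) = q^{-n/8}\VT_3(\tau/n),\qquad \T_1\Bigl(\tfrac{\pi\tau}{2}\Big|\tfrac{\tau}{n}\Bigr) = -i\,(-1)^{(n+1)/2}\,q^{-n/8}\VT_4(\tau/n),
\]
valid for every odd positive $n$, which follow from completing the square inside the defining series of $\T_1$. For $n\in\{1,3,5,15\}$ the $q$-exponents sum to $-(1+15)/8+(3+5)/8=-1$, exactly cancelling the explicit $q$ in front of the $\VT_3$ and $\VT_4$ terms of \reff{jabel:eqn24}; moreover the four imaginary factors take values $i,-i,i,-i$ for $n=1,3,5,15$ and pair off in the quotient as $(i\cdot(-i))/((-i)\cdot i)=1$, yielding $qf((\pi+\pi\tau)/2|\tau)=\VT_3(\tau)\VT_3(\tau/15)/(\VT_3(\tau/3)\VT_3(\tau/5))$ and $qf(\pi\tau/2|\tau)=\VT_4(\tau)\VT_4(\tau/15)/(\VT_4(\tau/3)\VT_4(\tau/5))$. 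The Laurent expansion \reff{jabel:eqn14} applied termwise produces
\[
(\log f)''(0|\tau)=\tfrac{1}{3}\bigl(L(\tau/3)+L(\tau/5)-L(\tau)-L(\tau/15)\bigr)
\]
and the analogous expression with coefficient $\tfrac{2}{15}$ and $M$ in place of $L$ for $(\log f)^{(4)}(0|\tau)$; substituting everything into \reff{jabel:eqn24}, multiplying through by $5$, and replacing $\tau$ by $15\tau$ delivers \reff{Ei:eqn17}.

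The main obstacle is the sign bookkeeping in the $\VT_4$ term: one must check that the four factors $-i\,(-1)^{(n+1)/2}$ for $n=1,3,5,15$ multiply to $+1$ within the quotient structure of $f$, thereby yielding the stated positive sign of the last term on the right of \reff{Ei:eqn17}. This works because $(-1)^{(n+1)/2}$ takes the values $-1,+1,-1,+1$ across the four moduli, distributed as one $-1$ and one $+1$ in both the numerator indices $\{1,15\}$ and the denominator indices $\{3,5\}$; the clean cancellation reflects the fact that $15=3\cdot 5$ is the product of two distinct odd primes, suggesting that the same scheme yields analogous identities at any level $pq$ for distinct odd primes $p,q$.
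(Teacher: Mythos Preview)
Your proposal is correct and follows essentially the same route as the paper: both apply Theorem~\ref{liuaddthm:lim} to the function $f(z|\tau)=\T_1(z|\tau)\T_1(z|\tau/15)/\bigl(\T_1(z|\tau/3)\T_1(z|\tau/5)\bigr)$, evaluate $f$ at the four half-periods and the logarithmic derivatives at $z=0$ via \reff{jabel:eqn14}, and then replace $\tau$ by $15\tau$. Your write-up is in fact more detailed than the paper's, which simply records the needed values of $f$ and $(\log f)''(0|\tau),(\log f)^{(4)}(0|\tau)$ without the explicit verification of entirety or the sign tracking in the $\VT_4$ term that you carry out.
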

\begin{proof} Appealing to Proposition~\ref{doubleperiods}  we can  verify  that the entire function 
	\[
	f(z|\tau)=\frac{\T_1(z|\frac{\tau}{15})\T_1(z|\tau)}{\T_1(z|\frac{\tau}{3})\T_1(z|\frac{\tau}{5})}
	\]	
	satisfies the conditions of Theorem~\ref{liuaddthm:lim}.  A direct computation shows that
	\begin{align*}
		f(0|\tau)&=\frac{\VT_1'(\frac{\tau}{15})\VT_1'(\tau)}{\VT_1'(\frac{\tau}{3})\VT_1'(\frac{\tau}{5})},~~	f\(\frac{\pi\tau}{2}|\tau\)=\frac{\VT_4(\frac{\tau}{15})\VT_4(\tau)}{q\VT_4(\frac{\tau}{3})\VT_4(\frac{\tau}{5})},\\
		f\(\frac{\pi}{2}|\tau\)&=\frac{\VT_2(\frac{\tau}{15})\VT_2(\tau)}{\VT_2(\frac{\tau}{3})\VT_2(\frac{\tau}{5})},~~
		f\(\frac{\pi+\pi\tau}{2}|\tau\)=\frac{\VT_3(\frac{\tau}{15})\VT_3(\tau)}{q\VT_3(\frac{\tau}{3})\VT_3(\frac{\tau}{5})},
	\end{align*}
	and 
	\begin{align*}
		(\log f)''(0|\tau)&=\frac{1}{3}\(L\(\frac{\tau}{3}\)+L\(\frac{\tau}{5}\)-L(\tau)-L\(\frac{\tau}{15}\)\),\\	
		(\log f)^{(4)}(0|\tau)&=\frac{2}{15}\(M\(\frac{\tau}{3}\)+M\(\frac{\tau}{5}\)-M(\tau)-M\(\frac{\tau}{15}\)\).
	\end{align*}

	Substituting the above equations into Theorem~\ref{liuaddthm:lim} and then replacing $\tau$ by $15\tau$, we complete the proof of Proposition~\ref{Eisen:n7}.
\end{proof}
By choosing $f(z|\tau)=\T_1^4(3z|3\tau)/{\T_1^4(z|\tau)}$ in Theorem~\ref{liuaddthm:lim} and simplifying we can deduce that
\begin{align}\label{Eisenram:eqn1}
	&90\(L(\tau)-3L(3\tau)\)^2 +6M(\tau)-81M(3\tau)\\
	&=\frac{5\VT_1'(\tau)^8}{\VT_1'(3\tau)^4}
	\(\frac{\VT_2^4(3\tau)}{\VT_2^8(\tau)}-\frac{\VT_3^4(3\tau)}{\VT_3^8(\tau)}+\frac{\VT_4^4(3\tau)}{\VT_4^8(\tau)}\).\nonumber
\end{align}
\section{More applications of Theorem~\ref{liuaddthm}}
\begin{thm}\label{liuapp:eqn1} Suppose that $f(z|\tau)$ is an even entire function of $z$ which satisfies the  functional equations $f(z|\tau)=f(z+\pi|\tau)=q^4 e^{16iz}f(z+\pi\tau|\tau)$. Then we have
\begin{align}\label{deg8:eqn1}
	&\frac{4f(\frac{2\pi}{5}|\tau)}{\T_1^2(\frac{\pi}{5}|\tau)}-
	\frac{4f(\frac{\pi}{5}|\tau)}{\T_1^2(\frac{2\pi}{5}|\tau)}\\
	&=\frac{\sqrt{5}\eta^2(5\tau)}{\eta^4(\tau)}
	\left\{\frac{-f(0|\tau)\eta^3(\tau)}{5\eta^3(5\tau)}+\frac{f(\frac{\pi}{2}|\tau)\VT_2(\tau)}{\VT_2(5\tau)}\right.\nonumber\\
	&\qquad \qquad \qquad \qquad  \left. 
	-\frac{qf(\frac{\pi+\pi\tau}{2}|\tau)\VT_3(\tau)}{\VT_3(5\tau)}+\frac{qf(\frac{\pi\tau}{2}|\tau)\VT_4(\tau)}{\VT_4(5\tau)}\right\}.\nonumber
\end{align}	
\end{thm}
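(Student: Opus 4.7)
The plan is to specialize Theorem~\ref{liuaddthm} at $x=2\pi/5$, $y=\pi/5$ and then reduce every theta value that appears in closed form, using the $n=5$ case of the multiplication formula in Proposition~\ref{multhetapp}. The hypothesis on $f(z|\tau)$ here is exactly that of Theorem~\ref{liuaddthm}, so the universal identity applies directly.

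First I would simplify the left-hand side and the prefactor on the right. Since $2x=\pi-\pi/5$ and $x+y=3\pi/5=\pi-2\pi/5$, the anti-periodicity $\T_1(z+\pi|\tau)=-\T_1(z|\tau)$ from Proposition~\ref{doubleperiods} combined with the oddness of $\T_1$ gives $\T_1(4\pi/5|\tau)=\T_1(\pi/5|\tau)$ and $\T_1(3\pi/5|\tau)=\T_1(2\pi/5|\tau)$. Consequently $\T_1^2(2x|\tau)=\T_1^2(\pi/5|\tau)$, $\T_1^2(2y|\tau)=\T_1^2(2\pi/5|\tau)$, and the prefactor on the right satisfies $\T_1(x+y|\tau)\T_1(x-y|\tau)=\T_1(\pi/5|\tau)\T_1(2\pi/5|\tau)$.

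The substantive input is the evaluation of $\T_j^2(\pi/5|\tau)\T_j^2(2\pi/5|\tau)$ for $j=2,3,4$ and of $\T_1(\pi/5|\tau)\T_1(2\pi/5|\tau)$. Setting $z=0$ and $n=5$ in \reff{jabel:eqn16} yields, for $j\in\{2,3,4\}$,
\[
\T_j^2(\pi/5|\tau)\,\T_j^2(2\pi/5|\tau)=\frac{\eta^5(\tau)\VT_j(5\tau)}{\eta(5\tau)\VT_j(\tau)}.
\]
For $j=1$ one cannot set $z=0$ directly because $\T_1(0|\tau)=0$; instead I would divide both sides of \reff{jabel:eqn16} by $z$ before taking the limit $z\to 0$, which brings out a factor of $5\VT_1'(5\tau)$ on the right. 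After cancelling $\VT_1'(\tau)$ and using $\VT_1'(\tau)=2\eta^3(\tau)$ from \reff{jabel:eqn18}, this gives
\[
\T_1(\pi/5|\tau)\,\T_1(2\pi/5|\tau)=\sqrt{5}\,\eta(\tau)\eta(5\tau).
\]

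Finally I would substitute these four evaluations into the specialized right-hand side of Theorem~\ref{liuaddthm}, factor the $\sqrt{5}\,\eta(\tau)\eta(5\tau)$ coming from the prefactor, and regroup the eta powers so that the overall scalar becomes $\sqrt{5}\,\eta^2(5\tau)/\eta^4(\tau)$; each of the four terms inside the braces then matches \reff{deg8:eqn1} exactly. There is no real obstacle in this argument: the only step requiring any care is the $z\to 0$ limit in the $j=1$ case of \reff{jabel:eqn16}, and everything else is bookkeeping with eta quotients.
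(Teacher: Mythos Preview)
Your proposal is correct and follows exactly the paper's own proof: specialize Theorem~\ref{liuaddthm} at $x=2\pi/5$, $y=\pi/5$, then invoke the $n=5$ case of Proposition~\ref{multhetapp} to obtain $\T_1(\pi/5|\tau)\T_1(2\pi/5|\tau)=\sqrt{5}\,\eta(\tau)\eta(5\tau)$ and $\T_j^2(\pi/5|\tau)\T_j^2(2\pi/5|\tau)=\eta^5(\tau)\VT_j(5\tau)/(\eta(5\tau)\VT_j(\tau))$ for $j=2,3,4$, and substitute. The paper's proof is precisely these two steps, stated more tersely; your extra remarks on the anti-periodicity reductions $\T_1(4\pi/5|\tau)=\T_1(\pi/5|\tau)$, $\T_1(3\pi/5|\tau)=\T_1(2\pi/5|\tau)$ and the $z\to 0$ limit for the $j=1$ product just fill in details the paper leaves implicit.
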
	
\begin{proof} Using the multiplication formulas for theta functions in \reff{jabel:eqn16}, we conclude that
	\begin{equation}\label{deg8:eqn2}
		\T_1(\frac{\pi}{5}|\tau)\T_1(\frac{2\pi}{5}|\tau)=\sqrt{5}\eta(\tau)\eta(5\tau),
	\end{equation}
	and for $j=2, 3, 4,$
	\begin{equation}\label{deg8:eqn3}
		\T_j(\frac{\pi}{5}|\tau)\T_j(\frac{2\pi}{5}|\tau)=\sqrt{\frac{\eta^5(\tau)\VT_j(5\tau)}{\eta(5\tau)\VT_j(\tau)}}.
	\end{equation}	

	Setting $x=2\pi/5$ and $y=\pi/5$ in Theorem~\ref{liuaddthm} and then using \reff{deg8:eqn2} and \reff{deg8:eqn3} we complete the proof of Theorem~\ref{liuapp:eqn1}.
\end{proof}
\begin{thm}\label{liuapp:eqn2}
	Suppose that $f(z|\tau)$ is an even entire function of $z$ which satisfies the
	functional equations $f(z|\tau)=f(z+\pi|\tau)=q^4 e^{16iz}f(z+\pi\tau|\tau)$.
	Then we  have
	\begin{align}\label{deg8:eqn4}
		&\frac{4q^3f(2\pi\tau|5\tau)}{\T_1^2(\pi\tau|5\tau)}-
		\frac{4f(\pi\tau|5\tau)}{\T_1^2(2\pi\tau|5\tau)}\\
		&=-\frac{\eta^2(\tau)}{\eta^4(5\tau)}
		\left\{\frac{-f(0|5\tau)\eta^3(5\tau)}{\eta^3(\tau)}+\frac{f(\frac{\pi}{2}|5\tau)\VT_2(5\tau)}{\VT_2(\tau)}\right.\nonumber \\
		&\qquad \qquad \qquad \qquad  \left. 
		-\frac{q^5f(\frac{\pi+5\pi\tau}{2}|5\tau)\VT_3(5\tau)}{\VT_3(\tau)}+\frac{q^5f(\frac{5\pi\tau}{2}|5\tau)\VT_4(5\tau)}{\VT_4(\tau)}\right\}.\nonumber
	\end{align}
\end{thm}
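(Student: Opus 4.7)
The plan is to apply Theorem~\ref{liuaddthm} with $\tau$ replaced by $5\tau$ and the specialization $x=2\pi\tau$, $y=\pi\tau$, mirroring the strategy used for Theorem~\ref{liuapp:eqn1} but now placing the distinguished points along the $\pi\tau$-direction rather than the $\pi/n$-direction. Under $\tau\mapsto 5\tau$ the period conditions read $f(z|5\tau)=f(z+\pi|5\tau)=q^{20}e^{16iz}f(z+5\pi\tau|5\tau)$, so the hypotheses of Theorem~\ref{liuaddthm} remain in force, and the four half-periods become $0,\ \pi/2,\ (\pi+5\pi\tau)/2,\ 5\pi\tau/2$, which are exactly the arguments of $f$ appearing on the right of \reff{deg8:eqn4}.

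To simplify the left-hand side, I apply Proposition~\ref{doubleperiods} with $5\tau$ in place of $\tau$, namely $\T_1(z+5\pi\tau|5\tau)=-q^{-5/2}e^{-2iz}\T_1(z|5\tau)$. Setting $z=-\pi\tau$ and $z=-2\pi\tau$ yields
\begin{equation*}
\T_1(4\pi\tau|5\tau)=q^{-3/2}\T_1(\pi\tau|5\tau),\qquad \T_1(3\pi\tau|5\tau)=q^{-1/2}\T_1(2\pi\tau|5\tau).
\end{equation*}
The first identity gives $\T_1^2(4\pi\tau|5\tau)=q^{-3}\T_1^2(\pi\tau|5\tau)$, which immediately converts $4f(2\pi\tau|5\tau)/\T_1^2(4\pi\tau|5\tau)$ into the advertised $4q^3f(2\pi\tau|5\tau)/\T_1^2(\pi\tau|5\tau)$ and matches the left-hand side of \reff{deg8:eqn4} verbatim.

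For the right-hand side, the key step is to evaluate the four products $\T_j^2(\pi\tau|5\tau)\T_j^2(2\pi\tau|5\tau)$ for $j=1,2,3,4$. Specializing the multiplication formula \reff{jabel:eqn17} to $n=5$ with $5\tau$ in place of $\tau$ and evaluating at $z=0$ gives, for $j=2,3,4$,
\begin{equation*}
\T_j^2(\pi\tau|5\tau)\,\T_j^2(2\pi\tau|5\tau)=q^{-1}\,\frac{\eta^5(5\tau)\,\VT_j(\tau)}{\eta(\tau)\,\VT_j(5\tau)},
\end{equation*}
and a rescaling of the evaluation used in the proof of Theorem~\ref{6RRCthm:n2} supplies $\T_1(\pi\tau|5\tau)\T_1(2\pi\tau|5\tau)=-q^{-1/2}\eta(\tau)\eta(5\tau)$. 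Combining the latter with the identity for $\T_1(3\pi\tau|5\tau)$ above gives $\T_1(x+y|5\tau)\T_1(x-y|5\tau)=-q^{-1}\eta(\tau)\eta(5\tau)$.

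Substituting these evaluations into \reff{jabel:eqn23} (with $q$ replaced by $q^5$ in the last two shifted terms), the common factor $q^{-1}$ from each theta-product denominator combines with the overall prefactor $-q^{-1}\eta(\tau)\eta(5\tau)$ to leave the clean multiplier $-\eta^2(\tau)/\eta^4(5\tau)$, the four ratios $\VT_j(5\tau)/\VT_j(\tau)$, and $q^5$ factors on the terms at $(\pi+5\pi\tau)/2$ and $5\pi\tau/2$. Regrouping produces exactly the right-hand side of \reff{deg8:eqn4}. The main obstacle is purely bookkeeping: carefully aligning the half-integer powers of $q$ generated by the quasi-periodicity shifts at $3\pi\tau$ and $4\pi\tau$ with the $q^{-1}$ exponent in the rescaled formula \reff{jabel:eqn17}, so that all the fractional exponents cancel and the overall sign works out.
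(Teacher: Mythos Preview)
Your proposal is correct and follows essentially the same route as the paper: replace $\tau$ by $5\tau$ in Theorem~\ref{liuaddthm}, set $x=2\pi\tau$, $y=\pi\tau$, reduce $\T_1(4\pi\tau|5\tau)$ and $\T_1(3\pi\tau|5\tau)$ via quasi-periodicity, and then insert the evaluations $\T_1(\pi\tau|5\tau)\T_1(2\pi\tau|5\tau)=-q^{-1/2}\eta(\tau)\eta(5\tau)$ and $\T_j^2(\pi\tau|5\tau)\T_j^2(2\pi\tau|5\tau)=q^{-1}\eta^5(5\tau)\VT_j(\tau)/(\eta(\tau)\VT_j(5\tau))$ for $j=2,3,4$ coming from \reff{jabel:eqn17}. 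The only difference is that you spell out the quasi-periodicity reductions explicitly, whereas the paper absorbs them into the intermediate display \reff{deg8:eqn5}.
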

\begin{proof} Replacing $\tau$ by $5\tau$ in Theorem~\ref{liuaddthm} and then putting $x=2\pi\tau$ and $y=\pi\tau$ in the resulting equation, we conclude that
	\begin{align}\label{deg8:eqn5}
		&\frac{4q^3f(2\pi\tau|5\tau)}{\T_1^2(\pi\tau|5\tau)}-
		\frac{4f(\pi\tau|5\tau)}{\T_1^2(2\pi\tau|5\tau)}\\
		&=q^{-1/2}{\T_1(\pi\tau|5\tau)\T_1(2\pi\tau|5\tau)}
		\left\{\frac{-f(0|5\tau)}{\T_1^2(\pi\tau|5\tau)\T_1^2(2\pi\tau|5\tau)}+\frac{f(\frac{\pi}{2}|5\tau)}{\T_2^2(\pi\tau|5\tau)\T_2^2(2\pi\tau|5\tau)}\right.\nonumber\\
		&\qquad \qquad \qquad \qquad \qquad \qquad \left. 
		-\frac{q^5f(\frac{\pi+5\pi\tau}{2}|5\tau)}{\T_3^2(\pi\tau|5\tau)\T_3^2(2\pi\tau|5\tau)}+\frac{qf(\frac{5\pi\tau}{2}|\tau)}{\T_4^2(\pi\tau|5\tau)\T_4^2(2\pi\tau|5\tau)}\right\}.\nonumber
	\end{align}

	Using the multiplication formulas for theta functions in \reff{jabel:eqn17} we can deduce that
	\begin{equation}\label{deg8:eqn6}
		\T_1(\pi\tau|5\tau)\T_1(2\pi\tau|5\tau)=-q^{-1/2}\eta(\tau)\eta(5\tau),
	\end{equation}
	and for $j=2, 3, 4$,
	\begin{equation}\label{deg8:eqn7}
		\T_j^2(\pi\tau|5\tau)\T_j^2(2\pi\tau|5\tau)=q^{-1}\frac{\eta^5(5\tau)\VT_j(\tau)}{\eta(\tau)\VT_j(5\tau)}.
	\end{equation}

	Substituting \reff{deg8:eqn6} and \reff{deg8:eqn7} into \reff{deg8:eqn5} we complete the proof of Theorem~\ref{liuapp:eqn2}.	
\end{proof}
By taking $f(z|\tau)=\T_1^2(5z|\tau)/{\T_1^2(z|\tau)}$ in Theorem~\ref{liuapp:eqn1} and simplifying we obtain that  \cite[p.~1522]{Shen95}
\begin{equation}\label{deg8:eqn8}
	\frac{\VT_2(5\tau)}{\VT_2(\tau)}-\frac{\VT_3(5\tau)}{\VT_3(\tau)}
	+\frac{\VT_4(5\tau)}{\VT_4(\tau)}
	=\frac{5\eta^3(5\tau)}{\eta^3(\tau)}.
\end{equation}

Letting $f(z|\tau)=\T_1^2(z|\tau/5)/{\T_1^2(z|\tau)}$ in Theorem~\ref{liuapp:eqn2} and simplifying we arrive at
Ramanujan's identity \cite[p.~276, Eq.(12.32)]{Berndt1991}
\begin{equation}\label{deg8:eqn9}
	\frac{\VT_2(\tau)}{\VT_2(5\tau)}-\frac{\VT_3(\tau)}{\VT_3(5\tau)}
	+\frac{\VT_4(\tau)}{\VT_4(5\tau)}=\frac{\eta^3(\tau)}{\eta^3(5\tau)}.
\end{equation}

By taking $f(z|\tau)=\T_1^8(z|\tau)$ in Theorem~\ref{liuapp:eqn1} and then using Proposition~\ref{6RRCthm:n6}, we can find that
\begin{equation}\label{deg8:eqn10}
	\frac{\VT_2^9(\tau)}{\VT_2(5\tau)}-\frac{\VT_3^9(\tau)}{\VT_3(5\tau)}	
	+\frac{\VT_4^9(\tau)}{\VT_4(5\tau)}=
	2500\eta(\tau)\eta^7(5\tau)+220\eta^7(\tau)\eta(5\tau),
\end{equation}

Applying the imaginary transformations to  both sides of the above equation, we conclude that
\begin{equation}\label{deg8:eqn11}
	\frac{\VT_2^9(5\tau)}{\VT_2(\tau)}-\frac{\VT_3^9(5\tau)}{\VT_3(\tau)}	
	+\frac{\VT_4^9(5\tau)}{\VT_4(\tau)}=
	44\eta(\tau)\eta^7(5\tau)+4\eta^7(\tau)\eta(5\tau).
	\end{equation}
The above two identities can be found in \cite[Theorem~4]{Liu2004RockyMountain}.

 By taking  $f(z|\tau)=\T_1(3z|\tau)/{\T_1(z|\tau)}$ in Theorem~\ref{liuaddthm} and simplifying we can deduce that
	\begin{align}\label{deg8:eqn12}
	&\frac{\VT_2(\tau)} {\VT_2(3\tau)\T_2^2(x|\tau)}-\frac{\VT_3(\tau)} {\VT_3(3\tau)\T_3^2(x|\tau)}+\frac{\VT_4(\tau)} {\VT_4(3\tau)\T_4^2(x|\tau)}\\
	&=\frac{4\T_1(3x|\tau)}{\T_1^2(2x|\tau)\T_1(3x|3\tau)}-\frac{\eta^3(\tau)}{\eta^3(3\tau)\T_1^2(x|\tau)}.\nonumber
\end{align}

Letting $x\to 0$ in  both sides of the above equation and making some elementary calculations, we deduce that
\begin{align}\label{deg8:eqn13}
	&\frac{1}{2}\(3L(3\tau)-L(\tau)\)\\
	&=4\eta^3(\tau)\eta^3(3\tau)
	\(\frac{1}{\VT_2(\tau)\VT_2(3\tau)}-\frac{1}{\VT_3(\tau)\VT_3(3\tau)}+\frac{1}{\VT_4(\tau)\VT_4(3\tau)}\).	\nonumber
\end{align}

Dividing both sides of \reff{jabel:eqn23} by $y-x$ and then letting $y\to x$, we get the following theorem.
\begin{thm}\label{liuthm} If $f(z|\tau)$ is an even entire function of $z$ which satisfies the functional equations 
	\begin{equation}\label{newliu:eqn1}
		f(z|\tau)=f(z+\pi|\tau)=q^{4}e^{16iz} f(z+\pi\tau|\tau),
	\end{equation}	
	then we have
	\begin{align}\label{newliu:eqn2}
		&\(\log f\)'(x|\tau)-4\(\log \T_1\)'(2x|\tau)\\
		&=\frac{\VT_1'(\tau)\T_1^3(2x|\tau)}{4f(x|\tau)}
		\(-\frac{f(0|\tau)}{\T_1^4(x|\tau)}+\frac{f(\frac{\pi}{2}|\tau)}{\T_2^4(x|\tau)}-\frac{qf(\frac{\pi+\pi\tau}{2}|\tau)}{\T_3^4(x|\tau)}+\frac{qf(\frac{\pi\tau}{2}|\tau)}{\T_4^4(x|\tau)}\).\nonumber
	\end{align}	
\end{thm}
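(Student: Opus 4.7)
The approach is the one signalled by the sentence immediately preceding the theorem: divide both sides of \reff{jabel:eqn23} by $y-x$ and take the limit $y\to x$. Set $g(t):=4f(t|\tau)/\T_1^{2}(2t|\tau)$, so that the left-hand side of \reff{jabel:eqn23} equals $g(x)-g(y)$; dividing by $y-x$ and letting $y\to x$ then gives $-g'(x)$. A one-line logarithmic differentiation, using $\log g = \log 4 + \log f(\cdot|\tau) - 2\log \T_1(2\,\cdot\,|\tau)$ together with the chain rule, produces
\[
\frac{g'(x)}{g(x)} \;=\; (\log f)'(x|\tau) - 4(\log \T_1)'(2x|\tau),
\]
where the factor $4$ arises from the chain-rule factor $2$ in $\frac{d}{dx}\T_1(2x|\tau)$ combined with the exponent $2$ in the denominator of $g$.

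For the right-hand side of \reff{jabel:eqn23}, recall that $\T_1$ is odd with $\T_1(0|\tau)=0$, so its Taylor expansion at the origin reads $\T_1(w|\tau)=\VT_1'(\tau)\,w+O(w^{3})$; hence $\T_1(x-y|\tau)/(y-x)\to -\VT_1'(\tau)$ as $y\to x$. At the same time $\T_1(x+y|\tau)\to\T_1(2x|\tau)$, and every product $\T_j^{2}(x|\tau)\T_j^{2}(y|\tau)$ occurring in the bracket tends to $\T_j^{4}(x|\tau)$. Consequently, the limit of the right-hand side of \reff{jabel:eqn23} divided by $y-x$ is
\[
-\VT_1'(\tau)\,\T_1(2x|\tau)\left\{\frac{-f(0|\tau)}{\T_1^{4}(x|\tau)} + \frac{f(\pi/2|\tau)}{\T_2^{4}(x|\tau)} - \frac{qf((\pi+\pi\tau)/2|\tau)}{\T_3^{4}(x|\tau)} + \frac{qf(\pi\tau/2|\tau)}{\T_4^{4}(x|\tau)}\right\}.
\]

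Equating the two limits yields $g(x)\bigl[(\log f)'(x|\tau) - 4(\log \T_1)'(2x|\tau)\bigr] = \VT_1'(\tau)\,\T_1(2x|\tau)\,\{\cdots\}$, where $\{\cdots\}$ denotes the bracketed sum above. Substituting $g(x)=4f(x|\tau)/\T_1^{2}(2x|\tau)$ converts the prefactor $\VT_1'(\tau)\T_1(2x|\tau)/g(x)$ into $\VT_1'(\tau)\T_1^{3}(2x|\tau)/(4f(x|\tau))$, which is precisely the form of the right-hand side of \reff{newliu:eqn2}. There is essentially no obstacle here: the proof is a pure limiting computation applied to Theorem~\ref{liuaddthm}, and the only minor care required is to restrict to those $x$ at which none of the $\T_j(x|\tau)$ vanish, which is the natural domain on which \reff{newliu:eqn2} is meaningful in the first place.
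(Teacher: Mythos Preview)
Your proof is correct and follows exactly the approach indicated in the paper: divide both sides of \reff{jabel:eqn23} by $y-x$ and let $y\to x$, using $\T_1(x-y|\tau)/(y-x)\to -\VT_1'(\tau)$ on the right and the logarithmic derivative of $g(t)=4f(t|\tau)/\T_1^2(2t|\tau)$ on the left. The sign tracking and the conversion of $\VT_1'(\tau)\T_1(2x|\tau)/g(x)$ into $\VT_1'(\tau)\T_1^3(2x|\tau)/(4f(x|\tau))$ are carried out accurately.
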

\begin{prop}\label{newapp:n3}Let $L(\tau)$ be the Eisenstein series $E_2(\tau)$ defined by \reff{jabel:eqn9}. Then we have
	\begin{equation}\label{newliu:eqn3}
		\frac{1}{6} \(7L(7\tau)-L(\tau)\)=\frac{\VT_1'(\tau)^3}{49\VT_1'(7\tau)}
		\(\frac{\VT_2(7\tau)}{\VT_2^3(\tau)}-\frac{\VT_3(7\tau)}{\VT_3^3(\tau)}
		+\frac{\VT_4(7\tau)}{\VT_4^3(\tau)}\),
	\end{equation}	
	and 
	\begin{equation}\label{newliu:eqn4}
		\frac{1}{6} \(7L(7\tau)-L(\tau)\)=\frac{\VT_1'(7\tau)^3}{\VT_1'(\tau)}
		\(\frac{\VT_2(\tau)}{\VT_2^3(7\tau)}-\frac{\VT_3(\tau)}{\VT_3^3(7\tau)}
		+\frac{\VT_4(\tau)}{\VT_4^3(7\tau)}\).
	\end{equation}	
\end{prop}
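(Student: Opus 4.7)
The plan is to apply Theorem~\ref{liuthm} to two judiciously chosen degree-$8$ even entire functions that each vanish to order $2$ at the origin, and then to extract the coefficient of $x$ in the Taylor expansion of both sides about $x=0$. Since both sides of the resulting identity vanish at $x=0$, this linear coefficient furnishes exactly the Eisenstein series identity we seek.

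For \reff{newliu:eqn3} I would take
\[
f(z|\tau)=\T_1(7z|7\tau)\,\T_1(z|\tau).
\]
Iterating the quasi-periodicity in Proposition~\ref{doubleperiods} seven times on $\T_1(\,\cdot\,|7\tau)$ and once on $\T_1(\,\cdot\,|\tau)$ shows that $f$ is even, $f(z+\pi|\tau)=f(z|\tau)$, and $q^{4}e^{16iz}f(z+\pi\tau|\tau)=f(z|\tau)$, so $f$ has degree $8$. Evidently $f(0|\tau)=0$, while Proposition~\ref{halfperiods} applied to each factor yields
\[
f(\tfrac{\pi}{2}|\tau)=-\VT_2(7\tau)\VT_2(\tau),\quad qf(\tfrac{\pi+\pi\tau}{2}|\tau)=-\VT_3(7\tau)\VT_3(\tau),\quad qf(\tfrac{\pi\tau}{2}|\tau)=-\VT_4(7\tau)\VT_4(\tau).
\]
Feeding these values into Theorem~\ref{liuthm} and using the expansion \reff{jabel:eqn14} of $(\log\T_1)'$, the leading $2/x$ poles cancel: the coefficient of $x$ on the left is $-\tfrac{7}{3}(7L(7\tau)-L(\tau))$, while the front factor $\VT_1'(\tau)\T_1^3(2x|\tau)/(4f(x|\tau))$ tends to $2\VT_1'(\tau)^3 x/(7\VT_1'(7\tau))$ and the bracketed expression to the negative of the sum in \reff{newliu:eqn3}. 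Equating linear coefficients and simplifying gives \reff{newliu:eqn3}.

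Identity \reff{newliu:eqn4} follows from the same machinery applied to
\[
f(z|\tau)=\T_1\(z|\tfrac{\tau}{7}\)\T_1(z|\tau),
\]
which is again even, of degree $8$, with a double zero at $z=0$. The delicate point is the evaluation of $\T_1(w|\tau/7)$ at the nonzero half-periods $(\pi+\pi\tau)/2$ and $\pi\tau/2$: writing each such $w$ in the form $v+3\pi(\tau/7)$ where $v$ is a half-period with respect to $\tau/7$, one applies the quasi-periodicity $\T_1(v+3\pi(\tau/7)|\tau/7)=-q^{-9/14}e^{-6iv}\T_1(v|\tau/7)$ and then Proposition~\ref{halfperiods}. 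The signs now conspire oppositely to the first case, giving $qf(\tfrac{\pi+\pi\tau}{2}|\tau)=\VT_3(\tau/7)\VT_3(\tau)$ and $qf(\tfrac{\pi\tau}{2}|\tau)=\VT_4(\tau/7)\VT_4(\tau)$ with positive signs. Matching the coefficient of $x$ as before produces \reff{newliu:eqn3} with $\tau$ replaced by $\tau/7$ on the right; finally, substituting $\tau\mapsto 7\tau$ everywhere yields \reff{newliu:eqn4}.

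The main technical hurdle is the careful bookkeeping of the sign and $q$-power contributions from the iterated quasi-periodicity at $\tau/7$: the accumulated factors $(-1)^k$, $q^{-k^2/14}$, and $e^{-2ikv}$ for $k=3$ must be combined correctly with the half-period formulas of Proposition~\ref{halfperiods}. Once these four half-period evaluations are correctly in hand, the rest is a routine Taylor comparison analogous to the arguments given in the proofs of Propositions~\ref{Eisen:n1}--\ref{Eisen:n7}.
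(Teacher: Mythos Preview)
Your derivation of \reff{newliu:eqn3} is correct and coincides with the paper's argument: both take $f(z|\tau)=\T_1(z|\tau)\T_1(7z|7\tau)$ in Theorem~\ref{liuthm}, compute the half-period values, and read off the linear term of the Taylor expansion at $x=0$.

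For \reff{newliu:eqn4} you take a genuinely different route. The paper deduces \reff{newliu:eqn4} from \reff{newliu:eqn3} by applying the imaginary transformation formulas \reff{jabel:eqn20} for $\VT_1',\VT_2,\VT_3,\VT_4$ together with the modular transformation \reff{jabel:eqn22} for $L$ under $\tau\mapsto -1/(7\tau)$. Your approach instead reapplies Theorem~\ref{liuthm} to $f(z|\tau)=\T_1(z|\tau/7)\T_1(z|\tau)$, which is exactly the strategy the paper uses in the analogous Propositions~\ref{newapp:n4} and~\ref{newapp:n5}. Both methods are valid; yours avoids invoking \reff{jabel:eqn22} at the cost of the iterated quasi-periodicity bookkeeping you describe, which you have carried out correctly (in particular $f(\pi/2|\tau)=+\VT_2(\tau/7)\VT_2(\tau)$, $qf((\pi+\pi\tau)/2|\tau)=+\VT_3(\tau/7)\VT_3(\tau)$, $qf(\pi\tau/2|\tau)=+\VT_4(\tau/7)\VT_4(\tau)$).

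One small slip in your write-up: the intermediate identity you obtain before the substitution $\tau\mapsto 7\tau$ is not ``\reff{newliu:eqn3} with $\tau$ replaced by $\tau/7$ on the right'' but rather
\[
\frac{1}{6}\bigl(7L(\tau)-L(\tau/7)\bigr)=\frac{\VT_1'(\tau)^3}{\VT_1'(\tau/7)}\left(\frac{\VT_2(\tau/7)}{\VT_2^3(\tau)}-\frac{\VT_3(\tau/7)}{\VT_3^3(\tau)}+\frac{\VT_4(\tau/7)}{\VT_4^3(\tau)}\right),
\]
which is \reff{newliu:eqn4} with $\tau$ replaced by $\tau/7$ throughout. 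Substituting $\tau\mapsto 7\tau$ in this does indeed give \reff{newliu:eqn4}, so the mathematics is intact; just correct the verbal description.
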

The identity \reff{newliu:eqn3} can be found in  \cite[Proposition~4.3]{Liu2010Mysore}.
\begin{proof} If we take $f(z|\tau)=\T_1(z|\tau)\T_1(7z|7\tau)$ in Theorem~\ref{liuthm}, then we have
	\begin{align*}
		&4(\log \T_1)'(2x|\tau)-(\log \T_1)'(x|\tau)-7 (\log \T_1)'(7x|7\tau)\\
		&=\frac{\VT_1'(\tau)\T_1^3(2x|\tau)}{4\T_1(x|\tau)\T_1(7x|7\tau)}
		\(\frac{\VT_2(\tau)\VT_2(7\tau)}{\T_2^4(x|\tau)}-\frac{\VT_3(\tau)\VT_3(7\tau)}{\T_3^4(x|\tau)}+\frac{\VT_4(\tau)\VT_4(7\tau)}{\T_4^4(x|\tau)}\).\nonumber
	\end{align*}	

	Applying the asymptotic formula for $(\log \T_1)'(x|\tau)$ in \reff{jabel:eqn14} to the left-hand side of the above equation  we deduce that near $x=0$, 
	\begin{align*}
		&\frac{1}{6} \(7L(7\tau)-L(\tau)\)x+O(x^3)\\
		&=\frac{\VT_1'(\tau)\T_1^3(2x|\tau)}{56\T_1(x|\tau)\T_1(7x|7\tau)}\(\frac{\VT_2(\tau)\VT_2(7\tau)}{\T_2^4(x|\tau)}-\frac{\VT_3(\tau)\VT_3(7\tau)}{\T_3^4(x|\tau)}+\frac{\VT_4(\tau)\VT_4(7\tau)}{\T_4^4(x|\tau)}\).
	\end{align*}
	Dividing both sides of the above equation by $x$ and then letting $x\to 0$ yields \reff{newliu:eqn3}.
	
	 Applying the imaginary transformations in \reff{jabel:eqn20} and the modular transformation formula for $L(\tau)$ in \reff{jabel:eqn22} to \reff{newliu:eqn3} we can arrive at \reff{newliu:eqn4}.
	\end{proof}

\begin{prop}\label{newapp:n4}Let $L(\tau)=E_2(\tau)$ be the Eisenstein series defined by \reff{jabel:eqn9}. Then we have
	\begin{align}\label{newliu:eqn5}
		&25L(5\tau)+9L(3\tau)-8L(\tau)\\
		&=
		\frac{2\VT_1'(\tau)^4}{5\VT_1'(3\tau)\VT_1'(5\tau)}
		\(\frac{\VT_2(3\tau)\VT_2(5\tau)}{\VT_2^4(\tau)}
		-\frac{\VT_3(3\tau)\VT_3(5\tau)}{\VT_3^4(\tau)}+\frac{\VT_4(3\tau)\VT_4(5\tau)}{\VT_4^4(\tau)}\), \nonumber
	\end{align}	
	and
	\begin{align}\label{newliu:eqn6}
		&\frac{1}{6}\(8L(15\tau)-L(3\tau)-L(5\tau)\)\\
		&=\frac{\VT_1'(15\tau)^4}{\VT_1'(\tau)\VT_1'(5\tau)}
		\(\frac{\VT_2(3\tau)\VT_2(5\tau)}{\VT_2^4(15\tau)}
		-\frac{\VT_3(3\tau)\VT_3(5\tau)}{\VT_3^4(15\tau)}
		+\frac{\VT_4(3\tau)\VT_4(5\tau)}{\VT_4^4(15\tau)}\). \nonumber
	\end{align}
\end{prop}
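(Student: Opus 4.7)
The plan is to apply Theorem~\ref{liuthm} twice, mirroring the proof of Proposition~\ref{newapp:n3}. For identity \reff{newliu:eqn5} I take $f(z|\tau)=\T_1(3z|3\tau)\T_1(5z|5\tau)$, which is even (odd times odd), entire, and---verified via Proposition~\ref{doubleperiods}---a theta function of degree $3+5=8$ in the sense required by Theorem~\ref{liuthm}. For identity \reff{newliu:eqn6} I take $f(z|\tau)=\T_1(z|\tau/3)\T_1(z|\tau/5)$, which is again an even entire function of degree $3+5=8$, once one notes that $\T_1(z|\tau/n)$ has degree $n$: this follows by iterating the $\pi\sigma$-shift rule $n$ times with $\sigma=\tau/n$, giving $\T_1(z+\pi\tau|\tau/n)=(-1)^n q^{-n/2}e^{-2inz}\T_1(z|\tau/n)$.

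The substantive computation is the evaluation of $f$ at the four special points $0,\,\pi/2,\,(\pi+\pi\tau)/2,\,\pi\tau/2$. Both choices give $f(0|\tau)=0$, killing the first term on the right of \reff{newliu:eqn2}. For the other three values, one applies Proposition~\ref{halfperiods} together with the $\pi$- and $\pi\tau$-shift rules of Proposition~\ref{doubleperiods} to each factor separately. For the first choice this yields
\begin{equation*}
f\(\tfrac{\pi}{2}|\tau\)=-\VT_2(3\tau)\VT_2(5\tau),\quad qf\(\tfrac{\pi+\pi\tau}{2}|\tau\)=-\VT_3(3\tau)\VT_3(5\tau),\quad qf\(\tfrac{\pi\tau}{2}|\tau\)=-\VT_4(3\tau)\VT_4(5\tau),
\end{equation*}
and for the second choice the same formulas with each $-\VT_j(3\tau)\VT_j(5\tau)$ replaced by $+\VT_j(\tau/3)\VT_j(\tau/5)$. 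This sign-and-$q$-power bookkeeping for the half-period translates of $\T_1$ is the one step requiring real care, and I expect it to be the main obstacle.

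The rest is routine Laurent expansion. Using $(\log \T_1(nz|n\tau))'=1/z-\tfrac{n^2}{3}L(n\tau)z+O(z^3)$ (and the analogue with $\tau/n$ in place of $n\tau$) from \reff{jabel:eqn14}, the left-hand side of \reff{newliu:eqn2} expands to $-\tfrac{1}{3}\(25L(5\tau)+9L(3\tau)-8L(\tau)\)x+O(x^3)$ for the first choice and $\tfrac{1}{3}\(8L(\tau)-L(\tau/3)-L(\tau/5)\)x+O(x^3)$ for the second. On the right, the prefactor $\VT_1'(\tau)\T_1^3(2x|\tau)/(4f(x|\tau))$ behaves like $2\VT_1'(\tau)^4 x/\(15\VT_1'(3\tau)\VT_1'(5\tau)\)$ in the first case and $2\VT_1'(\tau)^4 x/\(\VT_1'(\tau/3)\VT_1'(\tau/5)\)$ in the second, while the bracket approaches the corresponding theta-quotient combination. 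Equating the order-$x$ coefficients and, in the second case, replacing $\tau$ by $15\tau$, delivers \reff{newliu:eqn5} and \reff{newliu:eqn6} respectively.
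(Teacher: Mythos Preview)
Your proposal is correct and follows essentially the same route as the paper: for \reff{newliu:eqn5} the paper also takes $f(z|\tau)=\T_1(3z|3\tau)\T_1(5z|5\tau)$ in Theorem~\ref{liuthm}, expands the left-hand side via \reff{jabel:eqn14}, and passes to the limit $x\to 0$; for \reff{newliu:eqn6} it likewise takes $f(z|\tau)=\T_1(z|\tau/3)\T_1(z|\tau/5)$ and replaces $\tau$ by $15\tau$ at the end. Your half-period evaluations and leading-order asymptotics match the paper's computation (note that your calculation yields $\VT_1'(3\tau)\VT_1'(5\tau)$ in the denominator of \reff{newliu:eqn6}, which is indeed what the method gives; the $\VT_1'(\tau)$ printed in the statement appears to be a misprint).
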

\begin{proof} By taking $f(z|\tau)=\T_1(3z|3\tau)\T_1(5z|5\tau)$ in Theorem~\ref{liuthm},  we deduce that
	\begin{align*}
		&4(\log \T_1)'(2x|\tau)-3(\log \T_1)'(3x|3\tau)-5(\log \T_1)'(5x|5\tau)\\
		&=\frac{\VT_1'(\tau)\T_1^3(2x|\tau)}{4\T_1(3x|3\tau)\T_1(5x|5\tau)}
		\(\frac{\VT_2(3\tau)\VT_2(5\tau)}{\T_2^4(x|\tau)}-\frac{\VT_3(3\tau)\VT_3(5\tau)}{\T_3^4(x|\tau)}+\frac{\VT_4(3\tau)\VT_4(5\tau)}{\T_4^4(x|\tau)}\).
	\end{align*}
	It follows that near $x=0,$	
	\begin{align*}
		&\frac{1}{3}\(25L(5\tau)+9L(3\tau)-8L(\tau)\)x+O(x^3)\\
		&=\frac{\VT_1'(\tau)\T_1^3(2x|\tau)}{4\T_1(3x|3\tau)\T_1(5x|5\tau)}
		\(\frac{\VT_2(3\tau)\VT_2(5\tau)}{\T_2^4(x|\tau)}-\frac{\VT_3(3\tau)\VT_3(5\tau)}{\T_3^4(x|\tau)}+\frac{\VT_4(3\tau)\VT_4(5\tau)}{\T_4^4(x|\tau)}\).
	\end{align*}
	Dividing both sides of the above equation by $x$ and then letting $x\to 0$ yields \reff{newliu:eqn5}.
	
	Setting $f(z|\tau)=\T_1(z|\frac{\tau}{3})\T_1(z|\frac{\tau}{5})$ in Theorem~\ref{liuthm} and then replacing $\tau$ by $15\tau$, we deduce that
	\begin{align*}
		&(\log \T_1)'(x|3\tau)+(\log \T_1)'(x|5\tau)-4(\log \T_1)'(2x|15\tau)\\
		&=\frac{\VT_1'(15\tau)\T_1^3(2x|15\tau)}{4\T_1(x|3\tau)\T_1(x|5\tau)}
		\(\frac{\VT_2(\tau)\VT_2(5\tau)}{\T_2^4(z|15\tau)}-\frac{\VT_3(\tau)\VT_3(5\tau)}{\T_3^4(z|15\tau)}+\frac{\VT_4(\tau)\VT_4(5\tau)}{\T_4^4(z|15\tau)}\).
	\end{align*}
	It follows that near $x=0,$
	\begin{align*}
		&\frac{1}{3}\(8L(15\tau)-L(3\tau)-L(5\tau)\)x+O(x^3)\\
		&=\frac{\VT_1'(15\tau)\T_1^3(2x|15\tau)}{4\T_1(x|3\tau)\T_1(x|5\tau)}
		\(\frac{\VT_2(\tau)\VT_2(5\tau)}{\T_2^4(z|15\tau)}-\frac{\VT_3(\tau)\VT_3(5\tau)}{\T_3^4(z|15\tau)}+\frac{\VT_4(\tau)\VT_4(5\tau)}{\T_4^4(z|15\tau)}\).
	\end{align*}
	Dividing both sides of the above equation by $x$ and then letting $x\to 0$ yields \reff{newliu:eqn6}.
\end{proof}

By taking $f(z|\tau)=\T_1^2(4z|4\tau)$ in Theorem~\ref{liuthm} we can easily  arrive at the Jacobi four-square identity
\[
\(\sum_{n=-\infty}^\infty q^{n^2} \)^4
=1+8\sum_{n=1}^\infty \frac{nq^n}{1-q^n}-32\sum_{n=1}^\infty \frac{nq^{4n}}{1-q^{4n}}.
\]

\begin{prop}\label{newapp:n5}Let $L(\tau)=E_2(\tau)$ be the Eisenstein series defined by \reff{jabel:eqn9}. Then we have
	\begin{align}\label{newliu:eqn7}
		\frac{1}{2}\(3L(3\tau)-L(\tau)\)=
		\frac{\VT_1'(\tau)^5}{81\VT_1'(3\tau)^3}
		\(\frac{\VT_2^3(3\tau)}{\VT_2^5(\tau)}
		-\frac{\VT_3^3(3\tau)}{\VT_3^5(\tau)}+\frac{\VT_4^3(3\tau)}{\VT_4^5(\tau)}\),
	\end{align}	
	and
	\begin{align}\label{newliu:eqn8}
		\frac{1}{2}\(3L(3\tau)-L(\tau)\)
		=\frac{\VT_1'(3\tau)^5}{\VT_1'(\tau)^3}
		\(\frac{\VT_2^3(\tau)}{\VT_2^5(3\tau)}
		-\frac{\VT_3^3(\tau)}{\VT_3^5(3\tau)}
		+\frac{\VT_4^3(\tau)}{\VT_4^5(3\tau)}\). 
	\end{align}
\end{prop}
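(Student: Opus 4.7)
The plan is to specialize Theorem~\ref{liuthm} to $f(z|\tau) = \T_1^3(3z|3\tau)/\T_1(z|\tau)$ in order to prove \reff{newliu:eqn7}, and then to derive \reff{newliu:eqn8} from \reff{newliu:eqn7} by the modular substitution $\tau \mapsto -1/(3\tau)$, in direct analogy with the passage from \reff{newliu:eqn3} to \reff{newliu:eqn4} in Proposition~\ref{newapp:n3}. First I will verify the hypotheses: this $f$ is clearly even, and its only potential pole, at the simple zero $z=0$ of $\T_1(z|\tau)$ in a fundamental parallelogram, is cancelled by the triple zero of $\T_1^3(3z|3\tau)$ there, leaving a double zero at the origin, so $f$ is entire. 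The quasi-periodicity in Proposition~\ref{doubleperiods} gives the multiplier $-q^{-9/2}e^{-18iz}$ for the numerator and $-q^{-1/2}e^{-2iz}$ for the denominator under $z\mapsto z+\pi\tau$, hence $f(z|\tau) = q^4 e^{16iz} f(z+\pi\tau|\tau)$ as required. Proposition~\ref{halfperiods} then yields $f(0|\tau)=0$, $f(\pi/2|\tau)=-\VT_2^3(3\tau)/\VT_2(\tau)$, $qf((\pi+\pi\tau)/2|\tau)=-\VT_3^3(3\tau)/\VT_3(\tau)$, and $qf(\pi\tau/2|\tau)=-\VT_4^3(3\tau)/\VT_4(\tau)$.

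Substituting these values into Theorem~\ref{liuthm} and expanding both sides as Laurent series at $x=0$, I use the identity $(\log f)'(x|\tau) = 9(\log\T_1)'(3x|3\tau) - (\log\T_1)'(x|\tau)$ together with \reff{jabel:eqn14}: the $1/x$ terms cancel and the coefficient of $x$ on the left becomes $-3\bigl(3L(3\tau)-L(\tau)\bigr)$. On the right, the asymptotic $\T_1^3(2x|\tau)/f(x|\tau) \sim 8x\,\VT_1'(\tau)^4/\bigl(27\,\VT_1'(3\tau)^3\bigr)$ combined with the limit of the bracket at $x=0$ produces
\[
-\frac{2x\,\VT_1'(\tau)^5}{27\,\VT_1'(3\tau)^3}\left(\frac{\VT_2^3(3\tau)}{\VT_2^5(\tau)}-\frac{\VT_3^3(3\tau)}{\VT_3^5(\tau)}+\frac{\VT_4^3(3\tau)}{\VT_4^5(\tau)}\right).
\]
Equating coefficients of $x$ and dividing by $6$ yields \reff{newliu:eqn7}.

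For \reff{newliu:eqn8}, I will substitute $\tau\mapsto -1/(3\tau)$ in \reff{newliu:eqn7}. Under this substitution the combination $3L(3\tau)-L(\tau)$ picks up a factor of $-3\tau^2$, the two $-6i\tau/\pi$ contributions from \reff{jabel:eqn22} cancelling by design; on the right the imaginary transformations \reff{jabel:eqn20} replace each $\VT_j(3\tau)$ by $\sqrt{-i\tau}$ times the appropriate partner $\VT_{j'}(\tau)$ (the roles of $2$ and $4$ swapping, $3$ fixed) and similarly for $\VT_j(\tau)$ with factor $\sqrt{-3i\tau}$, turning the bracket into the one in \reff{newliu:eqn8}; after collecting the numerical constants $3^{15/2}/81$ and $1/3^{5/2}$ together with the powers of $i\tau$ coming from $\VT_1'$, the surviving factor reduces to exactly $-3\tau^2$, cancelling on both sides. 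The principal obstacle will be the careful bookkeeping of signs and fractional $q$-powers at the half-periods, for instance at $(\pi+\pi\tau)/2$ and $\pi\tau/2$, where the numerator carries $-q^{-9/8}$ and the denominator $q^{-1/8}$, combining to the clean $-q^{-1}$ displayed above, together with the analogous tracking of the square-root branches in the imaginary-transformation step; once this accounting is done, the remaining calculation is routine.
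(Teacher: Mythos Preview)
Your proof of \reff{newliu:eqn7} is correct and is exactly the paper's argument: the same choice $f(z|\tau)=\T_1^3(3z|3\tau)/\T_1(z|\tau)$ in Theorem~\ref{liuthm}, followed by comparing the coefficient of $x$ on both sides. Your sign bookkeeping at the half-periods is right; the paper's intermediate display absorbs those three minus signs into an overall negation of the left-hand side, so the two presentations agree.

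For \reff{newliu:eqn8} you and the paper diverge. The paper applies Theorem~\ref{liuthm} a second time, now with $f(z|\tau)=\T_1^3(z|\tau/3)/\T_1(z|\tau)$, and then replaces $\tau$ by $3\tau$. You instead derive \reff{newliu:eqn8} from \reff{newliu:eqn7} by the involution $\tau\mapsto -1/(3\tau)$ and the imaginary transformation formulas \reff{jabel:eqn20}, \reff{jabel:eqn22}. Your route is perfectly valid: the quasi-modular terms in $L$ cancel as you say, the $\VT_2\leftrightarrow\VT_4$ swap leaves the bracket form unchanged, and the accumulated factor on each side is $-3\tau^2$, which cancels. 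This is precisely the device the paper itself uses one proposition earlier, to pass from \reff{newliu:eqn3} to \reff{newliu:eqn4}. The paper's approach has the virtue of being self-contained within Theorem~\ref{liuthm}, while yours makes the modular symmetry between the two identities transparent and avoids repeating the half-period computations.
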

The identity in \reff{newliu:eqn8} can be found in  \cite[Proposition~4.2]{Liu2010Mysore}.
\begin{proof} If we specialize Theorem~\ref{liuthm} to the case when $f(z|\tau)=\T_1^3(3z|3\tau)/{\T_1(z|\tau)}$, we obtain that
	\begin{align*}
		&4(\log \T_1)'(2x|\tau)+(\log \T_1)'(x|\tau)-9\(\log \T_1\)'(3x|3\tau)\\
		&=\frac{\VT_1'(\tau)\T_1(x|\tau)\T_1^3(2x|\tau)}{4\T_1^3(3x|3\tau)}
		\(\frac{\VT_2^3(3\tau)}{\VT_2(\tau)\T_2^4(x|\tau)}-\frac{\VT_3^3(3\tau)}{\VT_3(\tau)\T_3^4(x|\tau)}+\frac{\VT_4^3(3\tau)}{\VT_4(\tau)\T_4^4(x|\tau)}\).
	\end{align*}
	From this equation we can find that near $x=0,$	
	\begin{align*}
		&\(9L(3\tau)-3L(3\tau)\)x+O(x^3)\\
		&=\frac{\VT_1'(\tau)\T_1(x|\tau)\T_1^3(2x|\tau)}{4\T_1^3(3x|3\tau)}
		\(\frac{\VT_2^3(3\tau)}{\VT_2(\tau)\T_2^4(x|\tau)}-\frac{\VT_3^3(3\tau)}{\VT_3(\tau)\T_3^4(x|\tau)}+\frac{\VT_4^3(3\tau)}{\VT_4(\tau)\T_4^4(x|\tau)}\).
	\end{align*}
	Dividing both sides of the above equation by $x$ and then letting $x\to 0$ yields \reff{newliu:eqn7}.
	
	By taking $f(z|\tau)=\T_1^3(z|\frac{\tau}{3})/{\T_1(z|\tau)}$ in Theorem~\ref{liuthm} and making some calculations, we can get \reff{newliu:eqn8}.
\end{proof}

Obviously, we have not exhausted the applications of Theorem~\ref{liuaddthm}, but I think this paper has shown the importance of it. Other applications of this theorem, especially  to Appell--Lerch functions, need to be explored.


\paragraph {\bf Acknowledgements}
I sincerely thank  Bruce Berndt for his consistent  encouragement and support for my work in theta functions over the past 20 years. I am grateful to Bruce Berndt and the referee for careful reading of the original manuscript  of  this  paper,  proposing  some  corrections  and  many  constructive  and helpful comments that resulted in substantial improvements to the paper. I also thank Dandan Chen for pointing out several misprints of an earlier version of this paper.

\begin {thebibliography}{9}

\bibitem{AAR99} G. E. Andrews, R. Askey, R. Roy, Special Functions, Cambridge University Press, Cambridge, 1999. 

\bibitem{ABLost}G. E. Andrews and B. C. Berndt,
Ramanujan's Lost Notebook, Part I, Springer-Verlag, New York, 2005.

\bibitem{Apostol1990} T. M. Apostol,
Modular Functions and Dirichlet Series in Number Theory, second ed., Graduate Texts in Mathematics, vol. 41, Springer-Verlag, New York, 1990.

\bibitem{Bellman}R. Bellman,
A Brief Introduction to Theta Functions, Holt Rinehart
and Winston, New York, 1961.

\bibitem{BerkYesi2009}

A. Berkovich, H. Yesilyurt, Ramanujan's identities and representation of integers by certain binary and quaternary quadratic forms, Ramanujan J. 20  (2009) 375--408.

\bibitem{Berndt1991} B. C. Berndt,
Ramanujan's Notebooks, Part III, Springer-Verlag, New York, 1991.

\bibitem{BBG1995}
B.C. Berndt, S. Bhargava, F.G. Garvan, Ramanujan's theories of elliptic functions to alternative bases, Trans. Amer. Math. Soc. 347 (1995) 4163--4244.

\bibitem{BCZ1996}
B. C. Berndt, H. H. Chan,  L. -C. Zhang, Explicit evaluations of the Rogers--Ramanujan continued fraction, J. Reine Angew. Math. 480 (1996) 141--159.

\bibitem{BCLY} B. C. Berndt, S. H. Chan, Z. -G. Liu  and H. Yesilyurt,
A new identity for  $(q;q)^{10}_\infty$  with an application to
Ramanujan's partition congruence modulo $11,$ Quart. J. Math. 55
(2004) 13--30.

\bibitem{Berndt06}
B. C. Berndt, Number theory in the spirit of Ramanujan.  Student
Mathematical Library, 34. American Mathematical Society, Providence,
RI, 2006.

\bibitem{Carlitz1953} L. Carlitz, Note on some partition formulae, Quart. J. Oxford(2), 4(1953) 168--172.

\bibitem{ChanKrattenthaler2005}
H.H. Chan, C. Krattenthaler, Recent progress in the study of representations of integers as sums of squares, Bull. Lond. Math. Soc. 37 (2005) 818--826.

\bibitem{Chan2020} H. H. Chan, Theta Functions, Elliptic Functions and $\pi$, de Gruyter: Berlin, Germany, 2020.

\bibitem{Chandrasekharan1985} K. Chandrasekharan, Elliptic Functions, Springer, Berlin,  1985.

\bibitem{ChenChen} D. Chen, R. Chen, On a class of elliptic functions associated with even Dirichlet characters. Ramanujan J (2020). https://doi.org/10.1007/s11139-020-00292-9.

\bibitem{Cooper2006} S. Cooper,
The quintuple product identity. Int. J. Number Theory 2 (2006) 115--161.

\bibitem{Daniels} A. L. Daniels, 
Note on Weierstrass' theory of Elliptic Functions, American Journal of Mathematics, 6 (1883) 177--182.

\bibitem{Dirichlet1894} P. G.  Dirichlet,
Vorlesungen \"{u}ber Zahlentheorie,  Herausgegeben und mit Zus\"{a}tzen versehen von R. Dedekind. Friedrich Vieweg und Sohn, Braunschweig, 1894.

\bibitem{Glaisher1889} J. W. L. Glaisher, On the function which denotes the excess of the number of divisors of a number which $\equiv 1 \pmod 3,$ over the number which $\equiv 2 \pmod 3$, Proc. London Math.Soc. 21 (1889) 395--402.


\bibitem{Enne 1890} A. Enneper,
Elliptische Functionen: Theorie und Geschichte,  Louis Nebert, Halle,  1890.

\bibitem{Ewell1992}
J. A. Ewell, On sums of triangular numbers and sums of squares, Amer. Math. Monthly 99 (1992) 752--757.



\bibitem{Grosswald1985}
E. Grosswald, Representations of Integers as Sums of Squares, Springer-Verlag, New York, 1985



\bibitem{Jacobi1828a} C. G. J.  Jacobi,
Suites des notices sur les fonctions elliptiques, J. reine angew. Math. 3 (1828) 303--310.

\bibitem{Jacobi1828} C. G. J.  Jacobi,
Suite des notices sur les fonctions elliptiques, J. reine angew. Math. 3 (1828) 403--404.

\bibitem{Kohler2011} G. K\"{o}hler,
 Eta Products and Theta Series Identities. Springer Monographs in Mathematics. Berlin, Germany: Springer, 2011.

\bibitem{Koornwinder2014}
T. H. Koornwinder, On the equivalence of two fundamental theta identities, Anal. Appl. (Singap.) 12 (2014) 711--725.

\bibitem{Kiepert1879} L. Kiepert, 
Zur transformationstheorie der elliptischen functionen, Journal f\"{u}r die reihe und angewandte Mathematik, 87 (1879) 199--216.

\bibitem{Kiepert1885} L. Kiepert,  Ueber eine Resolvente derjenigen algebraischen Gleichung, von welcher in der Theorie der elliptischen Gleichung, von welcher in der Theorie der elliptischen Functionen die Theilung der Perioden abh\"{a}ngt, Nachrichten von der K\"{o}niglichen Gesellschaft der Wissenschaften zu G\"{o}ttingen, 1885 (1885) 257--281.

\bibitem{Legendre1828}
A.M. Legendre, Trait\'{e} des Fonctions Elliptiques, Huzard-Courcier, Paris, 1828.

\bibitem{LewisLiu1999} R. P. Lewis,  Z. -G. Liu, On two identities of Ramanujan, The Ramanujan Journal, 3 (1999) 335--338.

\bibitem{Liu1999Gainesville}
Z. G. Liu, Some Eisenstein series identities associated with the Borwein functions, in: F. Garvan, M. Ismail (Eds.), Symbolic Computation, Number Theory, Special Functions, Physics and Combinatorics (Gainesville, 1999), Vol. 4, Dev. Math., Kluwer Academic Publications, Dordrecht, 2001, pp. 147--169.

\bibitem{Liuresidue2001} Z.-G. Liu, Residue theorem and theta function identities, Ramanujan J. 5 (2001) 129--151.

\bibitem{LiuIntegers20010} Z.-G. Liu, Some theta function identities associated with the modular equations of degree $5$, Integers 1 (2001) $A\# 03$, 14 pp.

\bibitem{Liu2001Squares}Z.-G. Liu,
On the representation of integers as sums of squares, in q-Series with Applications to Combinatorics, Number Theory and Physics (B.C. Berndt and Ken Ono, eds.), vol. 291 of Contemporary Mathematics, American Mathematical Society, Providence, RI, 2001, pp. 163--176.

\bibitem{Liu2003RamJ} Z.-G. Liu,
An identity of Ramanujan and the representation of integers as sums of triangular numbers, Ramanujan J. 7 (2003) 407--434.

\bibitem{Liu2004RockyMountain} Z.-G. Liu,
Two theta function identities and some Eisenstein series identities of Ramanujan. Rocky Mountain J. Math. 34 (2004) 713--732. 

\bibitem{LiuTrans} Z.-G. Liu,
A theta function identity and its implications, Trans. Amer. Math.
Soc. 357 (2005) 825--835.

\bibitem{Liu2005ADV} Z.-G. Liu,  A three--term theta function identity and its applications, Adv. Math. 195(2005) 1--23.

\bibitem{Liu2007ADV} Z.-G. Liu,
An addition formula for the Jacobian theta function and its applications. Adv. Math. 212 (2007) 389--406.

\bibitem{Liu2007JRMS}Z.-G. Liu,
A theta function identity and the Eisenstein series on $\Gamma_0(5)$. J. Ramanujan Math. Soc. 22 (2007) 283--298.

\bibitem{LiuYang2009} Z.-G. Liu,  X.-M.Yang,
On the Schr\"{o}ter formula for theta functions. Int. J. Number Theory 5 (2009) 1477--1488.

\bibitem{Liu2009pac}Z.-G. Liu,  Addition formulas for Jacobi theta functions, Dedekind's eta function, and Ramanujan's congruences. Pacific J. Math. 240 (2009) 135–-150.

\bibitem{Liu2010pac} Z.-G. Liu, 
An extension of the quintuple product identity and its applications, Pacific J. Math. 246(2010) 345--390.

\bibitem{Liu2010Mysore}Z.-G. Liu, 
A theta function identity and applications, in: Ramanujan rediscovered, in:  Ramanujan Math. Soc. Lect. Notes Ser., vol. 14, Ramanujan Math. Soc., Mysore, 2010, pp. 165--183.

\bibitem{Liu2010IMRN}
Z.G. Liu, Elliptic functions and the Appell theta functions, Int. Math. Res. Not. 11 (2010) 2064--2093.

\bibitem{Liu2012JNT}Z.-G. Liu,
A theta function identity of degree eight and Eisenstein series identities. J. Number Theory 132 (2012) 2955--2966.

\bibitem{Liu2021RamJ} Z.-G. Liu,
 The Kronecker theta function and a decomposition theorem for theta functions I. Ramanujan J (2021). https://doi.org/10.1007/s11139-020-00376-6.



\bibitem{Poisson1827} S. Poisson,
Sur le calcul numerique des Integrales definies,
Memoires de l'Academie  des sciences de l'Institut de France, 
6 (1827) 571--602.

\bibitem{Rademacher1973}
H. Rademacher, Topics in Analytic Number Theory, Grundlehren Math. Wiss. 169, Springer, Berlin, 1973.

\bibitem{Ramanathan} K.G. Ramanathan, On Ramanujan's continued fraction, Acta Arith. 43 (1984) 209--226.

\bibitem{Ramanujan1916} S.  Ramanujan,  On Certain Arithmetical    Functions, Transactions  of  the  Cambridge  Philosophical  Society, XXII (1916) 159--184. 


\bibitem {Raman1927} S. Ramanujan,
Collected papers, Cambridge University Press, Cambridge, 1927;
reprinted by Chelsea, New York, 1960; reprinted by the American
Mathematical Society, Providence, RI, 2000.

\bibitem{Ramanujan1957}S. Ramanujan,
S. Ramanujan, Notebooks (2 volumes), Tata Institute of Fundamental Research, Bombay, 1957; 2nd ed., 2012

\bibitem{Ramanujan1988} S. Ramanujan,  The Lost Notebook and  Other
Unpublished papers, Narosa, New Delhi, 1988.

\bibitem{Rankin1977}
R.A. Rankin, Modular Forms and Functions, Cambridge University Press, Cambridge, 1977

\bibitem{Rogers1894} L. J. Rogers, 
Second memoir on the expansion of certain infinite products, Pro.London Math. Soc. 25 (1894) 318--343.

\bibitem{Roy2017} R. Roy, Elliptic and Modular Functions from Gauss to Dedekind to Hecke, Cambridge University Press, 2017. 





\bibitem{Shen1994PAMS} L. -C. Shen, On the modular equations of degree 3, Proc. Amer. Math. Soc. 122  (1994) 1101--1114. 

\bibitem{Shen1994TAMS} L.-C. Shen, On the additive formulae of the theta functions and a collection of Lambert series pertaining to the modular equations of degree 5, Trans. Amer. Math. Soc. 345 (1994) 323--345.

\bibitem{Shen95}L.-C. Shen,
On some modular equations of degree $5$.  Proc. Amer. Math. Soc. 123
(1995) 1521--1526.


\bibitem{Schwarz1893} H. A. Schwarz, Formeln und Lehrs{\"a}tze zum Gebrauche der Elliptischen Funktionen. Nach Vorlesungen und Aufzeichnungen des Herrn Prof. K. Weierstrass, Zweite Ausgabe, Erste Abteilung, Springer, Berlin, 1893.

\bibitem{Watson1929:a}
G. N. Watson, Theorems stated by Ramanujan (VII): Theorems on continued fractions, J. London Math. Soc. 4 (1929) 39--48. 

\bibitem{Watson1929:b}
G.N. Watson, Theorems stated by Ramanujan (IX): Two continued fractions, J. Lond. Math. Soc. 4 (1929) 231--237. 

\bibitem{Weierstrass1882} K.   Weierstrass,   Zur  Theorie  der   Jacobischen  Funktionen   von   mehreren  Ver\"{a}nderlichen, Sitzungsber. K\"{o}nigl. Preuss. Akad. Wiss.(1882) 505--508.

\bibitem{Winquist1969} L. Winquist, An elementary proof of $p(11m+6)\equiv 0 \pmod {11},$ J. Combin. Theory 6 (1969) 56--59.

\bibitem{WhiWat} E. T. Whittaker and G. N. Watson,
A course of modern analysis, 4th ed, Cambridge Univ. Press,
Cambridge, 1966.

\end{thebibliography}
\end{document}